\newtheorem{theorem}{Theorem}[section]
\newtheorem{lemma}[theorem]{Lemma}
\newtheorem{proposition}[theorem]{Proposition}
\newtheorem{corollary}[theorem]{Corollary}
\theoremstyle{definition}
\newtheorem{definition}[theorem]{Definition}
\theoremstyle{remark}
\newtheorem{remark}[theorem]{Remark}
\numberwithin{equation}{section}
\newcommand {\vv}[1]{{\mathsf #1}}
\newcommand{\alg}[1]{{\textbf{\upshape #1}}}
\newcommand{\pp}{\mathbf{P}}
\newcommand{\II}{\mathbf{I}}
\newcommand{\back}{\backslash}
\begin{document}

\title{Gluing residuated lattices}

%    Remove any unused author tags.

%    author one information
\author{Nikolaos Galatos}
\address{Department of Mathematics, University of Denver, CO, USA}
\curraddr{}
\email{nikolaos.galatos@du.edu}
\thanks{}

%    author two information
\author{Sara Ugolini}
\address{IIIA - CSIC, Bellaterra, Barcelona, Spain}
\curraddr{}
\email{sara@iiia.csic.es}
\thanks{}

\subjclass[2010]{06F05,08A55,06A15,08A05}

\keywords{Residuated lattices, Amalgamation, Gluing, Ordinal sum}

\date{}

\dedicatory{}

\begin{abstract}
We introduce and characterize various gluing constructions for residuated lattices that intersect on a common subreduct, and which are subalgebras, or appropriate subreducts, of the resulting structure. Starting from the $1$-sum construction (also known as ordinal sum for residuated structures), where algebras that intersect only in the top element are glued together, we first consider the gluing on a congruence filter, and then add a lattice ideal as well. We characterize such constructions in terms of (possibly partial) operators acting on (possibly partial) residuated structures. As particular examples of gluing constructions, we obtain the non-commutative version of some rotation constructions, and an interesting variety of semilinear residuated lattices that are $2$-potent.

This study also serves as a first attempt toward the study of amalgamation of non-commutative residuated lattices, by constructing an amalgam in the special case where the common subalgebra in the V-formation is either a special (congruence) filter or the union of a filter and an ideal.
\end{abstract}

\maketitle
\section{Introduction and preliminaries}
The first gluing construction in lattice theory is due to Hall and Dilworth \cite{HD44}, who used it to prove the existence of a modular lattice that cannot be embedded in a complemented modular lattice. Later on, the same construction was independently used by Wro\'nski in \cite{W74} and Troelstra \cite{T65} to study intermediate logics, by constructing Heyting algebras. 

The idea in  these constructions is to glue together lattices that intersect (up to isomorphism) on a sublattice that is a principal ideal of the first and a principal filter of the second.
In particular, the construction applies to Heyting algebras: bounded lattices that are relatively pseudocomplemented (i.e., for every pair of elements $x, y$ there is a largest element $z$ with the property that $x \land z \leq y$).
Heyting algebras can also be equivalently defined as bounded residuated lattices where the monoidal operation coincides with the meet in the lattice order; in this case $x \to y$ is the largest element $z$ such that $x \land z \leq y$.
Residuated lattices play an important role in the study of algebraic logic, as they constitute the equivalent algebraic semantics (in the sense of Blok-Pigozzi \cite{BP89}) of substructural logics. These encompass most interesting nonclassical logics: intuitionistic logic, fuzzy logics, relevance logics, linear logic, and classical logic as a limit case. Thus, the investigation of the variety of residuated lattices is a powerful tool in the comparative study of such logics, as explored in \cite{GJKO}. The multitude of different types of residuated lattices makes the study fairly complicated and at the present moment large classes of residuated lattices lack a structural description. The study of constructions that allow us to obtain new structures from known ones is extremely important in improving our understanding of residuated lattices, and as a result, of substructural logics. 

In the present paper we introduce different ways of gluing together residuated lattices, where by {\em gluing} we mean obtaining a new structure from two original ones which intersect on a common subreduct, and which are subalgebras (or appropriate subreducts) of the resulting structure. The starting point of our investigation is the {\em $1$-sum} construction, often called {\em ordinal sum} in residuated structures, where the algebras intersect only at the top element, i.e. at a trivial filter. We will consider gluings over an arbitrary  (nontrivial) congruence filter, and then over a lattice ideal as well. Moreover, we generalize these ideas to account for (possibly) partial algebras. Finally, we characterize the introduced constructions abstractly, by means of pairs of operators acting on residuated lattices.

These new constructions serve as a first attempt in the study of amalgamation of non-commutative residuated lattices, by constructing an amalgam in the special case where the common subalgebra in the V-formation is either a special (congruence) filter or the union of a filter and an ideal.

As particular examples of gluing constructions, we obtain the non-commutative version of the generalized rotation construction in \cite{BMU18}, and an interesting variety of semilinear residuated lattices that are $2$-potent. We use these two cases to illustrate examples of how the gluing construction can be used to study amalgamation. In the case of the rotation we show how the construction preserves the amalgamation property (in a sense that will be made precise), while in the latter case we show and characterize amalgamation failures.

\medskip
We start by introducing the objects of our study. 
A residuated lattices is an algebra $\alg A = ( A,\lor,\land,\cdot,\back, /, 1)$ of type $(2, 2, 2, 2, 2, 0)$ such that:
\begin{enumerate}
\item $(A, \lor, \land )$ is a lattice;
\item $(A, \cdot, 1)$ is a monoid;
\item $\back$ and $/$ are the left and right division of $\cdot$: for all $x,y, z \in A$,
$$
x \cdot y \leq z \Leftrightarrow y \leq x \back z \Leftrightarrow x \leq z /y,
$$
where $\leq$ is  the lattice ordering.
\end{enumerate}
Residuated lattices form a variety, denoted by $\mathsf{RL}$, as residuation can be expressed equationally; see \cite{BlountTsinakis}. When the monoidal identity is the top element of the lattice we say that the residuated lattice is \emph{integral} or an \emph{IRL}; we call the corresponding variety $\mathsf{IRL}$.
Residuated lattices with an additional constant $0$ are called \emph{pointed}. \emph{Bounded} integral residuated lattices are expansions of residuated lattice with an extra constant  $0$ that satisfies the identity $0 \leq x$.  The variety of bounded integral residuated lattices is called $\mathsf{FL_w}$, referring to the fact that it is the equivalent algebraic semantics of the Full Lambek calculus with the structural rule of weakening (see \cite{GJKO}). As usual, we write $xy$ for $x \cdot y$.

A residuated lattice is called \emph{commutative} if the monoidal operation is commutative. In this case the two divisions coincide, and we write $x \to y$ for $x \back y = y / x$. We write $\mathsf{CRL}$ and $\mathsf{CIRL}$, respectively, for the commutative subvarieties of $\mathsf{RL}$ and $\mathsf{IRL}$, and refer to commutative $\mathsf{FL_{w}}$-algebras as $\mathsf{FL_{ew}}$-algebras, since commutativity of the monoidal operation corresponds to the structural rule of exchange.

In a lattice $\alg A$, a \emph{filter} is a non-empty subset $S$ that is closed upwards (if $x \leq y$ and $x \in S$ then also $y \in S$) and is closed under meet (if $x, y \in S$ then $x \land y \in S$).
In a (bounded) integral residuated lattice $\alg A$, a \emph{congruence filter} $F$ is a non-empty upset of $A$, closed under products (if $x, y \in F$, then $x y \in F$) and under conjugates, i.e. if $x \in F$ , then $yx/y, y\backslash xy \in F$ for every $y \in A$. We will denote by $\alg {Fil}(\alg A)$ the lattice of congruence filters of $\alg A$. It is easy to see that a filter $F$ of a (bounded) integral residuated lattice $\alg A$ is a subalgebra (or $0$-free subreduct, if $\alg A$ is bounded) of $\alg A$, hence it is an integral residuated lattice.

Filters of residuated lattices are in one-one correspondence to congruences. In particular, in the integral case the isomorphism between $\alg {Fil}(\alg A)$ and the congruence lattice of $\alg A$, $\alg{Con}(\alg A)$, is given by the maps:
$$F \mapsto \theta_{F} = \{(x, y) \in A \times A : x \back y, y \back x \in F\} = \{(x, y) : x / y, y / x \in F\},$$
$$\theta \mapsto F_{\theta} = \{x \in A:  (x, 1) \in \theta\}$$
for all $F \in Fil(\alg A), \theta \in Con(\alg A)$. 
In what follows, given a congruence filter $F$, we will write $[x]_F$ for the equivalence class $[x]_{\theta_F}$.

\section{Gluing over a filter}\label{sec:gluingfilter}
As we mentioned in the introduction, the usual notion of gluing in lattice theory puts together two lattices that intersect in a filter of the first and an ideal of the second. As we are interested in integral residuated lattices and we want the components to be subalgebras of the resulting structure (in particular the common identity element needs to be the top), this approach needs to be modified. We start by describing the simple case where the ideal is empty, and the filter is trivial.

\subsection{$1$-sum}
The {\em $1$-sum} construction in the context of residuated structures was introduced with the name of {\em ordinal sum} by Ferreirim in \cite{FePhD} in the context of hoops. The latter can be defined as commutative integral divisible ($x \wedge y= x(x \to y)$) residuated lattices but without the demand that joins exist; we choose to use the naming $1$-sum as in \cite{Olson08,Olson12} to avoid confusion. Indeed it is slightly different than the ordinal sum of two posets/lattices, as it identifies the top elements of the two structures. The $1$-sum construction has played an important role in the study of BL-algebras and basic hoops \cite{AM}. The construction was later extended to integral residuated lattices, and even generalized to non-integral structures \cite{Galatos05}. It is also worth mentioning that historically, the analogue to the $1$-sum has been previously introduced and studied for semigroups \cite{C54}.

$1$-sums represent a seminal example of gluing; unlike the case of hoops, some care is needed to make sure that joins that were equal to the top still exist in the resulting structure. The two structures glued together intersect only at their respective top elements. 
In detail, let $\alg B$ and $ \alg C$ be integral residuated lattices, where $B \cap C = \{1\}$, and $1$ is join irreducible in $\alg B$ or $\alg C$ has a bottom element. We extend the order of  $\alg B$ and $\alg C$ to the set $B \cup C$ by: $b < c$ for $b \in B - \{1\}$ and $c \in C$, and extend the operations of $\alg B$ and $\alg C$ by:
$$
\begin{array}{lll}
x  y &=&\left\{
\begin{array}{ll}
y & \mbox{ if } x \in C \mbox{ and } y \in B \setminus\{1\} \\
x & \mbox{ if } x \in B \setminus\{1\} \mbox{ and } y \in C ,
\end{array}
\right.\\
&&\\
x \back y &=&\left\{
\begin{array}{ll}
y & \mbox{ if }x \in C \mbox{ and } y \in B \setminus\{1\}\\
1 & \mbox{ if } x \in B \setminus\{1\} \mbox{ and } y \in C,
\end{array}
\right.\\
&&\\
x / y &=&\left\{
\begin{array}{ll}
y & \mbox{ if }x \in C \mbox{ and } y \in B \setminus\{1\}\\
1 & \mbox{ if } x \in B \setminus\{1\} \mbox{ and } y \in C.
\end{array}
\right.\\
\end{array}\\
$$
It can be easily verified that the resulting structure $\alg B \oplus_{1} \alg C$ is a residuated lattice, called the \emph{$1$-sum} of $\alg B$ and $\alg C$.
The assumption that $1$ is join irreducible in $\alg B$ or $\alg C$ has a bottom element ensures that joins of elements of $B$, calculated in $\alg B \oplus_{1} \alg C$, exist; if these conditions are not satisfied the resulting structure is merely a residuated meet-semilattice. Note that $\alg C$ is always a subalgebra of  $\alg B \oplus_1 \alg C$ and $\alg B$ is a subalgebra except possibly with respect to $\lor$, in case that $1$ is not join irreducible in $\alg B$ (see \cite{JM09} for details when  $1$ is not join-irreducible). Generalizations of the $1$-sum construction to the non-integral case are discussed in \cite{GJKO}.

Notice that the $1$-sum construction stacks one IRL on top of  another one and identifies/glues their top elements; the product between elements of $\alg B$ and $\alg C$ is actually their meet in the new order. 

As it turns out, this is the only choice for defining a residuated monoidal operation when gluing two residuated lattices together with this particular lattice order, if we want $\alg B$ and $\alg C$ to be subalgebras of the new structure. 

\begin{proposition}\label{prop:ordinalsum}
Let $\alg B$ and $\alg C$ be IRLs and assume that $\alg D$ is an IRL with underlying set $B \cup C$, where $B \cap C = \{1\}$, $b < c$ for all $b \in B - \{1\}$ and $c \in C$, $C$ is a subalgebra, and $B$ is a subalgebra except possibly with respect to $\lor$. Then $\alg D$ is equal to $\alg B \oplus_1 \alg C$.
\end{proposition}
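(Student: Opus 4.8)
The plan is to show that once the lattice order and the requirement that $\alg{B}$ and $\alg{C}$ be subalgebras (up to $\lor$ in $\alg{B}$) are fixed, the mixed operations between elements of $B\setminus\{1\}$ and $C$ are forced to be exactly those of $\alg{B}\oplus_1\alg{C}$. Since $\alg{B}$ and $\alg{C}$ are subalgebras (modulo $\lor$), the operations restricted to $B\times B$ and to $C\times C$ already agree with $\alg{D}$, so the only thing to pin down is the value of $x\cdot y$, $x\back y$, and $x/y$ when one argument is in $B\setminus\{1\}$ and the other in $C\setminus\{1\}$. The hypothesis $b<c$ for $b\in B-\{1\}$, $c\in C$ tells us how the order looks, and integrality ($1$ is the top) together with residuation in $\alg{D}$ will do the rest.

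First I would fix $x\in B\setminus\{1\}$ and $c\in C$ and compute $x\cdot c$. By integrality $c\le 1$, so $xc\le x1=x$; symmetrically $x\le 1$ gives $xc\le 1c=c$, hence $xc\le x\wedge c=x$ since $x<c$. For the reverse inequality I would use residuation: one shows $x\le xc$ by checking $x\le c\back(xc)$ or an analogous residuated inequality, exploiting that $1$ acts as identity and that divisions of a $C$-element into a smaller $B$-element collapse appropriately. Concretely, from $c\cdot x \le x$ (same meet argument) and the residuation law $c\cdot x\le x \Leftrightarrow x\le c\back x$, combined with the order constraint, one deduces that the product of a $B\setminus\{1\}$-element with a $C$-element equals the $B\setminus\{1\}$-element, matching the definition of the $1$-sum.

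Next I would compute the divisions. For $x\in B\setminus\{1\}$ and $c\in C$, consider $x\back c$: since everything is $\le 1$ and $x\le c$, the residuation adjunction gives $x\back c=\bigvee\{z: xz\le c\}$, and because $xz\le x<c$ holds for every $z$ (by the product computation just established, products never escape below-$c$ territory when one factor is the small $B$-element), we get $x\back c=1$. For $c\back x$ with $c\in C$, $x\in B\setminus\{1\}$, residuation gives $c\back x=\bigvee\{z: cz\le x\}$; here $cz\le x$ forces $z$ into $B\setminus\{1\}$ (an element of $C$ multiplied by $c$ stays in $C$-territory, hence $\ge$ something not below $x$), and the largest such $z$ turns out to be $x$ itself, so $c\back x=x$. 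The right divisions $x/c$ and $c/x$ are handled symmetrically using the other residuation adjunction.

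The main obstacle I anticipate is the careful bookkeeping in the residuation arguments for the divisions, particularly showing that the suprema defining $x\back c$, $c\back x$, etc.\ actually land where claimed rather than somewhere unexpected in the glued order. This requires knowing that products of two $C$-elements stay in $C$ and products involving a small $B$-element behave as computed, so the product computation must be fully secured \emph{before} the division computations, and one must verify each supremum is attained (not merely bounded) using that $\alg{D}$ is a genuine residuated lattice where these divisions exist as elements. Once all six mixed operation values are shown to coincide with the $1$-sum definition, and the same-component operations agree by the subalgebra hypotheses, the equality $\alg{D}=\alg{B}\oplus_1\alg{C}$ follows immediately.
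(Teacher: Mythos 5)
Your overall architecture matches the paper's: the lattice operations are determined by the prescribed order, the same-component operations are fixed by the subalgebra hypotheses, the divisions are then determined by order and product (so your explicit computation of the four mixed divisions, while correct once the product is known, is optional), and everything reduces to forcing the mixed products. The inequality $xc \leq x$ via integrality and the meet is also fine. The problem is the reverse inequality $x \leq xc$, which is exactly the crux, and your argument for it does not go through. The inequality you propose to check, $x \leq c\back(xc)$, unwinds by residuation to $cx \leq xc$, which is neither known nor helpful; and your ``concrete'' derivation is circular: $x \leq c\back x$ is, by residuation, just a restatement of $cx \leq x$, the inequality you already have, so no amount of combining it with the order constraint can produce the missing direction $x \leq cx$. (Indeed, in the $1$-sum itself one has $c\back x = x$, not $1$, so knowing where $c\back x$ sits cannot by itself force $cx = x$.)

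The missing idea, which is the paper's one-line argument, is to residuate a tautology so that the resulting division stays inside $B$: from $cb \leq cb$ one gets $c \leq (cb)/b$. Since $cb \leq b$ by integrality and every element of $C$ lies strictly above every element of $B - \{1\}$, the element $cb$ lies in $B$; hence $(cb)/b \in B$, because $B$ is closed under the division operations. But $(cb)/b$ is then an element of $B$ lying above the element $c$ of $C$, so it must equal $1$, giving $b = 1\cdot b \leq cb$ and thus $cb = b$ (and symmetrically $bc = b$). Note that the division used here is of one $B$-element by another $B$-element --- not a mixed division of the sort you invoke --- and that is precisely what lets the closure hypothesis on $B$ do the work.
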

\begin{proof}
Given the assumptions, we only need to verify that for all $b \in B$ and $c \in C$, we have $cb =bc=b$. We have $cb \leq cb$, so $c \leq cb/b$. Since $cb, b \in B$ we have that $cb/b$ is an element of $B$ that is greater than some element of $C$. Therefore, $cb/b=1$, hence $b \leq cb$. By integrality we also have $cb\leq b$, so $cb=b$.
\end{proof}

\subsection{$F$-gluings: compatibility and uniqueness}
By relaxing the assumptions in Proposition~\ref{prop:ordinalsum}, we will generalize the $1$-sum construction to a more general type of gluing where the intersection of the two algebras may be a congruence filter different than $\{1\}$.

 More precisely, given IRLs $\alg B$ and $\alg C$, let $\alg D$ be some IRL with underlying set $B \cup C$, where $F:=B \cap C$ is a congruence filter of $\alg D$, $b < c<f$ for all $b \in B - F$, $c \in C-F$ and $f \in F$, $C$ is a subalgebra, and $B$ is a subalgebra except possibly with respect to $\lor$, in which case $F$ is assumed to have a bottom element. We say that $\alg D$ is a \emph{gluing over $F$}, or an \emph{$F$-gluing of $\alg B$ and $\alg C$}; see Figure~\ref{fig:gluingfilter} for the anticipated structure.
We will identify conditions on $\alg B$, $\alg C$ and $F$ that will allow us to construct $\alg D$ from these constituent parts. 

 First we describe a compatibility condition between $F$ and $\alg B$ and then we characterize the structure of the subset $B':= (B-F) \cup \{1\}$. Note that $B'$ supports a residuated lattice even when it is not a subalgebra under $\lor$ as those joins end up being equal to $1$.
 
We say that a congruence filter $F$ of an IRL $\alg B$ is \emph{compatible} with $\alg B$ if:  \begin{enumerate}
\item every element of $F$ is strictly above every element of $B-F$.
\item For all $b \in B-F$ the equivalence class $[b]_F$ has a maximum and a minimum; we define $$\sigma_F(b)=\min [b]_F,\;\; \gamma_F(b)=\max [b]_F$$ for  $b \in B-F$ and  $\sigma_F(1)=\gamma_F(1)=1$; hence $\sigma_F$ and $\gamma_F$ are maps on $B'=(B-F) \cup \{1\}$.  
\item $\sigma_F$ is \emph{absorbing}: for all $b \in B-F$, $b\sigma_F[B-F] \subseteq \sigma_F[B-F]$ and $\sigma_F[B-F]b \subseteq \sigma_F[B-F]$.
\end{enumerate}
We also say that $(\alg B, F)$ form a \emph{lower-compatible pair}. The following lemma shows that $F$-gluings contain compatible pairs and explains that the role of $\sigma$ and $\gamma$ is to capture the multiplications and divisions, respectively, by elements of $C$ that are not in the compatible pair. More importantly, it shows the uniqueness of the $F$-gluing.

\begin{lemma}\label{lemma:filtercompatible}
Given a gluing of IRLs $\alg B$ and $\alg C$ over $F$, the congruence filter $F$ is compatible with $\alg B$. Moreover,  for all $b \in B-F, c \in C-F$:
$$cb = bc = \sigma_F(b), \qquad c \back b = b/c = \gamma_F(b).$$ Therefore, the gluing of $\alg B$ and $\alg C$ over $F$ is unique when it exists.
\end{lemma}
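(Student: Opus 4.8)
The plan is to verify the three compatibility conditions in turn, then extract the formulas for the mixed products and divisions, after which uniqueness is immediate.

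\medskip
\emph{Verifying compatibility.} Condition (1) is given outright by the hypotheses of an $F$-gluing, since $b<c<f$ for all $b\in B-F$, $c\in C-F$, $f\in F$ forces every element of $F$ strictly above every element of $B-F$. The substance is in (2) and (3), and these will fall out of the product and division formulas, so in practice I would compute those formulas first and then read off compatibility. Still, let me indicate the shape. For (2), the claim is that each class $[b]_F$ (for $b\in B-F$) has a maximum and minimum in $\alg B$. Fixing any single $c\in C-F$ — which exists because $F$ is a proper filter of $\alg D$ with $C-F$ nonempty — I would argue that the maps $b\mapsto cb$ and $b\mapsto c\backslash b$ land inside $[b]_F$ and pick out the extremes.

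\medskip
\emph{Computing the mixed product.} Fix $b\in B-F$ and $c\in C-F$. Since $\alg D$ is an IRL it is integral, so $cb\leq c\cdot 1=c$ and $cb\leq 1\cdot b=b$; thus $cb\leq b$, and as $b\in B$ with $B$ downward-closed below $C$, we get $cb\in B$. Next I must show $cb\in[b]_F$, i.e. $cb\,\backslash\, b,\ b\,\backslash\,cb\in F$. Because $F$ is a congruence filter and $c\in F$ would be false — but here the key observation is that $[c]_F=[1]_F$ is \emph{not} what we want; rather, since $c\in C-F$ we cannot say $c\equiv_F 1$. Instead I would use that $c$ lies above all of $B-F$, so $b\leq c$ and hence $1=b\backslash b\geq b\backslash cb\geq\dots$; the cleaner route is: from $cb\leq b$ we get $b\backslash cb$ makes sense, and from $c\geq b$ together with residuation, $b=1\cdot b\leq c b$ is generally false, so the correct identity to establish is $cb=\sigma_F(b)$ via the extremal characterization. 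Concretely, $cb\leq b$ shows $cb\in[b]_F$ is a \emph{lower} member of the class once we check $cb\equiv_F b$, which follows because $c\geq$ every element of $B-F$ forces $b\backslash(cb)\in F$. Minimality of $cb$ in $[b]_F$ then comes from: for any $b'\in[b]_F$ with $b'\leq$ (membership giving $b\backslash b',b'\backslash b\in F$), one shows $cb\leq b'$ using that $c$ dominates $B-F$ and residuation transfers the filter membership downward. This identifies $cb=\min[b]_F=\sigma_F(b)$, and symmetrically $bc=\sigma_F(b)$ using the right division. The division formula $c\backslash b=b/c=\gamma_F(b)=\max[b]_F$ is dual: $c\backslash b\geq b$ since $cb\leq b$ implies $b\leq c\backslash b$ by residuation, $c\backslash b\in B$ because it cannot reach $F$, it lies in $[b]_F$, and it is the largest such element by a residuation argument symmetric to the one for $\sigma_F$.

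\medskip
\emph{The main obstacle and the payoff.} The crux — and the step I expect to be delicate — is proving the \emph{extremal} (max/min) and \emph{absorption} properties, i.e. that $cb$ is not merely \emph{some} element of $[b]_F$ but exactly its minimum independently of which $c\in C-F$ I chose, and that $\sigma_F$ absorbs products. The independence-of-$c$ and extremality claims both reduce to showing that multiplying by \emph{any} $c\in C-F$ has the same effect as multiplying by the infimum-of-$C-F$ behavior, which I would handle by comparing two choices $c,c'\in C-F$ and using that $cc',c'c\in C$ together with integrality to sandwich $cb=c'b$. Once $cb=\sigma_F(b)$ is pinned down, absorption (condition (3)) is forced: $b'\sigma_F(b)=b'(cb)=(b'c)b$, and since $b'c=b'$ (as $b'\in B-F$, $c\in C$, the product collapses as in the $1$-sum), this equals $b'b$, whose membership in $\sigma_F[B-F]$ follows from $\sigma_F(b'b)=c(b'b)=(cb')b=\sigma_F(b')b=\dots$, closing the loop. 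Finally, \emph{uniqueness} is automatic: any $F$-gluing must restrict to the given $\alg B$ and $\alg C$ on their respective domains, and the mixed operations $cb,bc,c\backslash b,b/c$ are now forced to equal $\sigma_F(b),\sigma_F(b),\gamma_F(b),\gamma_F(b)$; since these four cases together with the $\alg B$- and $\alg C$-internal operations exhaust all of $B\cup C$, the algebra $\alg D$ is completely determined, so at most one $F$-gluing exists.
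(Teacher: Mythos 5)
Your proposal follows the same overall strategy as the paper's proof (fix one $c\in C-F$, show that the mixed products and divisions land in $B-F$ and pick out the extremes of the $F$-classes, then read off compatibility and uniqueness), but at each of the points where the proof actually has to do work, your text asserts the conclusion rather than proving it, and in one place asserts something false. The central gap is the claim that $cb\equiv_F b$ "follows because $c\geq$ every element of $B-F$ forces $b\backslash(cb)\in F$": that is not an argument, and moreover you are aiming at the wrong division. The residuation move that is actually available is $cb\leq cb\Rightarrow c\leq (cb)/b$; since $cb,b\in B$ and $B$ is closed under divisions, $(cb)/b$ is an element of $B$ lying above the element $c$ of $C-F$, so the order conditions of the gluing force $(cb)/b\in F$, and together with $b/(cb)=1$ this gives $(cb,b)\in\theta_F$ via the $/$-form of the filter congruence. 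For $b\backslash(cb)$ there is no analogous direct bound in the noncommutative setting (you would need $by\leq cb$ for some $y\in C-F$, which is not available), so the step as you state it has nothing behind it. The same pattern recurs for the maximum: "$c\backslash b\in B$ because it cannot reach $F$" and "it lies in $[b]_F$" are exactly the two claims that need arguments. Ruling out $c\backslash b\in C$ requires the observation that otherwise some $c'\in C-F$ with $c'\leq c\backslash b$ would give $cc'\in C-F$ and $cc'\leq b$, contradicting that $C-F$ lies strictly above $B-F$; and membership of $c\backslash b$ in $[b]_F$ is obtained in the paper by showing $\sigma_F(c\backslash b)=\sigma_F(b)$ using monotonicity and idempotency of $\sigma_F$ — neither is a one-liner.

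Your absorption argument contains an outright false step: $b'c=b'$ "as in the $1$-sum". This contradicts the formula the lemma itself establishes, namely $b'c=\sigma_F(b')$, and the two differ whenever $[b']_F$ is not a singleton — i.e.\ precisely in the non-$1$-sum gluings the lemma is about. The computation is easily repaired without it, $b'\sigma_F(b)=b'(cb)=(b'c)b=(cb')b=c(b'b)=\sigma_F(b'b)\in\sigma_F[B-F]$, which is essentially the paper's argument; but as written your chain proves absorption only under the hidden assumption $\sigma_F=\mathrm{id}$. Finally, the "independence of $c$" that you single out as the crux needs no separate sandwich comparison of two choices $c,c'$: once minimality of $cb$ in $[b]_F$ is proved for every choice of $c$, any two choices yield the same minimum automatically, so that worry dissolves.
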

\begin{proof}
We first show that for all $b \in B-F$ the equivalence class $[b]_F$ has a minimum. 
For all $b \in B-F$ and $c \in C-F$, we have $cb \leq cb$, so $c \leq cb/b$. By integrality we also have $cb\leq b \in B-F$, so $cb \in B-F$. Since $cb, b \in B$ we have that $cb/b$ is an element of $B$ that is greater than some element of $C$. Therefore, $cb/b \in F$. Also, since $cb\leq b$, we get $b/cb =1 \in F$, hence $[b]_F =[cb]_F$.  Moreover, given $b' \in [b]_F$, we have $b'/b \in F$, so $c \leq b'/b$, hence $bc \leq b'$.
Thus, $cb$ (and by symmetry also $bc$) is the minimum of $[b]_F$, for all $c \in C-F$; we denote this minimum by $\sigma_F(b)$. Note that since $F$-congruence classes are in particular lattice congruence classes,  $\sigma_F$ is monotone on $B-F$.

We now show that  for all $b \in B-F$ the equivalence class $[b]_F$ has a maximum. 
For any $c \in C-F$, we cannot have $c\back b \in C$, as then $c' \leq c \back b$ for some $c' \in C-F$, so $c'c \leq b$ and $c'c \leq c' \in C-F$, so $c'c \in C-F$, which would imply $b \in F$, a contradiction. So, $c \back b \in B-F$, and thus $\sigma_F( c \back b)=c (c\back b) \leq b$ and so $\sigma_F( c \back b) \leq \sigma_F( b)$. Also, by integrality we have $b \leq c \back b$, so $\sigma_F(b) \leq \sigma_F( c \back b)$. Therefore, $\sigma_F(b) = \sigma_F( c \back b)$, hence $[b]_F=[c \back b]_F$. Also, for every $b' \in [b]_F$, we have $cb' =\sigma_F(b') = \sigma_F(b) \leq b$, so $b'\ \leq c \back b$. Therefore, for any $c \in C-F$, $c \back b$ (and by symmetry $b/c$) is the maximum of $[b]_F$; we denote this element by $\gamma_F(b)$.

We now prove the last property of compatibility, i.e., that $\sigma_F$ is \emph{absorbing}: for all $b \in B-F$, $b\sigma_F[B-F] \subseteq \sigma_F[B-F]$ and $\sigma_F[B-F]b \subseteq \sigma_F[B-F]$. In particular, we show $b \sigma_F[B-F] \subseteq \sigma_F[B-F]$, as the proof of $\sigma_F[B-F] b  \subseteq \sigma_F[B-F]$ is similar. Every element of $b \sigma_F[B-F]$ is of the form $b\sigma_F(x)$ where $x \in B-F$. By the above, for any $c \in C-F$, $b\sigma_F(x) = bxc = \sigma_F(bx) \in \sigma_F[B-F]$. 

Thus we showed that the congruence filter F is compatible with $\alg B$, and also that $\sigma_F(b) = cb = bc$ and $\gamma_F(b) = c \back b = b/c$. It follows that the gluing over $F$ is unique when it exists.
\end{proof}

\subsection{Compatible triples inside compatible pairs}
Our aim is to characterize abstractly the individual components of the gluing and later use them to define a gluing construction. In particular we identify the structure of $(B-F) \cup \{1\}$; as this set is not closed under divisions, we will need to make use of partially defined operations.

\begin{definition}\label{def:partial}
	By a \emph{partial IRL} we understand a partially ordered partial algebra $(\alg A, \leq)$ in the language of residuated lattices, such that:
	\begin{enumerate}
	\item $\alg A$ is integral: $x \leq 1$ for all $x \in A$;
		\item the three axioms of RLs are satisfied whenever they can be applied, in the following sense: \begin{enumerate}
		\item $x \lor y$ is the least common upper bound of $x$ and $y$ whenever it exists, and similarly $x \land y$ is the largest lower bound whenever it exists;
		\item $x 1 = 1 x = 1$, and if $xy, (xy)z, yz, x(yz)$ are defined then $(xy)z = x(yz)$;
		\item if $xy$, $z / y$, and $x \back z$ are defined, then $xy \leq z$ if and only if $x \leq z / y$ if and only if $y \leq x \back z$.
		\end{enumerate}
		\item multiplication is order preserving when defined:  $a \leq b$ and $ac, bc$ defined implies $ac \leq bc$, and likewise for left multiplication.
		\item whenever defined, the division operations are order-preserving in the numerator and order-reversing in the denominator. That is:
		$x \leq y $ and $z \backslash x, z \backslash y$ defined implies $z \backslash x \leq z \backslash y$; $x \leq y $ and $ y \backslash z, x \backslash z$ defined implies $ y \backslash z \leq x \backslash z$; likewise for right division. 
	\end{enumerate}
\end{definition}

   For example, given an $F$-gluing of IRLs $\alg B$ and $\alg C$, the structure $\alg B'$, whose domain  is $B':=(B-F) \cup \{1\}$, is a partial IRL. 
   
   \begin{remark}
   We wish to remark that, even though the definition of a partial IRL we are using is quite general, the constructions we will define in the rest of the paper really involve partial algebras that are much closer to being lattices: joins will always be defined, and a meet $x \land y$ will always be defined except if there is no common lower bound of $x$ and $y$. Moreover, the partial IRLs considered in our constructions will actually have an underlying structure of a \emph{partial monoid} in the  stronger sense usually intended in the literature: the products $xy, (xy)z$ are defined if and only if $yz, x(yz)$ are defined, and in such case $(xy)z = x(yz)$.
   \end{remark}

   We will now characterize abstractly triples of the form $(\alg B', \sigma_F, \gamma_F)$, where we mean that $B':=(B-F) \cup \{1\}$. 
 
 A \emph{lower-compatible triple} $(\alg K, \sigma, \gamma)$ consists of 
 \begin{enumerate}
\item a partial IRL $\alg K$ with all operations defined, except for $x \back y$ and $y /x$ which are undefined if and only if $\sigma(x) \leq y$ and $x \not\leq y$,\item\label{property2UPT}  $(\sigma, \gamma)$  is a residuated pair, i.e. $\sigma(x) \leq y$ if and only if $x \leq \gamma(y)$, such that:\begin{enumerate}
\item $\sigma$ is a \emph{strong conucleus}, i.e, an interior operator such that for $x, y \neq 1$, $x \sigma(y) = \sigma(xy) = \sigma(x) y$, and $\sigma(1) = 1$.
\item $\gamma$ is a closure operator on $\alg K$, and
\item $x y, y  x \leq \sigma(x)$ for all $x, y \in K, y \neq 1$.
\end{enumerate}
\end{enumerate}

\begin{lemma}
If $F$ is a compatible congruence filter of an IRL $\alg B$, then $(\alg B', \sigma_F, \gamma_F)$ is a lower-compatible triple. 
\end{lemma}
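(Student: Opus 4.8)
The plan is to verify, one clause at a time, that $(\alg{B}', \sigma_F, \gamma_F)$ meets the definition of a lower-compatible triple, drawing on the three compatibility conditions (domination of $B-F$ by $F$, existence of class minima and maxima, and absorption of $\sigma_F$) together with the standard fact that $\theta_F$ is a lattice congruence whose classes here are intervals with $\sigma_F(x)=\min[x]_F$ and $\gamma_F(y)=\max[y]_F$.

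First I would confirm that $\alg{B}'$ is a partial IRL whose only undefined operations are divisions, exactly as prescribed. Since $F$ dominates $B-F$, the set $B-F$ is a downset, hence closed under $\land$ and, by integrality, under $\cdot$; joins are always defined in $\alg{B}'$, equal to the join computed in $\alg B$ when that lands in $B-F$ and to $1$ otherwise, because no element of $B-F$ can bound a join that escaped into $F$. The partial-IRL axioms then hold by restriction, as each operation agrees with the corresponding total operation of $\alg B$ wherever defined. For divisions I would show that $x\back_{\alg B} y\notin B'$ iff $x\back_{\alg B}y\in F\setminus\{1\}$, and that this occurs precisely when $\sigma_F(x)\le y$ and $x\not\le y$: writing $w=x\back_{\alg B}y$, if $w\in F$ then $xw\equiv x\pmod{\theta_F}$ forces $\sigma_F(x)\le xw\le y$; conversely $\sigma_F(x)\le y$ together with $x\back\sigma_F(x)\in F$ (valid since $\sigma_F(x)\equiv x$) gives $x\back y\in F$, while $x\back y=1$ exactly when $x\le y$. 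The symmetric argument handles $y/x$.

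Next I would establish that $(\sigma_F,\gamma_F)$ is a residuated pair using only the lattice-congruence structure: if $\sigma_F(x)\le y$ then $x\lor y\equiv\sigma_F(x)\lor y=y$, so $x\le x\lor y\le\max[y]_F=\gamma_F(y)$; conversely, if $x\le\gamma_F(y)$ then $x=x\land\gamma_F(y)\equiv x\land y$, whence $\sigma_F(x)=\min[x]_F\le x\land y\le y$. That $\sigma_F$ is an interior operator and $\gamma_F$ a closure operator is immediate from their description as class minima and maxima (decreasing, respectively increasing; monotone because $\theta_F$ is an order congruence; idempotent because each value lies in its own class), and $\sigma_F(1)=1$ holds by definition.

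The substantive point is the multiplicativity that makes $\sigma_F$ a strong conucleus. For $x,y\ne 1$, I would note that $x\sigma_F(y)\equiv xy\pmod{\theta_F}$, so $x\sigma_F(y)$ and $\sigma_F(xy)$ share a class; the absorption hypothesis yields $x\sigma_F(y)\in\sigma_F[B-F]$, i.e. $x\sigma_F(y)$ is a $\sigma_F$-fixpoint, and any $\sigma_F$-fixpoint of $[xy]_F$ equals $\min[xy]_F=\sigma_F(xy)$, giving $x\sigma_F(y)=\sigma_F(xy)$; the identity $\sigma_F(x)y=\sigma_F(xy)$ is symmetric. Finally, clause (c), namely $xy,yx\le\sigma_F(x)$ for $y\ne 1$, follows cleanly from domination: since $\sigma_F(x)\equiv x$ we have $x\back\sigma_F(x)\in F$, which therefore lies above every element of $B-F$, so each $y\in B-F$ satisfies $y\le x\back\sigma_F(x)$, that is $xy\le\sigma_F(x)$ (and symmetrically $yx\le\sigma_F(x)$). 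I expect the multiplicativity of $\sigma_F$ to be the main obstacle, as it is the one step genuinely requiring the absorption condition rather than bare lattice-congruence bookkeeping; everything else reduces to properties of $\min$ and $\max$ on the $\theta_F$-classes and to the domination of $B-F$ by $F$.
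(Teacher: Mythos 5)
Your proof is correct and takes essentially the same route as the paper's: verify that $\alg B'$ is a partial IRL with divisions undefined exactly when $\sigma_F(x)\leq y$ and $x\not\leq y$, get the interior/closure and residuated-pair properties from the lattice-congruence structure of the $\theta_F$-classes, use the absorption hypothesis for the strong-conucleus identity $x\sigma_F(y)=\sigma_F(xy)=\sigma_F(x)y$, and deduce $xy,yx\leq\sigma_F(x)$ from residuation together with domination of $B-F$ by $F$. The only differences are local: your residuated-pair argument works directly with joins and meets modulo $\theta_F$ where the paper routes through monotonicity and $\gamma_F\circ\sigma_F=\gamma_F$, and your conucleus step identifies $x\sigma_F(y)$ as the unique $\sigma_F$-fixpoint in $[xy]_F$ rather than sandwiching two inequalities---same ingredients in both cases.
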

\begin{proof} 
For readability, in this proof we will write $\sigma$ for $\sigma_{F}$, $\gamma$ for $\gamma_{F}$ and $\theta$ for $\theta_F$.  It is clear that $B' = (B - F) \cup \{1\}$ is closed under multiplication, meet and also under join except when $x \vee y \in F-\{1\}$; we redefine these joins to be $1$ in $\alg B'$. Since $\alg B$ is an IRL and $B'$ inherits its operations, it can be directly checked that $\alg B'$ is a partial IRL in the sense of Definition \ref{def:partial}. With respect to the divisions, we want to show that $x \back y$ and $y /x$ are undefined if and only if $\sigma(x) \leq y$ and $x \not\leq y$. Notice that the divisions $x \back y$ and $y /x$ are undefined in $B'$ iff they produce elements of $F -\{1\}$. From $x \back y\in F-\{1\}$ we get that $x \not\leq y$, and moreover $f \leq x \back y$ for some $f \in F-\{1\}$. Thus by residuation $xf \leq y$. Thus, since $xf \in [x]_{F}$ (because $xf \leq x$ and $f \leq x \back xf$), we have $\sigma(x) = \min[x]_{F} \leq xf \leq y$. Similarly we can prove that if $y /x$ is not defined in $B'$ then again $\sigma(x)\leq y$ and $x \not\leq y$. Conversely, suppose $\sigma(x) \leq y$ and  $x \not\leq y$. Then $x \back \sigma(x) \leq x \back y$ and since $x \back \sigma(x) \in F$, we get $x \back y \in F$. Moreover, since $x \not\leq y$, we get $x \back y \not= 1$. 

Note that $B'$ is closed under meet as all elements of $B-F$ are below all elements of $F$ and it is closed under multiplication due to integrality and order preservation of multiplication. Also, it is closed under joins that do not produce elements of $F-\{1\}$ and the ones that do produce such elements are redefined to be $1$. The resulting structure is a monoid and a lattice. Finally, if $x \back^{\alg B} y \not \in F-\{1\}$, then residuation holds as all terms are evaluated in $B'$.

We now prove that $\sigma$ is a strong conucleus. Clearly, $\sigma(x) \leq x$ and $\sigma(\sigma(x)) = \sigma(x)$ thus $\sigma$ is decreasing and idempotent.
We now prove that $\sigma$ is monotone. Suppose $x \leq y$, with $x, y \in B'$. Since $\sigma(x) \mathrel{\theta} x$ and $\sigma(y) \mathrel{\theta} y$, we have $\sigma(x) \land \sigma(y) \mathrel{\theta} x \land y = x \mathrel{\theta} \sigma(x)$. Thus $\sigma(x) \leq \sigma(x) \land \sigma(y)$ (since $\sigma(x)$ is the smallest element in the equivalence class), thus $\sigma(x) \leq \sigma(y)$. We will now use the absorbing property of $\sigma$ in order to show that it is a strong conucleus. We show that $x\sigma(y) = \sigma(xy) = \sigma(x)y$ for $x$ and $y$ not equal to $1$. Now, $x \sigma(y) \in x \sigma[B-F] \subseteq \sigma[B-F]$, thus $x \sigma(y) = \sigma(z)$ for some $z \in B-F$. But then since $\sigma$ is idempotent $\sigma(x \sigma(y)) = \sigma(\sigma(z)) = \sigma(z) = x \sigma(y)$. Since $\sigma$ is decreasing and order preserving, we get $x \sigma(y) = \sigma(x \sigma(y)) \leq \sigma(xy)$. Moreover, since $x \mathrel{\theta} x$ and $y \mathrel{\theta} \sigma(y)$, we get $x y \mathrel{\theta} x \sigma(y)$, thus $\sigma(xy) = \sigma(x \sigma(y)) \leq x \sigma(y)$, and this shows that $\sigma(xy) = x \sigma(y)$.
Similarly, using $\sigma[B-F] x \subseteq \sigma[B-F]$, we can show that $ \sigma(y) x = \sigma(xy)$. 

We now prove that $\gamma$ is a closure operator. Since $\gamma(x) = \max[x]_{F}$, it is easy to see that it is increasing and idempotent. 
To show that $\gamma$ is monotone, suppose $x \leq y$, with $x, y \in B-F$. Since $\gamma(x) \mathrel{\theta} x$ and $\gamma(y)\mathrel{\theta} y$, we have $\gamma(x)\lor \gamma(y) \mathrel{\theta} x \lor y = y \mathrel{\theta} \gamma(y)$. Thus $\gamma(x)\lor \gamma(y)  \leq \gamma(y)$ (since $\gamma(y)$ is the biggest in the equivalence class),  thus $\gamma(x) \leq \gamma(y)$.

We now show that $(\sigma, \gamma)$ is a residuated pair. If $\sigma(x) \leq y$ then by idempotency and monotonicity of $\sigma$ we get that $\sigma(x) = \sigma\sigma(x) \leq \sigma(y)$. Then $\gamma(\sigma(x)) \leq \gamma(\sigma(y))$, and considering that it follows from their definition that $\gamma \circ \sigma = \gamma$, we have $x \leq \gamma(x) \leq \gamma(y)$. Similarly, if $x \leq \gamma(y)$ then $\gamma(x) \leq \gamma(y)$, thus applying $\sigma$ we get that $\sigma(x) \leq \sigma(y) \leq y$.

It is only left to prove that $x y, y  x \leq \sigma(x)$ for all $x, y \in B', y \neq 1$. This easily follows from residuation, since for instance: $x y \leq \sigma(x)$ if and only if $y \leq x \back \sigma(x)$, which holds since $x \back \sigma(x) \in F$.
\end{proof}
We say that $(\alg B', \sigma_F, \gamma_F)$ is the compatible triple of the compatible pair $(\alg B, F)$. We can also show that every compatible triple comes from a compatible pair. 

\begin{lemma}
Every lower-compatible triple $(\alg K, \sigma, \gamma)$ is the compatible triple of the lower-compatible pair $(\alg B, G)$, where $G$ is the 2-element IRL and  $\alg B$ is an IRL with operations extending $(\alg K-\{1\})\cup \alg G$.
\end{lemma}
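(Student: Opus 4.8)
The plan is to construct $\alg B$ by stacking the two-element IRL $\alg G=\{g,1\}$ on top of $\alg K$. Concretely, let $B=(K-\{1\})\cup\{g,1\}$, ordered so that $\alg K$'s order is retained on $K-\{1\}$ and $b<g<1$ for every $b\in K-\{1\}$; this is a lattice because $\alg K$, being a partial IRL with only certain divisions undefined, already has all meets and joins (joins that equalled $1$ in $\alg K$ simply become $g$ in $\alg B$, while meets are unaffected by the two added top elements). I extend the operations of $\alg K$ and $\alg G$ using the formulas forced by Lemma~\ref{lemma:filtercompatible}: for $b\in K-\{1\}$ set $gb=bg=\sigma(b)$ and $g\back b=b/g=\gamma(b)$; put $gg=g$; reassign the value $g$ to every division $x\back y$, $y/x$ that is undefined in $\alg K$; and let the remaining products and divisions (those involving $1$, together with $b\back g=g/b=1$) be dictated by integrality. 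Note that $\sigma(b),\gamma(b)\in K-\{1\}$ for $b\neq 1$, so these formulas respect the intended order.

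The bulk of the work is checking that $\alg B$ is an IRL, and I expect the exhaustive residuation/associativity case analysis to be the main obstacle; however, each case is pinned down by exactly one of the three defining properties of a lower-compatible triple. For associativity, the only nontrivial instances mix $g$ with elements of $K-\{1\}$ and collapse via the strong-conucleus identities $x\sigma(y)=\sigma(xy)=\sigma(x)y$ together with idempotency of $\sigma$; for example $(gx)y=\sigma(x)y=\sigma(xy)=g(xy)$. For residuation, the instances in which a division takes the value $g$ are handled by property~(c): if $x\back y=g$, i.e. $\sigma(x)\leq y$ and $x\not\leq y$, then $xz\leq\sigma(x)\leq y$ for every $z\neq 1$, matching $z\leq g$, while $x\cdot 1=x\not\leq y$ matches $1\not\leq g$. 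The instances having $g$ as a factor are handled by the residuated-pair law $\sigma(x)\leq y\Leftrightarrow x\leq\gamma(y)$; for instance $gb\leq c\Leftrightarrow\sigma(b)\leq c\Leftrightarrow g\leq c/b$, after noting that $c/b\in\{g,1\}$ precisely when $\sigma(b)\leq c$. All remaining instances take place inside $\alg K$ or $\alg G$ and hold since these are (partial) IRLs.

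It then remains to identify the compatible triple of $(\alg B,G)$. First, $G=\{g,1\}$ is an upset closed under products ($gg=g$) and under conjugates (e.g. $(bg)/b=\sigma(b)/b\in\{g,1\}$, using $zb\leq\sigma(b)$ for $z\neq1$), hence a congruence filter, and compatibility conditions (1)--(3) hold directly, with absorption of $\sigma$ inherited from the strong-conucleus property. Next I compute $\theta_G$: for $x,y\in K-\{1\}$ one has $x\back y\in G$ iff $\sigma(x)\leq y$, so $(x,y)\in\theta_G$ iff $\sigma(x)\leq y$ and $\sigma(y)\leq x$. The residuated-pair law then gives $\min[b]_G=\sigma(b)$ and $\max[b]_G=\gamma(b)$, i.e. $\sigma_G=\sigma$ and $\gamma_G=\gamma$. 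Finally, since $B'=(B-G)\cup\{1\}=K$ and the partial operations induced on $B'$ (with joins landing in $G-\{1\}=\{g\}$ reset to $1$) coincide with those of $\alg K$, the compatible triple of the lower-compatible pair $(\alg B,G)$ is exactly $(\alg K,\sigma,\gamma)$, as required.
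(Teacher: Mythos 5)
Your proposal is correct and follows essentially the same route as the paper: you build $\alg B$ by adjoining an idempotent coatom $g$ with $gx=xg=\sigma(x)$, $g\back x=x/g=\gamma(x)$, $x\back g=g/x=1$, reassigning undefined divisions of $\alg K$ the value $g$ and redirecting joins that were $1$ to $g$, then verify associativity via the strong-conucleus identities, residuation via the residuated-pair law and property (2c), and identify $\sigma_G=\sigma$, $\gamma_G=\gamma$ by analyzing the $G$-equivalence classes. This matches the paper's proof in construction and in every key verification step.
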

\begin{proof} 
Let $B = (K-\{1\})\cup G$ where $G = \{f, 1\}$.
We extend the operations of $\alg K$ to $\alg B$ except when $x \lor y = 1$ in $K$, in which case we redefine $x \lor^{\alg B} y = f$; moreover we stipulate that:
\begin{itemize}
\item $f$ is an idempotent coatom strictly above all elements of $K - \{1\}$;
\item $f\cdot x=x\cdot  f=\sigma(x)$, $f \back  x = x/ f = \gamma(x)$, $x \back  f = f /  x =1$ for all $x \in K - \{1\}$.
\item $x \back  y = y /x = f$ for $x, y \in B$ such that $x \not\leq y$ and $\sigma(x) \leq y$ (i.e., when $x \back^{\alg K}y, y/^{\alg K}x$ are undefined).
\end{itemize}

We first show that $\alg B$ is a residuated lattice. It can be easily seen that $(B, \land , \lor , 1)$ is a lattice with top $1$. In order to see that $(B , \cdot , 1)$ is a monoid, we need to prove associativity of the product in triples of elements where $f$ is involved, as in the other cases associativity follows from the associativity in $\alg K$. We will make use of the absorption of $\sigma$. For example, if $b, d \in K- \{1\}$ (the other cases are similar): $$b(df) = b \sigma(d) = \sigma(bd) = (bd)f,$$ $$b(fd) = b \sigma(d) = \sigma(bd) = \sigma(b) d = (bf)d.$$
For residuation, notice first that if one among $x, y, z$ is $1$ then the law clearly holds. First we check the cases where $f$ is involved. For $b, d \in K - \{1\}$, we show that:
$$b  f \leq  d \mbox{ iff } f \leq  b \back  d \mbox{ iff } b \leq  d/  f. $$
Indeed, $b f \leq  d \mbox{ iff } \sigma(b) \leq  d $ iff  [$b \leq d$ or ($b \not\leq d$ and $\sigma(b) \leq d$)] iff  ($b \back  d= 1$ or $b \back  d = f$) iff $f \leq  b \back  d$. Moreover, $b f \leq  d \mbox{ iff } \sigma(b) \leq d$ iff $b \leq \gamma(d)$ iff $b \leq  d/  f$, where we used that $(\sigma, \gamma)$ is a residuated pair. 
Likewise we obtain $f b \leq  d \mbox{ iff } b \leq  f \back  d \mbox{ iff } f \leq  d/  b. $
 Also,
$$b  d \leq  f \mbox{ iff } d \leq  b \back  f \mbox{ iff } b \leq  f/  d $$
holds as all these statements are true even for $b=f$ and $d \in K-\{1\}$ and also for  $d=f$ and $b \in K-\{1\}$. Moreover, $ff \leq b$ iff $f \leq b/f$ iff $f \leq f \back b$ since all inequalities are false. Now for $x, y, z \in K -  \{1\}$, we want to show that $$x y \leq  z \mbox{ iff } y \leq  x \back  z \mbox{ iff } x \leq  z /  y.$$
We show that $x y \leq  z \mbox{ iff } y \leq  x \back  z$, as the other equivalence is obtained similarly. We consider three different cases.
\begin{itemize}
\item If $x \leq z$, then both inequalities are true, since we get $xy \leq x \leq z$ and $x \back  z = 1$ thus $y \leq x \back z$.
\item If $x \not\leq z$, and $\sigma(x) \leq z$, we have to show that $xy \leq z$ iff $y \leq f$. Notice that $y \leq f$ since $y \neq 1$. Moreover, since $y \neq 1$ then $xy \leq \sigma(x) \leq z$.
\item If $\sigma(x) \not \leq z$, then the operations are defined in $K$ and residuation holds.
\end{itemize}
Thus $\alg B$ is an integral residuated lattice. 

We want to show that $(\alg B, G)$ is a compatible pair. $G$ is closed under products since $f$ is idempotent and it is closed under conjugates since $(x\cdot f)/ x = \sigma(x) / x \in \{f, 1\}$ and $x \back (f\cdot x) = x \back \sigma(x) \in \{f, 1\}$; so $G$ is a congruence filter of $B$.

We now show that $\sigma (x) = \min[x]_{G} = \sigma_{G}(x)$ and $\gamma (x) = \max[x]_{G} = \gamma_{G}(x)$ for all $x \in  K -  \{1\}$. 
 Note that $\sigma(x) \leq x$ implies $\sigma(x) \back  x = 1 \in G$; also $x \back  \sigma(x)$ is either $1$ or $f$, thus still in $G$. Therefore, $\sigma(x) \in [x]_{G}$. Furthermore, if $y \in [x]_{G}$, then $x \back  y \in G$, thus either $x \back  y = 1$ or $x \back  y = f$. If $x \back  y = 1$  then $\sigma(x) \leq x \leq y$, while if $x \back  y = f$, then $xf \leq y$, so $\sigma(x) \leq y$. Thus in any case $\sigma(x) \leq y$, which implies that $\sigma(x) = \min[x]_{G} = \sigma_{G}(x)$.

We now prove that $\gamma (x) = \gamma_{G}(x)$ for all $x \in K - \{1\}$. Note that $x\leq \gamma(x)$ implies $x \back  \gamma(x) = 1 \in G$; also $\gamma(x) \back  x$ is either $1$ if $x = \gamma(x)$ or $f$ otherwise, since $\sigma(\gamma(x)) \leq x$ follows from the fact that $\sigma, \gamma$ form a residuated pair. Therefore, $\gamma(x) \in [x]_{G}$. Now, if $y \in [x]_{G}$, then $y \back  x \in G$, thus either $y \back  x = 1$ or $y \back  x = f$. If $y \back  x = 1$  then $y \leq x \leq \gamma(x)$, while if $y \back  x = f$, then $yf \leq x$, so $\sigma(y) \leq x$, if and only if $y \leq \gamma(x)$. Thus $\gamma(x) = \max[x]_{G} = \gamma_{G}(x)$.

To prove that $\sigma$ is absorbing, we use the fact that $\sigma$ is a strong conucleus. For any $x$ and $y \in B-G$, we have $x \sigma(y)=\sigma(xy)\in \sigma[B-G]$, so $x \sigma[B-G] \subseteq \sigma[B-G]$ and similarly $\sigma[B-G] x \subseteq \sigma[B-G]$. Thus we proved that $(\alg B, G)$ is a compatible pair, and since $\sigma = \sigma_{G}, \gamma = \gamma_{G}$, and $K = (B-F) \cup \{1\}$, it follows that $(\alg K, \sigma, \gamma)$ is its compatible triple.
\end{proof}
If $\alg 2$ denotes the two-element residuated lattice, we have also shown the following.  
\begin{corollary}\label{c:multcomp}
 If $(\alg B, F)$ is a lower-compatible pair, then so is $(\alg B_F, \mathbf{2})$, where $B_F=(B-F) \cup 2$.  
 \end{corollary}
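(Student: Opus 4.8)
The plan is to derive this corollary directly by composing the two preceding lemmas of this subsection, read in succession. Together these lemmas establish a passage back and forth between lower-compatible pairs (with an arbitrary congruence filter) and lower-compatible triples; the key observation is that the triple $(\alg B', \sigma_F, \gamma_F)$ remembers everything about the pair $(\alg B, F)$ except the internal structure of the filter $F$. Replacing $F$ by the minimal two-element filter $\mathbf{2}$ should therefore produce another lower-compatible pair realizing the very same triple, which is exactly what the corollary asserts.

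Concretely, first I would apply the second lemma to the given lower-compatible pair $(\alg B, F)$, obtaining that $(\alg B', \sigma_F, \gamma_F)$ is a lower-compatible triple, where $B' = (B-F)\cup\{1\}$. Next I would feed this triple into the third lemma, taking $(\alg K, \sigma, \gamma) = (\alg B', \sigma_F, \gamma_F)$. That lemma then produces a lower-compatible pair $(\widetilde{\alg B}, G)$ in which $G = \{f,1\}$ is the two-element IRL and the underlying set is $\widetilde B = (K - \{1\}) \cup G = (B-F) \cup \{f,1\}$, with the operations extending those of $\alg B'$ and of $\alg G$ exactly as prescribed there, and with $\sigma_G = \sigma_F$, $\gamma_G = \gamma_F$.

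Finally I would identify $G$ with $\mathbf{2}$ and check that $\widetilde B = (B-F)\cup \mathbf{2} = B_F$, so that $(\widetilde{\alg B}, G)$ is precisely $(\alg B_F, \mathbf{2})$; hence the latter is a lower-compatible pair, as claimed.

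I do not expect a genuine obstacle here: all the substantive verifications (that the reconstructed operations on $\widetilde{\alg B}$ yield an IRL, that $G$ is a compatible congruence filter, and that its associated maps recover $\sigma_F$ and $\gamma_F$) are already carried out inside the third lemma, so the corollary is essentially a bookkeeping consequence of those two results. The only mild point to record is the set-theoretic identification $((B-F)\cup\{1\}) - \{1\} = B-F$, which makes $\widetilde B = (B-F)\cup\mathbf{2}$ coincide with $B_F$; in short, the statement is a functorial restatement of the fact that routing $(\alg B, F)$ through its compatible triple discards exactly the internal detail of $F$, detail that the two-element filter then supplies minimally.
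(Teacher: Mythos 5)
Your proposal is correct and matches the paper's own reasoning: the paper introduces the corollary with ``we have also shown the following,'' meaning it is read off exactly as you describe, by composing the lemma that extracts the lower-compatible triple $(\alg B', \sigma_F, \gamma_F)$ from $(\alg B, F)$ with the lemma realizing any such triple as the compatible triple of a pair over the two-element filter, whose underlying set $((B-F)\cup\{1\})-\{1\}\cup\{f,1\}=(B-F)\cup 2$ is precisely $B_F$.
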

We say that an IRL $\alg F$ \emph{fits} with a lower-compatible triple $(\alg K, \sigma, \gamma)$, if $(\alg K-\{1\})\cup \alg F$ extends to an IRL $\alg B$, $F$ is a compatible filter of $\alg B$, and the compatible triple of the compatible pair $(\alg B, F)$ is $(\alg K, \sigma, \gamma)$.  Note that if $(\alg B, F)$ is a compatible pair, then $F$ fits with the compatible triple $(\alg B', \sigma_F, \gamma_F)$, but Corollary~\ref{c:multcomp} shows that the same $\alg B'$ can belong to different lower-compatible triples. 
\subsection{Fitting the components together: the construction}

Now, we define the $F$-gluing construction given the individual pieces we have identified, provided that they fit together in a suitable way.

Consider a lower-compatible pair $(\alg B, F)$, and an IRL $\alg C$ such that $B \cap C = F$, with $F$ strictly above all other elements in $C$ and such that if there are elements in $B-F$ joining to some element of $F$ then $C$ has a least element $0_{C}$. We will show that there is an IRL that is the (unique) gluing of $\alg B$ and $\alg C$ over $F$ and that it is the following structure:
 $$\alg B  \oplus_{F} \alg C= (B \cup C,\, \cdot_{F},\back_{F},/_{F},\land_{F}, \lor_{F}, 0,  1),$$
where the operations are defined as follows:
\begin{align*}
x \cdot_{F} y &=\left\{\begin{array}{ll}
x \cdot y & \mbox{ if } x, y \in  B, \mbox{ or } x, y \in  C\\
\sigma_{F}(x) & \mbox{ if } y \in C-F, x \in B-F\\
\sigma_{F}(y) & \mbox{ if } x \in C-F, y \in B-F
\end{array}\right.\\
x\back_{F}\, y &=\left\{\begin{array}{ll} 
x \back y & \mbox{ if } x, y \in  B, \mbox{ or } x, y \in  C\\
\gamma_{F}(y) & \mbox{ if } x \in C-F, y \in B-F \\
1 & \mbox{ if } x \in B-F, y \in C-F \end{array}\right.\\
x/_{F}\, y &=\left\{\begin{array}{ll} 
x / y & \mbox{ if } x, y \in  B, \mbox{ or } x, y \in  C\\
\gamma_{F}(x) & \mbox{ if } y \in C-F, x \in B-F \\
1 & \mbox{ if } x \in C-F, y \in B-F \end{array}\right.\\
x \land_{F} y &=\left\{\begin{array}{ll}
x \land y & \mbox{ if } x, y \in  B, \mbox{ or } x, y \in  C\\
x & \mbox{ if } x \in B-F, y \in C-F\\
y & \mbox{ if } y \in B-F, x \in C-F\end{array}\right.\\
x \lor_{F} y &=\left\{\begin{array}{ll}
x \lor y & \mbox{ if } x, y \in  C, \mbox{ or } x, y \in  B \mbox{ with } x \lor y \not\in F\\
0_{C} & \mbox{ if } x, y \in  B-F,  x \lor y \in F\\
y & \mbox{ if } x \in B-F, y \in C-F\\
x & \mbox{ if } y \in B-F, x \in C-F\end{array}\right.
\end{align*}
 The proof of the following theorem is postponed until the next section where we further expand the construction. More precisely, it will be a direct consequence of Theorem \ref{prop:quadruplegluing}.
\begin{figure}
\begin{center}
\begin{tikzpicture}
\footnotesize{
 \draw  [dashed](0,0) ellipse (0.3 and 0.4);
 \draw  (0,1) ellipse (0.3 and 0.4);
  \fill (0,1.4) circle (0.05);
 \node at (3,1) {$F$}; 
  \node at (0,1) {$F$}; 
   \node at (6,1.9) {$F$}; 
 \node at (0.15,1.6) {$1$}; 
  \node at (0,-1) {$\alg B$};
  \node at (1.5,0.5) {$\oplus_{F}$};
  \draw [densely dotted] (3,0) ellipse (0.3 and 0.4);
  \draw  (3,1) ellipse (0.3 and 0.4);
  \fill (3,1.4) circle (0.05);
 \node at (3.15,1.6) {$1$}; 
  \node at (3,-1) {$\alg C$};
   \node at (4.5,0.5) {${=}$};
    \draw  [dashed](6,-0.05) ellipse (0.3 and 0.4); 
    \draw [densely dotted] (6,0.95) ellipse (0.3 and 0.4);
 \node at (6.15,2.5) {$1$}; 
 \draw  (6,1.9) ellipse (0.3 and 0.4);
  \node at (6,-1) {$\alg B \oplus_{F} \alg C$};
\fill (6,2.3) circle (0.05);  
   }
 \end{tikzpicture} \caption{The gluing $\alg B \oplus_{F} \alg C$ of the algebras $\alg B$ and $\alg C$ over $F$.} \label{fig:gluingfilter}
   \end{center}  
\end{figure}
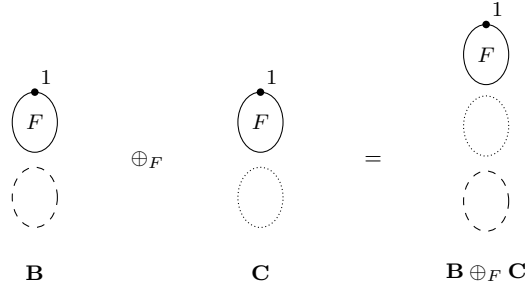
\begin{theorem}\label{prop:gluingfilter}
If $F$ is a congruence filter of an IRL $\alg C$ and also a compatible congruence filter of an IRL $\alg B$, then $\alg B \oplus _F \alg C$ is the gluing of $\alg B$ and $\alg C$ over $F$. 
\end{theorem}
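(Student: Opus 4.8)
The plan is to invoke the uniqueness already secured by Lemma~\ref{lemma:filtercompatible} and thereby reduce the theorem to a pure \emph{existence} check. Since that lemma shows that a gluing of $\alg B$ and $\alg C$ over $F$, if it exists, is unique and must have $cb=bc=\sigma_F(b)$ and $c\back b=b/c=\gamma_F(b)$ for $b\in B-F$ and $c\in C-F$ -- exactly the values appearing in the definition of $\alg B\oplus_F\alg C$ -- it suffices to verify that the explicitly defined structure $\alg B\oplus_F\alg C$ is an IRL and meets every clause of the definition of a gluing over $F$. Concretely, I would check, in order: (i) the reduct $(B\cup C,\land_F,\lor_F)$ is a lattice whose order is $b<c<f$ for $b\in B-F$, $c\in C-F$, $f\in F$, with the orders of $\alg B$ and $\alg C$ inherited inside their respective regions; (ii) $(B\cup C,\cdot_F,1)$ is an integral monoid; (iii) residuation holds for $\cdot_F,\back_F,/_F$; and (iv) $F=B\cap C$ is a congruence filter of the result, $\alg C$ is a subalgebra, and $\alg B$ is a subalgebra except possibly for $\lor$. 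Throughout I would use the standing assumptions of the construction: that $\alg B$ and $\alg C$ agree on the common subreduct $F$, that $F$ lies strictly above $C-F$ in $\alg C$, and that $\alg C$ has a least element $0_C$ whenever some pair in $B-F$ joins into $F$.

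For the lattice reduct, all meets and all ``non-escaping'' joins are read off directly from the three-layer order, and the only clause needing argument is $x\lor_F y=0_C$ for $x,y\in B-F$ with $x\lor^{\alg B}y\in F$: here I would check that $0_C$ is genuinely the least upper bound of $\{x,y\}$ in the global order, using that every upper bound living in $C$ dominates $0_C$, while every upper bound living in $B$ must lie in $F\subseteq C$ and hence also dominates $0_C$. Integrality is immediate, since $1\in F$ is the top of both $\alg B$ and $\alg C$, hence of $B\cup C$, and it is the monoid identity by inheritance.

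The monoid and residuation clauses are where the real work lies, and I would handle them by a case analysis on the regions ($B-F$, $C-F$, $F$) of the arguments, reducing each mixed case to an identity inside $\alg B$ or $\alg C$. For associativity the engine is the strong conucleus behaviour of $\sigma_F$ established earlier, namely $x\sigma_F(y)=\sigma_F(xy)=\sigma_F(x)y$ and $\sigma_F\sigma_F=\sigma_F$, together with the fact that $\theta_F$ is a congruence, so that $\sigma_F$ is constant on $\theta_F$-classes and $\sigma_F(fz)=\sigma_F(z)$ for $f\in F$; one first notes that a product is never forced out of $B-F$ or $C-F$ by an $F$-factor, since $F$ is an upset and $xy\le x$, so the crossing products land predictably and the two bracketings always evaluate either to the same $\sigma_F$-value or to the same product in a component. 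For residuation I would similarly split on regions and use that $(\sigma_F,\gamma_F)$ is a residuated pair, i.e. $\sigma_F(y)\le z$ iff $y\le\gamma_F(z)$, together with the companion facts (proved when building $\alg B$ from a compatible triple) that $x\back^{\alg B}y\in F$ iff $\sigma_F(x)\le y$, and dually for $/$; for instance, for $x\in C-F$ and $y\in B-F$ the equivalence $x\cdot_F y\le z$ iff $x\le z/_F y$ unfolds, according to the region of $z$, into the residuated-pair statement $\sigma_F(y)\le z$ iff $y\le\gamma_F(z)$ or into a tautology, using the defining division values $1$ and $\gamma_F$.

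Finally, granted that $\alg B\oplus_F\alg C$ is an IRL, I would confirm the remaining gluing clauses. The order condition $b<c<f$ holds by construction; $\alg C$ is a subalgebra because every operation restricts to $\alg C$'s operation on $C\times C$, and $\alg B$ is a subalgebra except for the redefined escaping joins. That $F$ is a congruence filter of the glued algebra is checked directly: $F$ is the top upset, it is closed under $\cdot_F$ by inheritance, and it is closed under conjugates because for $f\in F$ and any $y$ the conjugates $yf/_F y$ and $y\back_F fy$ are computed entirely inside $\alg B$ when $y\in B$ or inside $\alg C$ when $y\in C$, where $F$ is already a congruence filter. Uniqueness from Lemma~\ref{lemma:filtercompatible} then upgrades ``a gluing'' to ``the gluing''. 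I expect the main obstacle to be precisely the associativity case analysis together with the residuation cases that genuinely cross the $B-F$/$C-F$ boundary: ensuring that every combination of regions is consistent, and in particular that $F$-valued factors and escaping joins never break associativity or residuation, is the delicate bookkeeping at the heart of the argument -- exactly the content that the paper defers to the more general Theorem~\ref{prop:quadruplegluing}.
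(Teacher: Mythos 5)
Your plan is correct, and the ingredients you assemble --- uniqueness from Lemma~\ref{lemma:filtercompatible}, the strong-conucleus identities for $\sigma_F$, the residuated pair $(\sigma_F,\gamma_F)$, the criterion that $x\back^{\alg B}y\in F$ iff $\sigma_F(x)\le y$, and the observation that an $F$-factor never pushes a product out of $B-F$ or $C-F$ (integrality plus $F$ being an upset) --- are exactly what is needed for the region-by-region verification of the lattice, monoid, and residuation axioms, after which the gluing clauses and the congruence-filter property of $F$ follow as you describe. The difference from the paper is organizational rather than mathematical: the paper never verifies the $F$-gluing directly, but instead proves the more general filter-ideal gluing theorem (Theorem~\ref{prop:quadruplegluing}) and obtains Theorem~\ref{prop:gluingfilter} as the special case $I=\emptyset$, noting that with $I$ empty there are no $I$-divisors and all clauses involving $\ell_I, r_I$ disappear. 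Your direct route redoes, for this special case, essentially the same case analysis that appears in the proof of Theorem~\ref{prop:quadruplegluing} --- associativity via the absorbing/strong-conuclear behaviour of $\sigma_F$ (cf.\ Lemma~\ref{bounded-lemma}(\ref{conuclei-abs-b})), residuation via the residuated pair together with the ``division lands in $F$'' criterion --- and it is slightly more elementary in places: for instance, your upset argument that $cf, fc \in C-F$ replaces Lemma~\ref{bounded-lemma}(\ref{condC-}), whose proof needs the subalgebra property of $F\cup I$ to rule out products falling into $I$. What the paper's route buys is that the delicate bookkeeping is done once and serves both theorems; what yours buys is a self-contained proof of the filter case that needs no upper-compatible pairs, $I$-divisors, or compatible quadruples.
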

\subsection{Gluing without the filter}
We now obtain a different construction, that generalizes the $1$-sum construction in a different way: it glues together two structures that intersect at the top $1$ and maintains the same order relation, but some of the divisions are redefined. With respect to the previous construction, the intuition here is that we are removing the filter $F$ (keeping the unit $1$), and what is left is a partial IRL.

We start from a lower compatible triple $(\alg K, \sigma, \gamma)$ and a partial IRL $\alg L$ where some of the divisions might not be defined. In particular, $x \back y$ is undefined if and only if all elements $z$ in the interval $[y, 1) = \{z \in L: y \leq z < 1\}$ are such that $xz \leq y$ but $[y, 1)$ does not have a top element. Similarly, $y /x$ is undefined if and only if all elements $z$ the interval $[y, 1)$ are such that $zx \leq y$ and $[y,1)$ has no top. If $\alg L$ has a splitting coatom $c_L$ (i.e., $L=\{1\} \cup {\downarrow} c_L$), all divisions in $L$ are defined.  We also assume that  $K \cap L = \{1\}$ and if $x \lor y = 1$ in $K$ for some $x,y \in K-\{1\}$, then $L$ has a bottom element $0_{L}$.

We set $\pi = (\sigma, \gamma)$ and we define $\alg K \oplus_{\pi}  \alg L$ to be the structure where the operations extend those of $K$ and $L$, except if $x \lor y = 1$ in $K$ then we redefine $x \lor y = 0_{L}$. Moreover:
\begin{align*}
x y &=\left\{\begin{array}{ll}
\sigma(x) & \mbox{ if } y \in L - \{1\}, x \in K - \{1\}\\
\sigma(y) & \mbox{ if } x \in L - \{1\}, y \in K - \{1\}\end{array}\right.\\
x\back\, y &=\left\{\begin{array}{ll} 
c_{L} & \mbox{ if } x, y \in  K, L \mbox{ has a splitting coatom } c_{L}, \mbox { and $x \back^{\alg K} y$ is undefined}\\ 
\gamma(y) & \mbox{ if } x \in L - \{1\}, y \in K - \{1\} \\
1 & \mbox{ if } x \in K - \{1\}, y \in L\\ \end{array}\right.\\
y/\, x &=\left\{\begin{array}{ll} 
c_{L} & \mbox{ if } x, y \in  K, L \mbox{ has a splitting coatom } c_{L}, \mbox{ and $y/^{\alg K} x$ is undefined}\\
\gamma(y) & \mbox{ if } x \in L - \{1\}, y \in K - \{1\} \\
1 & \mbox{ if } x \in K - \{1\}, y \in L\\\end{array}\right.\\
x \land y &=\begin{array}{ll}
x & \mbox{ if } x \in K - \{1\}, y \in L\end{array}.\\
x \lor y &=\begin{array}{ll}
y & \mbox{ if } x \in K -\{1\}, y \in L\end{array}.
\end{align*}

\begin{proposition}\label{prop:partial1}
$\alg K \oplus_{\pi}  \alg L$ is a partial IRL and it is total if $L$ has a splitting coatom.
\end{proposition}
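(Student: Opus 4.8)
The plan is to verify directly the four conditions in Definition~\ref{def:partial}, organizing every computation according to the location of the elements involved, i.e.\ whether each belongs to $K-\{1\}$, to $L-\{1\}$, or equals $1$. The underlying order stacks $K-\{1\}$ strictly below all of $L$, with $1$ as the common top; this is forced by the clauses $x \land y = x$ and $x \lor y = y$ for $x \in K-\{1\}$, $y \in L$. Since both $\alg K$ and $\alg L$ have all operations defined except some divisions, the only partial operations of $\alg K \oplus_{\pi} \alg L$ are divisions, so the first task is to confirm that joins and meets behave as suprema and infima. For the totality claim I would note that a splitting coatom $c_L$ makes every division of $\alg L$ defined (as recorded just before the statement), while the $c_L$-clauses supply exactly the divisions $x\back^{\alg K} y,\ y/^{\alg K}x$ that were undefined in $\alg K$; together with the cross-divisions (always $\gamma(\cdot)$ or $1$) this leaves no operation undefined, so totality is immediate once the partial-IRL conditions are established.

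For the lattice structure, meets and joins inside $K$ or inside $L$ are inherited, and the cross operations return the $K$-component (for $\land$) or the $L$-component (for $\lor$), which are correct since $K-\{1\}$ lies below $L$. The one point needing argument is the redefinition $x \lor y = 0_L$ when $x,y \in K-\{1\}$ and $x \lor^{\alg K} y = 1$: here no element of $K-\{1\}$ bounds both $x$ and $y$, so the common upper bounds are exactly the elements of $L$, whose least element is $0_L$; thus $0_L$ is the genuine supremum, and its existence is guaranteed by hypothesis. For the monoid structure, $1$ is the identity and multiplication is total, so only associativity in the mixed cases requires work; these reduce, via the strong-conucleus identities $x\sigma(y) = \sigma(xy) = \sigma(x)y$ and the idempotency of $\sigma$, to equalities such as $(xy)z = \sigma(x)z = \sigma(xz) = x\sigma(z) = x(yz)$ for $x,z \in K-\{1\}$, $y \in L-\{1\}$, using that $yz \in L-\{1\}$ whenever $y,z \in L-\{1\}$ by integrality. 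Order-preservation of multiplication follows from monotonicity of $\sigma$ together with the inequalities $ab, ba \leq \sigma(a)$ for $b \neq 1$.

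The crux is the residuation condition, to be checked whenever the three relevant operations are defined. When all of $x,y,z$ lie in $L$ (resp.\ in $K$) the law is inherited from $\alg L$ (resp.\ $\alg K$), the redefined top-joins being irrelevant to residuation. In the genuinely mixed situations the main tool is that $(\sigma,\gamma)$ is a residuated pair: for $x \in K-\{1\}$, $y \in L-\{1\}$, $z \in K-\{1\}$ one has $xy = \sigma(x)$, $z/y = \gamma(z)$ and $\sigma(x) \leq z \Leftrightarrow x \leq \gamma(z)$, while $y \leq x\back z$ forces $x\back^{\alg K} z = 1$, because no element of $K-\{1\}$ can bound $y \in L-\{1\}$; matching these against the partiality pattern of $\alg K$ ($x\back^{\alg K}z$ undefined iff $\sigma(x)\leq z$ and $x\not\leq z$) yields the biconditionals. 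The remaining delicate case is the splitting-coatom substitution $x\back z = c_L$ (with $x,z \in K$ and $x\back^{\alg K}z$ undefined, so $\sigma(x)\leq z$, $x\not\leq z$): residuation then reads $xy \leq z \Leftrightarrow y \leq c_L$, i.e.\ $\Leftrightarrow y \neq 1$, which holds because the glued product satisfies $xy \leq \sigma(x) \leq z$ for every $y \neq 1$ while $x1 = x \not\leq z$; the symmetric statement for $z/y = c_L$ is analogous. Finally, order-preservation of divisions in the numerator and order-reversal in the denominator follow from monotonicity of $\gamma$ and the placement of $K-\{1\}$ below $L$, where one uses that $\gamma(z) \neq 1$ for $z \neq 1$ (otherwise $\sigma(w) \leq z$ for all $w$, forcing $z = 1$) to keep $\gamma$ valued in $K-\{1\}$.

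I expect the main obstacle to be precisely this residuation bookkeeping: the construction assigns different values ($\sigma$, $\gamma$, $1$, $c_L$, or inherited) in different regions, and one must confirm that in each admissible combination the three inequalities toggle together. The residuated-pair identity and the inequalities $xy, yx \le \sigma(x)$ are the levers that collapse each case, but the sheer number of combinations---compounded by the conditional appearance of $c_L$ and the partiality inherited from $\alg L$---is where the care is required.
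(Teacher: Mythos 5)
Your proposal is correct and takes essentially the same approach as the paper's proof: a direct verification of the partial-IRL axioms by case analysis on the location of the elements, with the residuated pair $(\sigma,\gamma)$, the strong-conucleus identities, the inequalities $xy,\, yx \leq \sigma(x)$ for $y \neq 1$, and the splitting-coatom clause doing exactly the same work in both arguments. The small extra justifications you supply (why $0_L$ is the genuine supremum of the redefined joins, and why $\gamma(z) \neq 1$ for $z \neq 1$) are points the paper leaves implicit, not a different method.
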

 
 \begin{proof}
 First, we notice that all operations are total except possibly the divisions.
 The meet and join operation clearly define a lattice, and $x \leq 1$ for every element $x$ in the gluing. 
Associativity of the product can be easily shown by the definition using the idempotency of $\sigma$ and the strong conuclearity condition. Since the operations of $\alg K \oplus_{\pi}  \alg L$ extend the ones of $\alg K$ and $\alg L$, $\alg K \oplus_{\pi}  \alg L$ has an underlying monoidal structure. 

We show that residuation holds. That is, for all elements $x, y, z$, whenever $xy, x\back z, z / y$ are defined: $$xy \leq z \mbox{ iff } y \leq x \back z \mbox{ iff } x \leq z /y$$

\begin{itemize}
\item In the case where $x, y \in L, z \in K-\{1\}$ none of the inequalities are true, as can be seen by the definition of the order and the divisions.
\item In the case where $x, z \in L, y \in K-\{1\}$ all the inequalities hold as it follows from the definition of the order and of the divisions. The situation is similar when: $x \in K- \{1\}, y, z \in L$, as well as when $x, y \in K-\{1\}, z \in L$.
\item If $x \in L-\{1\}, y,z, \in K-\{1\}$, we need to verify: $$\sigma(y) \leq z \mbox{ iff } y \leq \gamma(z) \mbox{ iff } x \leq z/ y$$ whenever  $z / y$ is defined.
The first equivalence holds since the two maps form a residuated pair. We now show $\sigma(y) \leq z \Leftrightarrow x \leq z/ y$ holds in case $z/ y$ is defined in $\alg K \oplus_{\pi}  \alg L$.
If $y \leq z$, then $\sigma(y)\leq \sigma(z) \leq z$ and $x \leq 1 = z /y$, so both inequalities hold. Assume now that $y \not \leq z$. If  $\sigma(y) \leq z$ we get that $z/y$ is undefined in $\alg K$, thus $z/ y$ is the coatom of $L$ (supposing $z/ y$ is defined in $\alg K \oplus_{\pi}  \alg L$), so $x \leq z/ y$ holds. Conversely, if $x \leq z/ y$ then since $x \in L$, we get that  either $ z/y = 1$ or it is the coatom of $L$. Thus either $y \leq z$ (which is against our assumption) or $z/y$ is undefined in $K$, which means that $\sigma(y) \leq z$. 
The case: $x, z \in K-\{1\}, y \in L-\{1\}$ is similar.

\item Suppose now $x, y, z \in K-\{1\}$. We only show  $x y \leq z \Leftrightarrow y \leq x \back z$ assuming that $x \backslash z$ is defined in $\alg K \oplus_{\pi}  \alg L$;  the proof of the equivalence  $x y \leq z \Leftrightarrow x\leq z / y$ is similar.
If  $x \back z$ is defined in $K$, the equivalence holds by residuation in $\alg K$. Also, if $x \leq z$, then $x y \leq zy \leq z$ and $y \leq 1= x \back z$, so both inequalities hold. Now, if $x \back^{\alg K} z$ is undefined and  $\sigma(x) \leq z$, then we get $xy \leq \sigma(x) \leq z$ (where the first inequality is due to (\ref{property2UPT}c) in the definition of lower-compatible triple) and also that $x \back z$ is equal to the coatom of $L$, so again both inequalities hold. 
\end{itemize}

It is left to show that multiplication is order preserving, divisions are order preserving in the numerator and order reversing in the denominator. 
The fact that the monoidal operation is order preserving can be easily checked using the facts that by the definition of lower-compatible triple $\sigma$ is order preserving, and $xy, yx \leq \sigma(x)$ for all $x,y \in K, y \neq 1$. Finally, the order properties of divisions (when defined) can be directly checked, and follow by residuation and the order preservation of $\gamma$.

We have shown that $\alg K \oplus_{\pi}  \alg L$ is a partial IRL. In the case where $L$ has a coatom, all divisions (the only partial operations) are correctly and fully defined. Thus in that case $\alg K \oplus_{\pi}  \alg L$ is a (total) IRL.
 \end{proof}
We will refer to $\alg K \oplus_{\pi}  \alg L$ as the \emph{partial upper gluing} of $\alg K$ and $\alg L$. If we take a lower-compatible pair $(\alg B, F)$ and an IRL $\alg C$ such that $B \cap C = F$ with $F$ strictly above the other elements in $C$, we can construct the partial gluing of the compatible triple $(\alg B, \sigma_{F}, \gamma_{F})$ and of $C-F$, which is a partial IRL with the required properties for the partial gluing construction.
We will see interesting examples of these constructions in the final section of the paper.
\section{Gluing over a filter and an ideal}

We can take the previous intuition even further and generalize the construction allowing the algebras $\alg B$ and $\alg C$ to intersect on both a congruence filter $F$ and a lattice ideal $I$. Let $\alg D$ be an IRL with underlying set $B \cup C$, where $B \cap C = F \cup I$, with $F$ a congruence filter as before, $I$ a lattice ideal, $i < b < c < f$ for all $i \in I$, $b \in B^{-}:= B - (F \cup I)$ $c \in C^{-}:= C - (F \cup I)$ and $f \in F$, $C$ is a subalgebra except possibly for $\land$ and $B$ is a subalgebra except possibly with respect to $\lor$. We say that $\alg D$ is \emph{a gluing of $\alg B$ and $\alg C$ over $F$ and $I$}, or \emph{$F-I$-gluing of $\alg B$ and $\alg C$}.
As before, we will identify conditions on $\alg B$, $\alg C$, $F$ and $I$ that will allow us to construct $\alg D$ from these constituent parts.  We will characterize the structure on the subset $C'=(C-I)$.

\subsection{Compatibility with an ideal}
 
We call an element $c \in C^{-}$ a \emph{left $I$-divisor} if there exists another $c' \in C^{-}$ such that $cc' \in I$, and a \emph{right $I$-divisor} if instead there exists $c'' \in C^{-}$ such that $c''c \in I$.

We say that an ideal $I$ of an IRL $\alg C$ is \emph{compatible} with $\alg C$ if it is strictly below $C'$ and for all left $I$-divisors $c \in C'$ and right $I$-divisors $d \in C'$, the sets $\{c \back i : i \in I\}$ and $\{i/ d : i \in I\}$ have maxima. In this case we denote these
 elements by $\ell_I(c)$ and $r_I(d)$, respectively:
 $$\ell_I(c) = {\rm max}\{c \back i : i \in I\}, \;\; r_I(d) = {\rm max} \{i/ d : i \in I\}.$$
We also say that $(\alg C, I)$ is a \emph{upper-compatible pair}.

\begin{lemma} \label{lemma:opideal}
Given a gluing of $\alg B$ and $\alg C$ over $F$ and $I$, the lattice ideal $I$ is compatible with $\alg C$. Also, for every  left $I$-divisor $c \in C-I$, right $I$-divisor $d \in C-I$, and $b \in B-F$:
$$c \back b = \ell_{I}(c), \qquad b/d = r_{I}(d).$$
\end{lemma}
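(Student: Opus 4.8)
The plan is to mirror the structure of the proof of Lemma~\ref{lemma:filtercompatible}, but dualized: where that lemma used integrality and division-by-$C$-elements to show each $[b]_F$ has a minimum and maximum (captured by $\sigma_F,\gamma_F$), here I must show that the relevant sets $\{c\backslash i:i\in I\}$ and $\{i/d:i\in I\}$ have maxima, and that left/right division of a $B$-element by a $C$-element is constant on $B-F$, equal to $\ell_I(c)$ and $r_I(d)$ respectively. The key structural fact driving everything is the order: $i<b<c<f$ for $i\in I$, $b\in B^-$, $c\in C^-$, $f\in F$, together with the fact that $C$ is a subalgebra (except possibly $\land$) and $B$ a subalgebra (except possibly $\lor$), all inside the total IRL $\alg D$.

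\medskip

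First I would fix a left $I$-divisor $c\in C-I$ and an arbitrary $b\in B-F$, and analyze $c\backslash b$ computed in $\alg D$. The crucial preliminary observation is that $c\backslash b$ must land in $C$: I would argue that it cannot be in $B-F$, since any element of $B$ is strictly below $c$, and by residuation $c\backslash b \geq x$ would force $cx\le b$; I need instead to show that $c\backslash b$ is forced to be large, lying in $C$ (indeed in $C-I$, or possibly in $F$). Concretely, since $c$ is a left $I$-divisor, pick $c'\in C^-$ with $cc'\in I$; then $c'\le c\backslash(cc')\le c\backslash i$ for a suitable $i\in I$, which shows the set $\{c\backslash i:i\in I\}$ reaches up into $C^-$. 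The heart of the argument is to show $c\backslash b$ is independent of the choice of $b\in B-F$ and equals $\max\{c\backslash i:i\in I\}$. The inequality $c\backslash i\le c\backslash b$ for all $i\in I$ follows from $i\le b$ (as $I$ lies below $B^-$) by order-preservation of division in the numerator. For the reverse direction, I would show $c(c\backslash b)\le b$ by residuation, and then use that $c(c\backslash b)$, being a product of two $C$-elements lying at-or-below $b\in B^-$, must actually fall into $I$ (since it is $\le b < c$ yet is a $C$-product, it cannot stay in $C^-$ while being below every element of $C^-$); hence $c\backslash b \le c\backslash i$ for some $i\in I$, giving $c\backslash b\in\{c\backslash i:i\in I\}$ and simultaneously that this set has a maximum, namely $c\backslash b$ itself.

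\medskip

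The symmetric argument, swapping left division for right division and left $I$-divisors for right ones, yields $b/d=\max\{i/d:i\in I\}=r_I(d)$ for every right $I$-divisor $d$ and every $b\in B-F$. Once both maxima are shown to exist (via the displayed computations above, which exhibit $c\backslash b$ and $b/d$ as the maxima), the compatibility of $I$ with $\alg C$ is immediate: $I$ is strictly below $C'$ by the order hypothesis, and the required maxima $\ell_I(c),r_I(d)$ exist. The independence of the value from the particular $b$ is exactly the statement $c\backslash b=\ell_I(c)$, $b/d=r_I(d)$.

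\medskip

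\emph{The main obstacle} I anticipate is pinning down precisely why $c\backslash b$ (and $b/d$) must lie in $C$ rather than leaking into $B$ or $F$, and in particular establishing that $c(c\backslash b)\in I$. This is the step where I must carefully exploit the hypothesis that $c$ is a left $I$-divisor together with the order stratification: the existence of a witness $c'$ with $cc'\in I$ is what guarantees the set $\{c\backslash i\}$ is nonempty-with-content in $C^-$, and the fact that $C$ is a subalgebra under $\cdot$ is what forces $c(c\backslash b)$ to be a genuine $C$-product whose position below $b$ pushes it into $I$. Getting the case analysis right — distinguishing whether $c\backslash b$ is $1$, lies in $F\setminus\{1\}$, lies in $C^-$, or (impossibly) in $B\setminus F$ — and ruling out the spurious cases using the strict order $b<c$ and residuation, is where the real care is needed; the remainder is a routine dualization.
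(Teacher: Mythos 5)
Your proposal is correct and takes essentially the same route as the paper's proof: the $I$-divisor witness $c'$ (with $cc'\in I$, hence $cc'\leq b$) together with the order stratification forces $c\backslash b$ into $C$, closure of $C$ under products then places $c(c\backslash b)\in I$, and the two inequalities $c\backslash i\leq c\backslash b$ (for all $i\in I$) and $c\backslash b\leq c\backslash\bigl(c(c\backslash b)\bigr)$ yield $c\backslash b=\max\{c\backslash i : i\in I\}=\ell_I(c)$, with the right-divisor case symmetric. The only divergence is cosmetic: the paper additionally rules out $c\backslash b\in F$ (via the impossibility of $cf\in I$ for $f\in F$) so as to pin down $c\backslash b\in C^{-}$, a step your more direct argument does not need, since $c\backslash b\in C$ already suffices to conclude.
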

\begin{proof} 
The ideal $I$ is strictly below all elements of $C' = C -I$ by definition. We now show that the maps $\ell_{I}$ and $r_{I}$ are defined for all, respectively, left and right $I$-divisors.
For a left $I$-divisor $c \in C^-$ we consider $c \back b$ for some $b \in B^{-}$. We claim that $c \back b \in C^{-}$. Indeed by definition there is at least one element in $C^{-}$ that multiplied to the right of $c$ gives an element of $I$, thus below $b$, and moreover there cannot be $f \in F$ such that $cf \in I$. Indeed otherwise, we would have $cf \leq i$ for some $i \in I$ iff (by residuation) $c \leq i/f$, which yields $i/f \in F$ (since $i/f \in B \cap C$); 
this would imply that $f'f \leq i$ for some $f' \in F$, so we would get $i  \in F$, a contradiction. 
 Thus $c \back b = \max\{d \in C^{-}: c  d \in I\}$. 

We now show that $\max\{d \in C^{-}: c d \in I\} = \max\{c \back i : i \in I\}$. Indeed, since $c \back b = \max\{d \in C^{-}: c  d \in I\}$, there exists $j \in I$, with $c  (c \back b) \leq j$; thus $c \back b \leq c \back j$. Also, for all $i \in I$, we have $c \back i \leq c\back b$ since $i <b$; so $c \back i \leq c\back b \leq c \back j$. Since $j \in I$, we get that $c \back b = c \back j$ and also $c \back b =  \max\{c \back i : i \in I\}$. Thus $\max\{c \back i : i \in I\}$ exists for every left $I$-divisor $c$ in  $C'$ and $c \back b=\ell_I(c)$. Similarly, one can show that $\max\{i/ d : i \in I\}$ exists for every right $I$-divisor $d$ of $C'$.
\end{proof}

What we showed in the previous section about lower-compatible pairs still holds in case there is at least one non $I$-divisor. Otherwise, we need consider the weaker notion of lower-compatibility.
\begin{definition}
	We say that a pair $(\alg B, F)$ is a \emph{weak lower-compatible pair} if it respects all the conditions of a lower-compatible pair except that $\gamma_F$ need not be defined, i.e. the equivalence classes $[b]_F$ for $b \in B-F$ do not need to have a maximum element.
\end{definition}
We can indeed prove the following analogue of Lemma \ref{lemma:filtercompatible}.
\begin{lemma}\label{lemma:filtercompatible2}
	Given a gluing of $\alg B$ and $\alg C$ over $F$ and $I$, $(\alg B, F)$ is a weak lower-compatible pair. If there exists an element of $\alg C$ that is not an $I$-divisor, then $(\alg B, F)$ is a lower-compatible pair. Moreover, for all $b \in B-F, c \in C-(F \cup I)$:
$$cb = bc = \sigma_{F}(b),$$  
		$$c \back b = \gamma_{F}(b) \mbox{ if c is not a left $I$-divisor, }$$	
	$$b / c = \gamma_{F}(b) \mbox{ if c is not a right $I$-divisor. }$$
\end{lemma}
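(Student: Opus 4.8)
The plan is to follow the proof of Lemma~\ref{lemma:filtercompatible} closely, but with two adjustments forced by the ideal. First, the multipliers must now be drawn from $C^{-}=C-(F\cup I)$ rather than from all of $C-F$, since $I\subseteq C-F$ behaves differently. Second, $B-F$ is no longer just $B^{-}$ but splits as $B-F=I\cup B^{-}$, so each argument must be checked to cover $b\in I$ as well as $b\in B^{-}$. Throughout I fix some $c\in C^{-}$; the degenerate case $C^{-}=\varnothing$ forces $C\subseteq B$ and can be set aside.

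I would first establish the minimum, showing $\sigma_{F}(b):=\min[b]_{F}=cb=bc$ for every $b\in B-F$ (with $bc$ by the symmetric argument). As in Lemma~\ref{lemma:filtercompatible}, from $cb\le cb$ I get $c\le cb/b$, and integrality gives $cb\le b$, so $cb$ lies in the downset of $b\in B-F$ and hence in $B$. Then $cb/b\in B$ because $\alg B$ is closed under division, and since $cb/b\ge c\in C^{-}$ the stratification $I<B^{-}<C^{-}<F$ (within $B$, only $F$ lies above $C^{-}$) forces $cb/b\in F$; together with $b/cb=1\in F$ this yields $cb\mathrel{\theta_{F}}b$, and residuation shows $cb=\min[b]_{F}$. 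Monotonicity and the absorbing property of $\sigma_{F}$ then follow verbatim, the latter because every element of $b\,\sigma_{F}[B-F]$ is of the form $b\sigma_{F}(x)=b(xc)=(bx)c=\sigma_{F}(bx)$ with $bx\in B-F$. With condition~(1) immediate from $i<b<f$, this shows $(\alg B,F)$ is a weak lower-compatible pair.

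For the maximum I would use a $c\in C^{-}$ that is \emph{not} a left $I$-divisor and prove $c\back b=\gamma_{F}(b):=\max[b]_{F}$. The crux, and the step that genuinely uses the ideal, is showing $c\back b\in B-F$; here the non-$I$-divisor hypothesis is exactly what is needed. Indeed $c(c\back b)\le b$, so if $c\back b$ were in $C^{-}$ then $c(c\back b)$ would be an element of $C$ below $b$, hence in $I$ by the stratification, making $c$ a left $I$-divisor; and if $c\back b$ were in $F$, then taking $c$ itself as a witness, $cc\le c(c\back b)\in I$, again exhibiting $c$ as a left $I$-divisor. Both contradict the hypothesis, so $c\back b\in B-F$. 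From there the argument of Lemma~\ref{lemma:filtercompatible} applies unchanged: $\sigma_{F}(c\back b)=c(c\back b)\le b$ gives $\sigma_{F}(c\back b)\le\sigma_{F}(b)$, while $b\le c\back b$ gives the reverse, so $\sigma_{F}(c\back b)=\sigma_{F}(b)$ and thus $c\back b\in[b]_{F}$; and for any $b'\in[b]_{F}$ one has $cb'=\sigma_{F}(b)\le b$, hence $b'\le c\back b$, so $c\back b=\max[b]_{F}$. The statement for $b/c$ and right $I$-divisors is symmetric.

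Finally, if $\alg C$ has an element of $C^{-}$ that is neither a left nor a right $I$-divisor, the previous paragraph defines $\gamma_{F}(b)$ for all $b\in B-F$, upgrading weak lower-compatibility to lower-compatibility. I expect the only real obstacle to be the verification that $c\back b\notin F\cup C^{-}$: this is precisely where the non-$I$-divisor assumption must be invoked correctly, and in the borderline case $c\back b\in F$ one must produce an explicit $C^{-}$-witness (namely $c$ itself) for $c$ being an $I$-divisor. The remaining bookkeeping---separating $b\in I$ from $b\in B^{-}$ and checking that products and residuals computed in $\alg D$ land where claimed---is routine given the stratified order and the subalgebra conditions on $\alg B$ and $\alg C$.
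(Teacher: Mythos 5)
Your proposal is correct and takes essentially the same route as the paper: the paper's own proof is just the remark that the argument of Lemma~\ref{lemma:filtercompatible} can be replicated, with the non-$I$-divisor hypothesis guaranteeing $c \back b \in B$, which is precisely the adaptation you carry out in detail. In particular, your case analysis ruling out $c\back b \in C^{-}$ and $c \back b \in F$ (using $c$ itself as the witness in the latter case, so that $cc \in I$ would make $c$ a left $I$-divisor) is exactly the content of the paper's phrase ``replicated in exactly the same way,'' and the rest of your argument (minimum, absorption, maximum, symmetry) mirrors Lemma~\ref{lemma:filtercompatible} just as the paper intends.
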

\begin{proof}
	The proof that in any case $(\alg B, F)$ is a weak lower compatible triple is the same as the one of Lemma \ref{lemma:filtercompatible}. If given some $c \in C - (F \cup I)$, there is no element $c' \in C-(F \cup I)$ such that $cc' \in I$, then also the proof about $[b]_F$ having a maximum can be replicated in exactly the same way. Indeed, we get  that then $c \back b \in B$, $[b]_F = [c \back b]_F$ and $b' \leq c \back b$ for all $b' \in [b]_F$.
\end{proof}

\subsection{Compatible quadruples}

Consider now a weak lower-compatible pair $(\alg B, F)$, and an upper-compatible pair $(\alg C, I)$ such that $B \cap C = F \cup I$ is a subalgebra of both $\alg B$ and $\alg C$, with $F$ strictly above all elements in $C-F$ and $I$ strictly below all elements of $C-I$. 
We call the quadruple $(\alg B, F, \alg C, I)$ \emph{compatible} if moreover: 
\begin{enumerate}
\item\label{compat-cond-1} if at least an element of $C-F$ is not an $I$-divisor, then $(\alg B, F)$ is a lower-compatible pair.
\item\label{compat-cond-ideal} Whenever $c, d \in C-I$ and $cd \in I$, we have $(cd) x = x (cd) = \sigma_F(x)$ for all $x \in B^-$.
\item\label{compat-cond-2} If there are elements $x, y \in B-F$ such that $x \lor y \in F$, then $C-I$ has a bottom element $\bot_{C}$.
\item\label{compat-cond-3} If there are elements $x, y \in C-I$ such that $x \land y \in I$, then $B-F$ has a top element $\top_{B}$.
\end{enumerate}

We recall that $\sigma_{F}(x) = \min[x]_{F},$ and whenever defined $\gamma_{F}(x) = \max[x]_{F}$, $\ell_{I}(x) = \max\{x \back i : i \in I\},\; r_{I}(x) = \max\{i/x: i \in I\}$.

\begin{proposition}
Given a gluing of IRLs $\alg B$ and $\alg C$ over $F$ and $I$, $(\alg B, F, \alg C, I)$ is a compatible quadruple. Moreover, the gluing of $\alg B$ and $\alg C$ over $F$ and $I$ is unique when it exists.
\end{proposition}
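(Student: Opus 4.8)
The plan is to verify the defining conditions of a compatible quadruple one at a time, leaning on Lemmas~\ref{lemma:opideal} and~\ref{lemma:filtercompatible2} for the bulk of the work, and then to read off uniqueness from the observation that every operation of $\alg D$ is forced. The baseline requirements come almost for free: Lemma~\ref{lemma:filtercompatible2} gives that $(\alg B, F)$ is a weak lower-compatible pair, Lemma~\ref{lemma:opideal} gives that $(\alg C, I)$ is an upper-compatible pair, and the order conditions ($F$ strictly above $C-F$ and $I$ strictly below $C-I$) are immediate from the prescribed chain $i<b<c<f$. That $F\cup I=B\cap C$ is a subalgebra of both $\alg B$ and $\alg C$ I would obtain by a short order-based case analysis: for the operations preserved by both subalgebras ($\cdot$, $\back$, $/$) the value computed in $\alg D$ lies in $B$ and in $C$, hence in $B\cap C$; for $\land$ and $\lor$ one checks directly, using $I<F$ and the closure properties of the congruence filter $F$ and the lattice ideal $I$, that the result stays in $F\cup I$. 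Finally, condition~(\ref{compat-cond-1}) is exactly the lower-compatibility clause of Lemma~\ref{lemma:filtercompatible2}, the relevant $I$-divisors being the elements of $C^-=C-(F\cup I)$.

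For condition~(\ref{compat-cond-ideal}), take $c,d\in C-I$ with $cd\in I$. As in the proof of Lemma~\ref{lemma:opideal}, neither $c$ nor $d$ can lie in $F$ (a product of an element of $C^-$ with an element of $F$ never falls into $I$), so $c,d\in C^-$. For $x\in B^-$ I would simply compute in the monoid reduct of $\alg D$, applying Lemma~\ref{lemma:filtercompatible2} twice:
$$(cd)x=c(dx)=c\,\sigma_F(x)=\sigma_F(\sigma_F(x))=\sigma_F(x),$$
and symmetrically $x(cd)=(xc)d=\sigma_F(x)$. The point that needs care is that the middle step applies the formula $c\,b=\sigma_F(b)$ to $b=\sigma_F(x)$, which is legitimate precisely because $\sigma_F(x)=\min[x]_F\in B-F$ (as $x\notin F$).

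Conditions~(\ref{compat-cond-2}) and~(\ref{compat-cond-3}) are dual to each other, and in both the key is that $\alg D$, being an IRL, is a lattice, so the join (resp.\ meet) in question must exist, and existence is exactly what forces $\bot_C$ (resp.\ $\top_B$). For~(\ref{compat-cond-2}), suppose $x,y\in B-F$ with $x\lor y\in F$ in $\alg D$. The upper bounds of $\{x,y\}$ in $\alg D$ split into the elements of $B$ above both, which all lie in $F$ (since within $\alg B$ they are $\geq x\lor^{\alg B}y$), and all of $C^-$ (as $C^->B^-$ and $C^->I$); since $C^-<F$, a least upper bound can only be the minimum of $C^-$, whence $C-I$ has a bottom $\bot_C$. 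For~(\ref{compat-cond-3}), if $x,y\in C-I$ satisfy $x\land y\in I$ then $x,y\in C^-$, the lower bounds of $\{x,y\}$ in $\alg D$ are $\downarrow(x\land y)\subseteq I$ together with all of $B^-$, and since $B^->I$ the greatest lower bound forces $B-F$ to have a top $\top_B$.

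Uniqueness is then immediate: a gluing has the prescribed underlying set $B\cup C$ and order, the operations on $B\times B$ and $C\times C$ are those of $\alg B$ and $\alg C$ (with the non-preserved $\lor$ on $B$ and $\land$ on $C$ pinned down by the order, being redefined to $\bot_C$, resp.\ $\top_B$, exactly in the situations of~(\ref{compat-cond-2}) and~(\ref{compat-cond-3})), and all mixed products, divisions, meets and joins are determined by Lemmas~\ref{lemma:filtercompatible2} and~\ref{lemma:opideal} together with the order. I expect the main obstacle to be condition~(\ref{compat-cond-ideal}): one has to recognize that the apparent tension between $(cd)x\leq cd\in I$ (by integrality) and the value $\sigma_F(x)$ is not a contradiction but rather the assertion that $\sigma_F(x)\in I$ in this regime, and that Lemma~\ref{lemma:filtercompatible2} is robust enough to be applied to $b=\sigma_F(x)$ even after that element has dropped into the ideal.
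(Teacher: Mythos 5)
Your proposal is correct and follows essentially the same route as the paper, whose own proof is a terse citation of Lemmas~\ref{lemma:opideal} and~\ref{lemma:filtercompatible2} together with the remark that conditions~(\ref{compat-cond-2}) and~(\ref{compat-cond-3}) are properties of the lattice ordering of the gluing. The extra details you supply—the double application of Lemma~\ref{lemma:filtercompatible2} via associativity to get condition~(\ref{compat-cond-ideal}) (noting $c,d\in C^-$ and $\sigma_F(x)\in B-F$), the closure of $F\cup I$ under the operations, and the explicit argument that all operations of the gluing are forced—are precisely the steps the paper leaves implicit, and you carry them out correctly.
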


\begin{proof}
The fact that $(\alg B, F)$ is a weak lower compatible pair, together with conditions \ref{compat-cond-1} and \ref{compat-cond-ideal}, are shown in Lemma \ref{lemma:filtercompatible2}. The fact that $(\alg C, I)$ is an upper-compatible pair is shown in Lemma \ref{lemma:opideal}. Conditions \ref{compat-cond-2} and \ref{compat-cond-3} are clearly properties of the lattice ordering of the gluing.
\end{proof}
The following technical properties of a compatible quadruple will be useful in what follows.
\begin{lemma}\label{bounded-lemma} If $(\alg B, F, \alg C, I)$ is a compatible quadruple, the following properties hold.
\begin{enumerate}
\item\label{conuclei-abs-b}
 For all $x \in F, y \in B - F$, $x \sigma_{F}(y) = \sigma_{F}(x y) = \sigma_{F}(y x) = \sigma_{F}(y) x = \sigma_{F}(y)$.
\item\label{condC-} For every $c \in C^{-}$ and $f \in F$, we have $cf, fc \in C^{-}$.
\end{enumerate}
\end{lemma}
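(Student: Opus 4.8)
The plan is to prove the two items separately: property~(\ref{conuclei-abs-b}) follows from the fact that every element of $F$ is congruent to $1$ modulo $\theta_F$, combined with integrality, while property~(\ref{condC-}) is obtained by recycling the residuation argument already used in the proof of Lemma~\ref{lemma:opideal}.

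For~(\ref{conuclei-abs-b}), the starting point is that, since $F=[1]_{\theta_F}$, an element $x$ lies in $F$ precisely when $(x,1)\in\theta_F$. Fix $x\in F$ and $y\in B-F$. First I would establish the two ``$\sigma_F$ of a product'' equalities: because $\theta_F$ is a congruence and $(x,1)\in\theta_F$, we get $(xy,y),(yx,y)\in\theta_F$, so $[xy]_F=[yx]_F=[y]_F$; since $[y]_F\subseteq B-F$ (distinct $\theta_F$-classes are disjoint and $[1]_{\theta_F}=F$), all three minima are defined and $\sigma_F(xy)=\sigma_F(yx)=\sigma_F(y)$. Next I would treat the two ``$\sigma_F$ times $x$'' equalities. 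As $\sigma_F(y)\in[y]_F$ and $(x,1)\in\theta_F$, we have $x\sigma_F(y)\in[y]_F$, so $\sigma_F(y)=\min[y]_F\leq x\sigma_F(y)$; conversely integrality gives $x\leq 1$, and order preservation of multiplication yields $x\sigma_F(y)\leq\sigma_F(y)$, whence $x\sigma_F(y)=\sigma_F(y)$. The identity $\sigma_F(y)x=\sigma_F(y)$ is symmetric, and chaining these facts produces the whole displayed string of equalities.

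For~(\ref{condC-}), fix $c\in C^{-}$ and $f\in F$; by symmetry it suffices to treat $cf$, the argument for $fc$ being identical with the other residual. Since $F\cup I$ is a subalgebra of $\alg C$ (except possibly for $\land$), $C$ is closed under the product, so $cf\in C$, and it remains to rule out $cf\in F$ and $cf\in I$. As $f\leq 1$ by integrality we have $cf\leq c$, and $F$ is an upset of $\alg C$ (its elements being strictly above all of $C-F$); hence $cf\in F$ would force $c\in F$, contradicting $c\in C^{-}$. To exclude $cf\in I$, I would reproduce the computation from Lemma~\ref{lemma:opideal}: if $cf\in I$, then $c\leq cf/f$ by residuation, and $cf/f\in B\cap C=F\cup I$ because both $\alg B$ and $\alg C$ are closed under $/$; the ordering conditions (with $c\in C^{-}\subseteq C-I$ lying strictly above $I$) exclude $cf/f\in I$, so $cf/f\in F$, and then $f'\leq cf/f$ for some $f'\in F$ gives $f'f\leq cf$ with $f'f\in F$, forcing $cf\in F$ since $F$ is an upset. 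As $F\cap I=\emptyset$ (because $C^{-}\neq\emptyset$), this is a contradiction, so $cf\in C^{-}$.

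The bookkeeping is routine; the main thing to be careful about is invoking the ordering and subalgebra hypotheses packaged into the notion of a compatible quadruple exactly where needed -- that $F$ is an upset of $\alg C$, that $C$ and $\alg B$ are closed under the residuals, and that $F\cap I=\emptyset$. One subtlety worth flagging in~(\ref{conuclei-abs-b}) is that the element $x\in F$ does not belong to $B'=(B-F)\cup\{1\}$, so the strong-conucleus identities for $\sigma_F$ cannot be applied directly to $x$; this is precisely why the argument is routed through the congruence $\theta_F$ and integrality instead.
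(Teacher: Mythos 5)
Your proposal is correct and follows essentially the same route as the paper: part (1) is proved via the congruence $\theta_F$ together with integrality (the paper derives $\sigma_F(xy)=x\sigma_F(y)$ by two inequalities, you derive $x\sigma_F(y)=\sigma_F(y)$ directly from the minimum property, which is the same idea), and part (2) uses exactly the paper's residuation argument $c\leq cf/f\in F\cup I$, the ordering conditions, and upward closure of $F$. The only difference is that you explicitly rule out $cf\in F$ (which the paper leaves implicit, as $cf\leq c$ lies strictly below $F$), a harmless addition.
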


\begin{proof}
(1) Since $x \mathrel{\theta} 1$ and $y \mathrel{\theta} y$, we have $xy \mathrel{\theta} y$ which implies $\sigma_{F}(xy) = \sigma_{F}(y)$. 

From $x \mathrel{\theta} x$ and $y \mathrel{\theta} \sigma_{F}(y)$ we get $xy \mathrel{\theta} x \sigma_{F}(y)$, thus $\sigma_{F}(xy) = \sigma_{F}(x \sigma_{F}(y)) \leq x \sigma_{F}(y)$. Moreover, from $x \mathrel{\theta} 1$ and $y \mathrel{\theta} \sigma_{F}(y)$ we have $xy \mathrel{\theta} \sigma_{F}(y)$, which implies $\sigma_{F}(xy) = \sigma_{F}(\sigma_{F}(y)) = \sigma_{F}(y) \geq x \sigma_{F}(y)$; thus $\sigma_{F}(xy) = x \sigma_{F}(y)$. The other equalities can be proven analogously.

(2) Suppose by way of contradiction that $cf = i \in I$. Thus by residuation $c \leq i/f$, but since $i, f \in B \cap C$, and $ B \cap C$ is a subalgebra of both $\alg B$ and $\alg C$, we get $i/f \in F$, so $i \geq (i/f)f \in F$, hence $i \in F$, a contradiction. Similarly one can show that $fc \in C^{-}$. 
\end{proof}

\subsection{Abstracting the upper part}

We now characterize abstractly the triples of the form $(\alg C', \ell_I, r_I)$, where we understand $C'=(C-I)$ and $I$ is an ideal strictly below $C'$. A triple $(\alg L, \ell, r)$ is called \emph{upper-compatible}, if $\alg L$ is a partial IRL and:
\begin{enumerate}
\item\label{propUC1} $ \ell, r$ are partial maps on $L$ that form a Galois connection; more precisely $\ell(y)$ is defined and $x \leq \ell(y)$ if and only if $r(x)$ is defined and $y \leq r(x)$. 
\item If $r(y')$ is defined, $x \leq r(y')$ and $y \leq y'$, then $r(y)$ is defined and $x \leq r(y)$. Thus the domain $D_{r}$ of $r$  is downwards closed. Also, the same holds for $\ell$.
\item\label{propUC3} $xy$ is undefined iff $y$ is in the domain of $r$ and $x \leq r(y)$, iff $x$ is in the domain of $\ell$ and $y \leq \ell(x)$; 
\item $x \back y$ is undefined iff there is no $z$ with $xz \leq y$, and $y / x$ is undefined iff there is no $z$ with $zx \leq y$.
\item\label{propassoc} If $\ell(x)$ and $r(z)$ are defined then: $x \back r(z)$ is defined iff $\ell (x) / z$ is defined, and in such a case $x \back r(z)=\ell (x) / z$. 
\item\label{propassoc2} If $\ell(x)$ is undefined and $r(z)$ is defined, then $x \back r(z) = r(z)$. If $r(z)$ is undefined and $\ell(y)$ is defined, then $\ell(y)/z = \ell(y)$.
\item\label{prop-orderell} If $\ell(x)$ is defined, then $x \back z$ is defined and $\ell(x) \leq x \back z$. Similarly,  if $r(x)$ is defined, then $w/x$ is defined and $r(x) \leq w/x$.
\item $x \land y$ is undefined iff there is no $z \leq x, y$. 
\item All other operations are defined.
\end{enumerate}
\begin{lemma}\label{lemma:partialres}
In an upper-compatible triple, if $x y$ is defined, then: $xy \leq z$ iff ($x\back z$ is defined and $y \leq x \back z$) iff ($z/y$ is defined and $x \leq z/y$).
\end{lemma}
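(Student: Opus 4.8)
I would prove the three-way equivalence by reducing everything to the residuation clause of a partial IRL, namely Definition~\ref{def:partial}(2c), which applies the moment all three of $xy$, $x\back z$, $z/y$ are known to be defined. Thus the real work is bookkeeping about \emph{definedness}, controlled by conditions (4), \ref{propUC3} and \ref{prop-orderell} of an upper-compatible triple. For the forward direction, assume $xy$ is defined and $xy \leq z$. Taking the witness $w=y$ shows there is some $w$ with $xw \leq z$, so condition (4) makes $x\back z$ defined; symmetrically the witness $w=x$ makes $z/y$ defined. With all three operations defined, Definition~\ref{def:partial}(2c) immediately yields $y \leq x\back z$ and $x \leq z/y$, which establishes both ``only if'' directions simultaneously.

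\textbf{The converse.} By the symmetry between the two division clauses (interchanging left with right, and $\ell$ with $r$), it suffices to show that if $x\back z$ is defined and $y \leq x\back z$, then $xy \leq z$; the definedness of $z/y$ and the inequality $x \leq z/y$ then follow from the forward implication already proved. First I would check that $x(x\back z)$ is defined: were it not, condition \ref{propUC3} would force $x$ into the domain of $\ell$ with $x\back z \leq \ell(x)$, whence $y \leq x\back z \leq \ell(x)$ together with $x \in D_\ell$ would make $xy$ undefined, contradicting the hypothesis. Given this, the order-preservation of multiplication (Definition~\ref{def:partial}(3)) and $y \leq x\back z$ give $xy \leq x(x\back z)$, so the whole converse collapses to the single inequality $x(x\back z) \leq z$.

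\textbf{The crux and the main obstacle.} The inequality $x(x\back z) \leq z$ I would again extract from residuation: once $z/(x\back z)$ is known to be defined, Definition~\ref{def:partial}(2c) applied to the triple $(x,\, x\back z,\, z)$ turns the trivial $x\back z \leq x\back z$ into $x(x\back z) \leq z$. So the key point — and the step I expect to be hardest — is establishing that $z/(x\back z)$ is defined, equivalently that the upper bound $x\back z$ of the set $\{w : xw \text{ defined and } xw \leq z\}$ (an upper bound by the forward direction) is actually \emph{attained}. Here I would split on whether $x\back z$ is a right $I$-divisor: if it is, condition \ref{prop-orderell} guarantees directly that $w/(x\back z)$ is defined for every $w$, in particular $z/(x\back z)$; if it is not, then by condition \ref{propUC3} every product $w(x\back z)$ is defined, and one must exhibit a witness $w$ with $w(x\back z) \leq z$ that does not collapse into the ideal region. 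Producing this witness in the non-divisor case is the delicate part, and is exactly where the Galois-connection conditions (1)--(2) governing $\ell$ and $r$, and the hypotheses on bounds in $L$, must be used. The clause involving $z/y$ is then disposed of by the mirror-image argument.
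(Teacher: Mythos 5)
Your forward direction and the first half of your converse coincide with the paper's own proof: the witnesses $w=y$ and $w=x$ give definedness of $x\back z$ and $z/y$ via the definedness criterion (4) for divisions, after which Definition~\ref{def:partial}(2c) applies; and your argument that $x(x\back z)$ is defined (otherwise $x \in D_\ell$ and $y \leq x\back z \leq \ell(x)$ would make $xy$ undefined by condition~\ref{propUC3}) is word-for-word the paper's. You are in fact \emph{more} careful than the paper at the final step: the paper simply writes $xy \leq x(x\back z) \leq z$ ``by order preservation,'' whereas under the stated form of residuation in a partial IRL, which requires all three of $uv$, $u\back w$, $w/v$ to be defined, one does need $z/(x\back z)$ to be defined before (2c) can convert $x\back z \leq x\back z$ into $x(x\back z)\leq z$. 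Identifying that obligation, and handling the case $x\back z \in D_r$ via condition~\ref{prop-orderell}, is correct.

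The problem is that you stop exactly at the step you yourself call the crux: in the case $x\back z \notin D_r$ you say one ``must exhibit a witness $w$ with $w(x\back z)\leq z$'' using the Galois-connection conditions (1)--(2) and the hypotheses on bounds in $L$, but you never exhibit it, so the central inequality of the converse is asserted rather than proved. This is a genuine gap, and the pointer toward the Galois connection is misleading: no such machinery is needed. Since $x\back z \notin D_r$, condition~\ref{propUC3} makes \emph{every} product $w(x\back z)$ defined; take $w=z$. Integrality ($x\back z \leq 1$) and order preservation of multiplication (Definition~\ref{def:partial}, item (3)) give $z\cdot(x\back z) \leq z\cdot 1 = z$, so $z$ itself is the required witness, and condition (4) then makes $z/(x\back z)$ defined. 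With both cases closed, (2c) applied to the triple $(x,\, x\back z,\, z)$ yields $x(x\back z)\leq z$, and your converse goes through. So the architecture of your proof is right and matches the paper's, but the one step not already contained in the paper's argument is left open, and its resolution is a one-line use of integrality rather than of conditions (1)--(2).
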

\begin{proof}
Suppose that $x y$ is defined. 
If $xy \leq z$, then $x\back z$ is defined, since there is $y$ such that $xy \leq z$, and so residuation holds. Conversely, suppose $x\back z$ is defined and $y \leq x \back z$. Then $x(x\back z)$ is defined, since otherwise we would have: $x \in D_\ell$, $x \back z \leq \ell(x)$, and since $y \leq x \back z \leq \ell(x)$, then $xy$ would be undefined, a contradiction. Thus we get $xy \leq x (x \back z) \leq z$, by order preservation of multiplication.
Similarly one can prove that $xy \leq z$ if and only if  $z/y$ is defined and $x \leq z/y$.
\end{proof}
\begin{lemma}
If $I$ is a compatible ideal of an IRL $\alg C$, then $(\alg C', \ell_I, r_I)$ is an upper-compatible triple.
\end{lemma}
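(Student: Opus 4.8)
The plan is to realize $\alg C'$ concretely as the restriction of the total IRL $\alg C$ to the set $C'=C-I$, exactly as $\alg B'$ was obtained from $\alg B$ in the lower-compatible case, and then to verify the nine clauses of the definition of an upper-compatible triple one at a time, translating each into a statement about $\alg C$ and the ideal $I$. First I would record the basic facts. Since $I$ is a lattice ideal lying strictly below $C'$, the set $C'$ is upward closed in $\alg C$; hence every join of elements of $C'$ stays in $C'$, while a meet, product, or division of elements of $C'$ leaves $C'$ precisely when its value in $\alg C$ lands in $I$. By integrality $y\le x\backslash y$ and $y\le y/x$, so the two divisions never fall into $I$ and are in fact \emph{total} on $C'$; this immediately gives clause (4) (both sides are vacuous) and clause (9), and it shows the underlying partial algebra satisfies Definition~\ref{def:partial}, all remaining partial-IRL axioms being inherited from $\alg C$. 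For clause (8), a meet $x\land y$ is undefined exactly when $x\land^{\alg C}y\in I$, which happens iff $x,y$ have no common lower bound inside $C'$, since any such bound would lie below an element of $I$ and hence (by downward closure of $I$) in $I$.

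The central step is to reduce all the Galois-connection data to a single condition. Using that the maxima defining $\ell_I$ and $r_I$ are \emph{attained} (as granted by compatibility, Lemma~\ref{lemma:opideal}), together with residuation in $\alg C$, I would show that for $x,y\in C'$ each of the statements ``$\ell_I(y)$ is defined and $x\le \ell_I(y)$'' and ``$r_I(x)$ is defined and $y\le r_I(x)$'' is equivalent to $yx\in I$, and likewise ``$xy$ is undefined in $\alg C'$'' is equivalent to $xy\in I$, which in turn is equivalent to ``$x\in D_{\ell}$ and $y\le \ell_I(x)$'' and to ``$y\in D_r$ and $x\le r_I(y)$''. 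This one translation yields clauses (1), (2), (3) together: the Galois connection is clause (1); downward closure of $D_\ell$ and $D_r$ (clause (2)) follows because $y\le y'$ and $xy'\in I$ force $xy\in I$ by monotonicity of multiplication; and clause (3) is the chain of equivalences above. Along the way I would also check that $\ell_I$ and $r_I$ genuinely take values in $C'=L$: if $x$ is a left $I$-divisor with witness $c'\in C'$, then $c'\le x\backslash(xc')\le \ell_I(x)$, and since $C'$ is an up-set, $\ell_I(x)\in C'$ (symmetrically for $r_I$).

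The substantive clauses are (5), (6), (7), and I expect clause (5) to be the main obstacle. Clause (7) is short: when $\ell_I(x)$ is defined, $x\,\ell_I(x)\in I$, so it lies strictly below every element of $C'$, whence $x\,\ell_I(x)\le z$ and therefore $\ell_I(x)\le x\backslash z$ for every $z\in C'$ (and dually for $r_I$). For clause (5), writing the attained maxima as $\ell_I(x)=x\backslash i_1$ and $r_I(z)=i_0/z$, I would invoke the residuated-lattice identity $a\backslash(c/b)=(a\backslash c)/b$ together with monotonicity of the divisions to prove that both $x\backslash r_I(z)$ and $\ell_I(x)/z$ equal $\max_{i\in I}\{(x\backslash i)/z\}$; the coincidence then follows, and it is precisely here that attainment of the maxima is indispensable. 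Clause (6) is the other delicate point: setting $w=x\backslash r_I(z)$ with $\ell_I(x)$ undefined, I would argue that $wz\in I$ — indeed $xwz\le r_I(z)\,z\in I$ forces $x(wz)\in I$, which is impossible for $x$ not a left $I$-divisor unless $wz$ itself lies in $I$ — and then $w\le (wz)/z\le r_I(z)$, which combined with integrality gives $x\backslash r_I(z)=r_I(z)$; the symmetric argument handles $\ell_I(y)/z=\ell_I(y)$. These three clauses exhaust the definition.
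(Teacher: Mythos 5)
Your proof is correct and follows essentially the same route as the paper's: a clause-by-clause verification carried out inside the total algebra $\alg C$, using residuation, downward closure of $I$, attainment of the maxima defining $\ell_I$ and $r_I$, and the non-$I$-divisor hypothesis in exactly the places the paper uses them (in particular your arguments for clauses (5)--(7) are minor rearrangements of the paper's computations). Your two refinements --- observing via integrality and the up-set property of $C'$ that the divisions are actually total on $C'$, and packaging clauses (1)--(3) through the single equivalence with $yx \in I$ --- streamline the presentation but do not constitute a different approach.
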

\begin{proof}
We show that $(\alg C', \ell_I, r_I)$ has the properties of an upper-compatible triple, recalling that $C' = C - I$. It is easy to check that $\alg C'$ is a partial IRL, in particular:
\begin{enumerate}
	\item Integrality is clearly satisfied;
	\item The three axioms of RLs are satisfied whenever they can be applied, in particular: 
	\begin{enumerate}
	\item with respect to the lattice operations, the  joins are always defined, $1$ is the largest element, and $x \land y$ is undefined iff there is no common lower bound of $x$ and $y$; 
	\item $1$ is the unit of the product, and $xy, (xy)z$ are defined iff they are not in $I$, iff $yz, x(yz)$ are not in $I$ and in such case $(xy)z = x/yz)$.
	\item residuation works by Lemma \ref{lemma:partialres}.
	\end{enumerate}
	\item  Since $\alg C$ is an IRL, multiplication is order preserving when defined;
	\item For the same reason, divisions are order-preserving in the numerator and order-reversing in the denominator whenever defined.
\end{enumerate}
In the rest of this proof we will write $\ell$ for $\ell_{I}$ and $r$ for $r_{I}$. We now check the properties in the definition of an upper compatible triple.
\begin{enumerate}
\item We first show that $\ell, r$ form a Galois connection whenever they are defined, i.e. that $\ell(y)$ is defined and $x \leq \ell(y)$ if and only if $r(x)$ is defined and $y \leq r(x)$.

If $r(x)$ is defined and $y \leq r(x)$, then $y \leq i/x$ for some $i \in I$, which by residuation is equivalent to $yx \leq i$. So $y$ is a left $I$-divisor and thus $\ell(y)$ is defined and $x \leq y \back i \leq \ell(y)$. Similarly, the converse holds.

\item We now prove that if $r(y')$ is defined, $x \leq r(y')$ and $y \leq y'$, then $r(y)$ is defined and $x \leq r(y)$. If $r(y')$ is defined then $y'$ is a right $I$-divisor, i.e. there is $z \in C'$ such that $zy' \in I$. Since $y \leq y'$, we have $zy \leq zy' \in I$ and since  $I$ is closed downwards we get $zy \in I$, i.e., 
 $y$ is a right $I$-divisor. Moreover, it follows from the definition of $r$ that it is order reversing, thus $r(y') \leq r(y)$. Since $x \leq r(y')$, we also have that $x \leq r(y)$. 

\item Given $x, y \in C'$, we now prove that $xy$ is undefined if and only if $y$ is in the domain of $r$ and $x \leq r(y)$, the proof of the other equivalence being similar. Notice that the product $xy$ is undefined in $C'$ if and only if it is an element of $I$. In such a case, we get $xy \leq i$ for some $i \in I$, thus $y$ is a right $I$-divisor and so it is in the domain of $r$. Moreover, $x \leq i/y \leq r(y)$. 

Conversely, if $y$ is in the domain of $r$ and $x \leq r(y)$, then $x \leq i/y$ for some $i \in I$. Thus $xy \in I$ and so $xy$ is undefined in $C'$.

\item We have that $x \back y$ is undefined in $C'$ iff it is an element of $I$, or equivalently, iff there is no $z \in C'$ with $xz \leq y$. Similarly, $y / x$ is undefined iff there is no $z$ with $zx \leq y$.

\item We need to show that, if $\ell(x)$ and $r(z)$ are defined: $x \back r(z)$ is defined iff $\ell (x) / z$ is defined and in such a case $x \back r(z)=\ell (x) / z$. 
Suppose first that $x \back r(z)$ is defined in $C'$. Then $(x\back r(z))z \leq x \back i \leq \ell(x)$, for some $i \in I$, which by residuation implies $x \back r(z) \leq \ell (x) / z$. Thus in particular $\ell(x)/z \in C'$. 

So we also get that $x(\ell(x) /z )z \leq x \ell(x) \leq j$ for some $j \in I$, which implies that $x(\ell(x) /z) \leq j/z \leq r(z)$, which again by residuation implies $\ell (x) / z \leq x \back r(z)$; thus the equality $x \back r(z)=\ell (x) / z$ is proved.  

Likewise, we can show that if $\ell (x) / z$ is defined in $C'$ then  $x \back r(z)$ is defined in $C'$ and $x \back r(z)=\ell (x) / z$. 
\item Suppose first that $\ell(x)$ is undefined and $r(z)$ is defined, then $r(z) = i/z$ for some $i \in I$. Thus $x \back (i/z) = (x \back i)/z \leq r(z)$, because necessarily $x \back i \in I$, and since $r(z) \leq x \back r(z)$ we have the equality. Similarly one can show the other case.
\item Now if $\ell(x)= \max\{x \back i: i \in I\}$ is defined in $C'$, then there exists $i \in I$ with $x \back i \in C'$. For every $z \in C-I$ we have $i <z$ and $x \back i \leq x \back z$, so $x \back z \in C'$, and thus it is defined in $C'$, and  
 $\ell(x) \leq x \back z$. The analogous fact for $r$ is proven similarly.
\item A meet $x\land y$ is undefined in $C'$ iff $x \land y \in I$ iff there is no $z \in C'$ with $z\leq x, y$. 
\item All other operations are defined. \end{enumerate}

\end{proof}

We say that $(\alg C', \ell_I, r_I)$ is the upper-compatible triple of the upper-compatible pair $(\alg C, I)$.

\begin{lemma}
Every upper-compatible triple $(\alg L, \ell, r)$ is the upper-compatible triple of the upper-compatible pair $(\alg C, J)$, where $J=\{0\}$ is a one-element set and $\alg C$ is an IRL with operations extending $\alg
 L \cup \{0\}$, with $0$ as the bottom element. 
\end{lemma}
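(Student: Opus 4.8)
The plan is to build $\alg C$ as the one-point extension $C = L \cup \{0\}$ of $\alg L$ by a fresh bottom $0$, turning the partial operations of $\alg L$ into total ones, and then to recover $(\alg L,\ell,r)$ as the associated upper-compatible triple. Concretely, I extend the operations of $\alg L$ as follows: $0$ is the bottom of the lattice, so a meet $x\land y$ that was undefined in $\alg L$ (no common lower bound) is set to $0$, while joins are unchanged; $0$ is absorbing for the product, $x0=0x=0$, and any product $xy$ undefined in $\alg L$ is set to $0$; for the divisions I put $0\back x = x/0 = 1$, I set $x\back 0 = \ell(x)$ and $0/x = r(x)$ whenever $\ell(x)$, $r(x)$ are defined (and to $0$ otherwise), and I set to $0$ every division $x\back y$ or $y/x$ with $x,y\in L$ that was undefined in $\alg L$. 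These choices are essentially forced: for $J=\{0\}$ one has $\ell_J(c)=c\back 0$ and $r_J(d)=0/d$, so the divisions into $0$ must reproduce $\ell$ and $r$; and since by \ref{prop-orderell} the definedness of $\ell(x)$ forces $x\back z$ to be defined for all $z\in L$, the stipulation that undefined divisions become $0$ never clashes with the value $\ell(x)$.

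Next I would verify that $\alg C$ is an IRL. The lattice axioms and integrality are immediate, $0$ being a new bottom. That $(C,\cdot,1)$ is a monoid needs care only for associativity of triples landing on $0$. Here the key point is to show that, for $x,y,z\in L$ with $xy$ and $yz$ defined, the product $(xy)z$ is undefined in $\alg L$ if and only if $x(yz)$ is; granting this, absorption gives $(xy)z = 0 = x(yz)$, and when all four products are defined associativity is inherited from $\alg L$ (Definition \ref{def:partial}). To prove the displayed equivalence I use the characterization of undefinedness in \ref{propUC3} together with the associativity law \ref{propassoc}: $(xy)z$ undefined means $z\in D_r$ and $xy\le r(z)$, which by residuation in $\alg L$ (Lemma \ref{lemma:partialres}) and the identity $x\back r(z)=\ell(x)/z$ of \ref{propassoc} translates into $yz\le \ell(x)$, i.e.\ into $x(yz)$ being undefined; the degenerate cases where $\ell(x)$ or $r(z)$ is undefined are handled by \ref{propassoc2}.

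The main work is residuation: $xy\le z \Leftrightarrow y\le x\back z \Leftrightarrow x\le z/y$ for all $x,y,z\in C$, now that every operation is total. I would split into cases according to how many arguments equal $0$. The cases involving $0$ reduce to the defining equations ($0$ absorbing, $0\back x = x/0 =1$, $x\back 0=\ell(x)$, $0/x=r(x)$) and unwind to the Galois connection \ref{propUC1} between $\ell$ and $r$ (this is precisely the instance $xy=0 \Leftrightarrow y\le x\back 0 \Leftrightarrow x\le 0/y$). The cases with $x,y,z\in L$ split further according to whether $xy$, $x\back z$, $z/y$ are defined in $\alg L$: if $xy$ is defined the equivalence is Lemma \ref{lemma:partialres} when the relevant division is defined, and when that division is undefined in $\alg L$ it equals $0$ and one checks that the corresponding inequality is forced by \ref{prop-orderell}; if $xy$ is undefined it reads $0\le z$, and \ref{propUC3} together with the Galois connection show that both divisions take values above the respective arguments. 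This case analysis, reconciling the abstract ``undefined'' conditions of the upper-compatible triple with the absorbing behaviour of $0$, is the principal obstacle; the remainder is bookkeeping.

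Finally I would check that $J=\{0\}$ is a compatible ideal recovering $(\alg L,\ell,r)$. As a singleton bottom, $J$ is a lattice ideal lying strictly below $C'=C-J=L$, and the maxima defining $\ell_J, r_J$ exist because $\{c\back 0\}$ and $\{0/d\}$ are singletons. An element $c\in L$ is a left $J$-divisor iff $cc'=0$ for some $c'\in L$, that is, iff $c\in D_\ell$, and in that case $\ell_J(c)=c\back 0=\ell(c)$; symmetrically $r_J=r$. Since the partial operations of $\alg C'$ are by definition those of $\alg C$ that avoid $J$, restricting $\alg C$ to $L$ returns exactly the partial IRL $\alg L$. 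Hence $(\alg C',\ell_J,r_J)=(\alg L,\ell,r)$, as required.
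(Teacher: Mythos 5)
Your construction and verification plan coincide with the paper's own proof: the same one-point extension $C=L\cup\{0\}$ with $0$ as an absorbing bottom, the same values for divisions into and out of $0$ (namely $0\back x=x/0=1$, $x\back 0=\ell(x)$ and $0/x=r(x)$ when defined, $0$ otherwise), the same key associativity equivalence ``$(xy)z$ undefined iff $x(yz)$ undefined'' proved from \ref{propUC3}, \ref{propassoc}, \ref{propassoc2} and Lemma~\ref{lemma:partialres}, and the same recovery of the triple via $\ell_J(c)=c\back 0$ and $r_J(d)=0/d$. In one respect you are more careful than the paper: you stipulate explicitly that a division $x\back y$ with $x,y\in L$ that is undefined in $\alg L$ becomes $0$ in $\alg C$, a case the paper's displayed definition omits even though its residuation argument tacitly uses it. (One small misattribution there: when $xy$ is defined but $x\back^{\alg L}z$ is undefined, the fact that $xy\not\leq z$ comes from the condition that $x\back y$ is undefined iff there is no $w$ with $xw\leq y$, not from \ref{prop-orderell}; the latter only tells you that $\ell(x)$ must then be undefined.)

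The one genuine gap is in your associativity sketch. You reduce everything to triples $x,y,z\in L$ with both $xy$ and $yz$ defined, claiming the remaining cases follow ``by absorption''. That is true when one of $x,y,z$ is $0$ and when both $xy$ and $yz$ are undefined (both sides collapse to $0\cdot z=0$ and $x\cdot 0=0$), but not in the mixed case: if $xy$ is undefined while $yz$ is defined in $\alg L$, absorption gives $(xy)z=0$ and says nothing about $x(yz)$. You must show that $x(yz)$ is undefined as well, and the paper devotes a separate case to exactly this: $xy$ undefined gives $x\in D_\ell$ and $y\leq\ell(x)$ by \ref{propUC3}, integrality of the partial IRL gives $yz\leq y\leq\ell(x)$, so \ref{propUC3} again makes $x(yz)$ undefined, hence equal to $0$. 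The tools are ones you already invoke elsewhere, so this is a short repair, but as written your case analysis does not close.
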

\begin{proof}
Let $C = L \cup \{0\}$, with $0$ an idempotent element strictly below all elements of $L$. The operations of $\alg C$ are defined to extend the existing operations of $L$, and we further define:
\begin{align*}
x \land  y &=\begin{array}{ll}
0 & \mbox{ if } x \land y \mbox{ is undefined in } L \end{array}\\
x   y &=\begin{array}{ll}
0 & \mbox{ if } x = 0 \mbox{ or } y = 0 \mbox{ or } x \leq r(y)\end{array}\\
x \back  y &=\left\{\begin{array}{ll}
1 & \mbox{ if } x = 0\\
0 & \mbox{ if } x \not = 0, y = 0 \mbox{ and }\ell(x) \mbox{ is not defined} \\
\ell(x) &  \mbox{ if } x \not = 0, y = 0 \mbox{ and }\ell(x) \mbox{ is defined}  \end{array}\right.\\
y /  x &=\left\{\begin{array}{ll}
1 & \mbox{ if } x = 0\\
0 & \mbox{ if } x \not = 0, y = 0 \mbox{ and }r(x) \mbox{ is not defined} \\
r(x) &  \mbox{ if } x \not = 0, y = 0 \mbox{ and }r(x) \mbox{ is defined}  \end{array}\right.\\
\end{align*}

We set $J = \{0\}$ and show that $\alg C$ is an integral residuated lattice. The order defined clearly yields a lattice. Let us show that associativity holds, i.e. for any $x, y, z \in C$, $$x\cdot (y\cdot z) = (x \cdot y) \cdot z.$$
We distinguish the following cases.
\begin{itemize}
\item If any of $x, y, z$ is $0$, both sides of the equality are $0$ and thus associativity holds. The same holds if $x \leq r(y)$ and $ y \leq r(z)$.
\item Assume $x \leq r(y)$, and $r(z)$ is undefined or $y \not\leq r(z)$. Then $(x\cdot y) \cdot z = 0 \cdot  z = 0$. Moreover $yz$ is defined in $L$, thus $yz \leq y \leq \ell(x)$ (since there is a Galois connection between $\ell$ and $r$) hence $x \leq r(yz)$, so $x\cdot (y \cdot z) = 0$. 

Similarly we verify  the case where $y \leq r(z)$ and $x \not\leq r(y)$ or $r(y)$ is undefined.
\item Finally, assume that $r(y)$ is undefined or $x \not\leq r(y)$, and $r(z)$ is undefined or $y \not\leq r(z)$. Then the products $xy, yz$ are defined in $L$, and then we get that $(x  y)  z  = 0$ if $xy \leq r(z)$, and $(xy)z \in L$ otherwise. Similarly, $x (yz) = 0$ if $yz \leq \ell(x)$, and $x(yz) \in L$ otherwise. 

The claim is proved by showing that $xy \leq r(z)$ if and only if $yz \leq \ell(x)$. 
Indeed, suppose $r(z)$ is defined and $xy \leq r(z)$, by Lemma \ref{lemma:partialres} we get that $x \back r(z)$ is defined and $y \leq x \back r(z)$. Then if $\ell(x)$ is undefined, by Property \ref{propassoc2} we get $x \back r(z) = r(z)$, thus $y \leq r(z)$, a contradiction. Then also $\ell(x)$ is defined, thus by Property \ref{propassoc} $\ell(x)/z$ is defined and $y \leq x \back r(z) = \ell(x) /z$. Since $\ell(x)/z$ is defined and $y \leq \ell(x) /z$, by Lemma \ref{lemma:partialres} we obtain $yz \leq \ell(x)$ since $yz$ is defined. Similarly one can show the right-to-left direction.

\end{itemize} 
We now show residuation: $$xy \leq  z \mbox{ iff } y \leq  x \back z \mbox{ iff } x \leq  z/ y$$
\begin{itemize}
\item If $x= 0$ or $y = 0$ the claim is easily shown. Suppose now $z  = 0$; we need to show $xy \leq 0$ iff $y\leq x \back 0$. If $\ell(x)$ is defined then $x\back 0 = \ell(x)$, and we know that $xy \leq 0$ iff $ y \leq \ell(x)$. If $\ell(x)$ is not defined then $x\back 0 =0$ and and $xy$ is defined to be $0$, so both inequalities hold.
\item Let $x,y,z \neq 0$, $r(y)$ defined, and $x \leq r(y)$ (or equivalently $\ell(x)$ defined and $y \leq \ell(x)$). Then $xy$ is not defined in $L$, thus the first inequality becomes $0 \leq z$ and is true. Since $\ell(x)$ is defined, by Property \ref{prop-orderell} $x \back z$ is also defined and $y \leq \ell(x) \leq x \back z$, so 
the second inequality is also true. The proof that $ x \leq  z/ y$ holds is similar.
\item  Let $x,y,z \neq 0$, $r(y)$ undefined or $x \not\leq r(y)$; then $xy$ is defined in $L$. Residuation follows from Lemma~\ref{lemma:partialres}.
\end{itemize}
Notice that if $c$ is a left $J$-divisor, $\ell_{J}(c) = \max \{c \back  i: i \in J\} = c\back  0 = \ell(c)$, and if $d$ is a right $J$-divisor $r_{J}(d) = \max \{i /  d: i \in J\} = 0 \back  d = r(d)$. Thus we have shown that $(\alg L, \ell, r)$ is the upper-compatible triple of the upper-compatible pair $(\alg C, J)$. 
\end{proof}

\begin{corollary}
If $(\alg C, I)$ is an upper-compatible pair, so is $((\alg C -I) \cup \{0\}, \{0\})$.
\end{corollary}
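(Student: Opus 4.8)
The plan is to derive the statement by composing the two preceding lemmas, in exact parallel with how Corollary~\ref{c:multcomp} was obtained on the lower side from its two analogous lemmas. No new computation should be required.

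First I would start from the given upper-compatible pair $(\alg C, I)$ and invoke the first of the two lemmas immediately above, which guarantees that the associated triple $(\alg C', \ell_I, r_I)$ is an upper-compatible triple; here $\alg C' = \alg C - I$ carries the partial IRL structure inherited from $\alg C$, with a product or division left undefined exactly when it would land in $I$, and with $\ell_I, r_I$ the maximal-quotient maps from the definition of upper-compatible pair.

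Next I would feed this triple into the second lemma, which takes an arbitrary upper-compatible triple $(\alg L, \ell, r)$ and exhibits it as the upper-compatible triple of an explicitly built upper-compatible pair $(\alg C'', \{0\})$, where the domain of $\alg C''$ is $L \cup \{0\}$ for a fresh bottom element $0$ and the operations extend those of $\alg L$ by the formulas given there. Specializing $\alg L := \alg C'$, $\ell := \ell_I$, $r := r_I$, the domain of $\alg C''$ is precisely $(C - I) \cup \{0\}$, and the second lemma already certifies that $(\alg C'', \{0\})$ is an upper-compatible pair. This is exactly the assertion to be proved.

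I do not anticipate any genuine obstacle, since all the real work is discharged inside the two lemmas and the corollary is merely the remark that their constructions chain together. The only point deserving a line of care is to make explicit that the structure denoted $(\alg C - I) \cup \{0\}$ in the statement is understood to be the IRL produced by the second lemma's construction, rather than some a priori different structure on the same underlying set; once this identification is recorded, the proof closes immediately.
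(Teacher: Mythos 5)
Your proposal is correct and is exactly the argument the paper intends: the corollary is stated without proof precisely because it follows by chaining the two preceding lemmas, just as Corollary~\ref{c:multcomp} follows on the lower side. Your closing remark---that $(\alg C - I) \cup \{0\}$ must be read as the IRL constructed in the second lemma---is the right identification and is all the care the step requires.
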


\subsection{The gluing over a filter-ideal pair}
We are now ready to introduce the gluing construction over a congruence filter and an ideal, which is depicted in Figure \ref{fig:gluing1}. To ease the notation, we write the pair of the filter $F$ and the ideal $I$ as $P$: $$P := (F, I)$$ 
We can then define the gluing of $\alg B$ and $\alg C$ over the pair $P = (F, I)$, or $(F-I)$-gluing of $\alg B$ and $\alg C$, as the structure
 $\alg B  \oplus_{P} \alg C$ where the operations extend the ones of $\alg B$ and $\alg C$ as follows:
\begin{figure}
\begin{center}
\begin{tikzpicture}
\footnotesize{
 \draw  [dashed](0,0.5) ellipse (0.3 and 0.4);
 \draw  (0,1.5) ellipse (0.3 and 0.4);
  \draw  (0,-0.5) ellipse (0.3 and 0.4);
  \fill (0,1.9) circle (0.05);
 \node at (3,1.5) {$F$}; 
  \node at (0,1.5) {$F$}; 
   \node at (6,1.9) {$F$}; 
 \node at (3,-0.5) {$I$}; 
  \node at (0,-0.5) {$I$}; 
   \node at (6,-1) {$I$};
 \node at (0.15,2.1) {$1$}; 
  \node at (0,-1.7) {$\alg B$};
   \node at (3,0.6) {$C^{-}$}; 
  \node at (0,0.6) {$B^{-}$}; 
     \node at (6,1) {$C^{-}$}; 
  \node at (6,0) {$B^{-}$}; 
  \node at (1.5,0.5) {$\oplus_{P}$};
  \draw [densely dotted] (3,0.5) ellipse (0.3 and 0.4);
    \draw  (3,-0.5) ellipse (0.3 and 0.4);
  \draw  (3,1.5) ellipse (0.3 and 0.4);
  \fill (3,1.9) circle (0.05);
 \node at (3.15,2.1) {$1$}; 
  \node at (3,-1.7) {$\alg C$};
   \node at (4.5,0.5) {${=}$};
    \draw  [dashed](6,-0.05) ellipse (0.3 and 0.4); %pattern=dots
    \draw [densely dotted] (6,0.95) ellipse (0.3 and 0.4);
%     \fill (6,1.5) circle (0.05);
 \node at (6.15,2.5) {$1$}; 
 \draw  (6,1.9) ellipse (0.3 and 0.4);
 \draw  (6,-1) ellipse (0.3 and 0.4); 
  \node at (6,-1.8) {$\alg B \oplus_{P} \alg C$};
\fill (6,2.3) circle (0.05);  
   }
 \end{tikzpicture} \caption{The gluing $\alg B \oplus_{P} \alg C$ of the algebras $\alg B$ and $\alg C$ over the pair $P = (F, I)$.} \label{fig:gluing1}
   \end{center}  
 \end{figure}
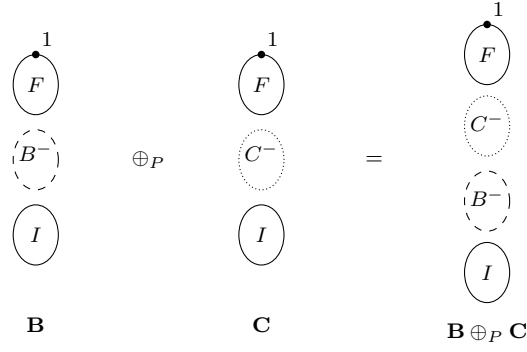

\begin{align*}
x \cdot  y &=\left\{\begin{array}{ll}
x \cdot y & \mbox{ if } x, y \in  B, \mbox{ or } x, y \in  C\\
\sigma_{F}(x) & \mbox{ if } y \in C^{-}, x \in B^{-}\\
\sigma_{F}(y) & \mbox{ if } x \in C^{-}, y \in B^{-}\end{array}\right.\\
x\back \, y &=\left\{\begin{array}{ll} 
x \back y & \mbox{ if } x, y \in  B, \mbox{ or } x, y \in  C\\
\gamma_{F}(y) & \mbox{ if } y \in B^{-} \mbox { and } x \in C^{-} \mbox{ is not a left $I$-divisor} \\
\ell_{I}(x) & \mbox{ if } y \in B^{-} \mbox { and } x \in C^{-} \mbox{ is a left $I$-divisor}\\
1 & \mbox{ if } x \in B^{-}, y \in C^{-} \end{array}\right.\\
x/ \, y &=\left\{\begin{array}{ll} 
x / y & \mbox{ if } x, y \in  B, \mbox{ or } x, y \in  C\\
\gamma_{F}(x) & \mbox{ if } x \in B^{-} \mbox { and } y \in C^{-} \mbox{ is not a right $I$-divisor} \\
r_{I}(y) & \mbox{ if } x \in B^{-} \mbox { and } y \in C^{-} \mbox{ is a right $I$-divisor} \\
1 & \mbox{ if } x \in C^{-}, y \in B^{-} \end{array}\right.\\
x \land  y &=\left\{\begin{array}{ll}
x \land y & \mbox{ if } x, y \in  B, \mbox{ or } x, y \in  C \mbox{ with } x \land y \not\in I\\
\top_{B} & \mbox{ if } x, y \in  C^{-},  x \land y \in I\\
x & \mbox{ if } x \in B^{-}, y \in C^{-}\\
y & \mbox{ if } y \in B^{-}, x \in C^{-}\end{array}\right.\\
x \lor  y &=\left\{\begin{array}{ll}
x \lor y & \mbox{ if } x, y \in  C, \mbox{ or } x, y \in  B \mbox{ with } x \lor y \not\in F\\
\bot_{C} & \mbox{ if } x, y \in  B^{-},  x \lor y \in F\\
y & \mbox{ if } x \in B^{-}, y \in C^{-}\\
x & \mbox{ if } y \in B^{-}, x \in C^{-}\end{array}\right.
\end{align*}

\begin{theorem}\label{prop:quadruplegluing}
If $(\alg B, F, \alg C, I)$ is a compatible quadruple, then $\alg B \oplus _{P} \alg C$ is the gluing of $\alg B$ and $\alg C$ over $F$ and $I$. 
\end{theorem}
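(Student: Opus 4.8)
The plan is to show that the explicitly defined structure $\alg B \oplus_{P} \alg C$ is an integral residuated lattice whose underlying poset, congruence filter $F$, and ideal $I$ satisfy the defining clauses of an $F$-$I$-gluing; since such a gluing is unique when it exists (as recorded in the proposition preceding Lemma~\ref{bounded-lemma}), this is enough to identify $\alg B \oplus_{P} \alg C$ as \emph{the} gluing. Thus the real content is the verification of the residuated-lattice axioms, which I would carry out by a case analysis according to the region ($I$, $B^{-}$, $C^{-}$, or $F$) to which each element involved belongs. The skeleton of the argument parallels that of Proposition~\ref{prop:partial1}, but with the additional bookkeeping forced by the ideal and by the maps $\ell_{I}, r_{I}$.

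First I would settle the lattice structure. The order is the prescribed one ($i < b < c < f$), and meets and joins agree with those of $\alg B$ and $\alg C$ except for the two redefined cases: a join of two elements of $B^{-}$ that would land in $F$ is set to $\bot_{C}$, and a meet of two elements of $C^{-}$ that would land in $I$ is set to $\top_{B}$. That these are genuine least upper and greatest lower bounds is exactly what compatibility conditions~\ref{compat-cond-2} and~\ref{compat-cond-3} guarantee, since they provide $\bot_{C}$ and $\top_{B}$; all mixed meets and joins are then forced by the total order between $B^{-}$ and $C^{-}$, and integrality is immediate. Next I would establish the monoid structure, where the only real work is associativity. When all three factors lie in $\alg B$ or all in $\alg C$ the identity is inherited; the mixed cases collapse, since any product with a factor in $C^{-}$ and a factor in $B^{-}$ equals $\sigma_{F}$ of the $B$-component, and one reassociates using the strong-conucleus identities and idempotency of $\sigma_{F}$ from Lemma~\ref{bounded-lemma}(\ref{conuclei-abs-b}), together with Lemma~\ref{bounded-lemma}(\ref{condC-}) to keep products $cf, fc$ inside $C^{-}$. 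The genuinely new wrinkle is a triple in which a product of two $C^{-}$ elements falls into $I$: here condition~\ref{compat-cond-ideal} is precisely what makes both bracketings reduce to $\sigma_{F}$ of the $B^{-}$ factor.

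The heart of the proof is residuation: for all $x,y,z$ one must check $xy \leq z \Leftrightarrow y \leq x \backslash z \Leftrightarrow x \leq z/y$, again by cases on the regions of $x,y,z$. Cases confined to $\alg B$ or to $\alg C$ use residuation in the respective algebra; the filter-gluing mixed cases are handled by the residuated pair $(\sigma_{F}, \gamma_{F})$ exactly as in the pure $F$-gluing, using that $c \backslash b = \gamma_{F}(b)$ for a non-$I$-divisor $c$ (Lemma~\ref{lemma:filtercompatible2}). I expect the main obstacle to be the $I$-divisor cases, where the divisions branch: $x \backslash y$ equals $\gamma_{F}(y)$ or $\ell_{I}(x)$ according to whether $x \in C^{-}$ is a left $I$-divisor, and symmetrically for $z/y$ and $r_{I}$. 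Verifying residuation across this branch is where the Galois connection of $(\ell_{I}, r_{I})$ (property~\ref{propUC1} of an upper-compatible triple) together with the identification $c \backslash b = \ell_{I}(c)$ of Lemma~\ref{lemma:opideal} must be combined with the residuated pair $(\sigma_{F}, \gamma_{F})$; keeping track of which elements are $I$-divisors, and hence which defining branch is active on each side of the equivalence, is the delicate point.

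Finally I would close the loop. Order-preservation of multiplication and the order behaviour of the divisions follow from monotonicity of $\sigma_{F}, \gamma_{F}, \ell_{I}, r_{I}$, completing the proof that $\alg B \oplus_{P} \alg C$ is an IRL. It then remains to check the structural side conditions of an $F$-$I$-gluing: that $B \cap C = F \cup I$ with the stated order, that $F$ is a congruence filter and $I$ a lattice ideal of the resulting algebra, and that $\alg C$ is a subalgebra except possibly for $\land$ while $\alg B$ is a subalgebra except possibly for $\lor$. These are direct consequences of the operation tables, the placement of $\bot_{C}$ and $\top_{B}$, and the fact that $\sigma_{F}, \gamma_{F}, \ell_{I}, r_{I}$ return values in the intended regions. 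Together these establish that $\alg B \oplus_{P} \alg C$ is an $F$-$I$-gluing, and uniqueness identifies it as the gluing.
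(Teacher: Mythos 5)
Your proposal is correct and follows essentially the same route as the paper's proof: a region-by-region case analysis ($I$, $B^{-}$, $C^{-}$, $F$) verifying the lattice, monoid, and residuation axioms, powered by Lemma~\ref{bounded-lemma}, Lemma~\ref{lemma:filtercompatible2}, Lemma~\ref{lemma:opideal}, the residuated pair $(\sigma_{F},\gamma_{F})$, the Galois connection $(\ell_{I},r_{I})$, and compatibility condition~\ref{compat-cond-ideal} for the $I$-divisor cases. You correctly single out the same delicate points the paper handles, namely the redefined joins/meets via $\bot_{C}$ and $\top_{B}$ and the branching of the divisions according to $I$-divisibility.
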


\begin{proof}
We show that $\alg B \oplus_P  \alg C$ is an IRL. 
The fact that $\alg B \oplus_P  \alg C$ has an underlying lattice structure is guaranteed by the order properties of the compatible quadruple. In particular, it follows that $F$ is strictly above all other elements of $B$ and $C$ implies that if $x, y \in B-F$ (or $x, y \in C-F$) and $x \lor y \in F$, then $F$ has a bottom element $\bot_{F}$ and $x \lor y = \bot_{F}$. Also, if there are $x, y \in B-F$ with $x \lor_{B} y \in F$, then $x \lor y = \bot_{C}$. This is not in conflict with the definition of the operations, since we can show that $\sigma_{F}(x)= \bot_{F} \cdot x = x  \cdot \bot_{F}$ and $\gamma_{F}(x) = \bot_{F} \back x = x /\bot_{F}$ in $\alg B$.
First, it is easy to see that $\bot_{F}  \cdot x \in [x]_{F}$; indeed, $(\bot_{F}  \cdot x) \back x = 1$ and $\bot_{F} \leq x \back (\bot_{F} \cdot  x)$, thus both $(\bot_{F}  \cdot x) \back x$ and $x \back( \bot_{F} \cdot  x)$ are in $F$. 
Moreover, $(\bot_{F} \cdot  x) \back \sigma_{F}(x) = \bot_{F} \back (x \back \sigma_{F}(x)) = 1$ since $x \back \sigma_{F}(x) \in F$ and thus   $\bot_{F} \cdot  x \leq \sigma_{F}(x)$. Therefore, $\sigma_{F}(x)= \bot_{F} \cdot x$ and the proof for $x  \cdot \bot_{F}$ is analogous.

We now show that $\gamma_{F}(x) = \bot_{F} \back x$; the proof for $x /\bot_{F}$ is similar. Since $x \leq \bot_{F} \back x$, we get $x \back (\bot _{F}\back x)=1 \in F$, and we also have $(\bot_{F} \back x) \back x \geq \bot_{F}  \in F$; hence $ \bot _{F}\back x \in [x]_{F}$.
Moreover, $\gamma_{F}(x) = \max[x]_{F} \leq \bot_{F} \back x$, or equivalently, $\bot_{F} \gamma_{F}(x) \leq x$, since $\bot_{F} \gamma_{F}(x) = \min[\gamma(x)]_{F} = \min[x]_{F} = \sigma_{F}(x) \leq x$.

Similarly, since $I$ is strictly below all other elements of $B$ and $C$,  if $x, y \in B-I$ (or $x, y \in C-I$) and $x \land y \in I$, then $I$ has a top element $\top_{I}$ and $x \land y = \top_{I}$. Thus given $x, y \in C-I$ with $x \land_{C} y \in I$, the meet is redefined as $x \land y = \top_{B}$. This is not in conflict with the definition of the operations, due to Lemma \ref{bounded-lemma} (\ref{condC-}), and Lemma \ref{lemma:filtercompatible2}.

Also, it is clear that $1$ is both the monoidal unit and the top element of the lattice. 

To prove associativity, we need to show that for every $x,y,z \in B \oplus_P  C$, $(x   y)   z = x   (y  z)$. We distinguish the following cases.
\begin{itemize}
\item Let $x \in F, y \in C^{-}, z \in B^{-}$. Then $(x   y)   z = \sigma_F(z)$, since $xy \in C$ from Lemma~\ref{bounded-lemma}(\ref{condC-}).  
Now, $x   (y   z) = x \sigma_F(z) = \sigma_F(z)$, given 
Lemma~\ref{bounded-lemma}(\ref{conuclei-abs-b}). Similarly we can show the cases where: $x \in B^{-}, y \in C^{-}, z \in F$; $x \in F, y \in B^{-}, z \in C^{-}$; $x \in C^{-}, y \in F, z \in B^{-}$, $x \in C^{-}, y \in B^{-}, z \in F$; $x \in B^{-}, y \in F, z \in C^{-}$.
\item Let $x, y \in C^{-}, z \in B^{-}$. We have that $(x  y)   z = \sigma_F(z)$,  if either $xy \in C^{-}$ (by definition) or if $xy \in I$ (by the compatibility condition \ref{compat-cond-ideal} for the quadruple). On the other hand, $x  (y   z) = x \sigma_F(z) = \sigma_F(\sigma_F(z))= \sigma_F(z)$.
The proof is analogous for the case: $x \in B^{-}, y,z \in C^{-}$.
\item If $x \in C^{-}, y,z \in B^{-}$, then $(x   y)   z = \sigma_F(y) z = \sigma_F(yz)$, given that $\sigma_F$ is a strong conucleus. Also, $x   (y  z) = \sigma_F(yz)$, if either $ yz \in B^{-}$  (by definition) or $yz \in I$ (by Lemma \ref{lemma:filtercompatible2}).  We get a similar proof for the cases: $x, y \in B^{-}, z \in C^{-}$; $x \in B^{-}, y \in C^{-}, z \in I$; $x \in I, y \in C^{-}, z \in B^{-}$; $x \in I, y \in B^{-}, z \in C^{-}$; $x \in B^{-}, y \in I, z \in C^{-}$; $x, z \in B^{-}, y \in C^{-}$; $x, z \in C^{-}, y \in B^{-}$. 
\item Since both $\alg B$ and $\alg C$ are subalgebras with respect to multiplication, the remaining cases hold automatically.
\end{itemize}
We now prove that for all $x, y, z$, $$ x \cdot  y \leq z \mbox{ iff } x \leq z / y \mbox{ iff } y \leq x \back  z$$
We have the following cases:
\begin{itemize}
\item Let $x \in F, y \in C^{-}, z \in B^{-}$. Then it never happens that $x \cdot  y \leq z$, by Lemma \ref{bounded-lemma}(\ref{condC-}). The other inequalities are also false by definition and order preservation. An analogous case is given by $x \in C^{-}, y \in F, x \in B^{-}$.
\item Let $x \in F, y \in B^{-}, z \in C^{-}$. Then all three inequalities are always true, given the definition of the operations and order preservation. Similar cases are given by: $x \in C^{-}, y \in B^{-}, z \in F$; $x, z \in C^{-}, y \in B^{-}$; $x \in B^{-}, y \in F, z \in C^{-}$; $x \in B^{-}, y \in C^{-}, z \in F$; $ x \in B^{-}, y,z \in C^{-}$; $x, y \in B^{-}, z \in C^{-}$; $x \in B^{-}, y \in I, z \in C^{-}$; $x \in I, y \in C^{-}, z\in B^{-}$; $x \in I, y \in B^{-}, z \in C^{-}$; $x \in C^{-}, y \in I, z \in B^{-}$. 
\item Let $x, y \in C^{-}, z \in B^{-}$.
We distinguish two cases, based on whether $xy \in I$ or not. If  $xy = i$ for some $i \in I$, then all inequalities hold. Indeed $xy = i\leq z$; moreover $xy = i$ implies $y \leq x \back i \leq \ell_{I}(x) = x \back  z$ and similarly $x \leq i/y \leq r_{I}(y) = z/ y$.
If $xy \in C^{-}$, none of the inequalities hold. Indeed, $xy \not\leq z$ by definition of the order. Moreover, if $y \leq x \back  z$ the only possibility by definition is that $x \back  z = \ell_{I}(x)$, but $x \ell_{I}(x) \in I$ thus we would have $xy \in I$, a contradiction. Similarly it cannot be that $x \leq z / y$. 

\item Let $x \in C^{-}, y,z \in B^{-}$. To show that $ x  y = \sigma_F(y) \leq z $ iff $x \leq z / y$ it suffices to note that, equivalently, $\sigma_F(y) \leq z$ iff $z/y \in F$. Indeed since $\sigma_F(y)/y \in F$, we have that $\sigma_F(y) \leq z$ implies $z/y \in F$. Conversely, if $z/y \in F$ then there is $f \in F$ such that $f \leq z/y$, thus $fy \leq z$, and so $\sigma_F(y) = \sigma_F(fy) \leq fy \leq z$ (where in the first equality we used Lemma~\ref{bounded-lemma}(\ref{conuclei-abs-b})).

We now show that $x \cdot  y = \sigma_F(y) \leq z$ iff $y \leq x\back  z$. If $\sigma_F(y) \leq z$, then since $\sigma_F$ and $\gamma_F$ form a Galois connection we get $y\leq  \gamma_F(z) \leq x\back  z$, where the second inequality holds because $\gamma_F(z) = x\back z$ or $x\back z\in C^{-}$.
Conversely, assume $y \leq x\back  z$. If $x\back  z = \ell_{I}(x)$, then by definition $x$ is a left $I$-divisor thus there is a $c \in C^{-}$ such that $xc \in I$, thus by the compatibility condition (\ref{compat-cond-ideal}) for the quadruple $\sigma_F(y) \in I$ thus $\sigma_F(y) \leq z$. Otherwise, if $x\back z  = \gamma_F(z)$ then $y \leq \gamma_F(z)$, so by the Galois connection we have that $\sigma_F(y) \leq z$.

An analogous case is given by $x, z \in B^{-}, y \in C^{-}$.
\item Let $x \in C^{-}, y \in B^{-}, z \in I$. The fact that $x y \leq z$ iff $x \leq z / y$ can be shown, as in the previous case, using the fact that  $\sigma_F(y) \leq z$ if and only if $z/y \in F$.

We now show that $xy = \sigma_F(y) \leq z$ iff $y \leq x \back z$. Since $\alg C$ is a subalgebra, either $x \back z \in C^{-}$ or $x \back z \in I$. If $x \back z \in C^{-}$ then clearly $y \leq x \back z$ and by the compatibility condition \ref{compat-cond-ideal} for the quadruple, we get $ \sigma_F(y) = xy \leq x (x\back z) \leq z$.

Otherwise, we have $x \back z \in I$, so $y\not\leq x \back z$, since $y \in B^{-}$. Moreover, $x \back z = \gamma_{F}(z)$ from Lemma \ref{lemma:filtercompatible2}. Thus since $y \not\leq \gamma_{F}(z)$, we get that $\sigma_{F}(y) \not\leq z$, given that the two operators are a residuated pair.

Similarly we prove residuation for the case: $x \in B^{-}, y \in C^{-}, z \in I$.
\item Since both $\alg B$ and $\alg C$ are subalgebras for the divisions the other cases do not need to be checked.
\end{itemize}
Thus, $\alg B \oplus_P \alg C$ is an integral residuated lattice.\end{proof}
Notice that in the case where $\II$ is empty, we get the proof of Theorem \ref{prop:gluingfilter}. Indeed, if $I$ is empty, no elements of $C-F$ are $I$-divisors and thus we obtain exactly the hypothesis of Theorem \ref{prop:gluingfilter}.
\subsection{A gluing of partial algebras}
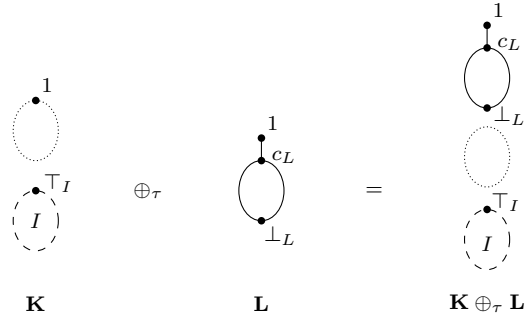
\begin{figure}
\begin{center}
\begin{tikzpicture}
% \draw [] (0,0) grid (3,2);
\footnotesize{
 \draw  [dashed](0,0.1) ellipse (0.3 and 0.4);
 \draw  [densely dotted](0,1.3) ellipse (0.3 and 0.4);
  \fill (0,1.7) circle (0.05);
   \fill (0,0.5) circle (0.05);
  \node at (0,0.1) {$I$}; 
   \node at (6,-0.2) {$I$}; 
 \node at (0.15,1.9) {$1$}; 
  \node at (0,-1) {$\alg K$};
  \node at (0.3,0.6) {$\top_I$}; 
  \node at (6.25,0.35) {$\top_I$}; 
  \node at (1.5,0.5) {$\oplus_{\tau}$};
  \draw  (3,0.5) ellipse (0.3 and 0.4);
  \fill (3,0.9) circle (0.05);
    \fill (3,0.1) circle (0.05);
    \fill (6,1.6) circle (0.05);
    \fill (6,0.25) circle (0.05);
\draw (3,0.9)--(3,1.2);
\fill (3,1.2) circle (0.05);
 \node at (3.3,0.95) {$c_L$}; 
  \node at (3.15,1.4) {$1$}; 
  \node at (3.25,-0.1) {$\bot_L$}; 
  \node at (6.3,1.5) {$\bot_L$}; 
  \node at (3,-1) {$\alg L$};
   \node at (4.5,0.5) {${=}$};
    \draw  [dashed](6,-0.15) ellipse (0.3 and 0.4); %pattern=dots
    \draw [densely dotted] (6,0.95) ellipse (0.3 and 0.4);
 \node at (6.3,2.45) {$c_L$};
  \node at (6.15,2.9) {$1$}; 
 \draw (6,2.4)--(6,2.7);
\fill (6,2.7) circle (0.05);
 \draw  (6,2) ellipse (0.3 and 0.4);
  \node at (6,-1) {$\alg K \oplus_{\tau}\alg L$};
\fill (6,2.4) circle (0.05);  
   }
 \end{tikzpicture} \caption{The partial gluing $\alg K \oplus_{\tau}\alg L$, in the case where $\alg L$ has a coatom $c_L$.} \label{fig:gluing2}
   \end{center}  
\end{figure}
In this section, we obtain a different construction that glues together two structures that intersect at the top $1$ and keeps the same order relation, but where some of the divisions are redefined. The underlying idea is to forget the filter $F$ from the previous construction. See Figure \ref{fig:gluing2} for a pictorial intuition.

We start from a lower-compatible triple $(\alg K, \sigma, \gamma)$ and an upper-compatible triple $(\alg L, \ell, r)$.  Recall that in upper-compatible triples already some divisions are not defined. Here we will allow other divisions not to be defined. Precisely, we shall say that $x \back y$ is \emph{strongly undefined} (in order to distinguish this case in the definition of the operations) if all elements $z$ in the interval $[y, 1) = \{z \in L: y \leq z < 1\}$ are such that $xz \leq y$ and there is no coatom. Similarly, $y /x$ is strongly undefined if all elements $z$ the interval $[y, 1)$ are such that $zx \leq y$ and $\alg L$ has no coatom.
We assume: 
\begin{enumerate}
\item[(A1)] $\alg K$ has an ideal $I \subseteq K$ with an idempotent top element $\top_{I}$ such that $\sigma(\top_{I}) = \top_{I}$.
\item[(A2)] If there are undefined products in $L$, then $\sigma(x) = \top_{I}$ for all $x \in K-I$ and $\sigma(\top_{I}y)= \sigma(y\top_{I}) = \sigma(y)$ if $y \in I$.
\item[(A3)]  If there exists $x, y \in K - \{1\}$ such that $x \lor y = 1$, then $L$ has a bottom element $\bot_L$.
\item[(A4)]  If $\alg L$ has undefined meets, $\alg K$ has a splitting coatom $c_K$.
\end{enumerate}
Moreover, we assume that $K \cap L = \{1\}$, we set $\tau := (\sigma, \gamma, \ell, r)$ and we define the \emph{partial gluing} $\alg K \oplus_{\tau}  \alg L$ to be the structure where the operations extend the ones of $\alg K$ and $\alg L$ in the following way, here $D_\ell$ and $D_r$ denote the domains of $\ell$ and $r$, respectively:
\begin{align*}
x y &=\left\{\begin{array}{ll}
x  y & \mbox{ if } x, y \in  K \mbox{ or } x, y \in  L \mbox{ and $xy$ is defined}\\
\sigma(x) & \mbox{ if } y \in L - \{1\}, x \in K - \{1\}\\
\sigma(y) & \mbox{ if } x \in L - \{1\}, y \in K - \{1\}\\
\top_{I} & \mbox{ if } x, y \in L \mbox{ and $xy$ is undefined}\\\end{array}\right.\\
x\back\, y &=\left\{\begin{array}{ll} 
x \back y & \mbox{ if } x, y \in  K \mbox{ or } x, y \in  L, \mbox{ and $x \back y$ is defined }\\
c_{L} & \mbox{ if } x, y \in  K, L \mbox{ has a coatom } c_{L} \mbox{ and $x \back y$ is undefined}\\
\ell(x) & \mbox{ if } x \in L - \{1\}, y \in K - \{I \cup 1\}  \mbox{ and } x \in D_{\ell}, \mbox{ or } x, y \in L \mbox{ and $x \back y$ undefined} \\
\gamma(y) & \mbox{ if } x \in L - \{1\}, y \in I \mbox{ or }  (y \in K - \{1\} \mbox{ and } x \not\in D_{\ell})\\
1 & \mbox{ if } x \in K - \{1\}, y \in L\\
\mbox{ undefined } & \mbox{ if } x,y \in L, \mbox{ and $x \back y$ strongly undefined} \end{array}\right.\\
y/\, x &=\left\{\begin{array}{ll} 
y / x & \mbox{ if } x, y \in  K \mbox{ or } x, y \in  L, \mbox{ and $y / x$ is defined }\\
 c_{L} & \mbox{ if } x, y \in  K, L \mbox{ has a coatom } c_{L} \mbox{ and $y/x$ is undefined}\\
r(x) & \mbox{ if } x \in L - \{1\}, y \in K - \{I \cup 1\}  \mbox{ and } x \in D_{r} \mbox{ or } x, y \in L \mbox{ and $y /x$ undefined} \\
\gamma(y) & \mbox{ if } x \in L - \{1\}, y \in I \mbox{ or } (\in K - \{1\} \mbox{ and } x \not\in D_{r}) \\
1 & \mbox{ if } x \in K - \{1\}, y \in L\\
\mbox{ undefined } & \mbox{ if } x,y \in L, \mbox{ and $y /x$ strongly undefined} \end{array}\right.\\
x \land y &=\left\{\begin{array}{ll}
x \land y & \mbox{ if } x, y \in  K, \mbox{ or } x, y \in  L\\
c_K & \mbox{ if } x, y \in L \mbox{ and } x\land y \mbox{ undefined}\\
y & \mbox{ if } y \in K - \{1\}, x \in L\\
x & \mbox{ if } x \in K - \{1\}, y \in L\end{array}\right.\\
x \lor y &=\left\{\begin{array}{ll}
x \lor y & \mbox{ if } x, y \in  K, \mbox{ or } x, y \in  L\\
x & \mbox{ if } y \in K -\{1\}, x \in L\\
y & \mbox{ if } x \in K -\{1\}, y \in L\\
\bot_L & \mbox{ if } x \lor y = 1 \mbox{ in } K.\end{array}\right.
\end{align*}

\begin{theorem}\label{prop:partialgluing}
$\alg K \oplus_{\tau}  \alg L$ is a partial IRL, that is total if $L$ has a coatom.
\end{theorem}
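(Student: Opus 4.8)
The plan is to verify directly that $\alg K \oplus_\tau \alg L$ satisfies every clause of Definition~\ref{def:partial}, by a case analysis on the location of the arguments in $K$ or $L$. Since $\oplus_\tau$ is obtained by superimposing the two earlier mechanisms --- it ``forgets the filter'' as in the partial upper gluing $\oplus_\pi$ of Proposition~\ref{prop:partial1} while retaining the ideal as in the gluing $\oplus_P$ of Theorem~\ref{prop:quadruplegluing} --- most cases can be imported almost verbatim from those two proofs. First I would record the structural preliminaries: the $\land$/$\lor$ clauses (including the redefinitions forced by (A3), where joins equal to $1$ in $K$ are set to $\bot_L$, and by (A4), where undefined meets in $L$ are set to $c_K$) make the carrier a lattice with top $1$ in which every element of $K-\{1\}$ lies below all of $L$; integrality is immediate. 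Crucially, the only partial operations are the divisions: a product $xy$ of two $L$-elements that is undefined in $\alg L$ is redefined to the idempotent ideal-top $\top_I$, so multiplication is total.

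The substantive work is associativity and residuation. For associativity I would treat $(x,y,z)$ case by case. When all three lie in $K$, or all in $L$, the identity follows from $\alg K$, respectively $\alg L$, being a (partial) monoid; the mixed cases where no $L$-product collapses reduce, through the product clauses, to the strong-conucleus equalities $x\sigma(y)=\sigma(xy)=\sigma(x)y$ and the idempotency of $\sigma$, exactly as in Proposition~\ref{prop:partial1}. The genuinely new and delicate cases are those in which a product of two $L$-elements collapses to $\top_I$ and is then multiplied by a $K$-element. Here I would lean on (A1), (A2) and property~(\ref{property2UPT}c): for instance, if $x,y\in L-\{1\}$ with $xy$ undefined and $z\in K-\{1\}$, then $(xy)z=\top_I z$ while $x(yz)=x\,\sigma(z)=\sigma(z)$, and the two coincide because $\top_I z\leq\sigma(z)$ by~(\ref{property2UPT}c), whereas $\sigma(z)\leq\top_I z$ follows from $\sigma(\top_I z)=\sigma(z)$ (this uses (A2) when $z\in I$, and $\sigma(z)=\top_I$ together with strong conuclearity when $z\notin I$) and the fact that $\sigma$ is decreasing.

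For residuation I would prove $xy\leq z \Leftrightarrow x\leq z/y \Leftrightarrow y\leq x\back z$ whenever all of $xy$, $z/y$, $x\back z$ are defined, again by cases. The cases internal to $\alg K$ or $\alg L$ follow from residuation in $\alg K$ and from Lemma~\ref{lemma:partialres}; the mixed cases in which a division returns $\gamma(y)$, $\ell(x)$, $r(x)$ or the coatom $c_L$ run parallel to the corresponding cases of Theorem~\ref{prop:quadruplegluing}, using that $(\sigma,\gamma)$ is a residuated pair and that $(\ell,r)$ is a Galois connection given by the maxima defining $\ell_I,r_I$. The two new points are: (i) checking that residuation still holds when a division has been forced to the value $\ell(x)$ (respectively $r(x)$) because the ambient $L$-division was undefined --- here I would invoke Properties~\ref{propassoc}, \ref{propassoc2} and~\ref{prop-orderell} of the upper-compatible triple; and (ii) observing that in the strongly undefined branch the standing hypothesis ``the relevant division is defined'' simply discards that branch, so nothing is to be proved there.

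It then remains to check the order clauses of Definition~\ref{def:partial}: multiplication is monotone because $\sigma$ is monotone and $xy,yx\leq\sigma(x)$, and the divisions are monotone in the numerator and antitone in the denominator since $\gamma$ and the genuine residuals are monotone while $\ell,r$ are order-reversing on their downward-closed domains. Finally, totality when $\alg L$ has a coatom is immediate: no division is then strongly undefined, so the single partial branch never fires and, since products are always total, $\alg K \oplus_\tau \alg L$ is a (total) IRL. I expect the main obstacle to be bookkeeping rather than any single deep step --- specifically, verifying that the hypotheses (A1)--(A4) are invoked exactly where the two gluing mechanisms meet, above all in the associativity cases where an $L$-product sinks into the ideal-top $\top_I$ and must be reconciled with a $\sigma$-value produced by the lower part.
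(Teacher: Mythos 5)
Your overall strategy is the same as the paper's: a direct case-by-case verification of the clauses of Definition~\ref{def:partial}, importing the mixed cases from Proposition~\ref{prop:partial1} and Theorem~\ref{prop:quadruplegluing}, and your treatment of the collapse cases (an undefined $L$-product becoming $\top_I$ and then being multiplied by a $K$-element) is correct; indeed you justify the key identity $\top_I z=\sigma(z)$ somewhat more carefully and uniformly than the paper does. However, there is one genuine gap: the case $x,y,z\in L$. You dismiss it with ``the identity follows from $\alg L$ being a (partial) monoid,'' but that is not enough. In the gluing, products of $L$-elements are totalized: an undefined $L$-product is set equal to $\top_I\in K$, after which any further multiplication by an element of $L$ returns $\sigma(\top_I)=\top_I$. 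So associativity for a triple in $L$ requires proving that $xy$ and $(xy)z$ are defined in $\alg L$ if and only if $yz$ and $x(yz)$ are; if this matching of definedness failed, one side of the associativity law would be an element of $L$ while the other would be $\top_I\in K$, and since $K\cap L=\{1\}$ these are distinct. Definition~\ref{def:partial}(2b) only asserts $(xy)z=x(yz)$ \emph{when all four products are defined}, and the matching property is not an axiom of upper-compatible triples either --- the paper's remark that the partial IRLs arising in its constructions are partial monoids ``in the stronger sense'' is an observation about the constructed algebras, not a hypothesis you may invoke for an abstract $\alg L$. It must be derived, and that derivation is precisely the first and longest case of the paper's associativity proof: assuming, say, $xy$ defined while $yz$ or $x(yz)$ is undefined, one uses Property~\ref{propUC3} to translate undefinedness into inequalities involving $\ell$ and $r$, and then the Galois connection (Property~\ref{propUC1}), Lemma~\ref{lemma:partialres}, and Properties~\ref{propassoc} and~\ref{propassoc2} to conclude that $(xy)z$ is undefined as well.

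A smaller instance of the same oversight occurs in order-preservation of multiplication: for $x\leq y$ and $z$ all in $L$ with $yz$ undefined, one needs the downward closure of the domains $D_\ell$, $D_r$ (property (2) of upper-compatible triples) to conclude that $xz$ is also undefined, hence $xz=\top_I=yz$; your stated justification via monotonicity of $\sigma$ and property~(\ref{property2UPT}c) only addresses the cases involving $K$, and you invoke the downward-closed domains only in connection with the divisions. These are fixable omissions rather than wrong steps, but as written the proposal leaves out the argument on which the paper's proof spends most of its effort.
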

\begin{proof}
Notice first that all operations are defined except possibly the divisions.
It is easy to see that the operations $\land, \lor$ define a lattice order, with $1$ being the top. 
Let us now prove associativity of multiplication.
\begin{itemize}
\item Suppose $x,y,z \in L$. Then by definition:
  \begin{align*}
  (x  y) z &=\left\{\begin{array}{ll}
(xy)z & \mbox{ if } xy \mbox{ and } (xy)z \mbox{ are defined in } L\\
%\sigma(\top_{I}) & \mbox{ if } xy \mbox{ is undefined}\\
%\top_{I} & \mbox{ if } xy \mbox{ is defined and} (xy)z \mbox{ is undefined}\end{array}\right.\\
\sigma(\top_{I} ) = \top_{I} & \mbox{ otherwise }\end{array}\right.\\
  x  (y z) &=\left\{\begin{array}{ll}
x(yz) & \mbox{ if } yz \mbox{ and } x(yz) \mbox{ are defined in } L\\
\sigma(\top_{I} ) = \top_{I} & \mbox{ otherwise }\end{array}\right.
\end{align*}
 We will show that $xy$ and $(xy)z$ are defined if and only if $yz$ and $x(yz)$ are defined, or equivalently, $yz$ or $x(yz)$ is undefined iff $xy$ or $(xy)z$ is undefined. Notice that when $(xy)z$ and $x(yz)$ are defined they coincide, since in upper compatible triples the IRL axioms hold whenever the operations involved are defined. 

We show the left-to-right direction first: we assume that $yz$ is undefined or $x(yz)$ is undefined; we also assume that $xy$ is defined. If $yz$ is undefined, then $z \in D_{r}$ and $y \leq r(z)$. Since $xy \leq y$, we get $xy \leq r(z)$, so $(xy)z$ is undefined. If instead $yz$ is defined and $x(yz)$ is undefined, we have that $yz \in D_{r}$ and $x\leq r(yz)$. Then by Property \ref{propUC1} of an upper compatible triple, $\ell(x)$ is defined and $yz \leq \ell(x)$. Suppose $r(z)$ is defined, thus using Lemma \ref{lemma:partialres} and Property \ref{propassoc}, $y \leq \ell(x)/z = x \back r(z)$ and then $xy \leq r(z)$; thus $(xy)z$ is undefined. We also show that $r(z)$ is necessarily defined, indeed if $r(z)$ is undefined, $\ell(x) /z = \ell(x)$, and $y \leq \ell(x)$ implies $x \leq r(y)$, thus $xy$ undefined. 

The other direction is proved in a similar way.

\item For $x, y \in L$ and $z \in K-I$, we show that $x  (y z) = \sigma(z)$. If $x \not \leq r(y)$, then $(x  y) z = \sigma(z)$ holds by definition. Otherwise we get $\top_{I} z = \top_{I}$, since $\top_{I}$ is idempotent; so there is at least an undefined product in $L$, thus by  (A2) we get $\sigma(z) = \top_{I}$.
Similarly we can show the case: $y, z \in L$, $x \in K-I$.
\item For $x, y \in L$ and $z \in I$, we show that $x  (y z) = \sigma(z)$. If $x \not \leq r(y)$, then $(xy) z = \sigma(z)$ by definition. Otherwise we get $\top_{i} z = \sigma(z)$ since $\sigma(z) = \sigma(\top_{I}z)$ implies $\sigma(z) \leq \top_{I}z$ and $\top_{I}z \leq \sigma(z)$.
Similarly we prove the case: $y, z \in L$ and $x \in I$.
\item For $x, z \in L, y \in K$, associativity follows directly from the definition of multiplication and the idempotency of $\sigma$.
\item For $x \in L, y,z \in K$, associativity follows from the definition of multiplication and the strong conuclear property. Similar cases are: $x, z \in K, y \in L$; $x, y \in K, z \in L$.
\item In the remaining cases all elements belong to $K$.
\end{itemize}
It easily follows that the product is a monoidal operation with unit $1$. 
We now show that residuation holds (when the divisions are defined): $$x  y \leq z \mbox{ iff } y \leq x \back z \mbox{ iff } x \leq z/ y$$
\begin{itemize}
\item For $x, y, z \in L$, we distinguish two cases, depending whether $xy$ is defined in $L$ or not. If $xy$ is not defined in $L$, then we get that all three inequalities hold since they respectively become: $\top_{I} \leq z, y \leq \ell(x), x \leq r(y)$; here  we used Property \ref{propUC3} of upper-compatible triples.

If $xy$ is defined in $L$, then we get $xy \leq z$ if and only if $x \back z$ is defined and $y \leq x\back z$, by Lemma \ref{lemma:partialres}. Similarly, this is equivalent to  $z/y$ being defined and $x \leq z/y$. 
\item For $x,y \in L, z \in K-\{I \cup 1\}$, we distinguish two cases, based on whether $xy$ is defined in $L$ or not. If $xy$ is defined in $L$, we have $xy \not\leq z$ by the definition of the order; also $x \not \leq r(y)$ and thus $y \not\leq \ell(x)$, by Property \ref{propUC1} (Galois connection). Moreover, $x \back z$ is either $\ell(x)$ or $\gamma(z)$, and in either case $y \not\leq x\back z$, since $\gamma(z) \in K-1$ and $y \not\leq \ell(x)$.

If $xy$ undefined in $L$, then $xy$ is equal to the top element of $I$, and $\top_{I} \leq z$. Moreover, $y \leq \ell(x) = x \back z$, and $x \leq r(y) = z/y$, using again Property \ref{propUC1},\ref{propUC3}.
\item For $x,y \in L, z \in I$, we distinguish two cases, whether $x \leq r(y)$ or not. If $x \not\leq r(y)$, the proof is the same as in the previous case. If $x \leq r(y)$ we distinguish whether $z = \top_I$ or not. If $z < \top_I$, then $xy = \top_{I} \not\leq z$, $y \not\leq \gamma(z) = x \back z$ and $x \not\leq \gamma(z) = z/y$. If $z = \top_I$, then $xy = \top_I$ implies $x \leq \top_I /y = r(y)$  iff $y \leq \ell(x) = x \back \top_I$.
\item If $x, z \in L, y \in K-\{1\}$, it follows directly from the definition of the operations that all inequalities hold, whenever the divisions are defined. Similarly for the cases: $x \in L, y \in I, z \in K-\{I\}$; $x \in K, y,z \in L$; $x, y \in K, z \in I$.
\item For $x \in L, y,z, \in K-\{I\}$, it follows directly from the definition of the operations that $xy = \sigma(y) \leq z$ iff $x \leq z/y$ whenever the division is defined. Indeed $z/y$ is either $z/_{K}y$ iff $\sigma(y)\not\leq z$ and it is either undefined or $c_{L}$ if $\sigma(y) \leq z$. Now we show that $xy = \sigma(y) \leq z$ iff $y \leq x \back z$. If $x \in D_{\ell}$ then there are undefined products in $L$, thus $\sigma(y) = \top_{I} \leq z$ and $x \back z = \ell(x)$ thus $y \leq x \back z$. Otherwise, if $x \not\in D_{\ell}$, $\sigma(y) \leq z$ iff $y \leq \gamma(z) = x \back z$ since $\sigma, \gamma$ form a residuated pair.
The following cases are proven similarly: $x \in L, y \in K, z \in I$; $x, z \in K-\{I\}, y \in L$; $x \in K-\{I\}, y\in L, z \in I$.
\item For $x, y, z \in K$, we show that $xy \leq z $ iff $y \leq x \back z$, as the proof the equivalence $xy \leq z \Leftrightarrow x \leq z/y$ is analogous. If $x \back_{K} z $ is defined then residuation holds since $K$ is a partial IRL. If $x \back_{K} z $ is undefined, then $\sigma(x) \leq z$, and $xy \leq \sigma(x) \leq z$ holds. Moreover $x\back z $ is either still undefined or if there is a coatom $x \back z = c_{L}$, thus $y \leq c_{L}$.
\end{itemize}
We now show that multiplication is order preserving: if $x \leq y$, then $xz \leq yz$ and $zx \leq zy$ (notice again that all products are defined in the gluing).
\begin{itemize}
	\item The cases where $x, y \in L$ and $z \in K-\{1\}$ follow directly from the definition of the operations.
	\item The cases where $x, y \in K -\{1\}$ follow from the order preservation of $\sigma$. 
	\item If $x, z \in K-\{1\}, y \in L$, then we get $xz, zx \leq \sigma(z)$, which is a property of $\sigma$ in a lower compatible triple.
	\item Let now $x \in K -\{1\}$, $y, z \in L$. If $yz$ (equiv. $zy$) is defined in $L$, clearly $xz \leq yz$ (equiv. $zx \leq zy$). If $yz$ (or $zy$) is undefined in $L$, the inequalities becomes $\sigma(x) \leq \top_I$, which holds by condition (A2).
	\item Finally, let $x, y, z \in L$. We show order preservation of right multiplication, the proof for left multiplication being analogous. If $xz, yz$ are defined in $L$, then order preservation holds since $\alg L$ is a partial IRL. If $yz$ is undefined, by the definition of an upper compatible triple, $z \leq \ell(y)$, and since $x \leq y$, and the domain of $\ell$ is closed downwards, by property (2) of the definition we also get that $z \leq \ell(x)$ and thus $xz$ is undefined in $L$. Therefore, $xz = \top_I = yz$. Suppose now that $yz$ is defined and $xz$ is undefined, then the inequality becomes $\top_I \leq yz \in L$, which holds by definition of the order in the gluing. 
\end{itemize}
The fact that (when defined) divisions are order preserving in the numerator and order reversing in the denominator follows from residuation and the order preservation of multiplication (which is always defined in $\alg K \oplus_{\tau}  \alg L$).
Thus, $\alg K \oplus_{\tau}  \alg L$ is a partial IRL.
We note that in the case where $L$ has a coatom no operation is undefined.
\end{proof}

If we take a compatible quadruple $(\alg B, F, \alg C, I)$ where $I$ has a top element satisfying assumptions $(A1), (A2)$, we can then consider the lower compatible triple $(\alg B', \sigma_{F}, \gamma_{F})$ (where $B' = B-F$) and the upper compatible triple $(\alg C', \ell_{I}, r_{I})$ (where $C' = C-(F \cup I)$) and construct the partial gluing $\alg B' \oplus_{\tau} \alg C'$, where $\tau = (\sigma_{F}, \gamma_{F},\ell_{I}, r_{I})$.

More generally, starting from any compatible quadruple $(\alg B, F, \alg C, I)$, we can always construct a partial gluing. Indeed either there are no $I$-divisors in $\alg C$, in which case the assumptions  $(A1), (A2)$ are vacuously true, or otherwise we replace $I$ with $I' := \{i \in I: i\neq cd, \mbox{ for } c, d \in C-I\} \cup \top_{I}$, where $\top_{I}$ a new element satisfying $(A1), (A2)$. Conditions $(A3), (A4)$ are implied by the last two compatibility conditions in the definition of a compatible quadruple. We can then construct the partial gluing $\alg B' \oplus_{\tau} \alg C'$, where $B' = B-F$, $C' = C-(F \cup I')$, $\tau = (\sigma_{F}, \gamma_{F},\ell_{I}, r_{I})$. 

\section{Variations of the constructions}
First, notice that the gluing constructions presented that involve a non-empty ideal $I$ work for both bounded and unbounded integral residuated lattices. In the case where the ideal is empty (and thus the construction is gluing over a congruence filter) one still obtains a new structure starting from $\mathsf{FL}_{w}$-algebras, but one of the two algebras is not a subalgebra with respect to $0$ anymore. 

Importantly, note again that as a special case of the gluing construction, where the filter is trivially the top element $\{1\}$ and the ideal is empty, we get the $1$-sum construction.
This also means that given any pair of integral residuated lattices $\alg B$ and $\alg C$ (with $\alg C$ having a lower bound or $1$ being join irreducible in $\alg B$) we can always glue them. We will call a gluing {\em trivial} if it is a $1$-sum, and  {\em non-trivial} otherwise.

\subsection{The congruence filter has a bottom element}
We first observe that if $F$ has a bottom element, then $\sigma_F$ and $\gamma_F$ have a very transparent definition: also the bottom element of $F$ multiplies and divides as the elements in $C-F$ in the gluing.

\begin{lemma}\label{lemma:botF}
Let $(\alg B, F)$ be a lower-compatible triple where $F$ has a bottom element $\bot_{F}$. Then given any $x \in B-F$, $\sigma_{F}(x)= \bot_{F} \cdot x = x  \cdot \bot_{F}$ and $\gamma_{F}(x) = \bot_{F} \back x = x /\bot_{F}$.
\end{lemma}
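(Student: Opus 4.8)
The plan is to exploit the defining descriptions $\sigma_F(x)=\min[x]_F$ and $\gamma_F(x)=\max[x]_F$ together with the fact that $\theta_F$ is a congruence whose class of $1$ is exactly $F$. Since $\bot_F\in F$, we have $\bot_F\mathrel{\theta_F}1$, and because $\theta_F$ is compatible with $\cdot,\back,/$, each of the four elements $\bot_F x$, $x\bot_F$, $\bot_F\back x$, $x/\bot_F$ is $\theta_F$-related to the corresponding product/division of $1$ with $x$, namely to $x$ itself. Hence all four lie in $[x]_F$. This single observation immediately yields the ``easy'' inequality in each of the claimed identities: $\sigma_F(x)\le \bot_F x$ and $\sigma_F(x)\le x\bot_F$ (as $\sigma_F(x)$ is the least element of $[x]_F$), and $\bot_F\back x\le\gamma_F(x)$, $x/\bot_F\le\gamma_F(x)$ (as $\gamma_F(x)$ is the greatest).

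For the reverse inequalities on the $\sigma_F$ side, I would argue that $\bot_F x\le\sigma_F(x)$ using the standard identity $a\le b \iff a\back b=1$ valid in any IRL, combined with the residual law $(ab)\back c = b\back(a\back c)$. Concretely, $(\bot_F x)\back\sigma_F(x)=\bot_F\back\bigl(x\back\sigma_F(x)\bigr)$, and since $\sigma_F(x)\in[x]_F$ we have $x\back\sigma_F(x)\in F$; as $\bot_F$ is the least element of $F$, it lies below this element, so $\bot_F\back(x\back\sigma_F(x))=1$, giving $\bot_F x\le\sigma_F(x)$. The computation for $x\bot_F$ is symmetric, using $(x\bot_F)\back\sigma_F(x)=\bot_F\back(x\back\sigma_F(x))$. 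Together with the membership bound this forces $\sigma_F(x)=\bot_F x=x\bot_F$.

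For the $\gamma_F$ side I would pass to the equivalent formulation by residuation: $\gamma_F(x)\le\bot_F\back x$ holds iff $\bot_F\,\gamma_F(x)\le x$. Now $\gamma_F(x)\in[x]_F\subseteq B-F$, so the identity just established applies to it, giving $\bot_F\,\gamma_F(x)=\sigma_F(\gamma_F(x))=\min[\gamma_F(x)]_F=\min[x]_F=\sigma_F(x)\le x$; hence $\gamma_F(x)\le\bot_F\back x$, and the reverse containment from the first paragraph closes the case $\gamma_F(x)=\bot_F\back x$. The identity $\gamma_F(x)=x/\bot_F$ follows in the same way, reformulating as $\gamma_F(x)\bot_F\le x$ and using $\gamma_F(x)\bot_F=\sigma_F(\gamma_F(x))=\sigma_F(x)\le x$. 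The only real point of care — and the step I would treat most carefully — is the non-commutativity: one must keep the two residuals and the two sides of the product strictly apart. This is precisely why I route the membership claims through the congruence $\theta_F$ rather than through one-sided residuation inequalities such as $\bot_F\le(\bot_F\back x)\back x$, which are awkward to verify directly because the product appears on the ``wrong'' side; the congruence argument sidesteps this entirely.
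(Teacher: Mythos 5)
Your overall route is the same as the paper's (which extracts this lemma from the opening paragraphs of the proof of Theorem \ref{prop:quadruplegluing}): first place the four elements $\bot_F x$, $x\bot_F$, $\bot_F\back x$, $x/\bot_F$ in $[x]_F$, then close the remaining inequalities by residuation. Your membership step, via compatibility of $\theta_F$ with the operations, is correct and in fact cleaner than the paper's direct divisional verifications. The problem is in the residuation step, and it sits exactly at the noncommutative point you said you would treat most carefully: in any residuated lattice $(ab)\back c = b\back(a\back c)$, \emph{not} $a\back(b\back c)$ (indeed $w \leq (ab)\back c$ iff $abw\leq c$ iff $bw\leq a\back c$ iff $w\leq b\back(a\back c)$). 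Hence $\bot_F\back\bigl(x\back\sigma_F(x)\bigr)$ equals $(x\bot_F)\back\sigma_F(x)$, not $(\bot_F x)\back\sigma_F(x)$. As written, your first computation proves $x\bot_F\leq\sigma_F(x)$, and your ``symmetric'' computation proves that same inequality a second time; the inequality $\bot_F x\leq\sigma_F(x)$ is never correctly established. The gap propagates: your argument for $\gamma_F(x)=\bot_F\back x$ invokes $\bot_F\,\gamma_F(x)=\sigma_F(\gamma_F(x))$, which is precisely the missing left-multiplication identity (your proof of $\gamma_F(x)=x/\bot_F$, which only needs $\gamma_F(x)\bot_F$, is fine).

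The repair is immediate and stays inside your framework: pair the left product with the \emph{right} residual. Since $\sigma_F(x)\mathrel{\theta_F}x$, the right-division description of $\theta_F$ given in the preliminaries yields $\sigma_F(x)/x\in F$, so $\bot_F\leq\sigma_F(x)/x$ and
$$\sigma_F(x)/(\bot_F x) = \bigl(\sigma_F(x)/x\bigr)/\bot_F = 1,$$
i.e.\ $\bot_F x\leq\sigma_F(x)$; then $\bot_F\,\gamma_F(x)=\sigma_F(\gamma_F(x))=\sigma_F(x)\leq x$ and the case $\gamma_F(x)=\bot_F\back x$ closes as you intended. (For comparison, the paper's own computation in Theorem \ref{prop:quadruplegluing} writes the same pairing $(\bot_F\cdot x)\back\sigma_F(x)=\bot_F\back(x\back\sigma_F(x))$ and defers the other product to an ``analogous'' argument; the correct reading there, as here, is that one of the two products is handled by $\back$ and the other by $/$.)
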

\begin{proof}
The proof can be directly extracted from the first paragraphs of the proof of Theorem \ref{prop:quadruplegluing}.
\end{proof}

 It turns out that assuming that $F$ has a bottom element is not a substantial restriction.
Given a lower compatible triple $(\alg B, F)$, where $F$ may or may not have a bottom element, and a new element $\bot_F$, we define the residuated lattice $\alg B_\bot$, where  for all $b \in B-F, f \in F$
\begin{itemize}
\item $b<\bot_F<f$,
\item  $\bot_F \cdot\bot_F = \bot_F \cdot f=f\cdot \bot_F=\bot_F$ and   $\bot_{F} \cdot b = b  \cdot \bot_{F}=\sigma_{F}(b)$,
\item	$\bot_{F} \back b = b /\bot_{F}=\gamma_{F}(b)$, $\bot_F \back f=f / \bot_F=1$, 
\item 	$b \back \bot_F=\bot_F / b=1$, and $f \back \bot_F=\bot_F / f=\bot_F$.
\end{itemize}

The following result is easy to prove and shows that every lower compatible triple can be embedded into one where the congruence filter has a bottom element.

 \begin{proposition}
If $(\alg B, F)$ is a lower compatible pair, %$\alg B_\bot$ is an IRL, $F_\bot:=F \cup \{\bot_F\}$ is a congruence filter of $\alg B_\bot$ and 
then $(\alg B_\bot, F \cup \{\bot_F\})$ is also a lower compatible pair and $\alg B$ is a subalgebra of $\alg B_\bot$, except possibly for  join if $F$ has a bottom element.
\end{proposition}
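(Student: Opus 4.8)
The plan is to avoid re-deriving residuation from scratch and instead to recognize $\alg B_\bot$ as an instance of the $F$-gluing of Section~\ref{sec:gluingfilter}, so that Theorem~\ref{prop:gluingfilter} supplies both that $\alg B_\bot$ is an IRL and the ``subalgebra except for $\lor$'' clause. To this end I would let $\alg C$ be the IRL obtained from the filter $\alg F$ (which is an IRL, being a subalgebra of $\alg B$) by adjoining a new bottom element $\bot_F$ acting as an absorbing zero: $\bot_F < f$ for all $f\in F$, with $\bot_F x = x\bot_F = \bot_F$, $x\back\bot_F = \bot_F/x = \bot_F$ and $\bot_F\back x = x/\bot_F = 1$ for $x\in F\cup\{\bot_F\}$. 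Adjoining an absorbing least element to an IRL again gives an IRL (the residuals are forced as above and residuation checks directly), so $\alg C$ is an IRL with $B\cap C = F$, with $F$ strictly above its least element $\bot_F$. One then checks that $F$ is a congruence filter of $\alg C$: it is an upset closed under products, and the only new conjugates to inspect are those by $\bot_F$, which equal $1\in F$.

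Next I would compare the defining clauses of $\alg B\oplus_F\alg C$ with those of $\alg B_\bot$. Writing the unique element of $C-F$ as $\bot_F$, the gluing prescribes $b\cdot_F\bot_F = \sigma_F(b)$, $\bot_F\back_F b = b/_F\bot_F = \gamma_F(b)$, $b\back_F\bot_F = \bot_F/_F b = 1$, $b\land_F\bot_F = b$ and $b\lor_F\bot_F = \bot_F$ for $b\in B-F$, and it redefines $x\lor_F y = 0_C = \bot_F$ whenever $x,y\in B-F$ with $x\lor y\in F$; these agree clause by clause with the operations defining $\alg B_\bot$. Hence $\alg B_\bot = \alg B\oplus_F\alg C$, and Theorem~\ref{prop:gluingfilter} gives that $\alg B_\bot$ is an IRL and that $\alg B$ is a subalgebra of it except possibly for $\lor$. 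The exceptional joins are exactly those $b_1\lor b_2$ with $b_1,b_2\in B-F$ whose $\alg B$-value lies in $F$; since every element of $F$ lies above every element of $B-F$, such a value must be the \emph{least} element of $F$, so these joins occur only when $F$ has a bottom element, and in $\alg B_\bot$ they are recomputed as $\bot_F$. This is precisely the ``except possibly for join if $F$ has a bottom element'' clause.

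It then remains to show $F' := F\cup\{\bot_F\}$ is a compatible congruence filter of $\alg B_\bot$. Compatibility condition (1) is immediate, since $F'$ sits strictly above $B_\bot - F' = B-F$. The key point I would isolate is that the $F'$-classes of elements of $B-F$ coincide with their $F$-classes: for $b,b'\in B-F$ the divisions $b\back b', b'\back b$ are computed in $\alg B$ and so never equal $\bot_F$, whence $b\mathrel{\theta_{F'}}b'$ iff $b\back b', b'\back b\in F$ iff $b\mathrel{\theta_F}b'$; and no $b\in B-F$ is $\theta_{F'}$-related to $\bot_F$, because $\bot_F\back b = \gamma_F(b)$ lies in $B-F$ and hence not in $F'$. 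Therefore $[b]_{F'} = [b]_F$, so $\sigma_{F'} = \sigma_F$ and $\gamma_{F'} = \gamma_F$ on $B-F$, giving conditions (2) and (3) (absorption is inherited verbatim from that of $\sigma_F$). Finally $F'$ is a congruence filter: it is an upset closed under products, and closure under conjugates follows from $F$ being a congruence filter of $\alg B$ together with the relations $\sigma_F(b)\mathrel{\theta_F}b$ and the trivial conjugates of $\bot_F$. Thus $(\alg B_\bot, F')$ is a lower-compatible pair.

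The only genuinely delicate point is bookkeeping rather than content: one must confirm that enlarging $F$ to $F'$ leaves $\sigma$ and $\gamma$ untouched, which is exactly the class-equality $[b]_{F'} = [b]_F$, and one must pin down that the sole failure of the subalgebra property is the join, occurring precisely when $F$ already has a bottom element. Both become transparent once $\alg B_\bot$ is identified with the $F$-gluing $\alg B\oplus_F\alg C$, which is why I would organize the whole argument around that identification.
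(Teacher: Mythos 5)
The paper itself does not write out a proof of this proposition: it is introduced with ``the following result is easy to prove'', the intended argument presumably being a direct verification of the IRL axioms for $\alg B_\bot$ and of the compatibility conditions, in the spirit of Lemma~\ref{lemma:botF} (whose proof is extracted from Theorem~\ref{prop:quadruplegluing}). Your proof is correct and takes a more structural route: you identify $\alg B_\bot$ with the $F$-gluing $\alg B \oplus_F \alg C$, where $\alg C$ is the IRL $\alg F$ with an absorbing bottom adjoined, and then invoke Theorem~\ref{prop:gluingfilter} to obtain at once that $\alg B_\bot$ is an IRL and that $\alg B$ sits inside it as a subalgebra except possibly for $\lor$. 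This buys a genuine economy: it avoids re-checking associativity and residuation by hand, it makes transparent that the only joins that can be disturbed are those $b_1 \lor b_2 \in F$ (which forces $b_1\lor b_2=\min F$, hence can occur only when $F$ has a bottom), and it isolates the one real point of content, namely that enlarging $F$ to $F'=F\cup\{\bot_F\}$ leaves $\sigma$ and $\gamma$ unchanged, via the class equality $[b]_{F'}=[b]_F$. The clause-by-clause match between $\alg B\oplus_F\alg C$ and $\alg B_\bot$ does hold, and the auxiliary verifications (that $\alg C$ is an IRL, that $F$ is a congruence filter of $\alg C$, that $F'$ is a compatible congruence filter of $\alg B_\bot$) are all sound.

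Three small points should be tightened, none of which affects the overall correctness. First, your definition of $\alg C$ is inconsistent at $x=\bot_F$: the clause $x\back\bot_F=\bot_F/x=\bot_F$ must be restricted to $x\in F$, since $\bot_F\back\bot_F=\bot_F/\bot_F=1$ (as your other clause correctly forces); this matches the paper's own definition of $\alg B_\bot$, where that clause is stated only for $f\in F$. Second, the verification that $F'$ is a congruence filter of $\alg B_\bot$ should logically precede any mention of $\theta_{F'}$ and of the classes $[b]_{F'}$, since these are only defined once $F'$ is known to be a congruence filter; your proof has all the ingredients but presents them in the reverse order. Third, in proving $[b]_{F'}=[b]_F$ you explicitly rule out $b\mathrel{\theta_{F'}}\bot_F$ but not $b\mathrel{\theta_{F'}}f$ for $f\in F$; both exclusions follow at once from the standard fact that $F'=[1]_{F'}$ is itself a $\theta_{F'}$-class, so no element outside $F'$ can be $\theta_{F'}$-related to an element of $F'$, which also renders your explicit computation of $\bot_F\back b$ unnecessary.
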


\subsection{Non-linear order}
We are now going to study whether the order conditions for the congruence filter $F$ and the ideal $I$ can be weakened. We previously required $F$ to be strictly above all elements of $\alg B$ and $I$ to be strictly below all elements of $\alg C$. This ensures that the product is well-defined and respect the join operation. Notice that, in particular, asking $F$ to be strictly above all other elements implies that if there are elements $x, y \in B-F$ such that $x \lor y \in F$, then $F$ has a bottom element $\bot_{F}$ and $x \lor y = \bot_{F}$.  Lemma~\ref{lemma:botF} shows that the product defined in Theorem \ref{prop:quadruplegluing} preserves this particular kind of join. 

 We define a \emph{non-strict} lower-compatible pair a pair $(\alg B, F)$ that is lower-compatible except that $F$ is not strictly above all elements in $F$, but whenever $x, y \in B-F$ are such that $x \lor y = z \in F$, the element $z$ is such that $zb = bz = \sigma_{F}(b)$ and $z \back b = b  / z = \gamma_{F}(b)$ for all $b \in B-F$. 

Similarly, if $z, w \in C-I$ are such that $z \land w \in I$, then we required $I$ to have a top element $\top_{I}$ and $z \land w = \top_{I}$; this does not create issues with respect to the operations, given Lemma \ref{bounded-lemma}. We can then extend the construction to also include lattice ideals that are not strictly below all other elements. In particular, we call a \emph{non-strict} upper-compatible pair a pair $(\alg C, I)$ that is upper-compatible except that $I$ is not strictly below all other elements in $C$. %, but whenever $x, y \in C-I$ are such that $x \land y = w \in I$, the element $w$ is such that $wc = cw = \sigma_{F}(b)$ and $z \back b = b  / z = \gamma_{F}(b)$ for all $b \in B-F$. 

Let us call a \emph{non-strict compatible quadruple} a compatible quadruple $(\alg B, F, \alg C, I)$ where the upper and lower-compatible pairs may be non-strict, by redefining all joins of elements $x, y \in B-F$ such that $x \lor_{B} y \in F$ to be the bottom element of $\alg C$, $x \lor y = \bot_{C}$, and all meets of elements $z,w \in C-I$ such that $x \land_{C} w \in I$ to be the top element of $B- F$, $x \land w = \top_{B}$. 

\begin{proposition}
If $(\alg B, F, \alg C, I)$ is a non-strict compatible quadruple, then $\alg B \oplus_{\pp} C$ is the gluing of $\alg B$ and $\alg C$ over $F$ and $I$. 
\end{proposition}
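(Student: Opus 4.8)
The plan is to run the argument of Theorem~\ref{prop:quadruplegluing} essentially unchanged, isolating the single place where the strictness of the order was used. First I would record that the multiplication and both division tables defining $\alg B \oplus_P \alg C$ are given by literally the same formulas in the non-strict case; the only operations that are altered are $\lor$ (for pairs in $B^-$ whose $\alg B$-join lands in $F$, now sent to $\bot_C$) and $\land$ (for pairs in $C^-$ whose $\alg C$-meet lands in $I$, now sent to $\top_B$). Since the verifications of associativity of the product and of the residuation law in the proof of Theorem~\ref{prop:quadruplegluing} refer only to products, divisions, and order comparisons internal to the four regions (mediated by $\sigma_F,\gamma_F,\ell_I,r_I$), and never to the specific values of these two redefined operations, those portions of the argument transfer verbatim.

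Next I would verify that $(B\cup C,\lor,\land)$ is a lattice realizing the intended regional order $I<B^-<C^-<F$. The two delicate points are that $\bot_C=\min(C-I)$ is the least common upper bound of any pair $x,y\in B^-$ with no upper bound in $B^-$ (which is exactly the case $x\lor_{\alg B}y\in F$), and dually that $\top_B=\max(B-F)$ is the greatest common lower bound of any pair in $C^-$ whose $\alg C$-meet falls into $I$; absorption and the remaining lattice laws then reduce to the identities $b\land\bot_C=b$ and $b\lor\bot_C=\bot_C$ for $b\in B^-$ and their duals, all immediate from the regional order.

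The crux, and the only genuinely new step, is to check that replacing the join value $z=x\lor_{\alg B}y\in F$ by $\bot_C$ (and dually a meet value in $I$ by $\top_B$) does not disturb the monoidal or residuated structure. This is exactly the purpose of the non-strict hypothesis: by definition of a non-strict lower-compatible pair, every such $z$ satisfies $zb=bz=\sigma_F(b)$ and $z\back b=b/z=\gamma_F(b)$ for all $b\in B^-$, which is the behaviour of $\bot_C$ in $\alg B\oplus_P\alg C$ prescribed by the operation tables; the dual identities for $\top_B$ come from the non-strict upper-compatibility condition together with Lemma~\ref{bounded-lemma}. Thus every associativity or residuation instance that the proof of Theorem~\ref{prop:quadruplegluing} settled via Lemma~\ref{lemma:botF} (that $\bot_F$ acts as $\sigma_F$ and $\gamma_F$) is now settled identically with $z$ in place of $\bot_F$, so that $\alg B\oplus_P\alg C$ is an IRL in which $\alg C$ is a subalgebra except possibly for $\land$ and $\alg B$ a subalgebra except possibly for $\lor$; together with the already established uniqueness this identifies it as the gluing over $F$ and $I$. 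The main obstacle is precisely this consistency check: since $F$ need not be strictly above $B^-$, the element $z$ may be a proper element of $F$ rather than a bottom, so Lemma~\ref{lemma:botF} is unavailable, and one must verify that the identities furnished by the non-strict hypothesis do suffice to close every case of associativity and residuation in which such a $z$ occurs.
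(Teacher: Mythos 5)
Your proposal is correct and takes the same route as the paper: the paper's entire proof of this proposition is the single remark that the proof of Theorem~\ref{prop:quadruplegluing} can be adapted, and your elaboration — the $\cdot$, $\back$, $/$ tables and all of their associativity and residuation cases carry over verbatim, while the non-strict hypothesis ($zb=bz=\sigma_F(b)$ and $z\back b=b/z=\gamma_F(b)$ for every join value $z=x\lor_{\alg B}y\in F$, and dually for meets landing in $I$ via Lemma~\ref{bounded-lemma}) takes over exactly the role that the identities $\sigma_F(x)=\bot_F\cdot x$ and $\gamma_F(x)=\bot_F\back x$ of Lemma~\ref{lemma:botF} played in the strict case — is precisely the intended adaptation. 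If anything, your write-up supplies more detail than the paper itself does, in particular by isolating the consistency of the redefined joins and meets with the monoidal and residuated structure as the only genuinely new verification.
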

\begin{proof}
The proof of Theorem \ref{prop:quadruplegluing} can be adapted to this case. 
 \end{proof}

\section{Preservation}
In this section we will investigate the interaction of  the $(F-I)$-gluing construction with class operators and equations that are preserved.

\subsection{Preservation of identities}
We identify equations that are preserved by the $(F-I)$-gluing.
It is worth noticing that the gluing construction preserves commutativity. Moreover, we identify the cases when divisibility and semilinearity are preserved; linearity is obviously always preserved. 

\emph{Semilinear} integral residuated lattices (i.e., subdirect products of totally ordered integral residuated lattices) constitute a variety, axiomatized by the equation: 
$$[u\back(y\back x)u] \lor [v(x \back y)/v]= 1.\qquad{\mbox{(sl)}}$$
This equation characterizes semilinearity also in $\mathsf{FL_{w}}$-algebras. 
 In commutative subvarieties of $\mathsf{IRL}$ and $\mathsf{FL_{w}}$ semilinearity is characterized by the simpler prelinearity identity, obtained from (sl) by taking $u=v=1$:
$$(y \back x) \lor (x \back y ) = 1.\qquad{\mbox{(prel)}}$$

Commutative prelinear $\mathsf{FL_{w}}$-algebras are called MTL-algebras since are the equivalent algebraic semantics of Esteva and Godo's {\em Monoidal t-norm based logic}, the logic of left-continuous t-norms \cite{EstevaGodo2001}.
A residuated lattice $\alg A$ is called \emph{divisible} if the lattice order coincides with the inverse divisibility order: 
$$
a \le b \qquad\text{if and only if}\qquad \text{there are $ c,d \in A$ with $a =bc$ and $a=db$}.
$$
Divisibility is characterized equationally by:
$$
x \land y = x(x \back (x \land y)) = ((x \land y) / x)x \qquad{\mbox{(div)}}
$$
The latter in integral structures reduces to: $x \land y = x(x \back y) = (y/x)x$.
Semilinear, commutative and divisible $\mathsf{FL_{ew}}$-algebras %(equivalently, divisible MTL-algebras) 
are called BL-algebras and we denote their variety by $\mathsf{BL}$;  semilinear and divisible CIRLs are called basic hoops, and we refer to their variety by $\mathsf{BH}$. BL-algebras are the equivalent algebraic semantics of {\em H\'ajek's Basic Logic} \cite{H98}.

\begin{proposition}
If $\alg B \oplus_{P} \alg C$ is the $P$-gluing of the IRLs $\alg B$ and $\alg C$, where $P = (F, I)$, then:
\begin{enumerate}
\item  $\alg B \oplus_{P} \alg C$ is commutative iff both $\alg B$ and $\alg C$ are commutative.
\item  $\alg B \oplus_{P} \alg C$ is divisible iff both $\alg B$ and $\alg C$ are divisible, $\alg C$ has no $I$-divisors, and $ B = ((B - F) \cup \{1\}) \oplus_1 F$. 
\item  If $F \not = \{1\}$, then  $\alg B \oplus_{P} \alg C$ is semilinear iff both $\alg B$ and $\alg C$ are semilinear. If $F = \{1\}$ and $C^- \not= \emptyset$, then  $\alg B \oplus_{P} \alg C$ is semilinear iff  $\alg B$ is linear and $\alg C$ is semilinear. (If $F = \{1\}$ and $C^-= \emptyset$, then $\alg B \oplus_{P} \alg C= \alg B$.)
\end{enumerate}
\end{proposition}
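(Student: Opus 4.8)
Throughout write $\alg D=\alg B\oplus_P\alg C$ and keep in mind the layering $I<B^-<C^-<F$. The plan is to treat the three items independently, in each case comparing the defining clauses of the glued operations with the corresponding operations of $\alg B$ and $\alg C$. For commutativity (1) the only clauses that could break symmetry are the cross-products, and by definition $x\cdot y=\sigma_F(x)$ when $x\in B^-,y\in C^-$ while also $y\cdot x=\sigma_F(x)$; hence cross-products are automatically symmetric. Thus $\alg D$ is commutative iff the products internal to $\alg B$ and to $\alg C$ are, i.e.\ iff $\alg B$ and $\alg C$ are, since inside each component the glued product is the component product.

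For divisibility (2) I would run the integral form $x\land y=x(x\back y)=(y/x)x$ through the possible locations of $x,y$. Pairs inside $\alg B$ use exactly the $\alg B$ operations (no meet is redefined within $B$), so that case reduces to divisibility of $\alg B$; pairs inside $\alg C$ use the $\alg C$ operations except that a meet landing in $I$ is redefined to $\top_B$, so here one must first rule out such meets. The decisive cross case is $x\in C^-,\ y\in B^-$: then $x\land y=y$, while if $x$ is not a left $I$-divisor $x\back y=\gamma_F(y)$ and $x(x\back y)=\sigma_F(\gamma_F(y))=\sigma_F(y)$, forcing $\sigma_F(y)=y$ for every $y\in B^-$; since $\sigma_F\le\mathrm{id}\le\gamma_F$ and $\sigma_F\circ\gamma_F=\sigma_F$, this also yields $\gamma_F=\mathrm{id}$, i.e.\ every class $[b]_F$ is a singleton, which is exactly $\alg B=((B-F)\cup\{1\})\oplus_1 F$. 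If instead $x\in C^-$ is a left $I$-divisor then $x\back y=\ell_I(x)$ and $x(x\back y)=x\,\ell_I(x)\in I\ne y$, so divisibility forbids $I$-divisors. Once $\alg C$ has no $I$-divisors, divisibility of $\alg C$ shows (via $x\land y=x(x\back y)$) that a meet $x\land^{\alg C}y\in I$ would make $x$ an $I$-divisor, so such meets never occur and the inside-$\alg C$ reduction is clean. The converse simply reassembles these facts: with $\sigma_F=\gamma_F=\mathrm{id}$, no $I$-divisors, and both components divisible, each case of (div) checks out directly, the cross cases becoming $x(x\back y)=\sigma_F(y)=y=x\land y$, etc.

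For semilinearity (3) I would argue through the identity (sl) rather than through explicit subdirect representations, since it is the glued join that interacts nontrivially with (sl); the conjugate terms $t_1=u\back(y\back x)u$ and $t_2=v(x\back y)/v$ are computed identically inside each component because divisions and products restrict, while a join of two elements of $B^-$ landing in $F$ is redefined to $\bot_C\ne 1$. The pivotal observation is that, because $F$ is strictly above $B^-$, if $F\ne\{1\}$ then $\bot_F$ exists with $\bot_F<1$ bounding $B^-$ from above, so $a\lor^{\alg B}b\le\bot_F<1$ for $a,b\in B^-$; whereas for $F=\{1\}$ no such barrier exists and $B^-$-joins can reach $1$. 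For the forward direction I use the contrapositive: any witness of failure of (sl) in $\alg B$ or in $\alg C$ is fed with the same $u,v,x,y$ into (sl) for $\alg D$, and either the unchanged join or the redefinition to $\bot_C\ne 1$ produces a failure, using that $\alg C$ is a subalgebra for the operations appearing in (sl) and that $\alg B$ is one for all of them but $\lor$. When $F=\{1\}$ the instance $u=v=1$ reads $(y\back x)\lor^{\alg D}(x\back y)=1$, and since $\alg B$-joins of $B^-$-elements can equal $1$ but then collapse to $\bot_C$, the equation survives only if one division is already $1$, i.e.\ $\alg B$ is linearly ordered. For the backward direction I would verify (sl) in $\alg D$ by cases on the placement of $u,v,x,y$: the genuinely mixed placements make one conjugate equal $1$ or push both high into $C^-\cup F$ (here the strong-conucleus identities of Lemma~\ref{bounded-lemma}(\ref{conuclei-abs-b}) and Lemma~\ref{bounded-lemma}(\ref{condC-}) keep the terms in the intended region); the inside-$\alg C$ case is inherited from semilinearity of $\alg C$; and the inside-$\alg B$ case is where the $\bot_F$ observation does the work, since semilinearity of $\alg B$ gives $t_1\lor^{\alg B}t_2=1$, which for $F\ne\{1\}$ is incompatible with both conjugates lying in $B^-$, so the harmful redefinition never triggers and $t_1\lor^{\alg D}t_2=1$; for $F=\{1\}$ linearity of $\alg B$ makes one conjugate $1$ outright. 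The degenerate case $F=\{1\},\ C^-=\emptyset$ is immediate, since then $C=F\cup I=\{1\}\cup I\subseteq B$ and $\alg D=\alg B$.

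The main obstacle is the semilinearity argument, and specifically isolating why the trivial filter forces \emph{linearity} of $\alg B$: everything hinges on the absence of $\bot_F$ below $1$, so I expect the careful bookkeeping of the non-commutative conjugate terms across the mixed placements of $u,v,x,y$ — ensuring in each that one of $t_1,t_2$ collapses to $1$ while controlling how the ideal $I$ sits inside the overlap $B\cap C$ — to be the most laborious and delicate part, relying on the multiplication and division clauses together with Lemma~\ref{lemma:filtercompatible2} and the description of the glued operations preceding Theorem~\ref{prop:quadruplegluing}.
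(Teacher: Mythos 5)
Your treatments of items (1) and (2) are correct and essentially follow the paper's argument; in (2) you are in fact a bit more careful than the paper, since you explicitly rule out meets of elements of $C^{-}$ falling into $I$ (which the gluing would redefine to $\top_B$) before reducing the inside-$\alg C$ case to divisibility of $\alg C$, and your route to $B = ((B-F)\cup\{1\})\oplus_1 F$ via $\sigma_F = \mathrm{id}$ on $B^-$ matches the paper's. One cosmetic slip in (3): you claim that $F \neq \{1\}$ forces $\bot_F$ to exist; it need not, but the fact you actually use — that $a \lor^{\alg B} b \leq f < 1$ for $a, b \in B^{-}$ and any $f \in F \setminus \{1\}$ — holds without any bottom element, so this is repairable.

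The genuine gap is in the backward direction of (3), precisely in what you call the ``genuinely mixed placements.'' The hard case is $x, y \in B^{-}$ incomparable while $u$ and/or $v$ lies in $C^{-}$ (and symmetrically $x,y \in C$ with $u$ or $v$ in $B$). Here neither component's instance of (sl) applies as it stands, because the four variables do not all lie in one component, and your proposal offers nothing to close this case. Your observation that the conjugate terms get ``pushed high into $C^{-}\cup F$'' is not enough: knowing where $t_1 = u\back(y\back x)u$ and $t_2 = v(x\back y)/v$ live does not tell you that their join is $1$, since they are not substitution instances of (sl) inside $\alg B$ or inside $\alg C$. The paper closes this case with a two-step bridge that is absent from your sketch. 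First, from the prelinearity instance of (sl) in $\alg B$ one gets $(y\back x)\lor(x\back y)=1$ in $\alg B$, and since both terms are $\neq 1$ and $F \neq \{1\}$, both must lie in $F$ — the part shared by $\alg B$ and $\alg C$; only after this is the term $u\back (y\back x)u$ with $u \in C$ even computable inside $\alg C$ (and only then does Lemma~\ref{bounded-lemma}(\ref{condC-}) place $(y\back x)u$ in $C^{-}$). Second, and crucially, one needs Lemma 6.5 of \cite{BlountTsinakis}: in a semilinear residuated lattice, iterated conjugation preserves joins to $1$. This lemma, applied in whichever component contains $u$, gives $[u\back(y\back x)u]\lor(x\back y)=1$ there; then the ``both in $F$'' observation is repeated and the lemma is applied once more, in whichever component contains $v$, to produce (sl) in the gluing. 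Without this conjugation lemma — or a proved substitute — the mixed case does not go through, and it is exactly where the paper's proof carries its real weight; your proposal neither invokes it nor supplies an alternative mechanism.
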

\begin{proof}
Recall that $B^{-} = B-(F\cup I)$ and $C^{-} = C- (F \cup I)$. For readability we write $\sigma$ for $\sigma_{F}$, $\gamma$ for $\gamma_{F}$, $\ell$ for $\ell_{I}$ and $r$ for $r_{I}$.
\begin{enumerate}
\item $B$ and $C$ are closed under multiplication. Also, $bc=\sigma(b)=cb$ for $b \in B^-, c \in C^{-}$; thus the construction preserves commutativity. 
\item  Recall that in integral structures divisibility states that for all $x, y$, $$x \land y = x(x \back y) = (y/x)x.$$ Since $B$ and $C$ are closed under $\wedge, \cdot, \back, /$, the divisibility of $\alg B$ and $\alg C$ is a necessary condition for the gluing to be divisible. Note that if $x \in B^{-},$ and $y \in C^{-}$, then $x(x \back y) = x1 = x = x \land y$, and similarly $ (y/x)x = x = x \land y$, so divisibility holds in this case.

If $x \in C^{-}$ and $y \in B^{-}$, then $x \land y = y$, while $x(x\back y)$ and $(y / x)x$ depend on whether $x$ is an $I$-divisor or not.
Notice that if $x$ is a left $I$-divisor, then $ x \ell(x) \neq y$ since $\alg C$ is a subalgebra, therefore $$x(x\back y) = x \ell(x) \neq y = x \land y.$$
Similarly, if $x$ is a right $I$-divisor
$$(y/x)x = r(x)x \neq y = x \land y$$
Thus for the gluing to be divisible, $\alg C$ must have no $I$-divisors. Now, if $x$ is not an $I$-divisor, we get:
$$x(x\back y) =  x \gamma(y)= \sigma(\gamma(y)) = \sigma(y)$$ and similarly, $$(y/x)x =  \gamma(y) x= \sigma(\gamma(y)) = \sigma(y).$$
Thus for the gluing to be divisible, we need that for all $y \in B^{-}$, $\sigma(y) =y$. 

Notice that the same holds for $x \in C^{-}, y \in I$, by Lemma \ref{lemma:filtercompatible2}.
Consequently, for all $f \in F$, $y = \sigma(y) \leq fy\leq y$ and also $y = \sigma(y) \leq yf \leq y$, thus have that $fy = yf = y$. This implies that $\alg B$ is the $1$-sum $B = (B-F) \oplus_1 F$ (including the trivial case where $F = \{1\}$). 

Now we show that if $\alg B$ and $\alg C$ are divisible, $\alg C$ has no $I$-divisors and $ B = (B-F) \oplus_1 F$, then divisibility holds in the gluing. We only need to check the case where $x \in C^{-}, y \in B^{-}$, where we get $x \land y = y$and $$x(x\back y) = x \gamma(y)= \sigma(y) = (y/x) x$$ since $x$ is not an $I$-divisor. Now, if $ B = (B-F) \oplus_1 F$, all products between elements $f \in F$ and $x \in B-F$ are such that $fx = xf = x$. Thus, for $x \in B^{-}$ with $x \mathrel{\theta_{F}} y$, we have $x \back y, y \back x \in F$. So $x = x (x \back y) \leq y$ and $y = y (y \back x) \leq x$, hence $x = y$. Therefore, $\sigma(y) =\min[y]_{F}= y$ and divisibility holds in the gluing.

\item 
If $\alg B \oplus_{P} \alg C$ is semilinear, then $\alg C$ is semilinear, since it is a subalgebra except possibly for the meet. Also, in verifying semilinearity in $\alg B$, if $x,y, u, v \in B$, then $[u\back(y\back x)u]$, $[v(x \back y)/v] \in B$ and $[u\back(y\back x)u] \lor [v(x \back y)/v]= 1$ in $\alg B \oplus_{P} \alg C$. Since $[u\back(y\back x)u] \lor [v(x \back y)/v] \in B$, it follows that $[u\back(y\back x)u] \lor [v(x \back y)/v]= 1$ in $\alg B$; hence semilinearity holds in $\alg B$.

 Moreover, in the particular case where $\alg F = \{1\}$, let $a, b \in B$ be incomparable. Therefore, $a \not \leq b$ and $b \not \leq a$, so $1 \not = a \back b$ and $1 \not = b\back a$; hence $a \back b, b\back a \in B-F$, so $(a \back b) \lor (b \back a) \leq c$ for every $c \in C^-$. Since $C^- \not = \emptyset$ we have $c<1$ for some $c \in C^-$, so $(a \back b) \lor (b \back a) \leq c<1$. By the semilinearity of  $\alg B \oplus_{P} \alg C$, we get $(a \back b) \lor (b \back a)= 1$, a contradiction.

For the converse direction, suppose both $\alg B$ and $\alg C$ are semilinear. We check whether in $\alg B \oplus_{P} \alg C$ we have (sl):
$$[u\back(y\back x)u] \lor [v(x \back y)/v]= 1.$$

If all elements belong to $\alg C$, the equation follows from the semilinearity of $\alg C$. If $x$ and $y$ are comparable, then $y \leq x$ or $x \leq y$, so  $y\back x=1$ or $x \back y=1$; hence for all $u,v$,  $u\back (y\back x)u=1$ or $v(x \back y)/v=1$ and so (sl) holds. 

If $x \in B^-, y \in C^-$ or vice versa, then $x,y$ are comparable, so  (sl) holds. 
It remains to verify (sl) for  $x, y \in B^-$.

 If $F = \{1\}$ and $\alg B$ is linear, then $x,y$ are comparable, so (sl) holds. 

We now assume that $F \neq \{1\}$. For $x, y \in B^-$, if they are comparable, (sl) holds. If they are incomparable, then $y\back x \not =1$ and $x \back y \not =1$ and $y\back x , x \back y \in B$. The semilinearity of $\alg B$ yields
 $$(y\back x) \lor (x \back y) = 1.$$
%We distinguish the cases whether $F = \{1\}$ or not.
Note that if $b_1, b_2 \in B$, $b_1 \not =1$, $b_2 \not = 1$ and $b_1 \lor b_2 =1$, then $b_1, b_2 \in F$. (If, say, $b_1 \not \in F$ and $b_2 \in F$, then since $B-F$ is strictly below $F$, we get  $b_1 \lor b_2 = b_2 \not = 1.$ If $b_1, b_2 \not \in F$, then since $B-F$ is strictly below $F \neq \{1\}$, there is $f \in F-\{1\}$ such that $b_1 \lor b_2 \leq f <1$.) The same holds for elements $c_1, c_2 \in C$.

Therefore, $y\back x , x \back y \in F$. If $u \in B$ (or $u \in C$), since $\alg B$ (respectively, $\alg C$) is semilinear, we can apply Lemma 6.5 in \cite{BlountTsinakis} and obtain that
$$[u\back(y\back x)u] \lor (x \back y)= 1.$$

If $[u\back(y\back x)u]=1$, then (sl) holds. If not, then $[u\back(y\back x)u]$ and $  (x \back y)$ are non-identity elements of $B$ ($C$, respectively) that join to $1$, so by above fact they are both in $F$. Given the semilinearity of $\alg B$ (respectively, $\alg C$), we can apply Lemma 6.5 in \cite{BlountTsinakis}, which states that whenever semilinearity holds, if $a \lor b = 1$, also $\gamma_1(a) \lor \gamma_2(b) = 1$, for any iterated conjugates $\gamma_1, \gamma_2$. With $v \in B$ (or $v \in C$), this yields precisely (sl).
\end{enumerate}
\end{proof}

Since the components of the gluing are subalgebras (except in the mentioned cases for the lattice operations), most one-variable equations are preserved.

\begin{proposition} The $P$-gluing $\alg B \oplus_{P} \alg C$, where $P= (F, I)$, of two IRLs preserves all one-variable equations not involving the lattice operations. Whenever $B-F$ is closed under joins, and $C-I$ is closed under meets, all one-variable equations satisfied by both $\alg B$ and $\alg C$ are preserved.
\end{proposition}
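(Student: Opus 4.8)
The plan is to exploit that each component embeds into the gluing as a subalgebra for almost all operations: by the construction underlying Theorem~\ref{prop:quadruplegluing}, $\alg B$ is a subalgebra of $\alg B\oplus_P\alg C$ with respect to $\cdot,\back,/,\land$ and $1$ (possibly failing only for $\lor$), while $\alg C$ is a subalgebra with respect to $\cdot,\back,/,\lor$ and $1$ (possibly failing only for $\land$). The decisive observation is that a \emph{one-variable} identity $s(x)=t(x)$ evaluated at a single element $a$ substitutes $a$ for every occurrence of $x$, so the entire computation of $s(a)$ and $t(a)$ stays inside the one component ($B$ or $C$) containing $a$, as long as that component is closed under the operations occurring in the terms. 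This is precisely where a single variable is needed: with two variables one could pick $x\in B$ and $y\in C$, forcing the evaluation to genuinely straddle both components.

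For the first assertion, assume $s(x)=t(x)$ uses only $\cdot,\back,/,1$ and holds in both $\alg B$ and $\alg C$. First I would fix an arbitrary $a\in B\cup C$. If $a\in B$, then since $B$ is closed under $\cdot,\back,/$ and these restrict to the operations of $\alg B$, every subterm evaluated at $a$ remains in $B$ and takes the value it has in $\alg B$; hence $s^{\alg B\oplus_P\alg C}(a)=s^{\alg B}(a)=t^{\alg B}(a)=t^{\alg B\oplus_P\alg C}(a)$, using $\alg B\models s=t$. The case $a\in C$ is symmetric via $\alg C\models s=t$, and since $a$ was arbitrary the identity holds in the gluing.

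For the second assertion I would first promote both components to \emph{full} subalgebras. The gluing join of $x,y\in B$ differs from $x\lor^{\alg B}y$ only through the redefinition $x\lor y=\bot_C$, which by its defining clause applies exactly when $x,y\in B^{-}$ with $x\lor^{\alg B}y\in F$; in all remaining cases (including joins of elements of $F\cup I=B\cap C$, computed in the common subalgebra) the join coincides with $\lor^{\alg B}$. If $B-F$ is closed under joins, then $x,y\in B^{-}\subseteq B-F$ forces $x\lor^{\alg B}y\in B-F$, so $x\lor^{\alg B}y\notin F$ and the redefinition never fires; thus $\alg B$ is a subalgebra for $\lor$ as well, hence a full subalgebra. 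Dually, the meet redefinition $x\land y=\top_B$ fires only for $x,y\in C^{-}$ with $x\land^{\alg C}y\in I$, which is ruled out once $C-I$ is closed under meets, making $\alg C$ a full subalgebra. With both components now full subalgebras, the argument of the first assertion applies verbatim to an arbitrary one-variable identity satisfied by both $\alg B$ and $\alg C$, now allowed to contain $\land$ and $\lor$. I expect the only genuinely delicate step to be this case analysis verifying that the sole discrepancy between the gluing join (resp. meet) and $\lor^{\alg B}$ (resp. $\land^{\alg C}$) on a component is the single redefinition clause neutralized by the hypothesis; the rest reduces to the localization-to-one-component observation enabled by having a single variable.
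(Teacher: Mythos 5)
Your proposal is correct and is essentially the paper's own argument: the paper's proof simply states that the result ``follows directly from the definition of the operations,'' and your write-up is the natural elaboration of exactly that observation --- one-variable evaluations localize to the component containing the chosen element, and the closure hypotheses on $B-F$ and $C-I$ neutralize the only two clauses ($x\lor y=\bot_C$ and $x\land y=\top_B$) where the gluing's lattice operations deviate from those of $\alg B$ and $\alg C$. No gaps; your case analysis of the redefinition clauses is the right (and only) delicate point.
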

\begin{proof}
Follows directly from the definition of the operations.
\end{proof}
Thus, for example, the gluing construction preserves $n$-potency, $x^{n} = x^{n+1}$, for every $n \geq 1$. In fact, the gluing also preserves all monoidal equations, given the idempotency and the absorbing properties of the conucleus $\sigma$.
\begin{proposition} The $P$-gluing $\alg B \oplus_{P} \alg C$, with $P= (F, I)$, preserves all monoid equations valid in both $\alg B$ and $\alg C$.
\end{proposition}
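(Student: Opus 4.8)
The plan is to reduce the statement to word equations and then to analyse how a word evaluates in the gluing according to where the variables are sent. A monoid term over $\{\cdot,1\}$ reduces, after deleting occurrences of $1$, to a word in the variables, so it suffices to prove that every word equation $u\approx v$ holding in both $\alg B$ and $\alg C$ holds in $\alg D:=\alg B\oplus_P\alg C$. We may assume $u$ and $v$ share their variable set $V$ (otherwise an occurring variable can be specialised, with all others set to $1$, to force triviality, and the statement is immediate). Fix an assignment $h\colon V\to B\cup C$. Writing $B^-=B-(F\cup I)$, $C^-=C-(F\cup I)$ and recalling $B\cap C=F\cup I$, note that $B\cup C=B\cup C^-$, so every value lies in $B$ or in $C^-$. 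I split into three cases: (i) $h(V)\subseteq B$; (ii) $h(V)\subseteq C$; (iii) some variable is sent to $B^-$ and some to $C^-$. In cases (i) and (ii), since $\alg B$ and $\alg C$ are submonoids of $\alg D$, both sides evaluate inside $\alg B$, resp.\ $\alg C$, and equality follows from $\alg B\models u\approx v$, resp.\ $\alg C\models u\approx v$.

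The substantive case is (iii), and it rests on the following normal-form lemma, which I would establish first. \emph{Claim.} If $w=a_1\cdots a_k$ is a word whose letters include at least one element of $B^-$ and at least one of $C^-$, then its value in $\alg D$ equals $\sigma_F(p)$, where $p$ is the product, computed in $\alg B$, of the subsequence of those $a_i$ that lie in $B$. I would prove this by induction on $k$, tracking the left-to-right partial product $q_i=a_1\cdots a_i$ together with the $\alg B$-product $p_i$ of the $B$-letters among $a_1,\dots,a_i$, and maintaining the invariant: $q_i=p_i\in B$ while only $B$-letters have appeared; $q_i$ equals the $\alg C$-product while only $C$-letters have appeared; and $q_i=\sigma_F(p_i)$ once both a $B^-$- and a $C^-$-letter have appeared. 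The inductive step multiplies $q_i$ by one letter; the only nontrivial transitions are the cross products, for which I would invoke Lemma~\ref{lemma:filtercompatible2} in the strong form $bc=cb=\sigma_F(b)$ valid for \emph{all} $b\in B-F$ (in particular for $b\in I$) and $c\in C^-$, the strong conucleus identities $\sigma_F(x)y=x\sigma_F(y)=\sigma_F(xy)$ together with idempotency, Lemma~\ref{bounded-lemma}(\ref{conuclei-abs-b}) to absorb letters of $F$, and Lemma~\ref{bounded-lemma}(\ref{condC-}) for products of $C^-$ with $F$. A useful observation that keeps the bookkeeping finite is that as soon as any factor lands in $I$ the partial product stays in $I\subseteq B$, so it is thereafter governed by the $\alg B$-operations and the conucleus identities.

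Granting the Claim, I would finish case (iii) as follows. Since $V$ is the common variable set and some variable is sent to $B^-$ and some to $C^-$, both $u$ and $v$ are mixed words, so by the Claim they evaluate to $\sigma_F(p_u)$ and $\sigma_F(p_v)$, where $p_u,p_v$ are the $\alg B$-products of their $B$-letters. Let $h'\colon V\to B$ agree with $h$ on variables whose value lies in $B$ and send each $C^-$-valued variable to $1$; then $p_u=u^{\alg B}(h')$ and $p_v=v^{\alg B}(h')$, because setting a variable to $1$ simply deletes it from the $\alg B$-product. As $\alg B\models u\approx v$ we get $p_u=p_v$, hence $\sigma_F(p_u)=\sigma_F(p_v)$, which is the desired equality in $\alg D$.

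The main obstacle is the Claim, and within it the risk that an intermediate product drops into the ideal $I$ and is then multiplied by a $C^-$ element through the $\alg C$-operation rather than through the defining cross-product clause of the construction. What makes this manageable is precisely the strength of Lemma~\ref{lemma:filtercompatible2}: the identity $bc=\sigma_F(b)$ holds for every $b\in B-F$, not merely for $b\in B^-$, so these $\alg C$-products still collapse to the appropriate value of $\sigma_F$. Combined with idempotency and the absorbing behaviour of $\sigma_F$, this forces the invariant to persist through every transition, and the lattice operations never enter the argument, which is why the purely monoidal hypothesis suffices.
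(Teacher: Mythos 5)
Your proposal is correct and takes essentially the same approach as the paper: the same reduction to equations whose variables all occur on both sides, the same case split, the same key collapse claim that a mixed product evaluates to $\sigma_F$ of the ordered $\alg B$-product of its $B$-letters (proved from the strong form of Lemma~\ref{lemma:filtercompatible2}, the strong conucleus identities, and Lemma~\ref{bounded-lemma}), and the same conclusion via substituting $1$ for the $C^-$-valued variables and invoking the equation in $\alg B$. The only difference is presentational: you organize the collapse as an induction along the word with an invariant, whereas the paper performs the same collapse by directly regrouping the evaluated word.
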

\begin{proof}
If the equation has a variable that appears in only one side, then setting all the other variables equal to $1$, we obtain a consequence of the form $x^n=1$, for some $n \not =1$, and the only model of that equation is the trivial algebra. Therefore, we consider equations where all variables appear on both sides.
 
Since  $B$ and $C$ are closed under multiplication, if an equation holds in the gluing then it also holds in $\alg B$ and in $\alg C$. We now assume that some equation holds in $\alg B$ and in $\alg C$. If under some evaluation all variables are chosen from $B$ or all variables are chosen from $C$, then the equation holds true.  Now suppose that at least one variable is assigned to an element of $C^{-}$ and one variable is assigned to an element $B^{-}$. Assume that $X$, is the set of variables in the equation, $v$ is the evaluation, and that $X_C=\{x \in X: v(x)\in C^-\}$ corresponds to the variables that are mapped to elements of $C^-$; $X^c_C$ denotes the complement of $X_C$. We focus on the position of values $v(x)$, where $x \in X_C$, inside the equation; we group together the elements in between these $v(x)$ as follows. Given that $B$ is closed under multiplication, the evaluation of each side of the equation takes the form $$b'_1c_1b'_2c_2 \cdots b'_n,$$ where each $c_i$ is of the form $v(x)$, for some $x \in X_C$ and each $b'_i$ is a product of elements of the form $v(x)$, for certain $x \in X^c_C$, so $v(x)\in B$; hence $c_i \in C^-$ and $b'_i \in B$. 

By focusing on the elements adjacent to the $c_i$'s and using that  $cb = bc = \sigma(b) \in B-F$, for $c \in C^-$ and $b \in B-F$, and that $fc, cf \in C^{-}$ for  $c \in C^-$ and $f \in F$, the evaluation of the equation is reduced to a form that does not contain any elements of $C^-$ (recall that at least one variable is assigned to an element of $C^{-}$ and at least one variable is assigned to an element of $B^{-}$). Then, we use that $b_1\sigma(b_2) = \sigma(b_1b_2) =  \sigma(b_1)b_2$, for $b_1,b_2 \in B-F$, and that $f\sigma(b) = \sigma(b)f = \sigma(b)$, for $b \in B-F$ and $f \in F$, and the idempotency of $\sigma$. In the end, the evaluation of the equation takes the form $\sigma(b_1b_2 \cdots b_n)=\sigma(b_{n+1}b_{n+2} \cdots b_m)$, where the $b_i$'s are exactly the elements of the form $v(x)$, for $x\in X^c_C$, in the exact order they appear in the equation. Therefore, by substituting $1$ for all $x$ with $x\in X_C$, and the appropriate value $b_i$ for the other variables, the original equation (which is valid in $\alg B$), yields $b_1b_2 \cdots b_n=b_{n+1}b_{n+2} \cdots b_m$, so $\sigma(b_1b_2 \cdots b_n)=\sigma(b_{n+1}b_{n+2} \cdots b_m)$ is also valid.
\end{proof}

\subsection{HSP$_{U}$}
Constructions as the one presented in this paper are particularly interesting when they help us better understand and describe the structure theory of the algebras. In what follows, we will just call ``gluing'' a gluing over a filter and an ideal.
In this section we characterize when a gluing is subdirectly irreducible. We will also study the subalgebras, homomorphic images and ultrapowers of a gluing. 
We recall that  $\alg{Fil}(\alg A)$ denotes the lattice of congruence filters of $\alg A$.
\begin{proposition}\label{prop:latticefilters} Consider the $P$-gluing $\alg B \oplus_{P} \alg C$, with $P= (F, I)$, of two integral residuated lattices $\alg B$ and $\alg C$. We distinguish two cases:
\begin{enumerate}
\item If $C-I$ is a congruence filter of $\alg C$, then  $\alg{Fil}(\alg B \oplus_{\pp} \alg C)$ is isomorphic to $\alg{Fil}(\alg B) \oplus \alg{Fil}(\alg C-I)$, the poset ordinal sum of the two lattices.
\item Otherwise, $\alg{Fil}(\alg B \oplus_{\pp} \alg C) \cong \alg{Fil}(\alg C)$.
\end{enumerate}
\end{proposition}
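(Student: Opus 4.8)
The plan is to analyze congruence filters $G$ of $\alg D:=\alg B\oplus_{P}\alg C$ according to how far ``down'' the chain of layers $I<B^{-}<C^{-}<F$ they reach, using that every congruence filter is an upward closed set containing $1\in F$. Two order observations do most of the work: since every element of $B^{-}$ lies strictly below every element of $C^{-}$, if $G$ meets $B^{-}$ then $G\supseteq C^{-}\cup F=C-I$; and since every element of $I$ lies below every element of $B^{-}$, if $G$ meets $I$ then $G\supseteq B^{-}$. Combined with the fact that $\alg B$ and $\alg C$ are subalgebras for multiplication and division, so that $G\cap B\in\alg{Fil}(\alg B)$ and $G\cap C\in\alg{Fil}(\alg C)$, this sorts each $G$ into exactly one of three strata: (i) $G\subseteq F$; (ii) $F\subsetneq G\subseteq C-I$; (iii) $C-I\subseteq G$ with $G$ meeting $B^{-}$. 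First I would record that $\Phi\colon G\mapsto(G\cap B,\,G\cap C)$ is an order embedding of $\alg{Fil}(\alg D)$ into $\alg{Fil}(\alg B)\times\alg{Fil}(\alg C)$ (injectivity is immediate since $D=B\cup C$), and then describe its image stratum by stratum.

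For the first case, assume $C-I$ is a congruence filter of $\alg C$, i.e.\ $\alg C$ has no $I$-divisors, so products of $C^{-}$-elements stay in $C^{-}$. Then strata (i) and (ii) are precisely the congruence filters of $\alg D$ contained in $C-I$, and I would show these coincide with $\alg{Fil}(\alg C-I)$: a congruence filter of the subalgebra $\alg C-I$ is automatically closed under the extra $\alg D$-conjugations by elements of $B^{-}\cup I$, because each such conjugate collapses into $F$ or back into $C^{-}$ (this is where the product and division clauses of the construction, together with the residuated-pair and strong-conuclear properties of $\sigma_{F},\gamma_{F}$, are used). Stratum (iii) is handled by $G\mapsto G\cap B$: a filter reaching $B^{-}$ contains all of $C^{-}$, hence equals $(C-I)\cup(G\cap B)$, and $G\cap B$ ranges exactly over the congruence filters of $\alg B$ that strictly contain $F$. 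Since every filter of strata (i)--(ii) is contained in $C-I$ and hence below every filter of stratum (iii), the lattice $\alg{Fil}(\alg D)$ assembles as the ordinal sum of $\alg{Fil}(\alg C-I)$ at the bottom and the filters of $\alg B$ lying above $F$ at the top, yielding the asserted ordinal sum $\alg{Fil}(\alg B)\oplus\alg{Fil}(\alg C-I)$ (with the common block $\alg{Fil}(F)$ of congruence filters of $F$ identified).

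For the second case there are $I$-divisors $c,d\in C^{-}$ with $cd\in I$, so $C-I$ is no longer product closed. Consequently any congruence filter $G$ with $C-I\subseteq G$ — in particular any $G$ meeting $B^{-}$ — contains $cd\in I$, and since $I$ lies below all of $B^{-}$ this forces $B^{-}\subseteq G$; thus $G=(D-I)\cup(G\cap I)$ is entirely determined by $G\cap C$. Filters not meeting $B^{-}$ lie in $C-I$ and again equal $G\cap C$. Hence $G\mapsto G\cap C$ is a bijection onto $\alg{Fil}(\alg C)$, whose inverse sends a filter $H$ reaching $I$ to $H\cup B^{-}$ and a filter $H\subseteq C-I$ to itself; verifying that both directions respect inclusion gives the isomorphism $\alg{Fil}(\alg D)\cong\alg{Fil}(\alg C)$.

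I expect the main obstacle to be the ``seam'' verification present in both cases: proving that the congruence filters of $\alg D$ contained in $C-I$ are exactly the congruence filters of $\alg C-I$ (and, at the very bottom, matching the filters contained in $F$), since this is the point where closure under conjugation by the foreign elements of $B^{-}\cup I$ must be fully controlled through the compatibility data. A secondary delicate point is pinning down the orientation of the ordinal sum and the single identification of the shared block $\alg{Fil}(F)$, and, in the second case, confirming rigorously that no congruence filter can meet $B^{-}$ without being dragged through $I$ all the way down.
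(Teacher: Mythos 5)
Your overall strategy---stratifying the congruence filters of $\alg D=\alg B\oplus_{P}\alg C$ by how far down the layers $I<B^{-}<C^{-}<F$ they reach, and in the second case showing that any filter meeting $B^{-}$ gets dragged through $I$---is the same as the paper's (very terse) argument. But the step you yourself flag as the main obstacle, the ``seam'' verification, is exactly where the proof breaks, and it cannot be repaired in the stated generality. Your justification that conjugates by elements of $B^{-}\cup I$ ``collapse into $F$ or back into $C^{-}$'' is not enough: landing in $F$ is harmless only for filters that already contain all of $F$ (your strata (ii) and (iii)); for a filter $G\subsetneq F$ in stratum (i) the conjugate must land in $G$ itself, and nothing in the compatibility data forces this, because conjugation by an element of $B^{-}$ can permute $F$ nontrivially. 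Concretely, let $\alg F$ be the G\"odel algebra on the four-element diamond $\{0_F,a,b,1\}$, let $\alg B=F\cup Fy$ be the IRL whose lower layer $Fy$ is a copy of $F$ with multiplication twisted by the automorphism $\phi$ exchanging $a$ and $b$ (so $g\cdot fy=(gf)y$, $fy\cdot g=(f\phi(g))y$, $(fy)(gy)=0_Fy$, with the evident residuals), and let $\alg C=\{0'\}\cup F$ be the $1$-sum of the two-element IRL with $\alg F$, taking $I=\emptyset$, so case (1) applies with $C-I=C$. Then $\{1,a\}$ is a congruence filter of $\alg C$, but in the gluing $y\back (a\cdot y)=\phi^{-1}(a)=b\notin\{1,a\}$, so it is not a congruence filter of $\alg D$; in fact $\alg{Fil}(\alg D)$ is the four-element chain $\{1\}\subset F\subset C\subset D$, while $\alg{Fil}(\alg C-I)$ contains a diamond, so no bijection of the kind you describe exists. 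The same failure destroys the surjectivity of $G\mapsto G\cap C$ in your second case. Closure under conjugation comes for free only in the commutative setting, which neither your proof nor the statement assumes.

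Two further mismatches deserve mention. First, even where the seam works, your stratum (iii) is order-isomorphic to $\{H\in\alg{Fil}(\alg B): F\subsetneq H\}$, the interval above $F$ in $\alg{Fil}(\alg B)$, not to $\alg{Fil}(\alg B)$ itself; what your strata assemble is therefore a Hall--Dilworth-style vertical gluing of $\alg{Fil}(\alg C-I)$ with that interval, identified at the single point $C-I\leftrightarrow F$, which is not ``the poset ordinal sum of the two lattices.'' Your closing parenthetical about ``identifying the common block $\alg{Fil}(F)$'' concedes this, but it is not the statement being proved, and the identification it invokes presupposes that the filters below $F$ of $\alg B$, of $\alg C-I$, of $\alg F$ and of $\alg D$ all coincide---the seam issue again (in the example above these four collections have sizes $2$, $4$, $4$ and $2$). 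Even in the simplest commutative case of two three-element G\"odel chains glued over a shared two-element filter, $\alg{Fil}(\alg D)$ is a four-element chain while any reading of $\alg{Fil}(\alg B)\oplus\alg{Fil}(\alg C-I)$ has at least five elements. Second, in case (2) you infer that $C-I$ fails to be a congruence filter only via a product $cd\in I$; it can also fail because a conjugate $y\back xy$ or $xy/y$ with $x\in C-I$, $y\in C$ falls into $I$, a possibility the paper's proof explicitly allows (``either some conjugate or some product'') and yours omits. To be fair, the paper's own justification of claim (1) is a one-line appeal to the definitions that glosses over the identical seam problem, so your proposal is faithful to the paper's approach; but it does not close the gap it identifies, and in the stated non-commutative generality that gap is fatal to the statement itself, not merely to the proof.
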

\begin{proof}
The first claim follows from the definition of the order and operations in the gluing construction, see Figure \ref{fig:gluing1}. Now, if $C-I$ is not a congruence filter, then the congruence filter generated by $C-I$ has non-empty intersection with $I$, i.e., either some conjugate or some product of elements in $C^{-}$ are in $I$. Since $C$ is closed  under multiplication and divisions, this is also true in the gluing $\alg B \oplus_{P} \alg C$. Thus the congruence filter $\langle C - I\rangle$ generated by $C-I$ in the gluing also has nonempty-intersection with the ideal $I$. Since filters are closed upwards,  $B^{-}$ is contained in $\langle C - I\rangle$  and the second claim follows.
\end{proof}
\begin{corollary}
For $P= (F, I)$, if $F \neq \{1\}$, then $\alg B \oplus_{P} \alg C$ is subdirectly irreducible iff $\alg F$ is subdirectly irreducible iff $\alg B$ is subdirectly irreducible iff $\alg C$ is subdirectly irreducible. If $F=\{1\}$, then $\alg B \oplus_{P} \alg C$ is subdirectly irreducible iff $\alg C$ is subdirectly irreducible. 
\end{corollary}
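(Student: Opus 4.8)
The plan is to reduce subdirect irreducibility to the order structure of the congruence filter lattice and then feed that into Proposition~\ref{prop:latticefilters}. Recall that for an integral residuated lattice $\alg A$ the lattice $\alg{Fil}(\alg A) \cong \alg{Con}(\alg A)$ has least element $\{1\}$, and that $\alg A$ is subdirectly irreducible precisely when $\alg{Fil}(\alg A)$ has a monolith, i.e.\ a least element among the congruence filters properly containing $\{1\}$.

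First I would isolate the key reduction as a lemma: if $F$ is a congruence filter of an IRL $\alg A$ that is strictly above $A-F$ and $F \neq \{1\}$, then $\alg A$ is subdirectly irreducible iff $\alg F$ is. The engine is a dichotomy. Since $F$ lies strictly above every other element, any congruence filter $G$ that contains a single element below $F$ is upward closed and hence contains all of $F$; thus each nontrivial congruence filter is either contained in $F$ or contains $F$, and every filter of the first kind is below every filter of the second kind. Consequently the monolith, if it exists, must be sought among the filters strictly between $\{1\}$ and $F$, so the whole question lives in the interval $[\{1\},F]$ of $\alg{Fil}(\alg A)$. I would then identify this interval with $\alg{Fil}(\alg F)$: a congruence filter of $\alg A$ contained in $F$ is visibly a congruence filter of $\alg F$, and conversely a congruence filter of $\alg F$ is already closed under conjugation by elements of $A-F$ because, for $x \in F$ and $a \in A-F$, one has $xa \mathrel{\theta_F} a$ and hence $a\backslash xa \mathrel{\theta_F} 1$, so external conjugates land back inside the same filter. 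This identification turns ``$\alg A$ has a monolith'' into ``$\alg F$ has a monolith''.

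Next I would apply this lemma three times to settle the case $F \neq \{1\}$. Both $\alg B$ and $\alg C$ carry $F$ as a congruence filter strictly above the rest, so each is subdirectly irreducible iff $\alg F$ is. For the gluing I split according to Proposition~\ref{prop:latticefilters}: if $C-I$ is not a congruence filter then $\alg{Fil}(\alg B \oplus_P \alg C) \cong \alg{Fil}(\alg C)$ and the equivalence is immediate; if it is, then $\alg{Fil}(\alg B \oplus_P \alg C)$ is the ordinal sum whose bottom part is $\alg{Fil}(\alg C-I)$ (this is the summand containing $\{1\}$, since $1 \in C-I$), so its monolith is the monolith of $\alg{Fil}(\alg C-I)$, and applying the interval argument to $F \subseteq C-I$ inside $\alg C-I$ shows the latter is again governed by $\alg F$. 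Chaining these equivalences gives that all four conditions coincide.

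Finally, for $F = \{1\}$ the dichotomy is vacuous, so I would argue directly from Proposition~\ref{prop:latticefilters}. In its second case the isomorphism $\alg{Fil}(\alg B \oplus_P \alg C) \cong \alg{Fil}(\alg C)$ yields the claim at once; in its first case $C-I$ is a congruence filter, the hypothesis $C^- \neq \emptyset$ forces $C-I \supsetneq \{1\}$, and the nontrivial bottom summand $\alg{Fil}(\alg C-I)$ controls the monolith, while the same dichotomy applied to the pair $(\alg C, C-I)$ — now with $C-I$ in the role of $F$ and $I$ in the role of $A-F$ — shows that $\alg C$ is subdirectly irreducible iff $\alg C-I$ is. The main obstacle I anticipate is precisely the uniform identification $[\{1\},F] \cong \alg{Fil}(\alg F)$ for $\alg B$, $\alg C$ and the gluing: one must verify that forming external conjugates, and products with elements outside $F$, never enlarges a congruence filter of $\alg F$ beyond itself, and this is exactly where the compatibility and strong-conucleus identities of the earlier sections, together with $F$ being strictly above everything, carry the weight.
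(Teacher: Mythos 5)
Your proof follows the same skeleton as the paper's: read subdirect irreducibility as the existence of a monolith in $\alg{Fil}(\cdot)$, split congruence filters into those contained in $F$ and those containing $F$, and feed the gluing case into Proposition~\ref{prop:latticefilters}; the paper then handles $\alg B$ and $\alg C$ by the same ``$F$ strictly above everything'' observation that you isolate as a lemma. The difference is that you try to prove the identification $[\{1\},F]\cong\alg{Fil}(\alg F)$, which the paper (and Proposition~\ref{prop:latticefilters} itself) leaves implicit, and this is exactly where there is a genuine gap. To show that a congruence filter $G$ of $\alg F$ is a congruence filter of the ambient algebra $\alg A$, you must show that for $x\in G$ and $a\in A-F$ the conjugates $a\back xa$ and $ax/a$ lie in $G$; your argument ($xa\mathrel{\theta_F}a$, hence $a\back xa\mathrel{\theta_F}1$) only puts them in $F$. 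In the commutative case the gap closes easily, since there $x\le a\back ax=a\back xa$ and $G$ is an upset; but the paper's algebras are not assumed commutative, and in general the claim is false, so the ``compatibility and strong-conucleus identities'' you hope will carry the weight cannot do so.

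Concretely, let $\alg F=\{1,u,v,w\}$ be the four-element Boolean lattice ($w=u\wedge v$, $u\vee v=1$) with product equal to meet, and let $B=F\cup\{a_1,a_2\}$ ordered by $a_1<a_2<w$, with multiplication extending that of $\alg F$ as follows: every product involving $a_1$, or two elements of $\{a_1,a_2\}$, equals $a_1$, while $ua_2=a_2v=wa_2=a_2w=a_1$ and $a_2u=va_2=a_2$. One can check that $\alg B$ is an IRL, that $(\alg B,F)$ is a lower-compatible pair (with $\sigma_F(a_i)=a_1$, $\gamma_F(a_i)=a_2$), and that it extends to a compatible quadruple, e.g.\ with $\alg C=\mathbf 2\oplus_1\alg F$ and $I=\emptyset$. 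However, the congruence filter $\{1,u\}$ of $\alg F$ is not a congruence filter of $\alg B$, because $a_2\back(ua_2)=a_2\back a_1=v\notin\{1,u\}$; symmetrically $\{1,v\}$ fails via $(a_2v)/a_2=a_1/a_2=u$. Consequently $\alg{Fil}(\alg B)=\{\{1\},F,B\}$, so $\alg B$ is subdirectly irreducible while $\alg F\cong\mathbf 2\times\mathbf 2$ is not. Thus your key lemma is false in the non-commutative setting, and the failure is not repairable within your strategy: the same example shows that part (1) of Proposition~\ref{prop:latticefilters} (on which both your argument and the paper's rely) and indeed the chain of equivalences in the statement itself break down, since conjugation by elements below $F$ can mix filters of $\alg F$ that are incomparable. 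The missing conjugation-closure step is therefore the crux, not a technicality.
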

\begin{proof}
Assume first that $F$ is not trivial. By Proposition \ref{prop:latticefilters} and standard universal algebraic results (see Theorem 8.4 in \cite{BS}), $\alg B \oplus_{\pp} \alg C$ is subdirectly irreducible iff $\alg F$ is subdirectly irreducible as an IRL, thus the claim follows since $F$ is a congruence filter of both $\alg B$ and $\alg C$, (strictly) above all their other elements.
For $F=\{1\}$, it follows from the definition of the operations and order of the gluing (see also Figure \ref{fig:gluing1}) that $\alg B \oplus_{P} \alg C$ is subdirectly irreducible iff $\alg C$ is subdirectly irreducible. 
\end{proof}
We can now describe the homomorphic images of a gluing. 
\begin{proposition}\label{prop:homogluing}
Let $h$ be a homomorphism having as domain a gluing  $\alg B \oplus_{P} \alg C$ and $H$ the associated congruence filter (the preimage of $1$). If $B/H, C/H, F/H, I/H$ denote the images under $h$, we have that
the homomorphic image of $\alg B \oplus_{P} \alg C$ via $h$ is isomorphic to
\begin{enumerate}
	\item\label{homo1} $\alg C/H$, if $H \cap I \not = \emptyset$.
	\item\label{homo2} $\alg B/H$, if $H \cap I = \emptyset$ and $H \cap B^- \not = \emptyset$.	
	\item\label{homo3} $\alg B/H \oplus_{P/H} \alg C/H$, where $P/H = (F/H, I/H)$, if $H \cap B^- = \emptyset$.
\end{enumerate}
\end{proposition}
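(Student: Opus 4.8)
The plan is to reduce the problem to identifying the quotient of the gluing by the congruence associated to $H$. By the filter--congruence correspondence recalled in the preliminaries, the image $h[\alg{B}\oplus_{P}\alg{C}]$ is isomorphic to $(\alg{B}\oplus_{P}\alg{C})/\theta_{H}$, so the three cases reduce to locating $H$ within the four-layer order $I < B^- < C^- < F$ and reading off which layers collapse to the top class. First I would record the consequences of $H$ being an upset in this order: since every element of $I$ lies below every element of $B^- \cup C^- \cup F$, if $H \cap I \neq \emptyset$ then $B^- \cup C^- \cup F \subseteq H$; similarly, if $H \cap B^- \neq \emptyset$ then $C^- \cup F \subseteq H$; and (assuming $B^- \neq \emptyset$) $H \cap B^- = \emptyset$ forces $H \cap I = \emptyset$. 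These observations partition the situation exactly into the three stated cases. I would also use that congruence filters are closed under meet (since $xy \leq x \wedge y$ and $H$ is a product-closed upset), which resolves the fact that $\alg{C}$ is a subalgebra of the gluing only up to $\wedge$.

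For case~(\ref{homo1}), $H \cap I \neq \emptyset$ yields $B^- \cup C^- \cup F \subseteq H$, so $h$ sends all of $B^- \cup C^- \cup F$ to $1$ and every surviving class arises from $I \setminus H$. Since $F, I \subseteq C$ and $h[B^-] = \{1\}$, we obtain $h[B \cup C] = h[C]$, so the image equals $h[\alg{C}] \cong \alg{C}/(H \cap C)$, which is what is denoted $\alg{C}/H$. Case~(\ref{homo2}) is the symmetric degenerate case: $H \cap I = \emptyset$ together with $H \cap B^- \neq \emptyset$ gives $C^- \cup F \subseteq H$ while $I$ is untouched, so $h[C] \subseteq h[B]$ and the image is $h[\alg{B}] \cong \alg{B}/H$. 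In both cases the content is simply that one component is collapsed into a quotient of the other, and the verification amounts to the image-coincidence $h[B]\subseteq h[C]$ or $h[C] \subseteq h[B]$ observed above.

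The substance, and the expected main obstacle, lies in case~(\ref{homo3}), where $H \subseteq C^- \cup F$ meets neither $B^-$ nor $I$. Here no layer collapses completely: $B^-$ and $I$ survive, and only $C^- \cup F$ is quotiented by $H$. I would show that $\theta_H$ restricts to the congruence $\theta_{H \cap F}$ on $\alg{B}$ and to $\theta_H$ on $\alg{C}$, so that as a set the quotient is $(\alg{B}/H) \cup (\alg{C}/H)$, with the two pieces overlapping exactly in the common part $(F/H) \cup (I/H)$. The crucial step is to check that the gluing data descend to the quotients: that $\sigma_F, \gamma_F$ induce the corresponding maps $\sigma_{F/H}, \gamma_{F/H}$ on $\alg{B}/H$, that $\ell_I, r_I$ induce $\ell_{I/H}, r_{I/H}$ on $\alg{C}/H$, and that $(\alg{B}/H, F/H, \alg{C}/H, I/H)$ is again a compatible quadruple. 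One must confirm that the order conditions (e.g.\ $F/H$ strictly above $C^-/H$ and $I/H$ strictly below) and the four compatibility conditions are inherited under the quotient, and that the gluing's redefinitions of the boundary joins and meets are compatible with passing to $\theta_H$. Once the quotient has been identified layer by layer with the structure built from these quotiented pieces, Theorem~\ref{prop:quadruplegluing} delivers the isomorphism with $\alg{B}/H \oplus_{P/H} \alg{C}/H$, completing the proof.
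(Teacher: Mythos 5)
Your proposal is correct and follows essentially the same route as the paper: the paper's own proof is a two-line appeal to Proposition~\ref{prop:latticefilters} (whose content is precisely the upset/layering analysis you spell out) together with the remark that cases~(\ref{homo1}) and~(\ref{homo2}) are degenerate instances of the gluing-of-quotients formula in case~(\ref{homo3}). Your write-up is in fact more detailed than the paper's, handling the first two cases by direct image comparison and reducing case~(\ref{homo3}) to Theorem~\ref{prop:quadruplegluing} via descent of the compatible-quadruple data.
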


\begin{proof} The fact that the homomorphic image through $h$ is given by the gluing $\alg B/H \oplus_{P/H} \alg C/H$ follows from Proposition \ref{prop:latticefilters}. Moreover, notice that (\ref{homo1}) and (\ref{homo2}) are particular cases of (\ref{homo3}). 
\end{proof}
We call a subalgebra $\alg S$ of $\alg C$ \emph{divisor-special} if whenever it contains an element $c$ that is a left $I$-divisor, it also contains $\ell_{I}(c)$, and similarly if $d \in S$ where $d$ is a right $I$-divisor, then also $r(d) \in S$. 
We call a subalgebra $\alg T$ of $\alg B$ \emph{$\sigma$-special} if for all $b \in T-F$, also $\sigma(b) \in T$, and \emph{$(\sigma,\gamma)$-special} if also $\gamma(b) \in T$.

\begin{proposition}\label{prop:subgluing}
Let $\alg B \oplus_{P} \alg C$, with $P = (F, I)$, be the $P$-gluing of IRLs $\alg B$ and $\alg C$. Then a subalgebra $\alg S$ of $\alg B \oplus_{P} \alg C$ is one of the following:
\begin{enumerate}
\item $\alg S$ is a subalgebra of $\alg C$ that does not include elements whose meet is the top element of $B-F$. %, that are always subalgebras of $\alg B \oplus_{\pp} \alg C$. 
\item $\alg S$ is a subalgebra of  $\alg B$ that does not include elements whose join is the bottom element of $C-I$.
\item A gluing $\alg B_{1} \oplus_{P_{1}} \alg C_{1}$, with $P_{1} = (F_{1}, I_{1})$, where: \begin{itemize}
\item $F_{1} \subseteq F, I_{1} \subseteq I$ and $F_{1} \cup I_{1}$ is a subalgebra of $F \cup I$;
\item $\alg C_{1}$ is a divisor-special subalgebra of $\alg C$ containing at least an element that is not an $I$-divisor; 
\item $\alg B_{1}$ is a (nonempty) special $(\sigma,\gamma)$-subalgebra of $\alg B$.
\end{itemize}
\item A gluing $\alg B_{2} \oplus_{P_{2}} \alg C_{2}$, with $P_{2} = (F_{2}, I_{2})$, where: \begin{itemize}
\item $F_{2} \subseteq F, I_{2} \subseteq I$ and $F_{2} \cup I_{2}$ is a subalgebra of $F \cup I$;
\item $\alg C_{2}$ is a divisor-special subalgebra of $\alg C$ containing only $I$-divisors; 
\item $\alg B_{2}$ is a (nonempty) $\sigma$-special subalgebra of $\alg B$.
\end{itemize}
\end{enumerate}
\end{proposition}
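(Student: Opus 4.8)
The plan is to reconstruct a subalgebra $\alg S$ from its intersections with the four regions of the gluing. Writing $B_{1} = S \cap B$, $C_{1} = S \cap C$, $F_{1} = S \cap F$ and $I_{1} = S \cap I$, and recalling that $B \cap C = F \cup I$, one gets $B_{1} \cap C_{1} = F_{1} \cup I_{1}$, while $S = B_{1} \cup C_{1}$ since the carrier of the gluing is $B \cup C$. The order on $S$ is inherited, so $I_{1} < (B_{1} \cap B^{-}) < (C_{1} \cap C^{-}) < F_{1}$. The first observation I would record is that, because $\alg B$ and $\alg C$ are subalgebras of $\alg B \oplus_{P} \alg C$ except for the redefined join (two $B^{-}$ elements whose $\alg B$-join lands in $F$ become $\bot_{C}$) and the redefined meet (two $C^{-}$ elements whose $\alg C$-meet lands in $I$ become $\top_{B}$), the set $B_{1}$ is closed under all $\alg B$-operations except possibly $\lor$, the set $C_{1}$ is closed under all $\alg C$-operations except possibly $\land$, and $F_{1} \cup I_{1}$ is a genuine subalgebra of $F \cup I$.

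I would then split on whether $S$ meets $B^{-}$ and $C^{-}$. If $S \cap C^{-} = \emptyset$, then $S = B_{1} \subseteq B$, and closure of $S$ forces that no two elements of $B_{1} \cap B^{-}$ have gluing-join equal to $\bot_{C}$ (else $\bot_{C} \in S \cap C^{-}$); hence the gluing-join agrees with the $\alg B$-join and $S$ is a subalgebra of $\alg B$ omitting pairs joining to $\bot_{C}$, which is case (2); the case $S \subseteq F \cup I$ is absorbed here. The symmetric argument with $S \cap B^{-} = \emptyset$ gives case (1), with the meet-to-$\top_{B}$ caveat.

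For the main case, where $S$ meets both $B^{-}$ and $C^{-}$, fix $b_{0} \in S \cap B^{-}$ and $c_{0} \in S \cap C^{-}$. For every $b \in B_{1} \cap B^{-}$ the product $c_{0} b = \sigma_{F}(b)$ lies in $S \cap B^{-} = B_{1} \cap B^{-}$, so $B_{1}$ is $\sigma$-special. By Lemma~\ref{lemma:opideal}, if $c \in C_{1} \cap C^{-}$ is a left $I$-divisor then $c \back b_{0} = \ell_{I}(c) \in S \cap C^{-}$, and symmetrically for right $I$-divisors via $r_{I}$, so $C_{1}$ is divisor-special. Now I dichotomize: if some $c \in C_{1} \cap C^{-}$ is not an $I$-divisor, then $c \back b = \gamma_{F}(b)$ for all $b \in B_{1} \cap B^{-}$ by Lemma~\ref{lemma:filtercompatible2}, so $\gamma_{F}(b) \in B_{1}$ and $B_{1}$ is $(\sigma,\gamma)$-special, which is case (3); otherwise every element of $C_{1} \cap C^{-}$ is an $I$-divisor and one obtains only $\sigma$-specialness, which is case (4). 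In both subcases the carrier is $B_{1} \cup C_{1} = S$ and all operations are the ambient ones, so it remains to identify $S$ with $\alg B_{1} \oplus_{P_{1}} \alg C_{1}$.

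The bulk of the remaining work, and the main obstacle, is to check that $(\alg B_{1}, F_{1}, \alg C_{1}, I_{1})$ is a compatible quadruple and, crucially, that the intrinsic operators $\sigma_{F_{1}}, \gamma_{F_{1}}, \ell_{I_{1}}, r_{I_{1}}$ of this quadruple coincide with the restrictions of the ambient $\sigma_{F}, \gamma_{F}, \ell_{I}, r_{I}$. This is exactly where the special-subalgebra conditions are used: $\sigma$-specialness yields $\sigma_{F}[B_{1} \cap B^{-}] \subseteq B_{1}$, so each class $[b]_{F_{1}}$ has the same minimum as $[b]_{F}$; $(\sigma,\gamma)$-specialness (in case 3) does the same for maxima; and divisor-specialness guarantees $\ell_{I}, r_{I}$ restrict, so the Galois maps of the sub-quadruple agree with the ambient ones. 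One then verifies routinely that $F_{1}$ is a congruence filter of $\alg C_{1}$ compatible with $\alg B_{1}$ and that $I_{1}$ is a compatible ideal of $\alg C_{1}$, the order and absorption conditions being inherited, and that the redefined join and meet of the sub-gluing match those of $S$ because whenever such a join equals $\bot_{C}$ (resp.\ a meet equals $\top_{B}$) that element already lies in $S$ and is the least element of $C_{1} \cap C^{-}$ (resp.\ the greatest of $B_{1} \cap B^{-}$). Given these identifications, Theorem~\ref{prop:quadruplegluing} shows that $\alg B_{1} \oplus_{P_{1}} \alg C_{1}$ is a gluing with carrier $B_{1} \cup C_{1} = S$ and ambient operations, completing the argument.
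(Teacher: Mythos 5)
Your proposal is correct and takes essentially the same approach as the paper: both decompose $\alg S$ via its intersections with $B$, $C$, $F$, $I$, obtain cases (1)--(2) from the fact that $\alg B$ and $\alg C$ sit inside the gluing as subalgebras except for the redefined joins/meets, and obtain cases (3)--(4) by reading off $\sigma_F(b)=cb$, $\gamma_F(b)=c\back b$ (for $c$ not an $I$-divisor), $\ell_I(c)=c\back b$ and $r_I(d)=b/d$ directly from the definition of the gluing operations, which forces the $\sigma$-, $(\sigma,\gamma)$- and divisor-speciality conditions. If anything, the paper's proof is terser than yours, stopping at exactly these observations and leaving implicit the verification (which you outline) that the induced quadruple is compatible and that its operators agree with the restrictions of the ambient ones.
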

\begin{proof}
The first two claims follow from the fact that $\alg B$ and $\alg C$ are subalgebras except possibly for those joins and meets.
The other two claims follow from the definition of the operations. For example, whenever a subalgebra $\alg S$ of $\alg B \oplus_{P} \alg C$ contains both an element $b \in B^{-}$ and an element $c \in C^{-}$ that is not an $I$-divisor, then both the minimal, $\sigma(b_F)$, and the maximal, $\gamma_F(b)$, element of the equivalence class $[b]_{F}$  also belong to $\alg S$, since $bc=cb=\sigma(b)$ and $c \back b = b/c = \gamma(b)$. Moreover, if in the subalgebra $\alg S$ there is at least an element $b \in B^{-}$, and $c \in C^{-}$ is a left $I$-divisor, then $c \back b = \ell(b)$ and similarly if $d \in C^{-}$ is a right $I$-divisor then $b/d = r(d)$, thus such elements need to be in $S$.
\end{proof}

We are now going to show that an ultrapower of a gluing $\alg B \oplus_{P} \alg C$ is a gluing of ultrapowers of $\alg B$ and $\alg C$. 
\begin{proposition}
$P_{U}(\alg B \oplus_{P} \alg C) \subseteq P_{U}(\alg B) \oplus_{(P_{U}(F), P_U(I))}  P_{U}(\alg C)$
\end{proposition}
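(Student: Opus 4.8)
The plan is to fix an index set $X$ and an ultrafilter $U$ on $X$, write $\alg D := \alg B \oplus_P \alg C$, and exhibit a concrete embedding of the ultrapower $\alg D^X/U$ into the gluing $(\alg B^X/U) \oplus_{P'} (\alg C^X/U)$, where $P' = (F^X/U, I^X/U)$; since the underlying sets turn out to coincide this gives the asserted inclusion. The starting observation is that the domain $D = B \cup C$ decomposes as the disjoint union of the four blocks $I$, $B^- = B-(F\cup I)$, $C^- = C-(F\cup I)$, and $F$, which are exactly the regions on which the operation tables of $\alg B \oplus_P \alg C$ split into separate clauses. For $g : X \to D$ these four preimages partition $X$, so since $U$ is an ultrafilter \emph{exactly one} of them lies in $U$; in particular every $g$ is, modulo $U$, valued a.e.\ in $B$ or a.e.\ in $C$ (and in the overlapping cases a.e.\ in $F\cup I$). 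This yields the set-level identity $D^X/U = (B^X/U) \cup (C^X/U)$ with intersection $(F\cup I)^X/U = (F^X/U) \cup (I^X/U)$, which is precisely the shared part required by the target gluing.

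First I would check that $(\alg B^X/U, F^X/U, \alg C^X/U, I^X/U)$ is a compatible quadruple, so that the right-hand side is a bona fide gluing to which Theorem~\ref{prop:quadruplegluing} applies. The order conditions (that $F^X/U$ sits strictly above the middle blocks and $I^X/U$ strictly below them) and the existence of $\bot_C,\top_B$ transfer by the ultraproduct theorem of {\L}o\'{s} applied to the corresponding first-order statements about $\alg B,\alg C,F,I$, once $F$ and $I$ are added to the language as unary predicates. The delicate point is the behaviour of the operators $\sigma_F,\gamma_F,\ell_I,r_I$: I would promote $\sigma_F,\gamma_F$ to total fundamental operations on $\alg B$ and $\ell_I,r_I$ to partial operations on $\alg C$, and observe that their defining properties are expressible as (universally quantified) sentences --- that $\sigma_F(b)=\min[b]_F$ via $\forall z\,[(x\back z, z\back x \in F)\to \sigma_F(x)\le z]$ together with $\sigma_F(x)\le x$ and $\sigma_F(x)\mathrel{\theta_F}x$, that $(\sigma_F,\gamma_F)$ is a residuated absorbing pair, and that $\ell_I(c)=\max\{c\back i : i\in I\}$. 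By {\L}o\'{s}'s theorem these transfer, so the \emph{pointwise} operators $[g]_U \mapsto [\sigma_F\circ g]_U$, etc., coincide with the operators $\sigma_{F^X/U}, \gamma_{F^X/U}, \ell_{I^X/U}, r_{I^X/U}$ that the gluing construction prescribes on the right-hand side.

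Next I would define $\phi : \alg D^X/U \to (\alg B^X/U)\oplus_{P'}(\alg C^X/U)$ by sending $[g]_U$ to $[g]_U$ read inside $\alg B^X/U$ when $g$ is a.e.\ in $B$ and inside $\alg C^X/U$ when $g$ is a.e.\ in $C$; the two readings agree on $(F\cup I)^X/U$, so $\phi$ is well defined, and it is a bijection by the set-level identity above. That $\phi$ is a homomorphism is verified clause by clause against the gluing operation tables: for instance, when $g$ is a.e.\ in $B^-$ and $h$ a.e.\ in $C^-$ one has $(g\cdot_D h)(x)=\sigma_F(g(x))$ a.e., hence $[g\cdot_D h]_U = [\sigma_F\circ g]_U = \sigma_{F^X/U}([g]_U)$, which is exactly $\phi([g]_U)\cdot\phi([h]_U)$ in the target by the identification of operators from the previous step; the divisions are matched the same way (using $\gamma_{F^X/U},\ell_{I^X/U},r_{I^X/U}$ and the clause determined by whether the relevant factor is an $I$-divisor), and the joins/meets landing in $F$ or $I$ are matched through the transferred $\bot_C,\top_B$. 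Thus $\phi$ is an embedding, and the inclusion $P_U(\alg B\oplus_P\alg C)\subseteq P_U(\alg B)\oplus_{(P_U(F),P_U(I))}P_U(\alg C)$ follows.

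I expect the main obstacle to be the bookkeeping in the {\L}o\'{s} transfer for the \emph{partial} operations and the undefined divisions: one must confirm that definedness of $x\back y$, definedness of $\ell_I(c)$ and $r_I(d)$, and the property of being a left or right $I$-divisor are all first-order expressible, so that the domains $D_\ell,D_r$ and the case splits in the definition of $\alg B\oplus_P\alg C$ are preserved under the ultrapower. Once the operators are in the language and their $\min/\max$ characterizations are recognized as universal sentences, the remaining verification that $\phi$ respects every clause is routine, if lengthy.
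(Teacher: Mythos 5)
Your proposal is correct and takes essentially the same route as the paper's own (sketched) proof: both partition the index set according to the four blocks $B^-$, $C^-$, $F$, $I$, use the ultrafilter to place each class of functions almost everywhere in exactly one block, identify the resulting pieces with ultrapowers of $\alg B$ and $\alg C$, verify that these form a compatible quadruple, and conclude that the ultrapower of the gluing is (isomorphic to) the gluing of the ultrapowers. Your explicit {\L}o\'{s}-theorem bookkeeping (adding $F$, $I$ as predicates and $\sigma_F$, $\gamma_F$, $\ell_I$, $r_I$ to the language) merely fills in the transfer steps that the paper leaves as ``easy to see.''
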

\begin{proof}
We sketch the proof.
Let $\alg A = \prod_{j \in J} \alg B \oplus_{P} \alg C$, and let $U$ be an ultrafilter on $J$. For $x=(x_j)_{j\in J} \in A$ and $j \in J$, we distinguish cases according to whether
$x_{j}$ is in $B^{-}, C^{-}$, $F$ or $I$, and partition $J$ in the sets:
$$ J_{B}(x)  = \{j \in J: x_{j} \in B^{-}\},\, J_{C}(x)  = \{j \in J: x_{j} \in C^{-}\},$$$$ J_{F}(x)  = \{j \in J: x_{j} \in F\}, J_{I}(x)  = \{j \in J: x_{j} \in I\}.$$
 Since $U$ is an ultrafilter, for each $x \in A$ only one of these sets belongs to $U$; also if $[x]_U=[y]_U$, then the corresponding sets are both in $U$ or neither in $U$. This allows us to define the sets
$$B_U=\{[x]_U: J_B(x) \in U\}, C_U=\{[x]_U: J_C(x) \in U\},$$
$$ F_U=\{[x]_U: J_F(x) \in U\}, I_U=\{[x]_U: J_I(x) \in U\}.$$
It is easy to see that $B_U \cup F_U \cup I_U$ and $C_U \cup F_U \cup I_U$ are IRLs with the inherited operations, that $(B_U \cup F_U \cup I_U, F_U, C_U \cup F_U \cup I_U, I_U)$ is a compatible quadruple and that $B_U \cup F_U \cup I_U\in P_{U}(\alg B)$ and $C_U \cup F_U \cup I_U \in P_{U}(\alg C)$. Finally, it can also be shown that $\alg A/U$ is isomorphic to the gluing $(B_U \cup F_U \cup I_U)/U \oplus_{(F_U, I_U)} (C_U \cup F_U \cup I_U)/U$ (see Proposition 3.3 in \cite{AM} for a similar instance).
\end{proof}
\section{Amalgamation property}
The gluing construction can be seen as a way of finding a (strong) amalgam of two algebras $\alg B$ and $\alg C$ in the particular case where the common subalgebra $\alg A$ corresponds to the union of a congruence filter and an ideal of both $\alg B$ and $\alg C$.

%Let us be more precise. Let $\vv K$ be a class of algebras of the same signature. A {\em V-formation in $\vv K$} is a tuple $(\alg A, \alg B,\alg C,i,j)$ where $\alg A, \alg B, \alg C \in \vv K$ and $i,j$ are embeddings of $\alg A$ into $\alg B,\alg C$, respectively. Given two classes of algebras $\vv K$ and $\vv K'$ of the same signature and a V-formation $(\alg A, \alg B,\alg C,i,j)$ in $\vv K$, $(\alg D,h,k)$ is said to be an \emph{amalgam of $(\alg A, \alg B,\alg C,i,j)$ in $\vv K'$} if $\alg D \in \vv K'$ and $h, k$ are embeddings of $\alg B, \alg C$, respectively, into $\alg D$ such that the compositions $h \circ i$ and $k\circ j$ coincide. In particular, $(\alg D,h,k)$ is a {\em strong} amalgam if $h \circ i [\alg A]= k \circ j [\alg A] = h[\alg B] \cap k[\alg C]$. 
%\begin{figure}[h!]
%\begin{center}
%\begin{tikzpicture}
%\node at (0,0) {$\alg A$};
%\node at (2,1) {$\alg B$};
%\node at (2,-1) {$\alg C$};
%\node at (4, 0) {$\alg D$};
%\draw [>->] (0.3, 0.1) -- (1.8, 0.8);
%\draw [>->] (0.3, -0.1) -- (1.8, -0.8);
%\draw [>->] (2.2, 0.8) -- (3.7, 0.1);
%\draw [>->] (2.2, -0.8) -- (3.7, -0.1);
%\node at (1, 0.65) {\footnotesize{$i$}};
%\node at (3, 0.65) {\footnotesize{$h$}};
%\node at (1, -0.65) {\footnotesize{$j$}};
%\node at (3, -0.7) {\footnotesize{$k$}};
% \end{tikzpicture}\end{center} \end{figure}
 
%$\vv K$ has the {\em amalgamation property} with respect to $\vv K'$ if each V-formation in $\vv K$ has an amalgam in $\vv K'$. In particular, $\vv K$ has the amalgamation property (AP) if each V-formation in $\vv K$ has an amalgam in $\vv K$.
%
More precisely, let $(\alg B, F, \alg C, I)$ be a compatible quadruple, and let us call $\alg P$ the subalgebra of $\alg B$ (equivalently, $\alg C$) with domain $F \cup I$. Then $\alg P$ embeds into both $\alg B$ and $\alg C$; let us name the embeddings with $i, j$ respectively. Moreover, by construction both $\alg B$ and $\alg C$ embed in the gluing $\alg B \oplus_{P} \alg C$. Let us denote these embeddings  by $h, k$, respectively. With this notation in mind:
%
%\begin{figure}[h!]
%\begin{center}
%\begin{tikzpicture}
%\node at (0,0) {$\alg P$};
%\node at (2,1) {$\alg B$};
%\node at (2,-1) {$\alg C$};
%\node at (4.5, 0) {$\alg B \oplus_{P} \alg C$};
%\draw [>->] (0.3, 0.1) -- (1.8, 0.8);
%\draw [>->] (0.3, -0.1) -- (1.8, -0.8);
%\draw [>->] (2.2, 0.8) -- (3.7, 0.1);
%\draw [>->] (2.2, -0.8) -- (3.7, -0.1);
%\node at (1, 0.65) {\footnotesize{$i_B$}};
%\node at (3, 0.65) {\footnotesize{$h_B$}};
%\node at (1, -0.65) {\footnotesize{$j_C$}};
%\node at (3, -0.7) {\footnotesize{$k_C$}};
% \end{tikzpicture}\end{center} \end{figure}
% 
%Then the following can be directly checked.
\begin{proposition}
Let $(\alg B, F, \alg C, I)$ be a compatible quadruple as above, and let $\alg P$ be the subalgebra of $\alg B$ (equivalently, $\alg C$) with domain $F \cup I$. $(\alg B \oplus_{P} \alg C, h, k)$  is a (strong) amalgam of $(\alg P, \alg B, \alg C, i, j)$.
\end{proposition}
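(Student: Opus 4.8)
The plan is to unwind the definition of a strong amalgam and verify its three requirements in turn: that $h$ and $k$ are embeddings, that the $V$-formation commutes (i.e.\ $h \circ i = k \circ j$), and that the two images meet exactly in the common part (i.e.\ $h[B] \cap k[C] = (h\circ i)[P]$). Essentially all of the real work has already been done in Theorem~\ref{prop:quadruplegluing}, which establishes that $\alg B \oplus_P \alg C$ is an IRL and a gluing of $\alg B$ and $\alg C$ over $F$ and $I$; the amalgamation statement then reduces to bookkeeping on the underlying sets.

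First I would record that $h$ and $k$ are simply the inclusions of $B$ and $C$ into the carrier $B \cup C$ of the gluing, so that injectivity is immediate. For the homomorphism property, the defining clauses of $\cdot$, $\back$ and $/$ agree with the operations of $\alg B$ (respectively $\alg C$) whenever both arguments lie in $B$ (respectively $C$), and the same holds for $\land$ and $\lor$ away from the redefined cases. The only clauses requiring attention are the redefinition of $x \lor y$ when $x,y \in B^-$ and $x \lor y \in F$, and of $x \land y$ when $x,y \in C^-$ and $x \land y \in I$; these are precisely the situations governed by the compatibility conditions \ref{compat-cond-2} and \ref{compat-cond-3} together with the order requirements on $F$ and $I$, which is exactly what it means for $\alg B$ and $\alg C$ to sit inside the gluing as subalgebras. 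Thus $h$ and $k$ are the embeddings guaranteed by Theorem~\ref{prop:quadruplegluing}.

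Next I would check commutativity. Since $\alg P$ has carrier $F \cup I$ and $i$, $j$ are the inclusions of $P$ into $B$ and $C$, both $h \circ i$ and $k \circ j$ are nothing but the inclusion of $F \cup I$ into $B \cup C$, so they coincide. For strength, recall that by construction $h[B] = B$ and $k[C] = C$ as subsets of $B \cup C$, while $B \cap C = F \cup I = P$; hence $h[B] \cap k[C] = F \cup I = (h \circ i)[P]$, which is exactly the strong amalgamation condition.

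I expect the only genuinely delicate point, and hence the main obstacle, to be the confirmation that the inclusions preserve the lattice operations, i.e.\ that $\alg B$ and $\alg C$ really are subalgebras of the gluing in the intended signature; this is where the redefined joins and meets must be controlled by the compatibility conditions. Once this is granted from Theorem~\ref{prop:quadruplegluing}, the commutativity of the square and the equality $h[B] \cap k[C] = P$ are forced by the set-theoretic identity $B \cap C = F \cup I$, and no further computation is needed.
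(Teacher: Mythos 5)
Your overall route is the same as the paper's: the paper in fact offers no separate proof of this proposition, treating it as immediate from Theorem~\ref{prop:quadruplegluing} together with the observations that $h, k, i, j$ are inclusions, that $h \circ i = k \circ j$ is then automatic, and that $h[B] \cap k[C] = B \cap C = F \cup I = (h\circ i)[P]$ gives strong amalgamation. Your skeleton --- embeddings, commutativity of the square, set-theoretic bookkeeping on $B \cap C$ --- is exactly that implicit argument written out, and the commutativity and strength parts are handled correctly.

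There is, however, one step whose justification is backwards, and it is precisely the step you yourself flag as delicate. You claim that the redefined joins ($x \lor y := \bot_{C}$ for $x,y \in B^-$ with $x \lor_{\alg B} y \in F$) and redefined meets ($x \land y := \top_{B}$ for $x,y \in C^-$ with $x \land_{\alg C} y \in I$) are ``governed'' by the compatibility conditions requiring $\bot_{C}$ and $\top_{B}$ to exist, and that this is ``exactly what it means'' for $\alg B$ and $\alg C$ to sit inside the gluing as subalgebras. In fact those two conditions only guarantee that the gluing carries a well-defined lattice order; they do nothing to make the inclusions preserve the operations. On the contrary, whenever such a join exists (and $C^- \neq \emptyset$), the map $h$ fails to preserve $\lor$: one has $h(x \lor_{\alg B} y) \in F$ while $h(x) \lor h(y) = \bot_{C} \in C^-$, and dually $k$ fails to preserve $\land$ when some meet of elements of $C^-$ falls into $I$. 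This is built into the paper's own definition of an $F$-$I$-gluing (``$B$ is a subalgebra except possibly with respect to $\lor$'', ``$C$ is a subalgebra except possibly for $\land$''), so Theorem~\ref{prop:quadruplegluing} does not guarantee full embeddings. Strictly speaking, the proposition and your proof are correct only when $B-F$ is closed under joins and $C-I$ under meets, or else when ``embedding'' is read in the weaker sense of embeddings of the appropriate lattice-operation subreducts. To be fair, this looseness is present in the paper itself, which simply asserts that ``by construction both $\alg B$ and $\alg C$ embed in the gluing''; but your write-up goes further by supplying an incorrect reason why the exceptional cases are harmless, rather than noting that they must either be excluded or absorbed into a weaker reading of the statement.
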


In this section we present two applications of the gluing and partial gluing constructions, respectively, where the gluing constructions shed some light on when amalgamation holds in classes of (bounded) IRLs.

\subsection{Generalized rotations}
We observe that  the generalized $n$-rotation construction introduced in \cite{BMU18} is actually an example of gluing and we generalize this construction to the non-commutative case, using the gluing perspective. % We further show the amalgamation property for the semilinear case.

The \emph{generalized $n$-rotation}, for $n \geq 3$, defined in \cite{BMU18} is itself inspired by ideas in Wro\'nski's reflection construction for BCK-algebras \cite{W83} and generalizes in this context the (dis)connected rotation construction developed by Jenei  \cite{Je00,Je03} for ordered semigroups. In these constructions given a CIRL, and also more generally in \cite{GR04} given a topped residuated lattice (not necessarily commutative or integral), a bounded involutive structure is produced, obtained by attaching below the original CIRL a rotated copy of it. On the other hand, the generalized rotation takes an CIRL and generates a bounded CIRL, which is not necessarily involutive, by attaching below it a rotated (possibly proper) \emph{nuclear image} of the original.  The generalized $n$-rotation, for $n \geq 3$,  further adds a \L ukasiewicz chain of $n$ elements, $n-2$ of which are between the original structure and its rotated nuclear image (see Figure \ref{fig:rotation} for a sketch). 

We introduce the non-commutative version of the generalized $n$-rotation, building on the construction in \cite{GR04}, and we apply it to IRLs.

We first recall from \cite{GR04} that the disconnected rotation  of an IRL $\alg A$ is the $\sf{FL}_{ew}$-algebra $\alg A^*$ whose lattice reduct is given by the  union of $A$ and its disjoint copy $A' = \{a' : a \in A\}$ with dualized order, placed below $A$: for all $a, b \in A$,  $$ a' < b, \mbox{ and } a' \leq b' \mbox{ iff } b \leq a.$$
In particular, the top element of $\alg A^*$ is the top $1$ of $\alg A$ and the  bottom element of $\alg A^*$ is the copy $0 := 1'$ of the top $1$. 
$\alg A$ is a subalgebra, the products in $A'$ are all defined to be the bottom element $0=1'$, and furthermore, for all $a,b \in A$, $$a\cdot b' = (b/a)', \quad b'\cdot a = (a \back b)';$$ 
$$a \back b' = a' /b = (b\cdot a)', \quad a' \back b' = a/b, \quad b'/a' = b \back a.$$
A nucleus on a residuated lattice $\alg A=(A, \wedge, \vee, \cdot, \, \back, 1)$ is a closure operator $\delta$ on $\alg A$ that satisfies $\delta(x) \delta(y) \leq \delta(xy)$, for all $x,y \in A$. It is known that then $\alg A_\delta=(\delta[A],  \wedge, \vee_\delta, \cdot_\delta, \, \back, \delta(1))$ is a residuated lattice, where $x \vee_\delta y=\delta(x \vee y)$ and  $x \cdot_\delta y=\delta(xy)$.

The  \emph{generalized disconnected rotation} $\alg A^\delta$ of a IRL $\alg A$ with respect to a nucleus $\delta$ on $\alg A$ serves as a non-commutative version of the construction given in \cite{AFU} (which in turn was inspired by \cite{CT06}). It differs from the disconnected rotation above in that it replaces $A'$ with $\delta[A]' = \{\delta(a)': a \in A\}$, where $\delta(a)'$ is short for $(\delta(a))'$. It is easy to see that then with respect to the above order we have 
$\delta(a)' \land \delta(b)' = \delta(\delta(a) \lor \delta(b))'$.
%Indeed, notice that since $\delta$ is a nucleus $\delta(a) \land \delta(b) = \delta(\delta(a) \land \delta(b))$, thus $\delta(a)' \lor \delta(b)'$ is defined by the dualization of the order as $(\delta(a) \land \delta(b))'$.
 Moreover, for all $a \in A$, $b \in \delta[A]$, $$a \back b'  = (\delta(b a))'\qquad  \mbox{ and } \qquad b'/a= (\delta(a b))'.$$

The proof that $\alg A^\delta$ is a residuated lattice is a very small variation of the analogous proof for $\alg A^*$, given in Section 6 of \cite{GR04}. In particular, the product is well-defined given the fact that $a \back b' = a'/b = (\delta(b \cdot a))'$. For an in-depth analysis of this and other rotation constructions, see \cite{GalatosCastano}. 

Clearly, the disconnected rotation is the special case of a generalized disconnected rotation where the nucleus is the identity map. 
Now, the \emph{generalized $n$-rotation} $\alg A^\delta_n$ of an IRL $\alg A$ with respect to a nucleus $\delta$ and $n \geq 3$ is defined on the disjoint union of $A^\delta$ and $\{\ell_i: 0<i<n-1\}$. We also set $\ell_0=0$ and $\ell_{n-1}=1$, the bounds of $\alg A^\delta$. The order extends the order of  $\alg A^\delta$ by 
$$b <  \ell_{1} < \ldots < \ell_{n-2} < a,$$
 for all $a \in A$ and $b \in \delta[A]$; see the rightmost structure of Figure \ref{fig:rotation}. The operations extend those of $\alg A^\delta$, of the $n$-element \L ukasiewicz chain \textbf{\L}$_{n}$, where $0 = \ell_{0} < \ell_{1} < \ldots < \ell_{n-2} < \ell_{n-1} = 1$, and  for $0<i<n-1$: $$a \ell_{i} = \ell_{i} = \ell_{i}a, \quad b'\ell_{i} = 0 = \ell_{i}b'.$$
The proof that the resulting structure is an $\mathsf{FL_{ew}}$-algebra is an easy combination of the proofs of \cite{GR04} and \cite{BMU18}, but it also follows from Proposition \ref{lemma:ngrot} below. We mention that in \cite{BMU18} the generalized $n$-rotation is defined with respect to nuclei that preserve the lattice operations, thus the construction we propose here is more general also in the commutative case. The subvariety $\mathsf{MVR_{n}}$ of $\mathsf{FL_{ew}}$ generated by the generalized $n$-rotations of CIRLs where the nuclei preserve the lattice operations is axiomatized in \cite{BMU18}. This class of algebras contains as subvarieties, among others, the varieties of: G\"odel algebras, product algebras, the variety generated by perfect MV-algebras, nilpotent minimum algebras, $n$-contractive BL-algebras, and Stonean residuated lattices.

We now show that the generalized $n$-rotation is a special case of a gluing.  We refer to $1$-sums of the kind \textbf{\L}$_{n} \oplus_1 \alg A$ as \emph{$n$-liftings }of an IRL $\alg A$. Then generalized $n$-rotations are gluings of disconnected rotations and $n$-liftings.
\begin{proposition}\label{lemma:ngrot}
The generalized $n$-rotation $\alg A^\delta_n$ of an IRL $\alg A$ with respect to a nucleus $\delta$ for $n \geq 3$ is isomorphic to the gluing $\alg A^\delta \oplus_{(A,\{0\})} (\textbf{\L}_{n} \oplus_1 \alg A)$ of the generalized disconnected rotation $\alg A^\delta$ and the $1$-sum \textbf{\L}$_{n} \oplus_1 \alg A$ over $\alg A$, $\{0\}$. 
\end{proposition}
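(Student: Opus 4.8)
The plan is to produce the isomorphism as the identity map on the common underlying set, after showing that the two structures are assembled from identical pieces. Write $\alg B = \alg A^\delta$, $F = A$, $\alg C = \textbf{\L}_n \oplus_1 \alg A$ and $I = \{0\}$. First I would observe that the carrier of $\alg A^\delta_n$, namely $A \cup \delta[A]' \cup \{\ell_1,\dots,\ell_{n-2}\}$, is exactly $B \cup C$ with $B \cap C = A \cup \{0\} = F \cup I$: the top copy $A$ is the shared filter and $0 = \delta(1)' = \ell_0$ the shared ideal, with $B^- = \delta[A]'\setminus\{0\}$ and $C^- = \{\ell_1,\dots,\ell_{n-2}\}$. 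Since $A$ sits on top in both the rotation and the $1$-sum, and $0$ is the common bottom, the order of $\alg A^\delta_n$ coincides with the order $I < B^- < C^- < F$ used by the gluing. I would also note that $A$ is a congruence filter of both components: in $\alg A^\delta$ the quotient by $\theta_A$ is $\mathbf 2$ (the collapse map sending $A \mapsto 1$, $\delta[A]' \mapsto 0$ is a homomorphism), and in the $1$-sum $A$ sits on top as a filter; and $A \cup \{0\}$ is a common subalgebra.

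Next I would check that $(\alg B, F, \alg C, I)$ is a compatible quadruple, the crux being the auxiliary maps. Because the rotation formula $a' \back b' = a/b$ forces $\delta(a)' \back \delta(d)' = \delta(a)/\delta(d) \in A = F$ for all $a,d$, the whole rotated part $\delta[A]'$ is a single $\theta_F$-class; its minimum is $\delta(1)' = 0$, so $\sigma_F(b') = 0$ for every $b' \in \delta[A]'$, while its maximum need not exist, so $\gamma_F$ is in general undefined and $(\alg B, F)$ is merely a weak lower-compatible pair. On the $\alg C$ side every $\ell_i \in C^-$ is an $I$-divisor, since $\ell_i\ell_{n-1-i} = 0$ in $\textbf{\L}_n$, so by Lemma~\ref{lemma:filtercompatible2} the absence of $\gamma_F$ is harmless, and $\ell_I(\ell_i) = r_I(\ell_i) = \ell_i \back 0 = \ell_{n-1-i}$. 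I would then dispatch the four compatibility conditions: condition (1) holds vacuously as $C^-$ contains no non-$I$-divisor; condition (2) reads $0\cdot b' = \sigma_F(b') = 0$, which is immediate; and conditions (3)--(4) hold vacuously, since joins of elements of $\delta[A]'$ stay in $\delta[A]'$ and meets of elements of $C-I$ stay strictly above $0$.

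With compatibility in hand, Theorem~\ref{prop:quadruplegluing} guarantees that $\alg B \oplus_P \alg C$ is a well-defined IRL, and it remains to match its operations clause by clause with those defining $\alg A^\delta_n$. Inside $\alg B$ and inside $\alg C$ the two structures agree by construction (the restriction of the rotation to $A^\delta$ is $\alg A^\delta$, and its restriction to $A \cup \{\ell_1,\dots,\ell_{n-2},0\}$ is $\textbf{\L}_n \oplus_1 \alg A$). For the cross terms I would substitute the computed values into the gluing formulas: $b'\ell_i = \ell_i b' = \sigma_F(b') = 0$, $b' \back \ell_i = 1$, and $\ell_i \back b' = \ell_I(\ell_i) = \ell_{n-1-i}$ (symmetrically for $/$ via $r_I$), together with the lattice clauses $b' \land \ell_i = b'$ and $b' \lor \ell_i = \ell_i$; each reproduces exactly the corresponding clause in the definition of the generalized $n$-rotation, so the identity map is an isomorphism.

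The main obstacle is the middle step, and concretely the verification that $\gamma_F$ is genuinely dispensable and that the rotation's division $\ell_i \back b'$ matches $\ell_I(\ell_i)$. This rests on two facts that must be pinned down: that $F = A$ collapses all of $\delta[A]'$ to a single congruence class with minimum $0$ and (generically) no maximum, and that every $\ell_i$ is an $I$-divisor, which is exactly what routes the gluing's divisions through $\ell_I, r_I$ instead of through the undefined $\gamma_F$. Once these are settled, the remaining clause-matching is routine.
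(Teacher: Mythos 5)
Your proposal is correct and takes essentially the same approach as the paper's proof: both verify that $(\alg A^\delta, A)$ is a weak lower-compatible pair with $\sigma_A$ constantly $0$ on $\delta[A]'$, that $(\textbf{\L}_{n} \oplus_1 \alg A, \{0\})$ is an upper-compatible pair all of whose elements in $C^-$ are $0$-divisors (so $\gamma$ is never needed), check the four conditions of a compatible quadruple, and conclude by matching the two structures via Theorem~\ref{prop:quadruplegluing}. The only differences are cosmetic: the paper obtains $\sigma_A(x)=0$ by computing $x \back 0$ and $0 \back x$ directly rather than via your single-congruence-class argument, and it matches only the order and the cross products $\ell_i \delta(a)' = 0$ (which suffices, since residuals are uniquely determined by order and multiplication), whereas you additionally verify the derived division clauses.
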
 
\begin{proof}
First we show that the conditions of the gluing are satisfied.
Note that $A$ is a congruence filter strictly above all other elements of 
$\alg A^\delta$ and \textbf{\L}$_{n} \oplus_1 \alg A$, and $\{0\}$ is a shared lattice ideal. 
 For all $x \in A^\delta-A=\delta[A]'$, there is $y \in A$ with  $x = \delta(y)'$, so 
$x \back 0 = (\delta(y))' \back 1' = \delta(y) /1 = \delta(y) \in A$; also 
 $0 \back x = 1 \in A$. Therefore, $x \mathrel{\theta_A} 0$ and $\sigma_{A} (x)= \min[x]_{A} = 0$. Furthermore, since all elements in (\L$_{n} \oplus_1 \alg A) - (A \cup \{0\})$ are $0$-divisors, $\gamma$ does not need to be defined. Moreover, since $\sigma_A(x) = 0$, for all  $x \in A^\delta-A$, $\sigma$ is clearly absorbing. Thus $(\alg A^\delta, A)$ is a weak lower-compatible pair. 

To show that (\L$_{n} \oplus_1 \alg A, \{0\})$ is an upper-compatible pair, first note that $\{0\}$ is a lattice ideal strictly below all other elements. Moreover, since being an $I$-divisor here means being a $0$-divisor, we have $\ell(x) = x \back 0$ and $r(x) = 0/x$, for all $x \in$ (\L$_{n} \oplus_1 A) - (A \cup\{0\})=$ \L$_n-\{0, 1\}$.

Now we prove that $(\alg A^\delta, A,$ \textbf{\L}$_{n} \oplus_1 \alg A, \{0\})$ is a compatible quadruple:
\begin{enumerate}
\item All elements of \L$_n - \{1\}$ are 0-divisors, thus the first condition is satisfied. 
\item If $c,d \in $ \L$_{n}  -  \{0\}$, with $cd = 0$, then $0x = x0 =0= \sigma_A(x)$ for all $x \in \delta[A]' - \{0\}$. 
\item $\delta[A]'$ is closed under join, so this condition is vacuously true.
\item  (\L$_{n} \oplus_1 A) -  \{0\}$ has a least element $\ell_1$, so this condition is also vacuously true.
\end{enumerate}
It is clear that $\alg A^\delta$ and \textbf{\L}$_{n} \oplus_1 \alg A$ are subalgebras of the gluing and also of the generalized $n$-rotation and they are ordered the same way in both of these structures. Finally, for  $\ell_i \in$ \L$_{n}-\{1, 0\}$, $1<i<n-1$ and $\delta(a)' \in \delta[A]'$, we have $\ell_i \delta(a)' = 0$ in both the generalized $n$-rotation and in the gluing.
\end{proof}

\begin{figure}
\begin{center}
\begin{tikzpicture}
\footnotesize{
 \draw (0.5,1.2) arc (0:180: 0.5 and 1.1);
 \draw [dotted] (0.5, 1.2) -- (-0.5, 1.2);
  \draw (0.5,0.4) arc (0:-180: 0.5 and 1.1);
 \draw [dotted] (0.5, 0.4) -- (-0.5, 0.4);
  \fill (0,2.3) circle (0.05);
    \fill (0,-0.7) circle (0.05);
 \node at (0.15,2.5) {$1$}; 
  \node at (0.35,-1) {$1' = 0$};  
  \node at (1.5,1) {$\oplus_{\pp}$};
 \draw (3.5,1.2) arc (0:180: 0.5 and 1.1);
 \draw [dotted] (3.5, 1.2) -- (2.5, 1.2);
  \fill (3,2.3) circle (0.05);
  \fill (3,0.5) circle (0.05);
    \fill (3,0.3) circle (0.05);
    \fill (3,0.1) circle (0.05);
        \fill (3,-0.1) circle (0.05); 
            \fill (3,-0.3) circle (0.05);
                \fill (3,-0.5) circle (0.05);   
     \fill (3,-0.7) circle (0.05);
  \node at (3,-1) {$0$}; 
    \node at (3.1,2.5) {$1$};   
  \node at (4.5,1) {${=}$};  
  \draw (6.5,1.7) arc (0:180: 0.5 and 1.1);
 \draw [dotted] (6.5, 1.7) -- (5.5, 1.7);
  \fill (6,2.8) circle (0.05);
  \fill (6,1.1) circle (0.05);
    \fill (6,0.9) circle (0.05);
    \fill (6,0.7) circle (0.05);
        \fill (6,0.5) circle (0.05); 
            \fill (6,0.3) circle (0.05);
                \fill (6,1.3) circle (0.05);  
 \draw (6.5,-0.1) arc (0:-180: 0.5 and 1.1);
 \draw [dotted] (6.5, -0.1) -- (5.5, -0.1);
  \node at (6.1,3) {$1$}; 
    \node at (6,-1.4) {$0$}; 
\fill (6,-1.2) circle (0.05);  

    }
 \end{tikzpicture}\end{center} \caption{The gluing of the disconnected rotation of an IRL $\alg A$ with its $8$-lifting: $\alg A^{*} \oplus_{\pp } (${\bf\L}$_{8} \oplus_1 \alg A)$, where $\pp = (\alg A, 0)$.} \label{fig:rotation}
 \end{figure}
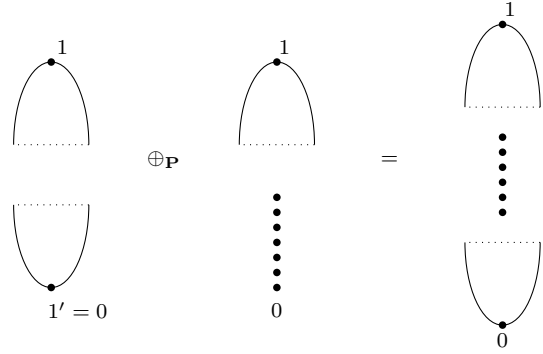

In \cite[Theorem 3.11]{AU19}, it is shown that a variety $\vv V$ of semilinear CIRLs has the AP if and only if the variety generated by generalized $n$-rotations of chains in $\vv V$ with respect to a nucleus definable by a term in the language of residuated lattices, has the AP. This allows to transfer known results about the AP in relatively tame varieties of CIRLs, to varieties of $\mathsf{FL_{ew}}$-algebras that are more complicated to study. For instance, since basic hoops, Wajsberg hoops, cancellative hoops, G\"odel hoops, all have the AP, so do the varieties generated by their generalized $n$-rotations (\cite{AU19}, Corollary 3.12).

We show that one can go in the same direction also in the non-commutative case, with the following bridge result. 
\begin{proposition}\label{lemma:amalgam}
Let $\delta$ be a term-defined nucleus for a class $\mathsf{K}$ of IRLs, and let $n \geq 3$. The following are equivalent:
\begin{enumerate}
	\item $\vv K$ has the amalgamation property; 
	\item the class of generalized $n$-rotations $\{\alg A^{\delta}_{n}: \alg A \in \vv K\}$ has the amalgamation property.
	\item the class $\{\alg A^{\delta}_{m}: \alg A \in \vv K, m-1 \mbox{ divides } n -1\}$ has the amalgamation property.
	\end{enumerate} 
\end{proposition}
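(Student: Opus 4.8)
The plan is to reduce the whole statement to a single, composition-respecting correspondence between embeddings of base algebras in $\vv K$ and embeddings of their generalized $n$-rotations, and then to transport V-formations and amalgams back and forth through this correspondence. I use Proposition~\ref{lemma:ngrot} only for orientation: it exhibits $\alg A^{\delta}_{n}$ as stratified into a top copy of $\alg A$ (the congruence filter), a middle copy of the \luk\ chain \textbf{\L}$_{n}$ with its bounds removed, and a bottom rotated nuclear image $\delta[A]'$, with $0$ at the very bottom. All three constructions are about moving amalgams across the functor $\alg A \mapsto \alg A^{\delta}_{n}$, so the single nontrivial ingredient is understanding its action on embeddings.

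The crux is a key lemma: every embedding $\phi\colon \alg A^{\delta}_{n}\to \alg B^{\delta}_{n}$ is the rotation of a unique base embedding $\phi_{0}\colon\alg A\to\alg B$, and conversely every base embedding lifts, the two assignments being mutually inverse and functorial. The point is that each stratum is term-definable from $\cdot$ and the constant $0$, hence preserved by embeddings. Indeed, the top copy $A$ is exactly the set of non-nilpotent elements, since products of filter elements stay in the filter and are thus never $0$, whereas every $\ell_{i}$, every $\delta(a)'$, and $0$ itself are nilpotent. Among the nilpotent elements, the left negation $x\back 0$ separates the two lower strata: $\ell_{i}\back 0=\ell_{n-1-i}$ is again nilpotent, while $\delta(a)'\back 0=\delta(a)\in A$ is non-nilpotent (and $0\back 0=1$ correctly places $0$ in the bottom stratum). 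Since nilpotency, $0$ and $\back$ are preserved, $\phi$ respects the strata. Hence $\phi$ restricts to $\phi_{0}=\phi|_{A}\colon\alg A\to\alg B$; it fixes the middle chain, because an injective order-preserving map between two $(n-2)$-element chains is the unique isomorphism, forcing $\phi(\ell_{i})=\ell_{i}$; and on the bottom it must send $\delta(a)'\mapsto\delta(\phi_{0}(a))'$, since applying $x\back 0$ recovers $\delta(a)\mapsto\delta(\phi_{0}(a))$ (the term-defined $\delta$ commutes with $\phi_{0}$) and $x\back 0$ is injective on the bottom stratum. Thus $\phi=(\phi_{0})^{\delta}_{n}$. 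The converse lifting is a homomorphism by the uniformity of the rotation formulas together with $\phi_{0}\,\delta=\delta\,\phi_{0}$, it is injective, and functoriality $(\psi_{0}\phi_{0})^{\delta}_{n}=(\psi_{0})^{\delta}_{n}(\phi_{0})^{\delta}_{n}$ is immediate.

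Granting this lemma, the equivalence (1)$\Leftrightarrow$(2) is bookkeeping. For (1)$\Rightarrow$(2), restrict a V-formation of $n$-rotations to its base V-formation in $\vv K$, amalgamate it to some $\alg E\in\vv K$ with maps $\alpha_{0},\beta_{0}$, and apply the functor: $(\alpha_{0})^{\delta}_{n}$ and $(\beta_{0})^{\delta}_{n}$ are embeddings into $\alg E^{\delta}_{n}$ and commute with the original legs, since $(\alpha_{0})^{\delta}_{n}\circ(\phi_{0})^{\delta}_{n}=(\alpha_{0}\phi_{0})^{\delta}_{n}=(\beta_{0}\psi_{0})^{\delta}_{n}=(\beta_{0})^{\delta}_{n}\circ(\psi_{0})^{\delta}_{n}$. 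For (2)$\Rightarrow$(1), lift a base V-formation, amalgamate among $n$-rotations (the apex is necessarily of the form $\alg E^{\delta}_{n}$ with $\alg E\in\vv K$), and restrict the legs back to base; injectivity of the functor on hom-sets transports commutativity downstairs.

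For (1)$\Leftrightarrow$(3) I would use that \textbf{\L}$_{m}$ embeds into \textbf{\L}$_{m'}$ precisely when $m-1$ divides $m'-1$, and upgrade the lifting to a functor accepting a base embedding together with a compatible chain embedding: scaling the middle index by the ratio $(m'-1)/(m-1)$ is compatible with the \luk\ product, yielding $\alg A^{\delta}_{m}\to\alg B^{\delta}_{m'}$. For (1)$\Rightarrow$(3), restrict a possibly mixed-height V-formation to base, amalgamate to $\alg E\in\vv K$, and take $\alg E^{\delta}_{n}$ as apex: every occurring height $m$ satisfies $(m-1)\mid(n-1)$, so both legs embed into $\alg E^{\delta}_{n}$, and the scaling factors multiply consistently (each composite leg scales indices by $(n-1)/(m_{0}-1)$), so the square commutes. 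For (3)$\Rightarrow$(2), a V-formation of height-$n$ rotations also lies in the class of (3), and any apex $\alg E^{\delta}_{m'}$ receiving embeddings of height-$n$ rotations must satisfy both $(n-1)\mid(m'-1)$ and $(m'-1)\mid(n-1)$, forcing $m'=n$; hence the amalgam already lives in the class of (2), and with (2)$\Rightarrow$(1) this closes the cycle (1)$\Rightarrow$(3)$\Rightarrow$(2)$\Rightarrow$(1). The only substantive step, and the expected obstacle, is the key lemma, namely making rigorous that the three strata are term-definable from $\cdot$ and $0$ (so embeddings cannot mix them) and that the middle chain is rigid; everything afterwards is routine transport, mirroring the commutative semilinear argument of \cite{AU19}.
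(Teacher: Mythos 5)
Your proposal is correct and takes essentially the same route as the paper: the heart of both arguments is the composition-respecting bijection between embeddings of base algebras in $\vv K$ and embeddings of their generalized $n$-rotations (established via nilpotency-based preservation of the three strata, using $x\back 0$ to separate the rotated copy from the chain), followed by routine transport of V-formations and amalgams. If anything, you are more explicit than the paper in spots it glosses over — the rigidity of the middle \L ukasiewicz chain under embeddings, and the divisibility argument forcing an amalgam of height-$n$ rotations inside the class of (3) to have height exactly $n$ (the paper declares (3)$\Rightarrow$(2) ``clear'' and instead proves (2)$\Rightarrow$(3) with an lcm-height apex, where you prove (1)$\Rightarrow$(3) with a height-$n$ apex); these are minor variations on the same scheme.
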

\begin{proof}
In this proof, let us denote with $\vv  K^{\delta}_{n}$ the class of the generalized $n$-rotations via $\delta$ of algebras in $\vv K$.
We first show (1) $\Leftrightarrow$ (2).
The key idea here is that homomorphisms of generalized $n$-rotations are uniquely determined by their restriction to the upper-compatible triple in the gluing: i.e., the IRL and the \L$_{n}$ chain. This is due to the fact that the lower-compatible triple is a rotation whose domain is $A \cup \delta[A]'$ for some IRL $\alg A$, and the elements $\delta(a)' \in A$ are such that $\delta(a)' = a \back 0$, thus their homomorphic images are determined by those on $A$. 

More precisely, any homomorphism $h: A \to B$, for $\alg A, \alg B$ IRLs, extends to a homomorphism $\bar h: \alg A^\delta_{n} \to \alg B^\delta_{n}$ in the following way: $$\bar h(a) = h(a), \quad \bar h(\delta(a)') = \delta(h(a))'\quad \bar h(l_{i}) = l_{i} \mbox{ for all } i: 1 \ldots n-1$$
And vice versa, given any homomorphism $k: \alg A^\delta_{n} \to \alg B^\delta_{n}$ the restriction $k_{A}$ to $\alg A$ is a homomorphism from $A$ to $B$. Indeed, given $a \in A$, suppose $k(a) \in \alg B^\delta_{n} - B$. Then $k(a^{n}) = k(a)^{n} = 0_{\alg B^\delta_{n}}$, but $a^{n} \in A$, since $A$ is a congruence filter of the disconnected rotation. This leads to a contradiction, since $\neg (a^{n}) = a^{n} \back 0 = \delta(a^{n})'$, thus $ (\neg (a^{n}))^{2} = 0$, but $$k((\neg (a^{n}))^{2}) = (\neg (k(a^{n})))^{2} = (\neg 0)^{2} = 1^{2} = 1 \neq k(0) = 0.$$
So, $a \in A$ implies $k(a) \in B$. Moreover, $h$ is an embedding iff $\bar h$ is an embedding, and if $k$ is an embedding then clearly $k_{A}$ is an embedding.

Thus, suppose that $\vv K$ has the amalgamation property, and consider a V-formation in $\vv  K^{\delta}_{n}$: $\alg A^\delta_{n}, \alg B^\delta_{n}, \alg C^\delta_{n}$ with embeddings $i: \alg A^\delta_{n}\to \alg B^\delta_{n}, j: \alg A^\delta_{n}\to \alg C^\delta_{n}$. Then one can consider the restrictions of $i$ and $j$ to $\alg A$, and obtain a V-formation in $\vv K$, given by $\alg A, \alg B, \alg C$ and the embeddings $i_{A}, j_{A}$. This has an amalgam, say $\alg D$ with embeddings $f: B \to D, g : C \to D$ such that $f \circ i = g \circ j$. Thus it follows from what was shown before that $\alg D^\delta_{n}$ is going to be an amalgam for the V-formation in $\vv  K^{\delta}_{n}$, with embeddings $\bar f, \bar g$. 

Similarly, supposing that $\vv  K^{\delta}_{n}$ has the amalgamation property, we consider a V-formation in $\vv K$: $\alg A, \alg B, \alg C \in \vv K$ and embeddings $k:A \to B, l:A\to C$. We take the corresponding V-formation in $\vv  K^{\delta}_{n}$: $\alg A^\delta_{n}, \alg B^\delta_{n}, \alg C^\delta_{n}$ with embeddings $\bar k, \bar l$. The amalgam in $\vv  K^{\delta}_{n}$ is going to be some $\alg D^\delta_{n}$, with $\alg D$ an IRL, and embeddings $s:  \alg B^\delta_{n} \to \alg D^\delta_{n}, t: \alg C^\delta_{n} \to \alg D^\delta_{n}$. Thus $\alg D$ with embeddings $s_{B}, t_{C}$ are an amalgam for the V-formation in $\vv K$. Therefore, (1) and (2) are equivalent.

 While (3) clearly implies (2), (2) $\Rightarrow$ (3) can be shown again via the fact that homomorphisms of generalized $n$-rotations are uniquely determined by their restriction to the upper-compatible triple in the gluing. In particular, consider a V-formation $\alg A^\delta_{j}, \alg B^\delta_{k}, \alg C^\delta_{l}$, with $\alg A^\delta_{j}$ embedding in the other two. If the amalgam in $\vv K$ of $\alg A, \alg B, \alg C$ is $\alg D$, then it is routine to check that the desired amalgam is given by $\alg D^\delta_{\textrm{lcm\{k,l\}}}$.
\end{proof}
\subsection{A $2$-potent variety of $\sf{FL}_{ew}$-algebras}
We are now going to study a variety in which the subdirectly irreducible members can be characterized as partial gluings of a class of algebras. More precisely, as partial gluings of  simple 2-potent (i.e., satisfying $x^{2} = x^{3}$) CIRL-chains.
Let us consider totally ordered $\sf{FL}_{ew}$-algebras that are 2-potent, and such that for every $x, y$
$$x = 1 \mbox{ or } x \cdot (x \land y) \leq (x \land y)^{2}.$$
Equivalently, for $y \leq x <1$, we have $xy=y^2$.
The above is a positive universal first-order formula, thus by results in \cite{Ga2004} these structures generate a variety of MTL-algebras that satisfy $x^{2} = x^{3}$ and $$x \lor\; ((x \cdot (x \land y)) \backslash (x \land y)^{2}) = 1.$$
We will call this variety of $\mathsf{FL}_{ew}$-algebras $\mathsf{GL_{2}}$.
One can easily see that the finite chains in this variety are of the form shown in Figure \ref{fig:2potent}. 

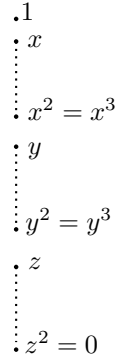
\begin{figure}[h!]
\begin{center}
\begin{tikzpicture}
% \draw [] (0,0) grid (3,2);
%{\footnotesize
 \fill (0,3) circle (0.03);
 \node at (0.15,3.1) {$1$}; %  
 \fill (0,2.7) circle (0.03);
 \node at (0.25,2.7) {$x$};%   
  \fill (0,1.7) circle (0.03);
  \node at (0.75,1.8) {$x^{2} = x^{3}$}; %  
 \draw [dotted,thick] (0,2.7) -- (0,1.7);%  
  
   \fill (0,1.3) circle (0.03);
 \node at (0.25,1.25) {$y$}; %  
  \fill (0,0.2) circle (0.03);
  \node at (0.7,0.3) {$y^{2} = y^{3}$}; %  
 \draw [dotted,thick] (0,1.2) -- (0,0.2);%  
  
   \draw [dotted,thick] (0,-0.3) -- (0,-1.3);
    \node at (0.25,-0.25) {$z$};
   \fill (0,-0.3) circle (0.03);
    \fill (0,-1.4) circle (0.03);
    \node at (0.6,-1.3) {$z^{2} = 0$};%  
 \end{tikzpicture}\end{center} \caption{A subdirectly irreducible algebra in $\mathsf{GL}_{2}$. }\label{fig:2potent}\end{figure}

More in detail, we will show that any $\mathsf{GL_{2}}$-chain consists of subintervals made of simple 2-potent CIRL-chains, and it can be characterized as \emph{partial gluings} of such chains. Using this representation, we will show that the amalgamation property fails for the class of $\mathsf{GL_{2}}$-chains. In particular, using their representation as partial gluings will allow us to fully characterize their subalgebras and determine exactly which V-formations have an amalgam.

First, let us show how we can iterate the partial gluing construction in this case. For example, we consider three  
simple 2-potent CIRL-chains $\alg S_{1}, \alg S_{2}$ and $\alg S_{3}$, as in Figure \ref{fig:simplegluings}.

\begin{figure}
\begin{center}
\begin{tikzpicture}
{\footnotesize
   \fill (0,3) circle (0.03);
 \node at (0.15,3.1) {$1$}; 
 \fill (0,2.7) circle (0.03);
 \node at (0.25,2.7) {$x$}; 
  \fill (0,1.7) circle (0.03);
  \node at (0.6,1.8) {$x^{2} = 0$}; 
    \node at (0.1,1.3) {$\alg S_{1}$}; 
 \draw [ dotted,thick] (0,2.7) -- (0,1.7);
  
 \fill (2,3) circle (0.03);
 \node at (2.15,3.1) {$1$}; 
 \fill (2,2.7) circle (0.03);
 \node at (2.25,2.7) {$y$}; 
  \fill (2,1.7) circle (0.03);
  \node at (2.6,1.8) {$y^{2} = 0$}; 
    \node at (2.1,1.3) {$\alg S_{2}$}; 
 \draw [ dotted,thick] (2,2.7) -- (2,1.7);

  \fill (4,3) circle (0.03);
 \node at (4.15,3.1) {$1$}; 
 \fill (4,2.7) circle (0.03);
 \node at (4.25,2.7) {$z$}; 
  \fill (4,1.7) circle (0.03);
  \node at (4.6,1.8) {$z^{2} = 0$}; 
 \draw [ dotted,thick, ] (4,2.7) -- (4,1.7);
  \node at (4.1,1.3) {$\alg S_{3}$};

  %%%%%%%%%%%%%%%%

   \fill (6.7,3) circle (0.03);
 \node at (6.85,3.1) {$1$}; 
 \fill  (6.7,2.7) circle (0.03);
 \node at (6.95,2.7) {$x$}; 
  \fill (6.7,1.7) circle (0.03);
  \node at (7.2,1.8) {$x^{2} = 0$}; 
    \node at (6.8,1.3) {$\alg S_{1}$}; 
 \draw [ dotted,thick, ] (6.7,2.7) -- (6.7,1.7);  
 
    \node at (8.1,2.5) {$ \oplus_{\tau_{2}}$};
 
   \fill (8.8,3) circle (0.03);
 \node at (8.95,3.1) {$1$}; 
 \fill (8.8,2.7) circle (0.03);
 \node at (9.05,2.7) {$y$}; 
  \fill (8.8,1.7) circle (0.03);
  \node at (9.05,1.8) {$y^{2}$}; 
 \draw [ dotted,thick, ] (8.8,2.7) -- (8.8,1.7);
  %\fill (8.8,1.4) circle (0.03);
  \node at (8.9,1.3) {$\alg S_{2}$};
    
 \node at (9.8,2.5) {$ {=}$};

   \fill (10.3,3) circle (0.03);
 \node at (10.5,3.1) {$1$}; 
 \fill  (10.3,2.7) circle (0.03);
 \node at (10.5,2.7) {$y$}; 
  \fill (10.3,1.7) circle (0.03);
  \node at (10.55,1.8) {$y^{2}$}; 
 \draw [ dotted,thick,] (10.3,2.7) -- (10.3,1.7);
 
 \fill  (10.3,1.5) circle (0.03);
 \node at (10.55,1.5) {$x$}; 
  \fill (10.3,0.5) circle (0.03);
  \node at (10.8,0.5) {$x^{2} = 0$}; 
 \draw [ dotted,thick, ] (10.3,0.5) -- (10.3,1.5);  
 
 %%%%%%%%%%%%%%%%%%%%%%%%%%%%
 
    \fill (5.2,-2) circle (0.03);
 \node at (5.4,-1.9) {$1$}; 
% \fill  (2.5,-1) circle (0.03);
% \node at (2.7,-1) {$z$}; 
%  \fill (2.5,-2) circle (0.03);
%  \node at (3,-1.9) {$z^{2} = 0$}; 
% \draw [ dotted,thick] (2.5,-2) -- (2.5,-1);
 \fill  (5.2,-2.2) circle (0.03);
 \node at (5.45,-2.2) {$z$}; 
  \fill (5.2,-3.2) circle (0.03);
  \node at (5.7,-3.2) {$z^{2} = 0$}; 
 \draw [ dotted,thick, ] (5.2,-2.2) -- (5.2,-3.2);
    \node at (5.3,-3.7) {$\alg S_{3}$};
 
  \node at (4.1,-2) {$\oplus_{\tau_{3}}$};
  
     \fill (2.2,-0.7) circle (0.03);
 \node at (2.4,-0.7) {$1$}; 
 \fill  (2.2,-1) circle (0.03);
 \node at (2.4,-1) {$y$}; 
  \fill (2.2,-2) circle (0.03);
  \node at (2.45,-1.9) {$y^{2}$}; 
 \draw [ dotted,thick] (2.2,-2) -- (2.2,-1);
 \fill  (2.2,-2.2) circle (0.03);
 \node at (2.45,-2.2) {$x$}; 
  \fill (2.2,-3.2) circle (0.03);
  \node at (2.85,-3.2) {$x^{2} = 0$}; 
 \draw [ dotted,thick, ] (2.2,-2.2) -- (2.2,-3.2);
    \node at (2.4,-3.7) {$\alg S_{1} \oplus_{\tau_{2}} \alg S_{2}$};
 
 \node at (6.5,-2) {$ {=}$};
 
   \fill (7.5,-0.7) circle (0.03);
 \node at (7.7,-0.7) {$1$}; 
 \fill  (7.5,-1) circle (0.03);
 \node at (7.7,-1) {$z$}; 
  \fill (7.5,-2) circle (0.03);
  \node at (7.75,-1.9) {$z^{2}$}; 
 \draw [ dotted,thick] (7.5,-2) -- (7.5,-1);
 \fill  (7.5,-2.2) circle (0.03);
 \node at (7.75,-2.2) {$y$}; 
  \fill (7.5,-3.2) circle (0.03);
  \node at (7.75,-3.1) {$y^{2}$}; 
 \draw [ dotted,thick, ] (7.5,-2.2) -- (7.5,-3.2);
 \fill  (7.5,-3.4) circle (0.03);
 \node at (7.75,-3.4) {$x$}; 
  \fill (7.5,-4.4) circle (0.03);
  \node at (8,-4.5) {$x^{2} = 0$}; 
 \draw [ dotted,thick, ] (7.5,-3.4) -- (7.5,-4.4);
}
 \end{tikzpicture}\end{center} \caption{The iterated partial gluing of three simple 2-potent chains.} \label{fig:simplegluings}
 \end{figure}
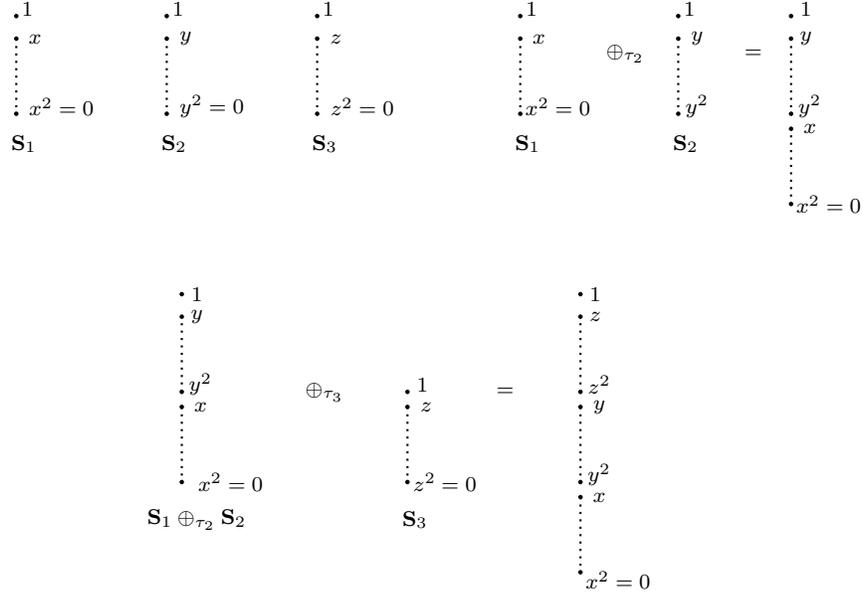
  Consider the triple $(\alg S_{1}, \sigma_{1}, \gamma_{1})$ where the implication in $\alg S_1$ is redefined to be: $x \to y = 1$ iff $x \leq y$, and undefined otherwise, and furthermore for all $a$ with $x^{2}\leq a \leq x$, we have $$\sigma_1(a) = y^{2}, \quad \gamma_1(a) = y,\quad \sigma_1(1) = \gamma_1(1) = 1.$$
Moreover, consider the triple $(\alg S_{2}, \ell_{2}, r_{2})$ where the maps $\ell_{2}, r_{2}$ have empty domain.
\begin{lemma}\label{lemma:iter}
$(\alg S_{1}, \sigma_{1}, \gamma_{1})$ is a lower-compatible triple and $(\alg S_{2}, \ell_{2}, r_{2})$ is an upper-compatible triple. 
\end{lemma}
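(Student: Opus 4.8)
The plan is to verify, one defining clause at a time, that $(\alg S_{1}, \sigma_{1}, \gamma_{1})$ meets the requirements of a lower-compatible triple and that $(\alg S_{2}, \ell_{2}, r_{2})$ meets those of an upper-compatible triple. Both verifications are finite case analyses, and the only genuine structural input is $2$-potency. Throughout I will use that each $\alg S_{i}$ is a simple $2$-potent CIRL-chain, so it has a coatom whose square is its least element $0$, and consequently that $ab=0$ whenever $a,b$ are both non-unit: if $c$ is the coatom then $a,b\le c$, so by order-preservation of multiplication $ab\le cc=c^{2}=0$. In particular $\sigma_{1}$ sends every non-unit element of $\alg S_{1}$ to $0=\min(S_{1}-\{1\})$ and $\gamma_{1}$ sends it to the coatom $=\max(S_{1}-\{1\})$; these values lie in $S_{1}$, as they must, since $\sigma_{1},\gamma_{1}$ are required to be maps on $\alg S_{1}$.

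For the lower-compatible triple I would first check that $\alg S_{1}$ with its redefined (partial) division is a partial IRL in the sense of Definition~\ref{def:partial}: integrality, the lattice clauses, and the order-preservation conditions are inherited from the chain, and the only partial operation is $\back$. Its defining pattern matches clause (1) of a lower-compatible triple, because $\sigma_{1}(a)=0\le b$ holds automatically for every non-unit $a$, so the condition ``$\sigma_{1}(a)\le b$ and $a\not\le b$'' collapses to simply $a\not\le b$, which is precisely where $a\to b$ was declared undefined. Next I verify that $(\sigma_{1},\gamma_{1})$ is a residuated pair as in~\ref{property2UPT}: since $\sigma_{1}$ maps the body to the bottom and $\gamma_{1}$ maps it to the coatom, both $\sigma_{1}(a)\le b$ and $a\le\gamma_{1}(b)$ hold for all non-unit $a,b$, with the unit handled separately, giving the equivalence. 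That $\sigma_{1}$ is an interior operator (decreasing, idempotent, and monotone, the last because it is constant on the body) and $\gamma_{1}$ a closure operator is then immediate. The strong-conuclearity identities $a\sigma_{1}(b)=\sigma_{1}(ab)=\sigma_{1}(a)b$ for $a,b\neq 1$ reduce, via the $2$-potency computation above, to $0=0=0$, and the absorption inequality $ab,ba\le\sigma_{1}(a)$ for $b\neq1$ becomes $ab=0$, which is the same computation.

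For $(\alg S_{2}, \ell_{2}, r_{2})$ the verification is almost entirely vacuous: $\alg S_{2}$ is a genuine total IRL, hence a partial IRL with all operations defined, and since $\ell_{2},r_{2}$ have empty domain, the Galois-connection and downward-closure conditions as well as clauses \ref{propassoc}, \ref{propassoc2} and \ref{prop-orderell} hold trivially. The clauses carrying content are those asserting totality: \ref{propUC3} requires that no product $xy$ be undefined, which holds because nothing lies in the (empty) domains of $\ell_{2},r_{2}$; and the division clause requires $x\back y$, $y/x$ to be defined whenever some $z$ satisfies $xz\le y$, which is automatic since $\alg S_{2}$ has a least element $0$ with $x0=0\le y$, so such a $z$ always exists and the relevant maximum exists in the finite chain. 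Meets are total for the same reason, and \ref{propUC1} is the empty Galois connection.

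The main obstacle is not any single computation but keeping the partiality bookkeeping consistent, namely confirming at each clause that the pattern of undefined divisions imposed on $\alg S_{1}$ coincides exactly with the pattern predicted by $\sigma_{1}$ in clause (1); this is where one must use that the whole non-unit part of $\alg S_{1}$ forms a single block collapsed by $\sigma_{1}$ to $0$. The one genuinely structural ingredient is $2$-potency together with $c^{2}=0$ for the coatom, which simultaneously makes $\sigma_{1}$ constant on the body, validates the strong-conucleus identities, and forces the absorption inequality; without it $\sigma_{1}$ would fail to be a conucleus and the triple would not satisfy clause (2a).
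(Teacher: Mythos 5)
Your proof is correct and takes essentially the same route as the paper's: a clause-by-clause verification in which everything collapses because, by simplicity and $2$-potency, all products of non-unit elements equal the bottom, so $\sigma_1$ is constantly $0$ and $\gamma_1$ constantly the coatom on $S_1 - \{1\}$ (making the residuated-pair, strong-conucleus, absorption, and undefinedness-pattern clauses trivial), while every condition on $(\alg S_2, \ell_2, r_2)$ is vacuous or amounts to the totality of $\alg S_2$. You also correctly treated the paper's stated values $\sigma_1(a)=y^2$, $\gamma_1(a)=y$ as a typo for the bottom $x^2$ and coatom $x$ of $\alg S_1$ (forced, since $\sigma_1,\gamma_1$ must be maps on $S_1$), which is exactly the reading the paper's own proof relies on when it asserts $\sigma_1(a)\leq b\leq\gamma_1(a)$ for all $a,b\in S_1-\{1\}$.
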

\begin{proof}
For all $a, b \in S_{1}-\{1\}$, we have $\sigma(a) \leq b\leq \gamma(a)$, which implies that the two operators form a residuated pair. In a lower-compatible triple the implication $x \to y$ is defined iff $\sigma(x) \leq y$ and $x \not\leq y$, and we can show that this holds in $(\alg S_1, \sigma_1, \gamma_1)$.     
Indeed, notice that for all $x, y$ such that $y^2 \leq x$, we have  $\sigma_1(x) \leq y$ and by definition $x \to y$ is undefined if and only if $x \not\leq y$. Also, it follows from direct computation that $\sigma_{1}$ is a strong conucleus, $\gamma_{1}$ is a closure operator, and $cd=dc \leq \sigma_{1}(c)$ for all $c, d \in S_{1}, d \neq 1$. Thus $(\alg S_{1}, \sigma_{1}, \gamma_{1})$ is a lower-compatible triple. 

$(\alg S_{2}, \ell_{2}, r_{2})$ is an upper-compatible triple since all the products are defined and all other properties are vacuously true.
\end{proof}
Moreover, we can consider the ideal $I = \{0\}$ with $\top_{I} = 0$, where $0$ is the bottom element of the chain, and both assumptions $A1, A2$ are satisfied. Also, conditions $A3,A4$ are trivially satisfied since the algebras considered are chains. Thus, letting $\tau_{2} = (\sigma_{1}, \gamma_{1}, \ell_{2}, r_{2})$, we can define the partial gluing $\alg S_{1} \oplus_{\tau_{2}} \alg S_{2}$, that is a total IRL since $\alg S_{2}$ has a coatom (see Theorem \ref{prop:partialgluing}). 

Similarly, we consider the upper-compatible triple $(\alg S_{3}, \ell_{3}, r_{3})$ where again $\ell, r$ have empty domain. Now, we also consider 
$\alg S_{1} \oplus_{\tau_{2}} \alg S_{2}$
 where $x \to y$ is defined and equal to $1$ iff $x \leq y$, and where for all $a: x^{2}\leq a \leq x$, and $b: y^{2}\leq b \leq y$ we have $$\sigma(a) = x^{2}, \quad \gamma(a) = x,\quad \sigma(b) = y^{2}, \quad \gamma(b) = y,\quad \sigma(1) = \gamma(1) = 1.$$
Building on the same line of reasoning as Lemma \ref{lemma:iter}, $(\alg S_{1} \oplus_{\tau_{2}} \alg S_{2}, \sigma, \gamma)$ is a lower compatible triple, and we can define the partial gluing $(\alg S_{1} \oplus_{\tau_{2}} \alg S_{2}) \oplus_{\tau_{3}} \alg S_{3}$ where $\tau_{3} = (\sigma, \gamma, \ell, r)$. This process can be iterated, and provides a way of constructing a partial gluing of a finite family of simple chains, indexed by a totally ordered set of indexes. Let us now give a more general definition, in order to be able to construct a partial gluing of a family of algebras, indexed by an arbitrary totally ordered chain with a largest element. 

We consider a family of algebras $\{\alg A_{i}\}_{i \in \II}$, where: each $\alg A_i$ is a simple 2-potent CIRL-chain with a coatom $c_i$ and a bottom $0_i$; $A_i \cap A_j = \{1\}$ for all $i, j \in I$; $\II = (I, \leq)$ is a totally ordered index set with largest element $i_{0}$. We will now define the \emph{iterated partial gluing of $\{\alg A_{i}\}_{i \in I}$}, and denote it with $\bigoplus_{I}\alg A_{i}$, as follows. The domain of $\bigoplus_{I}\alg A_{i}$ is given by $\bigcup_{i \in I} A_i$. The order is defined by $x \leq y $ iff either:
\begin{enumerate}
	\item $x, y \in A_i$ for some $i \in I$ and $x \leq_{A_i} y$; or
	\item $x \in A_i, y \in A_j$ and $i < j$,
\end{enumerate}
For each $\alg A_i$, and for all $x \neq 1$, let $\sigma_i(x) = 0_i$, $\gamma_i(x) = c_i$, and $\sigma_i(1) = \gamma_i(1) = 1$. 
The product and implication are as follows:
\begin{align*}
x \cdot y &=\left\{\begin{array}{ll}
x \cdot_{A_i} y & \mbox{ if } x, y \in  A_i \mbox{ for some } i \in I\\
\sigma_i(x) & \mbox{ if } x \in A_i, y \in A_j \mbox{ and } i < j\\
\sigma_j(y) & \mbox{ if } x \in A_i, y \in A_j \mbox{ and } j < i
\end{array}\right.\\
x\to\, y &=\left\{\begin{array}{ll} 
c_{i_0} & \mbox{ if } x, y \in  A_i \mbox{ for some } i \in I \mbox{ and } x \not\leq y\\
\gamma_j(y) & \mbox{ if } x \in A_i, y \in A_j \mbox{ and } j < i\\
1 & \mbox{ if } x \leq y \end{array}\right.\\
\end{align*}
\begin{proposition}
	Let $\{\alg A_{i}\}_{i \in I}$ be a family of simple 2-potent CIRL-chains $\alg A_i$, each with a coatom $c_i$ and a bottom $0_i$, such that $A_i \cap A_j = \{1\}$ for all $i, j \in I$, and $\II = (I, \leq)$ a totally ordered index set with a largest element $i_{0}$. Then the iterated partial gluing $\bigoplus_{I}\alg A_{i}$ is a CIRL.
\end{proposition}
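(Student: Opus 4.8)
The plan is to record the cheap structural facts first and then reduce the substantive residuated-lattice axioms to the partial-gluing theorem already established. Since each $\alg A_i$ is a chain and the blocks are stacked linearly by $\II$, the relation $\leq$ of the statement is a total order whose top $1$ lies in the block $A_{i_0}$ (as $i_0$ is the largest index); hence $\bigoplus_I \alg A_i$ is automatically a lattice and is integral. Commutativity of $\cdot$ is immediate from the symmetric clauses defining it, and the displayed clauses make $1$ a two-sided unit. So the only real content is associativity of $\cdot$ and residuation with $\to$ as residual.

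The simplification I would exploit is that in a simple $2$-potent CIRL-chain every non-unit $a$ satisfies $a^2 = 0_i$: the congruence filter generated by $a$ is the up-set of $\{a^k\}$, which by simplicity is all of $A_i$ and so contains $0_i$, and $2$-potency forces $a^2=a^3=\cdots=0_i$. Order preservation then gives $x\cdot_{A_i} y = 0_i$ for all non-units $x,y\in A_i$, and together with the cross-block clauses this yields the single formula $x\cdot y = 0_{\min(i,j)}$ for $x\in A_i\setminus\{1\}$, $y\in A_j\setminus\{1\}$. From this associativity is transparent, a product of non-units in blocks $i,j,k$ being $0_{\min(i,j,k)}$ bracketed either way and any factor $1$ absorbed by the unit law. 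The same computation shows $(\alg A_i,\sigma_i,\gamma_i)$ is a lower-compatible triple and $(\alg A_i,\ell_i,r_i)$, with $\ell_i,r_i$ of empty domain, an upper-compatible triple, exactly as in Lemma~\ref{lemma:iter}.

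For a finite index set $I=\{i_1<\dots<i_m=i_0\}$ I would identify $\bigoplus_I \alg A_i$ with the iterated partial gluing $(\cdots(\alg A_{i_1}\oplus_{\tau_2}\alg A_{i_2})\cdots)\oplus_{\tau_m}\alg A_{i_m}$ and induct on $m$ using Theorem~\ref{prop:partialgluing}: at each step the algebra glued on top has a coatom, so the partial gluing is total, and since each ideal is $\{0\}$ with $\top_I=0$ the hypotheses (A1)--(A4) hold (with (A3),(A4) vacuous since the blocks are chains). One checks that the explicitly defined operations agree with those produced by the iteration; the only delicate point is that a within-block implication $x\to y$ with $x\not\leq y$ is, at each step, ``undefined'' in the current lower-compatible triple (because $\sigma_i(x)=0_i\leq y$) and is therefore re-set to the coatom of the newly added top block, ending at $c_{i_0}$ after the last step. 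Commutativity survives the iteration because the glued operations are symmetric and $\ell_i,r_i$ coincide (both empty), so each stage, and hence the whole finite gluing, is a CIRL.

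For an arbitrary chain $\II$ with largest element $i_0$ the plan is to reduce to the finite case by locality. Given any finite tuple of elements, let $I'\subseteq I$ be the finite set of blocks meeting the tuple together with $i_0$; then $\bigoplus_{I'}\alg A_i$ is a CIRL by the finite case, and — precisely because $i_0\in I'$ — its operations agree with those of $\bigoplus_I \alg A_i$ on $\bigcup_{i\in I'}A_i$ (within-block implications resolve to the same $c_{i_0}$, collapsed products to the same $0_{\min(i,j)}$), so $\bigoplus_{I'}\alg A_i$ is a subalgebra. As CIRLs form a variety and each defining equation involves finitely many variables, every equational instance in $\bigoplus_I\alg A_i$ already lives in such a finite subalgebra and therefore holds. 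The main obstacle is exactly this interaction between the possibly infinite chain $\II$ and the single global reference $c_{i_0}$: one must confirm both that a largest index is genuinely needed for the within-block residuals to exist (which is why $\II$ is assumed to have a top) and that every finite reduct containing $A_{i_0}$ is a bona fide subalgebra. The residuation verifications in the finite case, in particular the cross-block equivalence $xy\leq z \Leftrightarrow y\leq x\to z=\gamma_j(z)$ when $z$ lies in a lower block, are then routine but require careful case bookkeeping.
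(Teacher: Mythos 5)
Your proof is correct, but note that the paper itself supplies no proof of this proposition: it is stated bare, followed by an (also unproven) lemma asserting that when $I-\{i_0\}$ has a largest element $i_1$, the iterated gluing decomposes as $(\bigoplus_{I-\{i_0\}}\alg A_{i})\oplus_{\tau}\alg A_{i_0}$. Your finite-case induction via Theorem \ref{prop:partialgluing} is precisely the route that lemma points at, so there you are reconstructing the intended argument; your two additions are genuine improvements. First, the observation that simplicity plus $2$-potency force $a^2=0_i$ for every non-unit $a$, hence $xy=0_{\min(i,j)}$ for all non-units, is the right structural reduction (the paper uses this only implicitly, in the $\mathsf{GL}_2$-chain characterization, where it notes $zw=z^2=w^2$ on blocks); it turns associativity and the lower-compatibility checks ($\sigma$ strong conucleus, $(\sigma,\gamma)$ residuated pair, $xy\leq\sigma(x)$) into one-line computations. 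Second, and more importantly, your locality argument is actually needed and not merely decorative: the paper's decomposition lemma does not apply when $I-\{i_0\}$ has no maximum (e.g.\ $I$ of order type $\omega+1$), so the general statement does not follow from the finite case without your step --- every finite set of elements lies in $\bigcup_{i\in I'}A_i$ for some finite $I'\ni i_0$, this set is a subuniverse whose induced algebra is $\bigoplus_{I'}\alg A_i$, and $\mathsf{RL}$ is a variety, so all defining equations transfer. One caveat you share with the paper: the clause $x\to y=c_{i_0}$ for $x,y\in A_i$, $x\not\leq y$ (and likewise the cross-block product clauses) must be read as applying only to non-unit $x$, since residuation forces $1\to y=y$; your verification tacitly adopts this intended reading, which is fine, but it deserves to be said explicitly, as does the matching point that in forming each lower-compatible triple one keeps the cross-block implications $x\to y=\gamma_j(y)$ (these have $\sigma(x)\not\leq y$, so they must remain defined) and forgets only the within-block ones.
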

Moreover, if $I$ is a totally ordered finite set of indexes, the above definition of iterated partial gluing corresponds to iterating the partial gluing construction as in the above example with the algebras $\alg S_1, \alg S_2, \alg S_3$. Indeed:
\begin{lemma}
Let $\{\alg A_{i}\}_{i \in I}$ be a family of simple 2-potent CIRL-chains $\alg A_i$, each with a coatom $c_i$ and a bottom $0_i$, such that $A_i \cap A_j = \{1\}$ for all $i, j \in I$.
	Let $I$ be a totally ordered set with largest element $i_0$ such that $I - \{i_0\}$ has a largest element $i_1$. Then: 
	\begin{enumerate}
		\item\label{lemma:iterated1} $(\bigoplus_{I-\{i_0\}}\alg A_{i}, \sigma, \gamma)$ is a lower compatible triple, where: the implication is redefined to be $x \to y = 1$ iff $x \leq y$, and undefined otherwise; if $x \in A_i -\{1\}$, $\sigma(x) = \sigma_i(x) = 0_i, \gamma(x) = \gamma_i(x) = c_i$, $\sigma(1) = \gamma(1) = 1$.
		\item\label{lemma:iterated2} $(\alg A_{i_0}, \ell, r)$ is an upper-compatible triple where $\ell$, $r$ have empty domain. 
		\item\label{lemma:iterated3} $\bigoplus_{I}\alg A_{i} \cong  (\bigoplus_{I-\{i_0\}}\alg A_{i})\oplus_{\tau}\alg A_{i_0}$, with $\tau = (\sigma, \gamma, \ell, r)$ and $I = \{0\}$ with $\top_{I} = 0$.
	\end{enumerate}
\end{lemma}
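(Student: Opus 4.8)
The plan is to prove the three assertions in turn, using the preceding Proposition to know that $\bigoplus_{I-\{i_0\}}\alg A_i$ is already a total CIRL (it applies because $I-\{i_0\}$ has a largest element $i_1$), and recalling that in each summand, being a simple $2$-potent chain forces every product of two non-unit elements to equal the block-bottom $0_i$, an idempotent with $z\,0_i=0_i$. Assertion (2) is then immediate: $\alg A_{i_0}$ is a total chain with least element, hence a partial IRL with no undefined operations, and since $\ell,r$ have empty domain the Galois-connection, domain and residual clauses (\ref{propUC1} through \ref{prop-orderell}) of the upper-compatible triple definition hold vacuously, while the characterizations of undefined products, divisions and meets hold because $x\,0_{i_0}=0_{i_0}\le y$ always witnesses a defined division and $0_{i_0}$ is a common lower bound of any pair.

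Assertion (1) is the substantive step. I would first endow the CIRL $\bigoplus_{I-\{i_0\}}\alg A_i$ with partial division by declaring $x\to y$ undefined exactly when $\sigma(x)\le y$ and $x\not\le y$. Since $\sigma(x)=0_i$ for $x\in A_i-\{1\}$ and the summands are stacked in a chain, this locus is precisely the pairs lying in a common block $A_i$ with $y<x$, whereas across distinct blocks ($x\in A_i,\,y\in A_j,\,j<i$) one retains $x\to y=\gamma(y)=c_j$, and upward one retains $1$; the partial-IRL axioms are inherited from the ambient total CIRL. It then remains to verify the conuclear data by a short case analysis on the blocks of the two arguments, using the explicit product rule: that $\sigma$ is decreasing, idempotent and monotone (because $i\mapsto 0_i$ is order preserving along the index chain); that $\sigma(xy)=\sigma(x)y=x\sigma(y)$ for $x,y\neq 1$; that $\gamma$ is a closure operator (because $i\mapsto c_i$ is order preserving); that $\sigma(x)\le y\Leftrightarrow x\le\gamma(y)$ (both sides reduce to comparison of block indices); and that $xy,yx\le\sigma(x)$ for $y\neq 1$, which follows directly from the product rule.

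For assertion (3) I would verify the hypotheses of Theorem~\ref{prop:partialgluing} for $\tau=(\sigma,\gamma,\ell,r)$ with ideal $\{0\}$ and $\top_I=0$: assumption (A1) holds since $\sigma(0)=0$, and (A2)--(A4) are vacuous because $\alg A_{i_0}$ has all products and meets and a coatom and $\bigoplus_{I-\{i_0\}}\alg A_i$ is a chain; the coatom of $\alg A_{i_0}$ makes the gluing total. The isomorphism is then an equality of operation tables on the shared carrier $\bigcup_{i\in I}A_i$, checked case by case: cross products give $0_i$ on both sides; a within-$K$ division that is undefined in the triple is sent by the gluing to $c_L=c_{i_0}$, matching the clause $x\to y=c_{i_0}$ of $\bigoplus_I\alg A_i$; a division with $x\in A_{i_0}$ and $y$ in a lower block gives $\gamma(y)=c_j$ on both sides (the emptiness of $D_\ell$ selecting the $\gamma$-branch of the gluing); downward divisions give $1$; and the lattice operations agree since every summand is a chain.

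The step I expect to be hardest is the strong-conucleus identity $\sigma(xy)=\sigma(x)y=x\sigma(y)$ in (1) when the two factors lie in different blocks, together with its compatibility in (3) with the coatom-valued division clause of the iterated gluing. The care needed is that $\sigma,\gamma$ send each element to the bottom, respectively coatom, of its \emph{own} block rather than to the global bottom/top, so the undefinedness locus is only the within-block part; this is exactly where simplicity and $2$-potency of the summands (all non-unit products equal to $0_i$) are genuinely used.
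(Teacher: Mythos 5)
Your proposal is correct. In fact the paper offers no proof of this lemma at all: it is stated as a routine generalization of the worked example with $\alg S_{1}, \alg S_{2}, \alg S_{3}$ and of Lemma~\ref{lemma:iter}, and your blockwise verification of the three items (triple conditions for the lower and upper parts, then hypotheses of Theorem~\ref{prop:partialgluing} plus a case-by-case match of operation tables) is exactly the argument the paper leaves implicit. One point deserves emphasis in your favour: you take the divisions of $\bigoplus_{I-\{i_0\}}\alg A_{i}$ to be undefined only on within-block pairs with $y<x$ (i.e., precisely where $\sigma(x)\le y$ and $x\not\le y$), while retaining the cross-block values $x\to y=\gamma(y)=c_j$; this is the correct reading and is forced both by clause (1) of the definition of a lower-compatible triple and by item (3) itself, since under the lemma's literal wording (``$x\to y=1$ iff $x\le y$, undefined otherwise'') the gluing would assign the value $c_{i_0}$ to cross-block divisions where $\bigoplus_{I}\alg A_{i}$ assigns $c_j$, and the claimed isomorphism would fail.
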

%\begin{proof}
%(\ref{lemma:iterated1}) can be shown similarly to Lemma \ref{lemma:iter}. 
% (\ref{lemma:iterated2}) is clear, since all conditions are trivially satisfied.
%For (\ref{lemma:iterated3}), the partial gluing can be defined since the conditions $(A1)-(A4)$ are vacuously satisfied. The fact that the two IRLs are isomorphic can be checked directly using the definitions.
%\end{proof}

We will now show how to characterize $\mathsf{GL}_{2}$-chains with iterated partial gluings.
 For a $\mathsf{GL}_{2}$-chain $\alg A$ and $a \in A$, let us now define $A(a)=\{x \in A: x^2=a^2\}$.

\begin{lemma}\label{lemma:iteratedchains}
 Let $\alg A$ be a $\mathsf{GL}_{2}$-chain and $a,b \in A$. Then:
% For all $x \in A$, we have $1x=x1=x$, $1 \to x=x$ and $x \to 1=x \to x=1$.
	\begin{enumerate}
		\item 	If $a \leq b <1$, then $ab=\min A(a)$. 
		\item If $a\leq b$, then $a \to b=1$. 
		\item If $a<b<1$ and $A(a) =A(b)$, then $\alg A$ has a coatom $c$ and $b \to a=c$.
		\item If $a<b<1$ and $A(a)\not =A(b)$, then $b \to a=\max  A(a)$. 
	\end{enumerate}
Moreover, if $\alg A$ has no coatom, then it is a G\" odel algebra.
\end{lemma}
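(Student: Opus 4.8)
The plan is to exploit two facts available for any $\mathsf{GL}_2$-chain $\alg A$: $2$-potency, which makes every square idempotent (since $x^2=x^3$ gives $(x^2)^2=x^4=x^2$), and the defining property in the equivalent form ``$xy=y^2$ whenever $y\le x<1$''. A first observation, underlying all four items, is that $\min A(a)=a^2$: indeed $a^2\in A(a)$ by idempotency of squares, and for any $z\in A(a)$ we have $a^2=z^2\le z$ by integrality, so $a^2$ is the least element squaring to $a^2$. With this in hand, item (1) is immediate, since for $a\le b<1$ the property gives $ab=ba=a^2=\min A(a)$; and item (2) is the standard residuation fact $a\le b \Leftrightarrow a\to b=1$, valid in every integral residuated lattice.

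For items (3) and (4) I would compute the residual $b\to a$ by describing $\{z:bz\le a\}$ via the case analysis $bz=z^2$ for $z\le b$, $bz=b^2$ for $b<z<1$, and $bz=b$ for $z=1$ (all instances of the defining property). In item (3) we have $A(a)=A(b)$, i.e.\ $a^2=b^2$; then every $z$ with $b\le z<1$ satisfies $bz=b^2=a^2\le a$, while $b\cdot 1=b>a$. Hence $c:=b\to a$ satisfies $b\le c<1$ (note $b\cdot b=b^2=a^2\le a$ puts $b$ in the set) and is an upper bound of $[b,1)$. A short chain argument then shows $c$ is the coatom: any $w<1$ with $w>c$ would satisfy $w>c\ge b$, hence $w\in[b,1)$ and so $w\le c$, a contradiction; thus $c=\max\{w:w<1\}$ and $b\to a=c$.

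The main work is item (4), where $A(a)\ne A(b)$ forces $a^2<b^2$, and hence $b>w$ for every $w\in A(a)$ (else $w^2\ge b^2>a^2$). The key lemma here is that $z^2>a^2$ implies $z^2>a$: assuming $z^2\le a<1$, the property yields $a z^2=(z^2)^2=z^2$, while $z^2\le a$ gives $az^2\le a^2$, so $z^2\le a^2$, a contradiction. Using this with the case analysis above, I would identify $\{z:bz\le a\}$ with $\{z:z^2\le a^2\}$, so that $b\to a=\max\{z:z^2\le a^2\}$. Since this residual $c$ satisfies $c^2\le a^2$ and $c\ge a$ (as $a$ lies in the set), we get $c^2=a^2$, whence $c\in A(a)$ and $c=\max A(a)$; this simultaneously shows that the maximum exists.

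Finally, the ``moreover'' clause follows from (3): if some $a$ had $a^2<a<1$, then $(a^2)^2=a^2$ gives $A(a^2)=A(a)$, so applying (3) to the pair $a^2<a<1$ would produce a coatom. Thus the absence of a coatom forces $x^2=x$ for all $x$, making the monoid operation idempotent and $\alg A$ a G\"odel chain, hence a G\"odel algebra. The only genuine obstacle is item (4): pinning down the residual precisely, verifying the inequality $z^2>a^2\Rightarrow z^2>a$, and checking that the residual lands exactly in $A(a)$ so that it equals (and witnesses the existence of) $\max A(a)$.
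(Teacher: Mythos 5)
Your proposal is correct and takes essentially the same approach as the paper: products and residuals are computed from the defining law $xy=y^2$ (for $y\le x<1$) together with idempotency of squares, item (3) produces the coatom as $b\to a$, item (4) identifies the set $\{z: bz\le a\}$ (your description $\{z: z^2\le a^2\}$ is the same set as the paper's implicit union of $A(a)$ with the elements below $a$) so that $b\to a=\max A(a)$ and the maximum exists, and the final clause follows from (3) exactly as in the paper. The only cosmetic differences are that in (3) you bound just the interval $[b,1)$ and then invoke the chain property, where the paper bounds all non-identity elements directly, and in (4) you isolate the auxiliary fact $z^2>a^2\Rightarrow z^2>a$, which the paper inlines as the step showing $bd\not\le a$ for $d>a$ with $A(d)\ne A(a)$.
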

\begin{proof}

For (1), note that if $a\leq b < 1$, then using the defining property for $\mathsf{GL}_{2}$, we get $a^2 \leq ab \leq (a \land b)^{2} = a^{2}$ so $ab=a^2 = \min A(a)$. (2) always holds in CIRLs.

Let us prove (3). If $a<b < 1$ and $A(a) = A(b)$, then for every non-identity element $c$ of $A$, we have $bc \leq b^2 =a^2 \leq a$, and so $c \leq b \to a$; therefore $\alg A$ has a coatom, which is equal to $b \to a$, for all  $a<b\not = 1$ with $A(a) = A(b)$. If $A(c)$ is a singleton for all $c \in A$ (i.e. $\alg A$ is a G\" odel algebra), then $\alg A$ may have no coatom, but if there is at least one non-trivial $A(c)$, then $\alg A$ has a coatom.

For (4) suppose $a<b <1$ and $A(a) \not = A(b)$, then for every $c \in A(a)$, we have $bc =a^2 \leq a$, but for $d>a$ with $A(d) \not = A(a)$, we have $bd \geq (b \wedge d)^2 \not = a^2$, so $bd \not \in A(a)$, hence $bd \not \leq a$. Therefore, $A(a)$ has a maximum element and $b \to a= \max A(a)$, for all non-identity $b>a$ with $A(a) \not = A(b)$.  

If $\alg A$ has no coatom, then $A(a) = \{a\}$ for each $a \in A$ by (3), thus every element of $\alg A$ is idempotent. Therefore, $\alg A$ is a G\"odel algebra.
\end{proof}
	We are now ready to characterize $\mathsf{GL}_{2}$-chains as iterated partial gluings of simple chains. 
	
	\begin{proposition}
		The chains in $\mathsf{GL}_{2}$ are exactly the iterated partial gluings of simple bounded CIRL-chains with a coatom over a totally ordered index set with both a bottom and a top element.
	\end{proposition}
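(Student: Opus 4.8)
The plan is to prove the two inclusions separately, using as the central organizing tool the partition of a $\mathsf{GL_{2}}$-chain into the \emph{blocks} $A(a)=\{x : x^2=a^2\}$ together with the explicit description of the operations provided by Lemma~\ref{lemma:iteratedchains}. First I would record the basic geometry of the blocks: since $x\mapsto x^2$ is order preserving and $2$-potent (so $a^2$ is idempotent with $a^2\le a$), each $A(a)$ is convex with least element $a^2=\min A(a)$, distinct blocks are comparable (if $a^2<b^2$ then every element of $A(a)$ lies strictly below every element of $A(b)$), and $A(1)=\{1\}$ because $x^2=1$ forces $x=1$ in a chain. Thus the non-trivial blocks, ordered by the order of their squares, form a chain $\II$ of indices, with bottom block $A(0)$ (using that the algebra is bounded) and, \emph{when $\alg A$ has a coatom} $c$, top block $A(c)$.

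For the easy inclusion I would take an iterated partial gluing $\bigoplus_{I}\alg A_i$ of simple bounded $2$-potent CIRL-chains with coatoms over an index set with bottom and top. We already know from the construction that it is a CIRL, and it is bounded (its bottom is the $0_i$ of the least simple chain and its top is the shared $1$). It then suffices to verify the two defining properties of $\mathsf{GL_{2}}$ directly from the gluing formulas. For $2$-potency: any $x\ne 1$ lies in a unique $\alg A_i$, and since $\alg A_i$ is simple and $2$-potent the product of two non-top elements of $\alg A_i$ is $0_i$, so $x^2=0_i=x^3$. For the defining condition, given $y\le x<1$ either $x,y$ lie in the same $\alg A_i$, where $xy=0_i=y^2$, or $y$ lies strictly below $x$ in a lower chain $\alg A_j$, where $xy=\sigma_j(y)=0_j=y^2$; in both cases $xy=y^2$, as required.

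For the hard inclusion I would start from a $\mathsf{GL_{2}}$-chain $\alg A$ with a coatom and show it equals the iterated partial gluing of the chains $\alg A_i:=\{1\}\cup A(a_i)$, one for each non-trivial block $A(a_i)$. Each $\alg A_i$ is bounded (bottom $a_i^2$, top $1$), $2$-potent, and has coatom $\max A(a_i)$, where this maximum exists by Lemma~\ref{lemma:iteratedchains}(4) for blocks below the top and equals $c$ for the top block. Each $\alg A_i$ is simple because the congruence filter generated by any non-top $x$ is ${\uparrow}x^2={\uparrow}a_i^2$, which is all of $\alg A_i$. I would then check that the gluing operations reconstruct those of $\alg A$: the order and lattice operations agree by convexity and comparability of blocks; products agree because Lemma~\ref{lemma:iteratedchains}(1) gives $xy=\min A(a)=0_i$, matching $x\cdot_{A_i}y$ within a block and $\sigma_j(y)$ across blocks; and the implication agrees by Lemma~\ref{lemma:iteratedchains}(2)--(4), where the same-block case $x\to y=c$ matches the gluing value $c_{i_0}$ and the across-block case $x\to y=\max A(y)$ matches $\gamma_j(y)=c_j$.

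The main obstacle is the correct matching of the \emph{implication}: inside each simple chain $\alg A_i$ the internal residual of two non-top elements is the \emph{local} coatom $\max A(a_i)$, whereas in $\alg A$ (and in the gluing) the value of $x\to y$ for same-block $x,y$ is the \emph{global} coatom $c=c_{i_0}$. Reconciling these is exactly the content of the partial-gluing construction, which overrides the local division by $c_{i_0}$, and Lemma~\ref{lemma:iteratedchains}(3) shows $\alg A$ does the same. A secondary, genuinely delicate point is the role of the top of the index set: a non-singleton block forces $\alg A$ to have a coatom by Lemma~\ref{lemma:iteratedchains}(3), and conversely the index set has a top precisely when $\alg A$ has a coatom; a $\mathsf{GL_{2}}$-chain without a coatom is, by Lemma~\ref{lemma:iteratedchains}, a G\"odel chain (all blocks singletons, i.e.\ an iterated gluing of copies of $\mathbf 2$), the boundary case sitting just outside the literal hypothesis of an index set with a top.
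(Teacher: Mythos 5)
Your proposal is correct and follows essentially the same route as the paper's proof: both directions rest on the block decomposition $A(a)=\{x : x^2=a^2\}$ and on Lemma~\ref{lemma:iteratedchains} to match the order, product and implication with those of the iterated partial gluing, and the converse inclusion is verified by checking $2$-potency and the defining positive universal formula case by case, exactly as the paper does. The boundary case you flag is genuine: for a coatomless G\"odel chain the index set $I=\{\max A(a): a \in A-\{1\}\}$ has no top (and such a chain cannot be a gluing over an index set with a top, since the coatom of the top component would be a coatom of the whole chain), a point the paper's own proof passes over silently, so your restriction of the hard direction to chains with a coatom, with the G\"odel case noted separately, is if anything more careful than the published argument.
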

	\begin{proof}
		For a chain $\alg A$ in  $\mathsf{GL}_{2}$ we denote by $A^2=\{a^2 : a \in A\}=\{a : a=a^2\}$  the set of idempotent elements of $A$ (equivalently all squares of $\alg A$). 
Note that for $a, b \in A$ we have $A(a)=A(b)$ iff $a^2=b^2$ ; also for every $a \in A$ we have $A(a)=A(a^2)$. Moreover, if $A(a) \not = A(b)$, then $a<b$ iff for all $x \in A(a)$ and $y \in A(b)$ we have $x<y$. Therefore, the collection $\{A(a):a \in A\}$ is equal to the collection $\{A(s):s \in A^2\}$, it partitions $A$ into equivalence classes, which are intervals, and these intervals are linearly ordered in $\alg A$; also $A(1)=\{1\}$.  So, $A$ is the order-theoretic ordinal sum of the chains $A(s)$ along $A^2$.

Moreover, we have seen in the proof of Lemma \ref{lemma:iteratedchains} that for all $a \in A$, $A(a)$ has a maximum element whenever there is a $b<1$ strictly larger that $a$. 
Notice now that if such a $b$ does not exist, then $A(a)$ is the biggest interval below $1$, which necessarily has a maximum element by the preceding paragraph (the coatom of $\alg A$), unless $\alg A$ is a  G\" odel algebra.  But even if $\alg A$ is a  G\" odel algebra, then $A(a)=\{a\}$, so $A(a)$ has a maximum element. 

We define $I:=\{\max A(a): a \in A-\{1\}\}$, the set of all these maximal elements, and note that $\{A(i): i \in I\}=\{A(a): a \in A-\{1\}\}$. 
For $i \in I$ we define the set $A_i:=A(i)\cup\{1\}$ and note that it supports the structure of a simple $2$-potent integral residuated chain $\alg A_i$; the order and the multiplication are inherited by $\alg A$ and all interesting divisions produce the coatom of $\alg A_i$. We mention that the structure of each $A(i)$, for $i\in I$, is that of an arbitrary bounded chain. 

We now claim that $\alg A$ is the iterated gluing of the algebras $\{\alg A_{i}\}_{i \in \II}$, where for $x \in A(i)$, $\sigma(x)=\min A(i)$ and $\gamma_i(x)=\max A(i)(=i)$. 
 Indeed it follows from the definition that the domain and the order coincide. The monoidal operation inside each $\alg A_{i}$ coincides with the one inherited from in $\alg A$: for $z, w \in A(i)$ we have $zw = z^{2}=w^2$. Also, for $x \in A_{i}, y \in A_{j}$ with $j < i$, we have $xy= y^{2} = \sigma(y)$. For the implications $x \to y$ with $y <x$, we have that if $A(x) =A(y)$, then $x \to y =c$, the coatom of the chain $\alg A$, and if  $A(y)=A(i) \neq A(x)$ where $i=\max A(y) \in I$, then $x \to y =\max A(y)=\gamma_i(y)$.

We now show that given any family $\{\alg A_{i}\}_{i \in I}$ of simple 2-potent chains (with a coatom), their iterated partial gluing belongs to $\mathsf{GL}_{2}$. Indeed if $x, y\in A_{i} - \{1\}$, then $x \cdot (x \land y) = x^{2} = y^{2} = (x \land y)^{2}$. Also, if $x \in A_{i}-\{1\}, y \in A_{j}-\{1\}, j < i$, then $x \cdot (x \land y) = x \cdot y = \sigma(y) = y^{2} = (x \land y)^{2}$, and $x \cdot (x \land y) =x \cdot x = (x \land y)^{2}$. As residuated lattices are determined by their order and multiplication reducts, the result follows. 
	\end{proof}
	
	We are now going to show that $\mathsf{GL}_{2}$ is generated by its finite members, that is, it has the finite model property (or FMP). First we need the following technical lemma. 
\begin{lemma}\label{lemma:generation}
Let $\alg A$ be a chain in $\mathsf{GL}_{2}$,  $X$ a subset of $A$ and $\langle X \rangle$ the subalgebra of $\alg A$ generated by $X$.
\begin{enumerate}
\item If $X$ consists solely of idempotents, and for all $x \in X$, $A(x)$ is a singleton except possibly for $A(m)$ in case $X-\{1\}$ has a maximum element $m$, then 
$\langle X \rangle=\{1\} \cup X $.
\item Otherwise, if either $X$ contains a non-idempotent element or if $A(x)$ is not a singleton for some non-maximal element $x$ of $X-\{1\}$, then $\alg A$ has a coatom $c$ and 
$$\langle X \rangle=X \cup \{\min A(x): x \in X\} \cup \{\max A(x): x \in X\} \cup \{1, c, \min A(c)\}.$$ 
\end{enumerate}
 \end{lemma}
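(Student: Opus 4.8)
The plan is to compute $\langle X\rangle$ by the two usual inclusions, using throughout the arithmetic recorded in Lemma~\ref{lemma:iteratedchains}, which determines every product and implication of non-top elements of a $\mathsf{GL_{2}}$-chain: $ab=\min A(a)=a^{2}$ when $a\le b<1$; $a\to b=1$ when $a\le b$; $b\to a=c$ (the coatom) when $a<b<1$ and $A(a)=A(b)$; and $b\to a=\max A(a)$ when $a<b<1$ and $A(a)\ne A(b)$. The single most useful preliminary fact I would isolate first is that an element is idempotent if and only if it is the minimum of its class, since $\min A(a)=a^{2}$; hence each class contains exactly one idempotent, and a set of idempotents meets every class at most once. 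Because $0$ and $1$ are constants of $\mathsf{FL_{ew}}$ they lie in every subalgebra, so I would assume $0\in X$ (equivalently, read the displayed sets as containing $0$); the bottom plays no other role.

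For part (1), with $X$ consisting of idempotents, I would show that $\{1\}\cup X$ is already a subalgebra. Closure under $\land,\lor$ is immediate on a chain, and for $x\le y<1$ in $X$ we get $xy=\min A(x)=x^{2}=x\in X$ by idempotency. The only nontrivial implication is $y<x<1$ with $x,y\in X$; here $A(x)\ne A(y)$ by the preliminary fact, so $x\to y=\max A(y)$ by Lemma~\ref{lemma:iteratedchains}(4). Now $y<x\le m$ forces $y\ne m$ whenever the maximum $m$ of $X-\{1\}$ exists (and if $X-\{1\}$ has no maximum, every such class is a singleton outright), so by hypothesis $A(y)$ is a singleton and $\max A(y)=y\in X$. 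Thus no implication ever returns the coatom or the top of $A(m)$, and $\{1\}\cup X$ is closed; as it visibly contains $X$ and sits inside $\langle X\rangle$, equality follows.

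For part (2) I would first note the coatom exists: otherwise $\alg A$ would be a G\"odel algebra by the last assertion of Lemma~\ref{lemma:iteratedchains}, making every element idempotent with singleton class, contrary to the hypothesis of (2). I would then establish $S\subseteq\langle X\rangle$, where $S$ is the displayed set, by generating each listed element: $\min A(x)=x\cdot x$; the coatom $c$ arises either as $x\to x^{2}$ for a non-idempotent $x\in X$ (Lemma~\ref{lemma:iteratedchains}(3), using $A(x)=A(x^{2})$), or, when all of $X$ is idempotent but some $x\in X$ is non-maximal with $A(x)$ non-singleton, as $(y\to x)\to x$ for $y\in X$ with $x<y<1$, since then $y\to x=\max A(x)$ by (4) and $\max A(x)\to x=c$ by (3); finally $\min A(c)=c\cdot c$, and $\max A(x)=c$ if $A(x)=A(c)$, otherwise $\max A(x)=c\to x^{2}$ by (4). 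The reverse inclusion is a closure check: observing that $S$ meets only the classes $A(x)$ $(x\in X)$ and $A(c)$ and contains the minimum and maximum of each, I would verify that any product of elements of $S$ equals $\min A(a)$ or $a$, and any implication equals $1$, $c$, $b$, or $\max A(b)$, all of which lie in $S$; hence $S$ is a subalgebra containing $X$ and $\langle X\rangle\subseteq S$.

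The hard part will be the forward inclusion of part (2), specifically producing $\max A(x)$: it is never a product of generators and is reachable only through an implication applied by something strictly larger and in a different class. The crux is therefore to manufacture the coatom $c$ first and then recover each $\max A(x)$ as $c\to x^{2}$; the delicate point is that $c$ must be obtained by two genuinely different arguments according to whether $X$ carries a non-idempotent element or merely a non-maximal element with a non-singleton class, and it is precisely the hypothesis of (2) — the negation of that of (1) — that guarantees one of these two scenarios occurs.
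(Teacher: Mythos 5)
Your proof is correct and takes essentially the same approach as the paper's: both rest on the arithmetic of Lemma~\ref{lemma:iteratedchains}, produce the coatom inside $\langle X\rangle$ as $w \to w^{2}$ for a non-idempotent $w$ (your $(y\to x)\to x$ is exactly this with $w = y\to x = \max A(x)$), recover each $\max A(b)$ as an implication from $c$, and finish with a closure check on the displayed set. Your explicit handling of the constant $0$ --- which the paper's statement and proof silently ignore even though $\mathsf{GL_{2}}$-algebras are $\mathsf{FL_{ew}}$-algebras --- is a minor, legitimate refinement rather than a divergence.
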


\begin{proof}
Clearly, $1$ needs to be in $\langle X \rangle$. Also, closure under the lattice operations does not increase the original set. From Lemma~\ref{lemma:iteratedchains}, $x^2=\min A(x)$. Also, the only other elements that are generated are: (i) the coatom $c$, when there is an  $A(x)$ such that  $A(x) \cap X$  is not a singleton (i.e., $X$ does not consist solely of idempotents) and (ii) $\max A(x)$, when there is a block $A(y) \cap X$ strictly between $A(x) \cap X$ and $A(1)$, as well as the ones obtained by interactions of (i) and (ii).

1. In the first case, $X$ is closed under multiplication, as for $x,y \in X-\{1\}$, we have $xy=(x \wedge y)^2 \in \{x^2, y^2\}=\{x,y\}$. Also, for $x,y \in X-\{1\}$ with $x<y$, the set $A(x)$ is a singleton $\{x\}$ and so $A(x)\not = A(y)$, hence $y \to x=\max A(x)=x$. 

2. In the second case, either $X$ has a non-idempotent element, which also is present in $\langle X \rangle$, or every element in $X$ is idempotent and there is some non-maximal element $x$ of $X-\{1\}$ such $A(x)$ is not a singleton, in which case there also exists a $y>x$ in $X-\{1\}$ (as $x$ is non-maximal there) and $A(y)\not = A(x)$ as $x,y$ are distinct idempotents; hence $y \to x = \max A(x)$, where $\max A(x) \not=x$, as $A(x)$ is not a singleton, hence $z$ is not idempotent in  $\langle X \rangle$. In any case,  $\langle X \rangle$ contains a non-idempotent element $w$. Then $\alg A$ has a coatom $c$, which is equal to $w \to w^2$ and which is in $\langle X \rangle$.
Closure under multiplication is equivalent to closure under squares, which is equivalent to containing the bottom of the block of an element (since $a^2=\min A(a)$, for all $a \in A$.) We need to consider only implications of the 
 form $a \to b$, for $b<a\not =1$. If $A(a)=A(b)$, then $a \to b=c$, and if  $A(a)\not =A(b)$ then $a \to b= \max A(b)$. Conversely, if $b \in \langle X \rangle-\{1,c\}$, $\max A(b)=c \to b \in \langle X \rangle$, since $c \in \langle X \rangle$; the special case of $b=c$ also works as $c=\max A(c)$.
\end{proof}
 We are now ready to show the following.

 \begin{proposition}
The variety $\mathsf{GL}_{2}$ is locally finite, hence it has the FMP. 
\end{proposition}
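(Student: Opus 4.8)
The plan is to derive local finiteness from the explicit description of finitely generated subalgebras of chains given in Lemma~\ref{lemma:generation}, combined with the fact that $\mathsf{GL_{2}}$, being a variety of MTL-algebras, is semilinear, so its subdirectly irreducible members are chains. Fix $n$ and set $\mathbf{F}:=\mathbf{F}_{\mathsf{GL_{2}}}(x_1,\dots,x_n)$. By Birkhoff's subdirect representation theorem, $\mathbf{F}$ is a subdirect product of subdirectly irreducible members of $\mathsf{GL_{2}}$; each such factor is a homomorphic image of $\mathbf{F}$, hence is generated by the images of $x_1,\dots,x_n$, and by semilinearity it is a chain. Thus it suffices to bound, uniformly in $n$, the cardinality of an $n$-generated $\mathsf{GL_{2}}$-chain.

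This bound is exactly what Lemma~\ref{lemma:generation} supplies. Applying it with $\alg A$ an $n$-generated $\mathsf{GL_{2}}$-chain and $X$ a generating set with $|X|=n$ (so that $\langle X\rangle=\alg A$), we obtain in case (2) that $|A|=|\langle X\rangle|\le 3|X|+3=3n+3$, and in case (1) the sharper bound $|A|\le n+1$. Hence every $n$-generated chain in $\mathsf{GL_{2}}$ has at most $M(n):=3n+3$ elements.

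Now let $\alg D_1,\dots,\alg D_t$ be representatives of the isomorphism types of $\mathsf{GL_{2}}$-chains of cardinality at most $M(n)$; this list is finite because the language is finite and cardinalities are bounded. For each $k$ the set $\mathrm{Hom}(\mathbf{F},\alg D_k)$ is finite, since a homomorphism out of $\mathbf{F}$ is determined by the images of the $n$ generators. Consider the map
$$\Phi\colon \mathbf{F}\longrightarrow \prod_{k=1}^{t}\ \prod_{h\in \mathrm{Hom}(\mathbf{F},\alg D_k)}\alg D_k,\qquad \Phi(a)=\bigl(h(a)\bigr)_{k,h}.$$
It is injective: if $a\neq b$ in $\mathbf{F}$, the subdirect representation yields a projection onto some $n$-generated subdirectly irreducible chain $\alg C$ separating $a$ and $b$, and composing with an isomorphism $\alg C\cong\alg D_k$ (which exists since $|C|\le M(n)$) produces a coordinate on which $\Phi(a)$ and $\Phi(b)$ differ. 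As the codomain is a finite product of finite algebras, $\mathbf{F}$ is finite, so $\mathsf{GL_{2}}$ is locally finite. The finite model property is then immediate: if an equation fails in $\mathsf{GL_{2}}$ it already fails in $\mathbf{F}_{\mathsf{GL_{2}}}(k)$, where $k$ is the number of its variables, and this free algebra is finite.

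I expect the only delicate point to be the passage from the uniform bound on $n$-generated chains to the finiteness of $\mathbf{F}$. This is precisely where semilinearity is essential, as it guarantees that the subdirect factors are chains to which Lemma~\ref{lemma:generation} applies; once that is in place, the remaining argument is the standard one showing that a variety generated by a uniformly size-bounded family of finite structures is locally finite.
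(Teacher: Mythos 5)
Your proposal is correct and follows essentially the same route as the paper: the paper's proof is exactly the one-line appeal to the $3n+3$ bound of Lemma~\ref{lemma:generation}, which is also the engine of your argument. The extra material you supply (semilinearity forcing subdirectly irreducible members to be chains, and the standard passage from a uniform bound on $n$-generated chains to finiteness of the free algebra via finitely many isomorphism types and finitely many homomorphisms) is precisely the routine step the paper leaves implicit, so it is a faithful filling-in of the same proof rather than a different one.
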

\begin{proof}
By Lemma~\ref{lemma:generation}, if $X$ is finite of size $n$, then $\langle X \rangle$ is also finite of size at most $3n+3$.
\end{proof}
We now use the previous results to show that the AP fails in the class of chains in $\mathsf{GL}_{2}$. 
\begin{theorem}\label{lemma:amfail}
 The amalgamation property fails for the class of $\mathsf{GL}_{2}$-chains.
\end{theorem}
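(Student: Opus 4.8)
The plan is to exhibit a single V-formation of $\mathsf{GL}_{2}$-chains that admits no amalgam in the class, and the whole argument will hinge on the observation that the coatom is rigidly preserved by embeddings. First I would isolate the following lemma, extracted directly from Lemma~\ref{lemma:iteratedchains}(3): if $\phi\colon \alg X \to \alg Y$ is an embedding of $\mathsf{GL}_{2}$-chains and $\alg X$ has a non-trivial block (some $A(s)$ with more than one element), then $\phi(c_{\alg X})=c_{\alg Y}$, where $c_{\alg X},c_{\alg Y}$ denote the respective coatoms. Indeed, pick $u>v$ in a common block of $\alg X$; then $u\to v=c_{\alg X}$ by Lemma~\ref{lemma:iteratedchains}(3). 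Since $u^2=v^2$ we have $\phi(u)^2=\phi(v)^2$, so $\phi(u),\phi(v)$ lie in a common block of $\alg Y$, they are distinct, and $\phi(u)>\phi(v)$; applying Lemma~\ref{lemma:iteratedchains}(3) inside $\alg Y$ gives $\phi(u)\to\phi(v)=c_{\alg Y}$, hence $\phi(c_{\alg X})=\phi(u\to v)=c_{\alg Y}$.

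With this tool in hand, I would take $\alg A$ to be the three-element G\"odel chain $0<a<1$ (all elements idempotent), and embed it into two $\mathsf{GL}_{2}$-chains that force the image of $a$ into incompatible positions relative to the coatom. Concretely, let $\alg B$ be the chain $0<a<b<1$ with $b^2=a$, so that $\{a,b\}$ is a non-trivial block and $b$ is the coatom; and let $\alg C$ be the chain $0<n<m<a<1$ with $n$ idempotent and $m^2=n$, so that $\{n,m\}$ is the non-trivial block and $a$ is the coatom. The maps $i\colon \alg A\to\alg B$ and $j\colon \alg A\to\alg C$ fixing $0,a,1$ are embeddings: the only non-obvious equation to verify is $a\to 0$, which in all three algebras equals $\max A(0)=0$ precisely because $0$ sits alone in its block. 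This is the delicate point, since enlarging the block of $0$ upward would change $a\to 0$ and destroy the embedding, so my constructions are chosen to keep $0$ as a singleton block.

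I would then derive a contradiction from the coatom lemma. Suppose $(\alg D,h,k)$ were an amalgam with $h\circ i=k\circ j$. Since $\alg B$ has a non-trivial block and coatom $b$, the lemma gives $h(b)=c_{\alg D}$; since $\alg C$ has a non-trivial block and coatom $a$, it gives $k(a)=c_{\alg D}$. But $h(i(a))=k(j(a))$ forces $h(a)=k(a)=c_{\alg D}$, whence $h(a)=c_{\alg D}=h(b)$ with $a\neq b$, contradicting the injectivity of $h$. Therefore no amalgam exists and the amalgamation property fails for $\mathsf{GL}_{2}$-chains.

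The hard part will not be the contradiction itself but confirming that the V-formation is genuine: I must check that $\alg B$ and $\alg C$ really are $\mathsf{GL}_{2}$-chains (verifying $xy=y^2$ for $y\leq x<1$ and matching every division to the value prescribed by Lemma~\ref{lemma:iteratedchains}), and that $i,j$ are embeddings despite the sensitivity of implications such as $a\to 0$ to block refinements. Once the coatom-preservation lemma is established, everything else is forced, so the proof reduces to the routine bookkeeping of these two small finite algebras together with that one structural lemma.
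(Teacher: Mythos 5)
Your proof is correct. It follows the same overall template as the paper's proof -- an explicit small V-formation of $\mathsf{GL}_2$-chains ruled out by the structure theory of Lemma~\ref{lemma:iteratedchains} -- and uses the identical $\alg A$ and $\alg B$, but your third algebra and your contradiction mechanism are genuinely different. The paper takes $\alg C$ to be the four-element G\"odel chain $\{0<a<c<1\}$ and argues via Lemma~\ref{lemma:iteratedchains}(4): in any amalgam $\alg D$ one has $b\in D(a)$ (since $b^2=a$), so $c\to a=\max D(a)\geq b$, contradicting $c\to a=a$ in $\alg C$. You instead place the non-singleton block of $\alg C$ strictly below $a$, making $a$ the coatom of $\alg C$, and use Lemma~\ref{lemma:iteratedchains}(3) through your coatom-rigidity lemma: an embedding of a $\mathsf{GL}_2$-chain that has a non-singleton block must send its coatom to the coatom of the codomain. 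Then $h(b)=c_{\alg D}=k(a)=h(a)$ contradicts injectivity of $h$. The paper's argument is more economical (a four-element $\alg C$, no auxiliary lemma); yours isolates a clean reusable structural fact, reduces the contradiction to pure injectivity, and the verifications you flag as remaining (that $\alg B,\alg C$ are $\mathsf{GL}_2$-chains and that $i,j$ are embeddings, in particular $a\to 0=0$ throughout) do check out.

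One consequence of your different route deserves attention: your V-formation realizes a failure mode that the characterization of amalgamation failures stated after this theorem does not obviously cover. In your example $C(a)=\{a\}$ is the maximum block of $\alg C$ below $\{1\}$, so under the reading of ``maximum nontrivial block'' as ``maximum block different from $\{1\}$'', that proposition's condition is not met, yet you prove no amalgam exists. The source of the discrepancy is visible in that proposition's proof: it only guards against cross-block implications $d\to x$ producing wrong block maxima (part (4) of Lemma~\ref{lemma:iteratedchains}), whereas your obstruction is the within-block implication $m\to n$, which by part (3) must produce the coatom -- equal to $a$ in $\alg C$ but to $b$ in the merged chain built from $D(x)=B(x)\cup C(x)$. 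So your argument is not only a valid proof of Theorem~\ref{lemma:amfail} but also a useful test case for the precise formulation of that later characterization.
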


\begin{proof}
Let $\alg A$ be the $3$-element G\" odel algebra where $A=\{0<a<1\}$, and also let  $\alg B$ and $\alg C$ be the $\mathsf{GL}_{2}$-chains specified by the following ordered blockings $B=\{\{0\}< \{a<b\} <\{1\}\}$ and $C=\{\{0\}< \{a\} <\{c\} <\{1\}\}$; so  $b^2=a$, and the other elements are idempotent. Note that $\alg A$ is a common subalgebra of $\alg B$ and $\alg C$, even though $A$ does not contain the top of $B(a)=\{a<b\}$, which is $b$. Let $\alg D$ be a $\mathsf{GL}_{2}$-chain that is an amalgam of this $V$-formation.  Since $a<c$ in $\alg C$, the same is true in $\alg D$. Since $\alg D$ is a $\mathsf{GL}_{2}$-chain, we have that $c \to a$ is the top of $D(a)$. However, since $b^2=a$, we have $b \in D(a)$, hence $b \leq c \to a$ in $\alg D$. Since $c \to a = a$ in $\alg C$, this yields $b \leq a$, a contradiction. 
\end{proof}
Interestingly, we can characterize exactly when the AP fails for $\mathsf{GL}_{2}$-chains. 
\begin{proposition}
 A $V$-formation $\alg A$, $\alg B$, $\alg C$ of $\mathsf{GL}_{2}$-chains (WLOG we assume that $A \subseteq B, C$) fails to have a $\mathsf{GL}_{2}$-chain amalgam iff
$\alg A$ is a G\" odel algebra and there is $a \in A$ such that $B(a)$ is not singleton, $C(a)$ is a singleton, and $C(a)$ is not the maximum nontrivial block of $\alg C$ (or the same with $\alg B$ and $\alg C$ swapped). 
\end{proposition}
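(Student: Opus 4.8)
The plan is to read the displayed condition as the precise obstruction to (weak) amalgamation, and to use throughout the description of multiplication and implication in a $\mathsf{GL}_2$-chain from Lemma~\ref{lemma:iteratedchains}: for $y<x<1$ one has $x\to y=c$ (the coatom) when $X(x)=X(y)$ and $x\to y=\max X(y)$ when $X(x)\neq X(y)$, while $xy=(x\wedge y)^2=\min X(x\wedge y)$. First I would record the two facts that drive everything. Since squares are preserved by embeddings, for $\alg A\subseteq\alg X$ each $A$-block is the trace $A(a)=A\cap X(a)$; and since the coatom of a non-Gödel chain is term-definable as $w\to w^2$ for a non-idempotent $w$, every embedding sends coatom to coatom. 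From these and the implication formula I would extract the key \emph{rigidity lemma}: if $\alg A\subseteq\alg X$ and some $X(a)$ properly contains a singleton block $\{a\}$ of $A$ \emph{above its maximum}, then $a$ must be the coatom of $\alg A$; otherwise an $A$-element $e$ with $a<e<1$ in a higher block would force $e\to a=\max X(a)>a$ in $\alg X$ but $e\to a=a$ in $\alg A$. Consequently, when $\alg A$ is Gödel the only block that can be enlarged above its maximum is the one at the coatom of $\alg A$, while when $\alg A$ is non-Gödel the common coatom $c_A\in A$ is shared by $\alg B$, $\alg C$ and any amalgam, and coatom preservation then forbids enlarging any block above its maximum at all.

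For the forward ("if") implication I would generalize the argument of Theorem~\ref{lemma:amfail} verbatim. Assume the condition holds at $a\in A$ (with the stated roles of $\alg B,\alg C$). Since $C(a)$ is a singleton, $a$ is idempotent and $a=\min B(a)$, so $B(a)$ non-singleton yields $b\in B$ with $a<b$ and $b^2=a$; since $C(a)$ is not the maximum nontrivial block of $\alg C$ there is $c\in C$ with $a<c<1$ and $C(c)\neq C(a)$, whence $c\to a=\max C(a)=a$ in $\alg C$. In any $\mathsf{GL}_2$-chain $\alg D$ with embeddings $f\colon\alg B\to\alg D$, $g\colon\alg C\to\alg D$ agreeing on $A$, preservation of squares gives $f(b)\in D(g(a))$ with $f(b)>g(a)$, and $g(c)^2=g(c^2)\neq g(a)$, so $g(c)\to g(a)=\max D(g(a))\ge f(b)>g(a)$, contradicting $g(c)\to g(a)=g(c\to a)=g(a)$.

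For the converse I would argue contrapositively, building an amalgam whenever the condition fails and using rigidity to localize the only possible conflict. If $\alg A$ is non-Gödel, then $\alg B,\alg C$ share the coatom $c_A\in A$, and rigidity together with coatom preservation forbids adding any element above a block-maximum; fattening then occurs only in block interiors, so no higher-block witness can sit above a fattened element. I would build $\alg D$ by interleaving $B\setminus A$ and $C\setminus A$ gap-by-gap over $A$ and amalgamating the two top blocks over $A(c_A)$ with maxima kept equal to $c_A$, and check via the implication formula that all within-block values collapse to $c_A$ and all cross-block values are block-maxima, so both inclusions are embeddings. If $\alg A$ is Gödel, rigidity shows above-maximum fattening can occur only at the coatom $a_0$ of $\alg A$ (if none exists, $\alg B,\alg C$ are Gödel and merge as plain chains), so $a=a_0$ is the sole candidate. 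The failure of the condition at $a_0$ leaves only benign cases: neither side fattens $a_0$; or both do, in which case I amalgamate the chains $B(a_0)$ and $C(a_0)$ over the point $\{a_0\}$ with tops identified to a common coatom $c_D$ (legitimate for weak amalgamation), so every witness-implication on either side evaluates to $c_D$; or one side fattens $a_0$ while the other has $a_0$ as its maximum nontrivial block, so that side is Gödel at and above $a_0$, imposes no coatom constraint, and embeds freely into the fattened $\alg D$. In every case the remaining blocks are singletons that merge as ordinary chains.

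The main obstacle I anticipate is the converse construction: turning the gap-by-gap merge into a genuine $\mathsf{GL}_2$-chain and verifying that both inclusions preserve implication. The delicate bookkeeping is the coatom, since a single enlarged block forces a global coatom that every within-block implication of $\alg B$ and of $\alg C$ must hit; the rigidity lemma is exactly what guarantees that above-maximum fattening is confined to (at most) one block and that, precisely when the stated condition fails, the forced coatom identifications are mutually consistent and no higher-block witness contradicts them.
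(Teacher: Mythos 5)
Your forward direction is correct and is essentially the paper's own argument: it runs the witness computation of Theorem~\ref{lemma:amfail} in the general setting, and your rigidity lemma is true and is a reasonable way to localize fattening of $A$-blocks. The genuine gap is in the converse. Your case analysis tracks only the blocks $B(a)$, $C(a)$ with $a\in A$ and treats everything else as negligible: you assert that if $\alg A$ has no coatom then $\alg B,\alg C$ are G\"{o}del, and that in the ``benign'' cases ``the remaining blocks are singletons that merge as ordinary chains''. Both assertions are false, because $\alg B$ and $\alg C$ may have non-singleton blocks disjoint from $A$. Such a block forces its algebra to have a coatom, and by Lemma~\ref{lemma:iteratedchains}(3) every same-block implication evaluates to that coatom; hence any embedding into a $\mathsf{GL}_2$-chain rigidly determines the coatom of the codomain. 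This global coatom constraint is invisible to your rigidity lemma, which only governs blocks meeting $A$.

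Concretely, let $\alg A=\{0<a_0<1\}$ (G\"{o}del), $\alg B=\{0<a_0<b^2<b<1\}$ (a new block $\{b^2<b\}$ above $a_0$) and $\alg C=\{0<c^2<c<a_0<1\}$ (a new block $\{c^2<c\}$ below $a_0$, so the coatom of $\alg C$ is $a_0\in A$). All $A$-blocks are singletons in both algebras, so you are in your first ``benign'' case and your recipe returns an interleaved chain. But no amalgam exists: $c\to c^2=a_0$ in $\alg C$, so for any embeddings $f,g$ into a $\mathsf{GL}_2$-chain $\alg D$ agreeing on $A$, the element $g(c)$ is non-idempotent and $g(c)\to g(c)^2$ is simultaneously the coatom of $\alg D$ and equal to $g(a_0)=a_0$, while $a_0<f(b)<1$ --- a contradiction. (In your interleaved chain this surfaces as $c\to c^2$ evaluating to $b$ instead of $a_0$, so $\alg C$ fails to be a subalgebra.) A variant with $\alg B=\{0<a_0<b<1\}$, $b^2=a_0$, and the same $\alg C$ likewise defeats your third ``benign'' case: there $\alg C$ is not G\"{o}del below $a_0$ and \emph{does} impose a coatom constraint that clashes with the fattening of $B(a_0)$. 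Be aware that both examples also contradict the displayed equivalence itself (in each, amalgamation fails although the stated condition does not hold), and the paper's own proof --- which uses the same blockwise construction and merges top blocks only when both coatoms lie outside $A$ --- silently passes over the same configurations; so this is not a defect you could have repaired by more careful bookkeeping within your scheme: the coatom constraints coming from non-singleton blocks disjoint from $A$ are missing from the condition as well as from your (and the paper's) construction.
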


\begin{proof}
The right-to-left direction follows from the same argument in the proof of Theorem \ref{lemma:amfail}. 

We prove the other direction.
 In case $\alg A$ has some non-idempotent element, then it has a coatom and hence so do $\alg B$ and $\alg C$, and the coatom of $\alg A$ coincides with his copy in $\alg B$ and also  in $\alg C$.  Also, if $\{\alg A_{i}\}_{i \in I}$, $\{\alg B_{j}\}_{j \in J}$, and $\{\alg C_{k}\}_{k \in K}$ are the associated decompositions, then $I \subseteq J, K$ and for $a \in A$, the chains $A(a), B(a), C(a)$ share the same top and bottom. We may assume that $I = J \cap K$, and we take $J \cup K$ as the index set for $\alg D$; the order on  $J \cup K$ is any amalgam of the chain $V$-formation given by $I, J, K$. Then for $i \in I$, we take $D(i)$ to be any amalgam of the bounded chain $V$-formation given by $A(i), B(i), C(i)$; for $j \in J-I$ we take $D(j)=B(j)$ and for $k \in K-I$ we take $D(k)=C(k)$.

Now assume that $\alg A$ is a G\" odel algebra. For each $a \in A$, we define $D(a)=B(a) \cup C(a)$ and the rest of $\alg D$ is defined as above, except for one case. If both $\alg B$ and $\alg C$ have coatoms $c_B, c_C$ not in $\alg A$, $B(c)$ and $C(c)$ are merged in the obvious way. It is easy to see that $\alg B$ and $\alg C$ are subalgebras of $\alg D$ with respect to multiplication. Implication could create a problem by producing the top of $B(a)$ and of $C(a)$, as they could be different elements. This can happen only when one of them is a singleton, say $C(a)$, and the other is not, say $B(a)$. Also, this can happen only of the implication is of the form $d \to x$, where $x \in D(a)$, $x<d<1$, $d \not \in C$ and $D(x) \not = D(d)$. But this is impossible by the assumption.
\end{proof}
Using the work in \cite{FussMet}, we can actually prove that the AP fails for the variety $\mathsf{GL}_2$. According to the authors a class of algebras $\vv K$ is said to have the \emph{one-sided amalgamation} property, or the 1AP, if  every  V-formation $(\alg A, \alg B, \alg C, i: A \to B, j: A \to C)$ in $\vv K$ has a \emph{1-amalgam} $(\alg D, h: B \to D, k: C \to D)$  in $\vv K$, i.e., $\alg D \in \vv K$, $k$ is an embedding, $h$ is a homomorphism, and $h \circ i = k \circ j$. Notice that if $h$ is an embedding we have the usual notion of amalgam. 
Given a variety $\vv V$, let $\vv V_{FSI}$ the class of finitely subdirectly irreducible members of $\vv V$. In \cite[Theorem 3.4]{FussMet}, the authors show that if a variety $\vv V$ has the congruence extension property and  $\vv V_{FSI}$ is closed under subalgebras, then $\vv V$ has the AP if and only if $\vv V_{FSI}$ has the 1AP. Since $\mathsf{GL}_2$ is congruence distributive, and the finitely subdirectly irreducibles are exactly the nontrivial chains (which are clearly closed under subalgebras), we can apply the mentioned result.
\begin{proposition}
	The amalgamation property fails for $\mathsf{GL}_2$.
\end{proposition}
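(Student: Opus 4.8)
The plan is to invoke the reduction from \cite{FussMet} just recalled. Since $\mathsf{GL}_2$ is congruence distributive, has the congruence extension property, and its class of finitely subdirectly irreducible members consists exactly of the nontrivial chains --- which is closed under subalgebras --- the criterion of \cite[Theorem 3.4]{FussMet} applies, so $\mathsf{GL}_2$ has the AP if and only if the class of $\mathsf{GL}_2$-chains has the one-sided amalgamation property (1AP). Thus it suffices to exhibit a single V-formation of $\mathsf{GL}_2$-chains that admits no $1$-amalgam which is a chain. Verifying these hypotheses is routine given the preceding development.

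For this I would reuse the V-formation of Theorem~\ref{lemma:amfail}: let $\alg A$ be the $3$-element G\"odel chain $A=\{0<a<1\}$, let $\alg B$ be given by the blocks $\{0\}<\{a<b\}<\{1\}$ (so $b^2=a$ and all other elements idempotent), and let $\alg C$ be given by $\{0\}<\{a\}<\{c\}<\{1\}$ (all idempotent); then $\alg A$ embeds into both $\alg B$ and $\alg C$ by inclusions $i,j$. To refute the 1AP it must be shown that there is no $\mathsf{GL}_2$-chain $\alg D$ together with an embedding $k\colon \alg C\to\alg D$ and a homomorphism $h\colon\alg B\to\alg D$ satisfying $h\circ i=k\circ j$; note that any such pair forces $k(a)=h(a)$, $h(0)=0_D$, and $h(1)=1_D$.

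The key step is a contradiction refining the argument of Theorem~\ref{lemma:amfail} so as to accommodate a mere homomorphism $h$. Since $k$ is an embedding and $a<c$ are distinct idempotents of $\alg C$, the images $k(a)<k(c)$ lie in distinct blocks of $\alg D$, so Lemma~\ref{lemma:iteratedchains}(4) gives $k(c)\to k(a)=\max D(k(a))$; as $c\to a=a$ in $\alg C$ and $k$ is a homomorphism, this yields $\max D(k(a))=k(a)$. Now $h(b)^2=h(b^2)=h(a)=k(a)=k(a)^2$, so $h(b)\in D(k(a))$ and hence $h(b)\le k(a)$; since $a\le b$ forces $k(a)=h(a)\le h(b)$, we conclude $h(b)=k(a)$. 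Finally, in $\alg B$ we have $B(a)=B(b)=\{a,b\}$, so by Lemma~\ref{lemma:iteratedchains}(3) the element $b$ is the coatom of $\alg B$ and $b\to a=b$. Applying $h$ gives
$$k(a)=h(b)=h(b\to a)=h(b)\to h(a)=k(a)\to k(a)=1_{D},$$
contradicting $k(a)<1_D$ (which holds because $k$ is an embedding and $a<1$ in $\alg C$). Hence no chain $1$-amalgam exists, the $\mathsf{GL}_2$-chains fail the 1AP, and therefore $\mathsf{GL}_2$ fails the AP.

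The main obstacle, and the only place where genuine work beyond Theorem~\ref{lemma:amfail} is needed, is precisely this last point: the earlier V-formation rules out only amalgams in which the map out of $\alg B$ is an embedding, whereas the 1AP permits $h$ to be an arbitrary homomorphism. The collapse $h(b)=k(a)$ is harmless for a non-injective $h$, so the contradiction must be located elsewhere; the observation that $b$ is the coatom of $\alg B$ with $b\to a=b$ supplies it, since the homomorphism then forces $k(a)=1_D$. I expect the remaining bookkeeping --- checking that the displayed V-formation really consists of $\mathsf{GL}_2$-chains and that the hypotheses of the \cite{FussMet} criterion hold --- to be straightforward.
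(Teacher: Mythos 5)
Your proof is correct, and it shares the paper's overall frame: the same reduction via \cite[Theorem 3.4]{FussMet} to the one-sided amalgamation property for the finitely subdirectly irreducible members (the nontrivial chains), tested on the very same V-formation $(\alg A, \alg B, \alg C, i, j)$ from Theorem~\ref{lemma:amfail}. Where you genuinely diverge is in the key step, ruling out a chain 1-amalgam $(\alg D, h, k)$. The paper argues that $h$ is forced to be \emph{injective}: since $h \circ i = k \circ j$ is injective, $a \notin \ker(h)$, hence $b \notin \ker(h)$ (because $b^2 = a$ and the kernel congruence filter is closed under products), so $\ker(h)$ is trivial; thus every 1-amalgam is an actual amalgam, and Theorem~\ref{lemma:amfail} finishes the job as a black box. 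You prove the opposite necessary condition --- $h$ must \emph{collapse} $b$ onto $k(a)$ --- by rerunning the block argument of Theorem~\ref{lemma:amfail} inside $\alg D$ (Lemma~\ref{lemma:iteratedchains}(4) plus the embedding $k$ give $\max D(k(a)) = k(a)$, and $h(b)^2 = h(b^2) = k(a)$ places $h(b)$ in $D(k(a))$), and you then extract a fresh contradiction from the coatom identity $b \to a = b$ of Lemma~\ref{lemma:iteratedchains}(3): applying $h$ yields $k(a) = h(b) = h(b) \to h(a) = k(a) \to k(a) = 1_D$, contradicting $k(a) < 1_D$. Both arguments hinge on $b^2 = a$, and both are valid. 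The paper's kernel argument is shorter and reuses the earlier theorem wholesale; yours is self-contained at the cost of partially re-deriving it, and it correctly identifies (as you note at the end) the one point where Theorem~\ref{lemma:amfail} alone does not suffice for the 1AP --- a non-injective $h$ --- and closes that gap with the observation that the collapse $h(b)=h(a)$ is itself incompatible with $b \to a = b$.
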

\begin{proof}
Consider again the V-formation $(\alg A, \alg B, \alg C, i, j)$ in the proof of Theorem \ref{lemma:amfail}, where $i, j$ are the inclusion maps and note that all three algebras are FSI. We show that every 1-amalgam for it is an actual amalgam. Since we have shown that it has no amalgams, via \cite[Theorem 3.4]{FussMet} this concludes the proof. 

In particular, we prove that if $(\alg D, h: B \to D, k: C \to D)$ is a 1-amalgam, then $h$ is necessarily injective. Notice that the composition $h \circ i = k \circ j$ is injective, since both $j$ and $k$ are injective. Thus, no elements of $\alg A$ are collapsed by $h \circ i$. In particular, $h$ does not collapse $i(1)$ and $i(a)$, i.e., $i(a) \notin {\rm ker}(h)$. Therefore, also $i(b) \notin {\rm ker}(h)$, since otherwise $i(b)^2 = i(b^2) = i(a)$ would be in the kernel as well, and it is not. We conclude that the kernel of $h$ is trivial, and hence, $h$ is injective.
\end{proof}
We remark that if we modify $\mathsf{GL}_{2}$-algebras to be expansions of IRLs with a new constant $c$, and be such that for the finitely subdirectly irreducible members they satisfy $x=1$ or $x \leq c$, then the subalgebras will also contain $c$ and amalgamation will hold for all such $\mathsf{GL}_{2}$-chains. Then by Theorem 49 in \cite{MMT}, which allows one to extend the amalgamation property from the FSI members to the whole variety, the amalgamation property extends to the variety of all modified $\mathsf{GL}_{2}$-algebras.
\medskip 
\section*{Funding}
This work has received funding from the European Union's Horizon 2020 research and innovation programme under the Marie Sk\l odowska-Curie grant agreement No 890616 awarded to Ugolini.

\end{document}